\theoremstyle{plain}
\newtheorem{theo}{Theorem}[section]
\crefname{theo}{Theorem}{Theorems}
\Crefname{theo}{Theorem}{Theorems}
\newtheorem{prop}[theo]{Proposition}
\crefname{prop}{Proposition}{Propositions}
\Crefname{prop}{Proposition}{Propositions}
\newtheorem{lem}[theo]{Lemma}
\crefname{lem}{Lemma}{Lemmas}
\Crefname{lem}{Lemma}{Lemmas}
\newtheorem{cor}[theo]{Corollary}
\crefname{cor}{Corollary}{Corollaries}
\Crefname{cor}{Corollary}{Corollaries}
\crefname{claim}{Claim}{Claims}
\Crefname{claim}{Claim}{Claims}
\crefname{property}{Property}{Properties}
\Crefname{property}{Property}{Properties}
\crefname{problem}{Problem}{Problems}
\Crefname{problem}{Problem}{Problems}
\theoremstyle{definition}
\newtheorem{defi}[theo]{Definition}
\crefname{defi}{Definition}{Definitions}
\Crefname{defi}{Definition}{Definitions}
\crefname{notation}{Notation}{Notations}
\Crefname{notation}{Notation}{Notations}
\crefname{convention}{Convention}{Conventions}
\Crefname{convention}{Convention}{Conventions}
\crefname{cond}{Condition}{Conditions}
\Crefname{cond}{Condition}{Conditions}
\newtheorem{assum}[theo]{Assumption}
\crefname{assum}{Assumption}{Assumptions}
\Crefname{assum}{Assumption}{Assumptions}
\theoremstyle{remark}
\newtheorem{rem}[theo]{Remark}
\crefname{rem}{Remark}{Remarks}
\Crefname{rem}{Remark}{Remarks}
\newtheorem{ex}[theo]{Example}
\crefname{ex}{Example}{Examples}
\Crefname{ex}{Example}{Examples}
\crefname{section}{Section}{Sections}
\Crefname{section}{Section}{Sections}
\crefname{subsection}{Subsection}{Subsections}
\Crefname{subsection}{Subsection}{Subsections}
\crefname{figure}{Figure}{Figures}
\Crefname{figure}{Figure}{Figures}
\newtheorem*{acknowledgement}{Acknowledgement}
\newcommand{\Z}{\mathbb{Z}}
\newcommand{\N}{\mathbb{N}}
\newcommand{\R}{\mathbb{R}}
\newcommand{\Q}{\mathbb{Q}}
\newcommand{\CP}{\mathbb{CP}}
\newcommand{\fiber}{\mathrm{fiber}}
\newcommand{\base}{\mathrm{base}}
\newcommand{\pt}{\mathrm{pt}}
\newcommand{\cpt}{\mathrm{cpt}}
\newcommand{\op}{\mathrm{op}}
\newcommand{\even}{\mathrm{even}}
\newcommand{\odd}{\mathrm{odd}}
\newcommand{\spc}{\mathrm{spin}^c}
\newcommand{\spcGL}{\mathrm{spin}^c_{GL}}
\newcommand{\tGL}{\widetilde{GL}}
\newcommand{\FrGL}{\mathrm{Fr}_{GL}}
\newcommand{\Area}{\mathrm{Area}}
\newcommand{\Shrink}{\mathrm{Shrink}}
\newcommand{\sign}{\mathrm{sign}}
\newcommand{\conn}{{\mathscr A}}
\newcommand{\gauge}{{\mathscr G}}
\newcommand{\M}{{\mathcal M}}
\newcommand{\vnU}{{\mathcal U}}
\newcommand{\calM}{{\mathcal M}}
\newcommand{\calE}{{\mathcal E}}
\newcommand{\calO}{{\mathcal O}}
\newcommand{\calU}{{\mathcal U}}
\newcommand{\calS}{{\mathcal S}}
\newcommand{\calV}{{\mathcal V}}
\newcommand{\calL}{{\mathcal L}}
\newcommand{\calH}{{\mathcal H}}
\newcommand{\calW}{{\mathcal W}}
\newcommand{\scrB}{{\mathscr B}}
\newcommand{\scrC}{{\mathscr C}}
\newcommand{\scrD}{{\mathscr D}}
\newcommand{\scrE}{{\mathscr E}}
\newcommand{\scrH}{{\mathscr H}}
\newcommand{\scrX}{{\mathscr X}}
\newcommand{\frakm}{\mathfrak{m}}
\newcommand{\frakM}{\mathfrak{M}}
\newcommand{\fraks}{\mathfrak{s}}
\newcommand{\frakt}{\mathfrak{t}}
\newcommand{\frakP}{\mathfrak{P}}
\newcommand{\Met}{\mathrm{Met}}
\newcommand{\circPi}{\mathring{\Pi}}
\newcommand{\PSC}{\mathrm{PSC}}
\newcommand{\ad}{\mathrm{ad}}
\newcommand{\Diff}{\mathrm{Diff}}
\newcommand{\Homeo}{\mathrm{Homeo}}
\newcommand{\Aut}{\mathrm{Aut}}
\newcommand{\Map}{\mathrm{Map}}
\newcommand{\Si}{\Sigma}
\newcommand{\si}{\sigma}
\newcommand{\vp}{\varphi}
\newcommand{\inc}{\hookrightarrow}
\newcommand{\lb}{\left<}
\newcommand{\rb}{\right>}
\newcommand{\bt}{{\bf t}}
\newcommand{\bs}{{\bf s}}
\newcommand{\del}{\partial}
\newcommand{\codel}{\delta}
\newcommand{\Ker}{\mathop{\mathrm{Ker}}\nolimits}
\newcommand{\Coker}{\mathop{\mathrm{Coker}}\nolimits}
\newcommand{\im}{\mathop{\mathrm{Im}}\nolimits}
\newcommand{\Hom}{\mathop{\mathrm{Hom}}\nolimits}
\newcommand{\PD}{\mathop{\mathrm{P.D.}}\nolimits}
\newcommand{\Rest}{\mathop{\mathrm{Rest}}\nolimits}
\newcommand{\Repa}{\mathop{\mathrm{Repa}}\nolimits}
\newcommand{\SWinv}{\mathop{\mathrm{SW}}\nolimits}
\newcommand{\Dcoh}{\mathop{{\mathbb D}}\nolimits}
\newcommand{\SWcoh}{\mathop{{\mathbb S}{\mathbb W}}\nolimits}
\newcommand{\SWcoch}[1][{{}}]{\mathop{{\mathcal S}{\mathcal W}_#1}\nolimits}
\newcommand{\Acoh}{\mathbb{A}}
\newcommand{\Acoch}[1][{{}}]{\mathop{{\mathcal A}_#1}\nolimits}
\newcommand{\tilA}{\mathop{\tilde{\mathcal A}}\nolimits}
\newcommand{\tilg}{\tilde{g}}
\newcommand{\tilalpha}{\tilde{\alpha}}
\newcommand{\SW}{Seiberg--Witten }
\newcommand{\vn}{virtual neighborhood }
\newcommand{\id}{\mathrm{id}}
\newcommand{\ind}{\mathop{\mathrm{ind}}\nolimits}
\newcommand{\reflect}[2]{{\overline{#1}}^{#2}}
\newcommand{\Euler}{\mathop{\chi}\nolimits}
\title[Characteristic classes via gauge theory]{Characteristic classes via 4-dimensional gauge theory}
\author{Hokuto Konno}
\address{Graduate School of Mathematical Sciences, the University of Tokyo, 3-8-1 Komaba, Meguro, Tokyo 153-8914, Japan}
\email{hkonno@ms.u-tokyo.ac.jp}
\date{}
\begin{document}

\maketitle

\begin{abstract}
We construct characteristic classes of 4-manifold bundles using $SO(3)$-Yang--Mills theory and Seiberg--Witten theory for families.
\end{abstract}

\tableofcontents

\section{Introduction}

The aim of this paper is to construct characteristic classes for bundles whose fiber is a $4$-manifold using gauge theory.
Our construction can be regarded as an infinite dimensional analogue of that of familiar characteristic classes typified by the Mumford--Morita--Miller classes~\cite{MR857372,MR914849,MR717614} for surface bundles.
To explain it, let us recall the definition of the Mumford--Morita--Miller classes.
From a topological point of view, the definition of these classes includes the following two steps:

\begin{description}
\item[(1)] Consider a ``linearization" of a given surface bundle, more precisely, the vector bundle consisting of tangent vectors along the fibers.
\item[(2)] Consider the Euler class of the linearization.
\end{description}
The Mumford--Morita--Miller classes are defined as the fiber integration of cup products of the Euler class in {\bf (2)}.
On the other hand, there are the following infinite dimensional analogues of {\bf (1)} and {\bf (2)}: 
\begin{description}
\item[(I)] Consider a functional space on a given a non-linear object, say a manifold.
\item[(II)] Consider the zero set of a given Fredholm section of a Hilbert bundle.
\end{description}
The reason why {\bf (I)} and {\bf (II)} correspond to {\bf (1)} and {\bf (2)} can be explained as follows.
On {\bf (I)}, a functional space on a given manifold is also regarded as a ``linearization" of the manifold.
For example, in representation theory, given a manifold acted by a Lie group, the induced infinite dimensional representation on a functional space on the manifold can be regarded as a linearization of the original non-linear action.
We also note that, if the non-linear object is given as a fiber bundle, one can consider a family of functional spaces as its linearization. 
On {\bf (II)}, suppose that we are given an infinite dimensional Hilbert bundle on a Hilbert manifold defined as some functional spaces on a finite dimensional manifold and that we are also given a Fredholm section of it.
The zero set of the section morally corresponds to the ``Euler class" of the infinite dimensional bundle.
We recall that {\bf (II)} is one of the main ideas to define the invariants of $4$-manifolds emerging from gauge theory, say the Donaldson invariant~\cite{MR1066174} and the \SW invariant~\cite{MR1306021}.
In this paper, using gauge theory, we shall construct characteristic classes for bundles of $4$-manifolds via the above analogy between  {\bf (1)}, {\bf (2)} and {\bf (I)}, {\bf (II)}.

The initial roots of this work have been given by S.~K.~Donaldson~\cite{MR1171888,MR1339810}.
He has suggested the idea of characteristic classes based on family gauge theory, however there are several problems to justify the idea.
The first one is the natural structure groups which are compatible with gauge theoretic equations are not subgroups of the diffeomorphism group of the $4$-manifold given as a fiber.
(For example, see M.~Szymik~\cite{MR2652709}.)
The second problem arises if we want to establish the universal theory:
the base space of the universal bundle of the diffeomorphism group, namely the classifying space of the diffeomorphism group, is not a finite dimensional smooth manifold.
Since the usual family gauge theory is considered on smooth finite dimensional parameter spaces, it cannot be applied to this situation.
In this paper we use an idea given by N.~Nakamura~\cite{MR2644908} and a family version of Y.~Ruan's virtual neighborhood technique~\cite{MR1635698} to overcome these difficulties of the construction.
In general, even if we could succeed to construct a characteristic class of bundles of smooth manifolds, it is a difficult problem to prove that the characteristic class is non-trivial.
(For example, in the case of the Mumford--Morita--Miller classes, the proof of the non-triviality is one of difficult parts of S.~Morita~\cite{MR914849}.)
However, for our characteristic classes, we can use D.~Ruberman's results~\cite{MR1671187} and author's~\cite{Konno3} on invariants of diffeomorphisms, and they ensure that our characteristic classes are non-trivial.
We also give other calculations in \cref{section Non triviality}.

We now explain our main results.
We can construct our theory of characteristic classes using both the $SO(3)$-Yang--Mills anti-self-dual  (ASD) equation and the \SW equations.
Let $X$ be an oriented closed smooth $4$-manifold, $\Diff^{+}(X)$ be the group of orientation preserving diffeomorphisms on $X$, $\frakP$ be the isomorphism class of an $SO(3)$-bundle on $X$ with $w_{2}(\frakP) \neq 0$,
and $\fraks$ be the isomorphism class of a $\spc$ structure on $X$.
(Although an $SO(3)$-bundle (or a $\spc$ structure) and the isomorphism class of it are often denoted by a same notation, we explicitly distinguish them in this paper.)
We define subgroups $\Diff(X, \frakP)$ and $\Diff(X, \fraks)$ of $\Diff^{+}(X)$ by
\begin{align}
&\Diff(X, \frakP) := \Set{f \in \Diff^{+}(X) | f^{\ast}\frakP = \frakP}, \text{ and}
\label{str grp ASD intro}\\
&\Diff(X, \fraks) := \Set{ f \in \Diff^{+}(X) | f^{\ast} \fraks = \fraks}.
\label{str grp SW intro}
\end{align}
Let $\calO$ be a homology orientation of $X$ (i.e. an orientation of the vector space $H^{1}(X; \R) \oplus H^{+}(X;\R)$, where $H^{+}(X;\R)$ is a maximal positive definite subspace of $H^{2}(X;\R)$ with respect to the intersection form of $X$).
We define
\begin{align}
&\Diff(X, \frakP, \calO) := \Set{ f \in \Diff(X,\frakP) | f^{\ast} \calO = \calO}, \text{ and}
\label{str grp ASD O intro}\\
&\Diff(X, \fraks, \calO) := \Set{ f \in \Diff(X,\fraks) | f^{\ast} \calO = \calO}.
\label{str grp SW O intro}
\end{align}
We denote by $d(\frakP)$ and $d(\fraks)$ the formal dimension of the moduli space of the $SO(3)$-ASD equations for $\frakP$ and the \SW equations for $\fraks$ respectively: given by the formula $d(\frakP) = -2p_{1}(\frakP) - 3(1 - b_{1}(X) + b^{+}(X))$ and $d(\fraks)= (c_1(\fraks)^2 - 2\Euler(X) -3\sign(X))/4$.
We shall construct characteristic classes of bundles of a $4$-manifold $X$ with structure group $\Diff(X, \frakP)$, $\Diff(X, \frakP, \calO)$ and $\Diff(X,\fraks)$, $\Diff(X,\fraks,\calO)$ using families of ASD equations and \SW equations respectively, summarized as follows:

\begin{theo}
\label{the theorem of final aim}
Let $n$ be a non-negative integer satisfying that $b^{+}(X) \geq n+2$.
\begin{enumerate}
\item
\label{the theorem of final aim Donaldson}
Assume that $d(\frakP) = -n$.
Then we can give cohomology classes
\[
\Dcoh(X,\frakP) \in H^{n}(B\Diff(X,\frakP);\Z/2)
\]
and
\[
\Dcoh(X,\frakP,\calO) \in H^{n}(B\Diff(X,\frakP,\calO);\Z)
\]
via $n$-parameter families of $SO(3)$-Yang--Mills ASD equations.
These cohomology classes depend only on the pair $(X,\frakP)$ and the triple $(X,\frakP,\calO)$ respectively.
\item
\label{the theorem of final aim SW}
Assume that $d(\fraks) = -n$.
Then we can give cohomology classes
\[
\SWcoh(X,\fraks) \in H^{n}(B\Diff(X,\fraks);\Z/2)
\]
and
\[
\SWcoh(X,\fraks,\calO) \in H^{n}(B\Diff(X,\fraks,\calO);\Z)
\]
via $n$-parameter families of \SW equations.
These cohomology classes depend only on the pair $(X,\fraks)$ and the triple $(X,\fraks,\calO)$ respectively.
\end{enumerate}
\end{theo}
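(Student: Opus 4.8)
The plan is to construct the classes first over finite-dimensional parameter spaces and then to pass to the classifying spaces. Since we are after a class of degree $n$, it is enough to produce, for every smooth fiber bundle $X \to E \to B$ over a closed smooth manifold $B$ of dimension $\le n+1$ whose structure group is one of $\Diff(X,\frakP)$, $\Diff(X,\frakP,\calO)$, $\Diff(X,\fraks)$, $\Diff(X,\fraks,\calO)$, a class in $H^{n}(B)$ (with $\Z/2$- or $\Z$-coefficients as in the statement) which is natural under pull-back and independent of all auxiliary choices; a standard argument, using this naturality together with invariance under concordance, then assembles these into the asserted classes $\Dcoh(X,\frakP)$, $\Dcoh(X,\frakP,\calO)$, $\SWcoh(X,\fraks)$, $\SWcoh(X,\fraks,\calO)$ on the respective classifying spaces, and in particular shows that they depend only on $(X,\frakP)$, $(X,\frakP,\calO)$, $(X,\fraks)$, $(X,\fraks,\calO)$.

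Fix such a bundle and consider the $SO(3)$ case. Because the structure group preserves $\frakP$, and following the idea of Nakamura \cite{MR2644908} (which circumvents the fact that $\Diff(X,\frakP)$ does not act canonically on a fixed representative of $\frakP$), one can equip the total space $E$ with a fiberwise $SO(3)$-bundle $\mathbb P$ restricting to $\frakP$ on each fiber, together with a fiberwise Riemannian metric and a generic fiberwise perturbation of the ASD equation; the hypothesis $w_{2}(\frakP) \neq 0$ guarantees that the resulting fiberwise moduli theory behaves as in the standard $SO(3)$ Donaldson setting. These data define a parametrized moduli space $\calM \to B$ whose fiberwise Uhlenbeck compactification maps properly to $B$. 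It is here that the assumption $b^{+}(X) \ge n+2 > \dim B$ is used: for a generic choice of the above data the family is reducible-free, since the set of parameters over which a reducible ASD connection occurs has codimension $b^{+}(X)$ in $B$ and is therefore empty. In the \SW case the moduli spaces are already compact, so this is the only place in the construction where the $b^{+}$ hypothesis intervenes.

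Next, apply a family version of Ruan's virtual neighborhood technique \cite{MR1635698} to $\calM \to B$. This yields a finite-dimensional manifold $\vnU$ fibered over $B$, a finite-rank real vector bundle $\calE \to \vnU$, and a section $s$ of $\calE$ whose zero set recovers the (compactified) parametrized moduli space, all compatible with the projection to $B$; the fiber dimension of $\vnU \to B$ is $\rank\calE + d(\frakP) = \rank\calE - n$. Over $\Z/2$ one defines $\Dcoh(X,\frakP) \in H^{n}(B;\Z/2)$ as the integration along the fiber of $\vnU \to B$ of the mod $2$ Euler class $e(\calE) \in H^{\rank\calE}(\vnU;\Z/2)$, a push-forward that lowers degree by $\rank\calE - n$ and so lands in degree $n$. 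Given a homology orientation $\calO$, the family version of Donaldson's orientability theorem (and its \SW analogue) shows that the index bundle of the relevant deformation operator over the parameter space is orientable and that $\calO$ pins down an orientation of it; this orients $\vnU$ and $\calE$, and the corresponding oriented push-forward of the integral Euler class produces $\Dcoh(X,\frakP,\calO) \in H^{n}(B;\Z)$. The classes $\SWcoh(X,\fraks)$ and $\SWcoh(X,\fraks,\calO)$ are defined identically from families of \SW equations.

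It remains to establish the required properties. Naturality under pull-back is immediate, since every ingredient is constructed fiberwise. For independence of the auxiliary data (the fiberwise bundle $\mathbb P$, the metric, the perturbation, and the virtual neighborhood) one joins two choices by a one-parameter family and runs the construction over the resulting bundle over $B \times [0,1]$; the hypothesis $b^{+}(X) \ge n+2$ again ensures --- after passing, if necessary, to a parameter space of dimension $\le n+1$ --- that this comparison family is reducible-free, and a cobordism argument for the associated virtual neighborhoods identifies the two classes in $H^{n}(B)$. The principal obstacle is the family virtual neighborhood construction itself: building the triple $(\vnU,\calE,s)$ uniformly over the base $B$ and, in the $SO(3)$ case, compatibly with the strata of the Uhlenbeck compactification, and then verifying that it behaves well under the cobordisms used for the invariance statements. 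The \SW case is technically lighter, since its moduli spaces are compact and no Uhlenbeck compactification is needed.
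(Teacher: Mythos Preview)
Your outline has the right ingredients but contains a genuine gap and, separately, differs substantially from the paper's route.

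The sentence ``one can equip the total space $E$ with a fiberwise $SO(3)$-bundle $\mathbb P$ restricting to $\frakP$ on each fiber'' is not correct, and this is exactly the difficulty that Nakamura's idea circumvents rather than resolves. A $\Diff(X,\frakP)$-bundle need not lift to an $\Aut(X,P)$-bundle; there is a genuine obstruction and no global $\mathbb P$ exists in general. What \cref{subsection Nakamura's idea and families of Fredholm sections} actually shows is that local lifts $\tilde g_{\alpha\beta}$ of the transition functions satisfy the cocycle condition only \emph{modulo gauge}, and that this suffices because the objects one cares about---the irreducible configuration spaces $\scrB^{\ast}_{\bullet}$, the Hilbert bundles $\scrE_{\bullet}$, the Fredholm sections $s_{\bullet}$, and the moduli spaces---are already quotients by $\gauge$ and therefore glue consistently. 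Your family virtual neighborhood must be built on these glued objects $s_{\sigma}:\scrB^{\ast}_{\sigma}\to\scrE_{\sigma}$, not on a nonexistent global bundle.

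The clause ``a standard argument \ldots then assembles these into the asserted classes on the respective classifying spaces'' hides the main work. Over $\Z$, not every homology class of $B\Diff$ is represented by a closed oriented manifold, so a pairing defined only on manifold bases does not automatically yield an integral cohomology class; concordance invariance gives a bordism invariant, not a cohomology class, and passing from one to the other is nontrivial. The paper avoids this entirely by never restricting to smooth bases. It works directly over an arbitrary CW complex $B$ via an obstruction-theoretic construction (\cref{subsection Definition of the characteristic classes in the basic case}): one builds an \emph{inductive section} $\sigma:B^{(n)}\to\Pi(E)|_{B^{(n)}}$ skeleton by skeleton, using $b^{+}\ge n+2$ to avoid reducibles cell by cell, so that the parametrized moduli space over $B^{(n-1)}$ is empty; one then defines a cellular cochain $\Acoch(E,\sigma)\in C^{n}(B)$ by applying the family virtual neighborhood count of \cref{section Virtual neighborhood for families} on each $n$-cell separately. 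This cochain is a cocycle (\cref{prop: coclosedness}, proved using $(n+1)$-cells) whose class is independent of $\sigma$ (\cref{theo: well-definedness of the invariant}); applying this to a CW model of $B\Diff$ gives the universal class directly. A side benefit is that compactness is only ever needed over a single closed disk, so no global compactness or Uhlenbeck compactification over the base enters.
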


See \cref{defi value for universal bundles} for the corresponding statement.
The cases that $n=0$ in \eqref{the theorem of final aim Donaldson} and \eqref{the theorem of final aim SW} of \cref{the theorem of final aim} are nothing other than the mod $2$ and the usual $SO(3)$-Donaldson invariants of $(X, \frakP)$ and \SW invariants of $(X, \fraks)$ respectively, via $H^{0}(B\Diff(X,\frakP);\Z/2) \cong \Z/2$ and $H^{0}(B\Diff(X,\frakP,\calO);\Z) \cong \Z$, and similar isomorphisms for $\fraks$.

As usual, \cref{the theorem of final aim} can be rephrased as follows.
For $n \geq 0$ and a CW complex $B$, if we are given a bundle $X \to E \to B$ whose structure group $G$ is given as \eqref{str grp ASD intro}, \eqref{str grp SW intro}, \eqref{str grp ASD O intro}, or \eqref{str grp SW O intro}, we can functorially define a cohomology class $\Dcoh(E) \in H^{n}(B)$ or $\SWcoh(E) \in H^{n}(B)$ under suitable assumptions on $b^{+}$ and the formal dimension.
This cohomology class is given as the pull-back of the universal characteristic class given in \cref{the theorem of final aim}.
(In fact our characteristic classes are defined even if $B$ is a general topological space.
See \cref{Dcoh SWcoh in the base case} for the summarized statements.)
If $n>0$ and the characteristic class $\Dcoh$ or $\SWcoh$ of $E$ is non-trivial, then we can see that $E$ is a non-trivial $G$-bundle, explained in \cref{rem on relation between vanishing and triviality}.
We mention that non-vanishing of our characteristic classes implies indecomposability as fiberwise connected sum under certain conditions, and $\SWcoh$ also obstructs the existence of a family of positive scalar curvature metrics, described in \cref{section Characteristic classes as obstruction}.

Since it is hard to compute gauge theoretic invariants in general, one may wonder computability of our characteristic classes.
In particular, it is natural to ask whether there are non-trivial examples for a positive degree $n>0$.
In \cref{section Non triviality} we shall give several non-trivial calculations of our characteristic classes for $n>0$.
As well as the usual Donaldson invariants and \SW invariants, $\SWcoh$ is relatively easy to compute rather than $\Dcoh$.
We here note that, if $X$ is simply connected and $\fraks$ is the isomorphism class of a $\spc$ structure coming from a spin structure on $X$, then we have $\Diff(X, \fraks) = \Diff^{+}(X)$, and thus we can get a cohomology class on $B\Diff^{+}(X)$ by considering $\SWcoh$.
For example, denoting by $\fraks$ the isomorphism class of a $\spc$ structure coming from a spin structure on $K3 \# n(S^{2} \times S^{2})$ for a positive integer $n$,
we see that
\[
\SWcoh(K3 \# n(S^{2} \times S^{2}),\fraks) \neq 0 \text{ in } H^{n}(B\Diff^{+}(K3 \# n(S^{2} \times S^{2}));\Z/2),
\]
which is explained in \cref{non vanishing for K3}.
We also mention that $\SWcoh$ can detect a difference between $\Homeo(X, \fraks)$-bundles and $\Diff(X, \fraks)$-bundles.
Here $\Homeo(X, \fraks)$ is the group of homeomorphisms which preserve the orientation and  $\fraks$, as in the definition of $\Diff(X, \fraks)$.
For example, for some natural numbers $k,l>0$, we can find the isomorphism class $\fraks$ of a $\spc$ structure on $X = k\CP^{2}\#l(-\CP^{2})$ and a bundle $X \to E \to S^{1}$ with structure group $\Diff(X, \fraks)$ such that $E$ is trivial as a $\Homeo(X, \fraks)$-bundle, but non-trivial as a $\Diff(X, \fraks)$-bundle, deduced by showing $\SWcoh(E) \neq 0$.
See~\cref{ex comp between fiberwise connected sum and connected sum concrete}.
This argument is a \SW analogue of one in D.~Ruberman~\cite{MR1671187}.

We note that family gauge theory has been studied by several people:
\cite{MR1671187,MR1734421,MR1874146,MR1868921,MR2015245,MR2644908,MR2652709,Kronheimer1,Konno1,Konno2,Konno3}.
The relation between some of these works and this paper is as follows.
(See also \cref{section Concluding remarks}.)
D.~Ruberman~\cite{MR1671187, MR1734421, MR1874146} has given invariants of diffeomorphisms, and the author generalized a part of \cite{MR1671187} to a tuple of commutative diffeomorphisms in \cite{Konno3}.
Our characteristic classes can be regarded as a generalization of the invariants of (tuples of) diffeomorphisms given in \cite{MR1671187,Konno3}.
T.-J.~Li and A.-K.~Liu~\cite{MR1868921} and M.~Szymik~\cite{MR2652709} have considered families of $\spc$ $4$-manifolds and \SW equations.
In addition, Szymik has considered characteristic cohomotopy classes for bundles of $\spc$ $4$-manifolds using a finite dimensional approximation of \SW equations, in other words, family version of Bauer--Furuta invariant~\cite{MR2025298}.
(See \cref{rem on cohomotopy version}.)
One of the big differences between theories of \cite{MR1868921,MR2652709} and our theory is the structure group: 
the structure group of their family is the automorphism group of a given $\spc$ $4$-manifold.
This group does {\it not} coincide with our structure group $\Diff(X, \fraks)$ (or $\Diff(X, \fraks, \calO)$).
This is because $\Diff(X, \fraks)$ respects only the {\it isomorphism class} of a $\spc$ structure.
As we mentioned, this is the first difficulty to construct a theory of characteristic classes whose structure group is (a subgroup of) the diffeomorphism group, and we use an idea of Nakamura~\cite{MR2644908} to deal with it.
In \cref{section Non triviality}, to calculate our characteristic classes we consider a higher-dimensional generalization of an argument given in D.~Ruberman~\cite{MR1671187, MR1734421, MR1874146} and the idea due to N.~Nakamura~\cite{MR2644908} on double mapping tori as in \cite{Konno3}.
Kronheimer~\cite{Kronheimer1} has considered a homological formulation of family gauge theory to study the space of symplectic forms of a $4$-manifold, and the author~\cite{Konno2} has considered a cohomological formulation of family gauge theory to study the adjunction inequality.
One of the important differences between \cite{Kronheimer1,Konno2} and the current paper is a type of families.
Families of perturbations or Riemannian metrics of a fixed $4$-manifold are considered in \cite{Kronheimer1,Konno2}, and they are therefore trivial families as a family of $4$-manifolds.
On the other hand, in this paper we consider families of $4$-manifolds themselves, and our characteristic classes can detect the non-triviality as families of $4$-manifolds, as we explained. 

We also note that, in symplectic geometry, several people have studied characteristic classes of symplectic fibrations based on the study of a family version of Gromov--Witten invariant.
(See H.-V.~L\^e and K.~Ono~\cite{MR2509705}, O.~Bu\c se~\cite{MR2218350}, and D.~McDuff~\cite{MR2396906}.)
Our theory of characteristic classes may be regarded as the gauge theoretic counterpart of their work.

The author hopes that this paper will be the first of a series of a cohomological study of family gauge theory, which was also considered in \cite{Kronheimer1} and \cite{Konno2} from different points of view as we explained.
Further potential developments hoped to be given in subsequent papers include a Bauer--Furuta version of our characteristic classes, calculations relating to algebraic geometry, and Floer homology analogue of this paper.
(See \cref{section Concluding remarks} for details of some of them.)

\begin{acknowledgement}
The author would like to express his deep gratitude to Mikio Furuta for the helpful suggestions and for continuous encouragement during this work.
The author would also like to express his appreciation to Yosuke Morita for asking him about fiberwise connected sum formula.
It induces the author to consider \cref{theo vanishing fiber wiseconnected sum}.
The author also wishes to thank David Baraglia for pointing out some mistakes and giving comments for a preprint version of this paper.
The author was supported by JSPS KAKENHI Grant Number 16J05569 and
the Program for Leading Graduate Schools, MEXT, Japan.
\end{acknowledgement}

\section{Structure groups}
\label{section Structure groups}

In this section we define the structure groups for our characteristic classes and some related groups.
We first consider Yang--Mills theory.
To construct a theory of characteristic classes fruitfully, we need some compactness, corresponding to \cref{compactness assumption for unparameterized case,assumption on compactness for parameterized moduli}.
We therefore consider $SO(3)$-Yang--Mills ASD equations rather than $SU(2)$-Yang--Mills ASD equations.
Let $X$ be an oriented smooth $4$-manifold, $P \to X$ be an $SO(3)$-bundle, and $\frakP$ be the isomorphism class of $P$.
We have already defined the groups $\Diff(X, \frakP)$ and $\Diff(X, \frakP, \calO)$ in the introduction.
We here define
\[
\Aut(X, P)
:= \Set{
	(f, \tilde{f}) |
		\begin{matrix}
			f \in \Diff(X, \frakP),\\
			\tilde{f} : P \to P \text{ is an isomorphism}\\
			\text{between $SO(3)$-bundles covering $f$.}
		\end{matrix}
		}.
\]
This group $\Aut(X, P)$ is the automorphism group of the pair $(X, P)$ in the category of pairs of a $4$-manifold and an $SO(3)$-bundle on it.
Let $\calO$ be a homology orientation of $X$, and define
\[
\Aut(X, P, \calO) := \Set{ (f, \tilde{f}) \in \Aut(X,P) | f \in \Diff(X, \frakP, \calO)}.
\]
Let $\gauge_{P}$ be the gauge group of $P \to X$.
Then we have the following exact sequences
\[
1 \to \gauge_{P} \to \Aut(X, P) \to \Diff(X, \frakP) \to 1
\]
and 
\[
1 \to \gauge_{P} \to \Aut(X, P, \calO) \to \Diff(X, \frakP, \calO) \to 1.
\]

We next consider \SW theory.
To avoid taking care with Riemannian metrics, we use {\it spin$^{c}_{GL}$ structure} as in \cite{Konno3} rather than $\spc$ structure.
Let us review the definition of spin$^{c}_{GL}$ structure.
Fix a double covering $GL^{+}_{4}(\R)$ of $GL^{+}_{4}(\R)$, where $GL^{+}_{4}(\R)$ is the group of invertible real $4\times4$-matrices with $\det>0$.
Set 
\[
Spin^{c}_{GL}(4) := (\tGL^{+}_{4}(\R) \times U(1))/\pm1.
\]
We have the natural map $Spin^{c}_{GL}(4) \to GL_{4}^{+}(\R)$ as the map $Spin^{c}(4) \to SO(4)$.
We denote by $\FrGL(X) \to X$ the frame bundle whose fiber at $x \in X$ is the set of oriented frames of $T_{x}X$.
We define a {\it $spin^{c}_{GL}$ structure} on $X$ as a pair $s = (P_{GL}, \psi)$ consisting of a $Spin^{c}_{GL}(4)$-bundle $P_{GL} \to X$ and an isomorpshism
\[
\psi : P_{GL} \times_{Spin^{c}_{GL}(4)} GL^{+}_{4}(\R) \to \FrGL(X)
\]
between $GL^{+}_{4}(\R)$-bundles.
Given a $\spcGL$ structure $s$, a $\spc$ structure $s_{g}$ is induced corresponding to each Riemannian metric $g$ on $X$.
For a fixed metric, an isomorphism class of $\spcGL$ structures corresponds one-to-one with an isomorphism class of $\spc$ structures.
We do not therefore distinguish $\spcGL$ structure from $\spc$ structure when we consider them at the level of isomorphism classes.

Let $s = (P_{GL}, \psi)$ be a $\spcGL$ structure on $X$, $\fraks$ be the isomorpshim class of $s$, and $\calO$ be a homology orientation of $X$.
We have already defined the groups $\Diff(X, \fraks)$ and $\Diff(X, \fraks, \calO)$ in the introduction.
The automorphism group of the pair $(X, s)$ in the category of $\spcGL$ $4$-manifolds is given as
\[
\Aut(X, s)
:= \Set{
	(f, \tilde{f}) |
		\begin{matrix}
			f \in \Diff(X, \fraks),\\
			\tilde{f} : P_{GL} \to P_{GL} \text{ is an isomorphism between}\\
			\text{$Spin^{c}_{GL}(4)$-bundles such that the diagram } \eqref{diagram for def of Aut} \text{ commutes}.
		\end{matrix}
		},
\]
where \eqref{diagram for def of Aut} is given by
\begin{align}
\xymatrix{
 		P_{GL} \ar[d] \ar[r]^{\tilde{f}} & P_{GL} \ar[d] \\
		P_{GL} \times_{Spin^{c}_{GL}(4)} GL^{+}_{4}(\R) \ar[d]_{\psi} & P_{GL} \times_{Spin^{c}_{GL}(4)} GL^{+}_{4}(\R)\ar[d]^{\psi} \\
		\FrGL(X) \ar[r]_{df} & \FrGL(X).
		 }
\label{diagram for def of Aut}
\end{align}
We also define 
\[
\Aut(X, s, \calO) := \Set{ (f, \tilde{f}) \in \Aut(X,s) | f \in \Diff(X, \fraks, \calO)}.
\]
For the gauge group $\gauge_{s}$ of the $\spcGL$ structure $s$, we have exact sequences
\[
1 \to \gauge_{s} \to \Aut(X, s) \to \Diff(X, \fraks) \to 1
\]
and 
\[
1 \to \gauge_{s} \to \Aut(X, s, \calO) \to \Diff(X, \fraks, \calO) \to 1.
\]
For a given metric $g$ on $X$, it is easy to see that the gauge group of $s$  is isomorphic to that of the induced $\spc$ structure $s_{g}$; both gauge groups are isomorphic to $\Map(X,S^{1})$.

\section{ASD setting and SW setting}
\label{subsection ASD setting and SW setting}

Most of the story of this paper is common to ASD and \SW equations.
We present here a summary of settings in which we work for both equations.
Let $X$ be an oriented closed smooth $4$-manifold and $B$ be a topological space.
The first setting is on ASD equations, to which we refer as the {\it ASD setting}:
\begin{itemize}
\item  An $SO(3)$-bundle $P \to X$ with $w_{2}(P) \neq 0$ is given.
Let $\frakP$ be the isomorphism class of $P$.
\item We write $\gauge$ for the gauge group $\gauge_{P}$ of $P$ which is $L^{2}_{k+1}$-completed for $k > 1$.
\item The groups $G$ and $\tilde{G}$ are either one of the following \eqref{candidate of tG and G in ASD no homology ori} and \eqref{candidate of tG and G in ASD with homology ori}:
\begin{enumerate}
\item $G = \Diff(X, \frakP)$ and $\tilde{G} = \Aut(X, P)$. \label{candidate of tG and G in ASD no homology ori}
\item $G = \Diff(X, \frakP, \calO)$ and $\tilde{G} = \Aut(X, P, \calO)$ for a given homology orientation $\calO$. \label{candidate of tG and G in ASD with homology ori}
\end{enumerate}
\item A continuous fiber bundle $X \to E \to B$ with structure group $G$ is given.
\item For the single $4$-manifold $X$, define $\Pi(X) := \Met(X)$, where $\Met(X)$ is the space of Riemannian metrics on $X$.
We also define $\Pi(E) := \bigsqcup_{b \in B} \Pi(E_{b})$.
\item For each $g \in \Pi(X)$, define
\[
\scrC_{g} := \conn_{L^{2}_{k}}(P),\quad \scrD_{g}:= L^{2}_{k-1}(\Lambda^{+}_{g}(X) \otimes \ad{P}),
\]
where $\conn_{L^{2}_{k}}(P)$ is the space of $SO(3)$-connections on $P$ which is $L^{2}_{k}$-completed,
$\Lambda^{+}_{g}(X)$ is the $g$-self-dual part of $\Lambda^{2}T^{\ast}X$, $\ad{P} \to X$ is the adjoint bundle associated with $P$ with fiber $\mathfrak{so}(3)$, and $L^{2}_{k-1}(\cdot)$ means the completion.
In fact $\scrC_{g}$ is independent of $g$ in this case, we use this notation.
Let $\scrC_{g}^{\ast}$ be the subset of $\scrC_{g}$ consisting of reducible connections.
\item Let  $\tilde{s}_{g} : \scrC_{g} \to \scrD_{g}$ be the
map defined by the ASD equation
\[
A \mapsto F^{+_{g}}_{A}.
\]
\item We call the integer $d$ defined by the formula
\[
d= -2p_{1}(\frakP) - 3(1 - b_{1}(X) + b^{+}(X))
\]
the {\it formal dimension} of the moduli space of the solutions to the $SO(3)$-ASD equation.
\end{itemize}

The second setting is on \SW equations, to which we refer as the {\it SW setting}:
\begin{itemize}
\item  A $\spcGL$ structure $s=(P_{GL}, \psi)$ is given.
Let $\fraks$ be the isomorphism class of $s$.
\item We write $\gauge$ for the gauge group $\gauge_{s}$ of $s$ which is $L^{2}_{k+1}$-completed for $k > 1$.
\item The groups $G$ and $\tilde{G}$ are either one of the following \eqref{candidate of tG and G in SW no homology ori} and \eqref{candidate of tG and G in SW with homology ori}:
\begin{enumerate}
\item $G = \Diff(X, \fraks)$ and $\tilde{G} = \Aut(X, s)$. \label{candidate of tG and G in SW no homology ori}
\item $G = \Diff(X, \fraks, \calO)$ and $\tilde{G} = \Aut(X, s, \calO)$ for a given homology orientation $\calO$. \label{candidate of tG and G in SW with homology ori}
\end{enumerate}
\item A continuous fiber bundle $X \to E \to B$ with structure group $G$ is given.
\item For the single $4$-manifold $X$, define $\Pi(X) := \bigsqcup_{g \in \Met(X)}L^{2}_{k-1}(\Lambda^{+}_{g}(X))$.
We also define $\Pi(E) := \bigsqcup_{b \in B} \Pi(E_{b})$.
\item Let $\pi : \Pi(X) \to \Met(X)$ be the projection.
For each $\mu \in \Pi(X)$, define 
\[
\scrC_{\mu} := \conn_{L^{2}_{k}}(s) \times L^{2}_{k}(S^{+}_{\pi(\mu)}), \ \scrD_{\mu} := L^{2}_{k-1}(i\Lambda^{+}_{\pi(\mu)}(X)) \times L^{2}_{k-1}(S^{-}_{\pi(\mu)})
\]
where $\conn_{L^{2}_{k}}(s)$ is the $L^{2}_{k}$-completion of the space of $U(1)$-connections of the determinant line bundle of $s$, and $S^{\pm}_{\pi(\mu)}$ is the positive and negative spinor bundles of the $\spc$ structure $s_{\pi(\mu)}$ induced from $s$ corresponding to the metric $\pi(\mu)$.
Let $\scrC_{\mu}^{\ast}$ be the subset of $\scrC_{\mu}$ consisting of reducible configurations.
\item Let $\tilde{s}_{\mu} : \scrC_{\mu} \to \scrD_{\mu}$ be the
map defined by the \SW equations
\[
(A, \Phi) \mapsto (F^{+_{\pi(\mu)}}_A + i \mu - \rho^{-1}(\si(\Phi, \Phi)), D_{A}\Phi),
\]
where $\rho : \Lambda^+_{\pi(\mu)}(X) \to \mathfrak{su}(S^+_{\pi(\mu)})$ is the map obtained from the Clifford multiplication, $\sigma(\cdot, \cdot)$ is the quadratic form given by $\sigma(\Phi, \Phi) = \Phi \otimes \Phi^\ast - |\Phi|^2 \id/2$, and $D_{A} : L^{2}_{k}(S^{+}_{\pi(\mu)}) \to L^{2}_{k-1}(S^{-}_{\pi(\mu)})$ is the Dirac operator.
\item We call the integer $d$ defined by the formula
\[
d= (c_1(\fraks)^2 - 2\Euler(X) -3\sign(X))/4
\]
the {\it formal dimension} of the moduli space of the solutions to the \SW equation.
\end{itemize}

In both settings, we use the following notations
\[
\gauge,\quad G,\quad \tilde{G},\quad X \to E \to B,\quad \Pi(X),\quad \Pi(E),\quad \tilde{s}_{\bullet} : \scrC_{\bullet} \to \scrD_{\bullet}, \quad  \scrC_{\bullet}^{\ast}\ (\bullet \in \Pi(X))
\]
as we defined above.
We refer to the case that we consider $G = \Diff(X, \frakP, \calO)$ and $\tilde{G} = \Aut(X, P, \calO)$, or $G = \Diff(X, \fraks, \calO)$ and $\tilde{G} = \Aut(X, s, \calO)$ as the {\it homology oriented case}.
For each $\bullet \in \Pi(X)$, we can define the moduli space of the ASD/\SW equations corresponding to $\bullet$ by
\[
\M_{\bullet} := \tilde{s}_{\bullet}^{-1}(0)/\gauge.
\]
We also define
\[
\scrB_{\bullet} := \scrC_{\bullet}/\gauge,\quad 
\scrB_{\bullet}^{\ast} := \scrC_{\bullet}^{\ast}/\gauge,\quad
\scrE_{\bullet} := \scrC_{\bullet}^{\ast} \times_{\gauge} \scrD_{\bullet}.
\]
Recall that $\scrB_{\bullet}^{\ast}$ is a paracompact Hausdorff Hilbert manifold and $\scrE_{\bullet}$ is a Hilbert bundle on $\scrB^{\ast}_{\bullet}$ with fiber $\scrD_{\bullet}$.
Since the map $\tilde{s}_{\bullet} : \scrC^{\ast}_{\bullet} \to \scrD_{\bullet}$ is $\gauge$-equivariant, this defines a section
\[
s_{\bullet} : \scrB^{\ast}_{\bullet} \to \scrE_{\bullet}.
\]

\section{Cocycle condition modulo gauge}
\label{subsection Nakamura's idea and families of Fredholm sections}

In \cref{section Construction of the characteristic classes} we shall construct characteristic classes with structure groups $\Diff(X, \frakP)$, $\Diff(X, \fraks)$, and the versions of them obtained by adding $\calO$ given in \cref{section Structure groups}.
In the usual theory of characteristic classes, the natural candidate of the structure group is the automorphism group of the fiber in the category in which the fiber belongs.
However, our groups are not the automorphism groups in the categories arising from gauge theory, i.e. in the category of the pairs of a $4$-manifold and an $SO(3)$-bundle on it and that of $\spc$ $4$-manifolds.
(This has been noticed in M.~Szymik~\cite{MR2652709}.)
To resolve it we use N.~Nakamura's idea~\cite{MR2644908} for families of ASD/\SW equations.
The idea can be regarded as a ``global version'' of D.~Ruberman's idea given in~\cite{MR1671187, MR1734421, MR1874146} used to define invariants of diffeomorphisms of a $4$-manifold.
We also note that this kind of idea is initially given by M.~Furuta~\cite{MR991097} for consideration of group action rather than that of family.
Although Nakamura has considered the idea for parameterized moduli spaces, we apply the idea also to families of Fredholm sections themselves arising from ASD/\SW equations.
This is because we shall use a \vn technique in \cref{section Construction of the characteristic classes}.
Let us choose one of the ASD setting or the SW setting, and henceforth work on it in this \lcnamecref{subsection Nakamura's idea and families of Fredholm sections}.

We first recall the observation due to Ruberman~\cite{MR1671187, MR1734421, MR1874146}:
each $f \in G$ defines a well-defined ``isomorphism''
\begin{align}
f^{\ast} : \M_{\bullet} \to \M_{f^{\ast}\bullet}
\label{isomorphism by pull back between moduli spaces}
\end{align}
between the moduli spaces for any $\bullet \in \Pi(X)$.
Here the term ``isomorphism'' means, for example, a diffeomorphism if $\bullet$ is generic (and the moduli $\M_{\bullet}$ is a manifold). 
This map is defied using a lift $\tilde{f} \in \tilde{G}$ of $f$ as follows.
Recall we have the exact sequence
\begin{align}
1 \to \gauge \to \tilde{G} \to G \to 1.
\label{exact sequence abstract}
\end{align}
The pull-back by $\tilde{f}$ defines an invertible map
\begin{align}
\tilde{f}^{\ast} : \scrC_{\bullet} \to \scrC_{f^{\ast}\bullet}.
\label{isomorphism by pull back between configuration spaces}
\end{align}
The gauge group $\gauge$ is a normal subgroup of $\tilde{G}$ because of \eqref{exact sequence abstract}, and therefore the map \eqref{isomorphism by pull back between configuration spaces} induces a map between quotients
\begin{align}
\tilde{f}^{\ast} :  \scrB_{\bullet} \to \scrB_{f^{\ast}\bullet}.
\label{isomorphism by pull back between quotients of configuration spaces}
\end{align}
This map \eqref{isomorphism by pull back between quotients of configuration spaces} is independent of the choice of lift $\tilde{f}$ again because of the exact sequence \eqref{exact sequence abstract}.
We therefore write $f^{\ast} :  \scrB_{\bullet} \to \scrB_{f^{\ast}\bullet}$ for the map \eqref{isomorphism by pull back between quotients of configuration spaces}, and define the map \eqref{isomorphism by pull back between moduli spaces} as the restriction of this map.

We next use Nakamura's idea~\cite{MR2644908} of considering the ``cocycle condition modulo gauge'' to construct the parameterized moduli space which  is globally defined on the whole of $B$.
Fix a section $\si : B \to \Pi(E)$.
Since $G$ is not the automorphism group in the suitable categories in both settings, we cannot write down families of ASD/\SW equations corresponding to $\si$.
Nevertheless, we can consider the moduli space parameterized by $\si$ on the whole of $B$ as follows.
Let $\{U_{\alpha}\}_{\alpha}$ be an open covering of $B$ satisfying that $U_{\alpha} \cap U_{\beta}$ is contractible for any $\alpha, \beta$.
(We can take such a covering by mimicking the argument, for example given in Hatcher's book~\cite{MR1867354}, to prove that a CW complex is locally contractible.)
We write $U_{\alpha\beta}$ and $U_{\alpha\beta\gamma}$ for $U_{\alpha} \cap U_{\beta}$ and $U_{\alpha} \cap U_{\beta} \cap U_{\gamma}$ respectively.
Take local trivializations of $E \to B$ on this covering
and let $\{ g_{\alpha\beta} : U_{\alpha\beta} \to G \}_{\alpha, \beta}$ be the transition functions corresponding to the local trivializations.
Since $U_{\alpha\beta}$ is contractible for each $\alpha, \beta$, there exists a lift $\tilde{g}_{\alpha\beta} : U_{\alpha\beta} \to \tilde{G}$ of $g_{\alpha\beta}$.
Note that $\{\tilde{g}_{\alpha\beta}\}$ satisfies the ``cocycle condition modulo gauge'', namely,
\begin{align}
\tilde{g}_{\alpha\beta}\tilde{g}_{\beta\gamma}\tilde{g}_{\gamma\alpha}(b) \in \gauge
\label{cocycle condition modulo gauge}
\end{align}
holds for any $b \in U_{\alpha\beta\gamma}$ since $\{g_{\alpha\beta}\}$ satisfies the cocycle condition and we have the exact sequence \eqref{exact sequence abstract}.
The given section $\si : B \to \Pi(E)$ corresponds to a system of maps $\{\si_{\alpha} : U_{\alpha} \to \Pi(X)\}_{\alpha}$ satisfying that $\si_{\alpha} = g_{\alpha\beta} \cdot \si_{\beta}$ on $U_{\alpha\beta}$.
Here the action of $g_{\alpha\beta}$ to $\si_{\beta}$ is given by the action of $G$ on $\Pi(X)$ via $G \inc \Diff^{+}(X)$, namely, $g_{\alpha\beta} \cdot \si_{\beta} = g_{\alpha\beta}^{\ast} \si_{\beta}$.
For each $\alpha$, let us define the ``locally defined'' parameterized moduli space 
\begin{align*}
\calM_{\si_{\alpha}} := \bigsqcup_{b \in U_{\alpha}} \calM_{\si_{\alpha}(b)}.
\end{align*}
For each point $b \in U_{\alpha\beta}$,
we obtain an invertible map
\[
\tilde{g}_{\alpha\beta}(b)^{\ast} : \calM_{\si_{\beta}(b)} \to \calM_{g_{\alpha\beta}(b)^{\ast} \si_{\beta}(b)}
\]
like \eqref{isomorphism by pull back between moduli spaces}.
(In fact this map is independent of lift $\tilde{g}_{\alpha\beta}$ as in the definition of \eqref{isomorphism by pull back between moduli spaces}, we keep using the notation $\tilde{g}_{\alpha\beta}^{\ast}$.)
Because of the relation $\si_{\alpha} = g_{\alpha\beta} \cdot \si_{\beta} = g_{\alpha\beta}^{\ast} \si_{\beta}$ on $U_{\alpha\beta}$, we eventually have
\[
\tilde{g}_{\alpha\beta}^{\ast} : \calM_{\si_{\beta}}|_{U_{\alpha\beta}} \to \calM_{\si_{\alpha}}|_{U_{\alpha\beta}}.
\]
The composition
\[
\tilde{g}_{\alpha\beta}^{\ast} \circ \tilde{g}_{\beta\gamma}^{\ast} \circ \tilde{g}_{\gamma\alpha}^{\ast} :\calM_{\si_{\alpha}}|_{U_{\alpha\beta\gamma}} \to \calM_{\si_{\alpha}}|_{U_{\alpha\beta\gamma}}
\]
coincides with the identity because of \eqref{cocycle condition modulo gauge}.
This is again a consequence of the definition of the moduli space: it is the quotient space by the gauge group.
We can therefore obtain the well-defined quotient space:

\begin{defi}
\label{defi: globally defined parameterized moduli space}
We define the space $\calM_{\si}$ equipped with a map $\calM_{\si} \to B$ by
\begin{align*}
\calM_{\si} := \bigsqcup_{\alpha} \calM_{\si_{\alpha}} / \sim,
\end{align*}
where the equivalence relation $\sim$ is given by the invertible maps $\{\tilde{g}^{\ast}_{\alpha\beta}\}$.
We refer to the space $\calM_{\si}$ as the {\it (globally defined) parameterized moduli space}.
\end{defi}

We now consider this idea at the level of Fredholm sections.
Namely, we construct the ``globally defined'' family of Fredholm sections as follows.
For a fixed section $\si : B \to \Pi(E)$,
set
\begin{align*}
\scrB_{\si_{\alpha}}^{\ast} := \bigsqcup_{b \in U_{\alpha}} \scrB_{\si_{\alpha}(b)}^{\ast},\quad
\scrE_{\si_{\alpha}} := \bigsqcup_{b \in U_{\alpha}} \scrE_{\si_{\alpha}(b)},\quad
s_{\si_{\alpha}} := \bigsqcup_{b \in U_{\alpha}} s_{\si_{\alpha}(b)}: \scrB_{\si_{\alpha}}^{\ast} \to \scrE_{\si_{\alpha}}.
\end{align*}
Let us take $f \in G$ and $\bullet \in \Pi(X)$.
Then we can consider the invertible map $f^{\ast} :  \scrB_{\bullet} \to \scrB_{{f^{\ast}\bullet}}$ as in the definition of \eqref{isomorphism by pull back between moduli spaces}.
The restriction of this map gives an invertible map $f^{\ast} : \scrB_{\bullet}^{\ast} \to \scrB_{{f^{\ast}\bullet}}^{\ast}$ between irreducible configurations.
We can therefore define a parameterized Hilbert manifold and a parameterized Hilbert bundle using $\scrB^{\ast}_{\bullet}$ and $\scrE_{\bullet}$:

\begin{defi}
\label{defi family of scrB and scrE in the most general setting}
We define
\[
\scrB_{\si}^{\ast} := \bigsqcup_{\alpha} \scrB^{\ast}_{\si_{\alpha}} / \sim
\]
by the same argument to define $\M_{\si}$ as in \cref{defi: globally defined parameterized moduli space}.
This is a Hilbert manifold bundle parameterized on $B$.
Similarly we can define a parameterized Hilbert bundle $\scrE_{\si} \to \scrB_{\si}^{\ast} \to B$ by 
\[
\scrE_{\si} := \bigsqcup_{\alpha} \scrE_{\si_{\alpha}} / \sim.
\]
\end{defi}

In addition, the Fredholm sections themselves are also obtained as the quotient of the gauge group, and we can define:

\begin{defi}
\label{defi: family of Fredholm sections in the most general setting}
We define a family of Fredholm sections $s_{\si} : \scrB_{\si}^{\ast} \to \scrE_{\si}$ parameterized on $B$ by
\begin{align*}
s_{\si} := \bigsqcup_{\alpha} s_{\si_{\alpha}} / \sim.
\end{align*}
\end{defi}

%Assume that the parameterized moduli space $\calM_{\si}$ contains no reducible solution, i.e. $\calM_{\si} \subset \scrB_{\si}^{\ast}$.
%This is equivalent to that $\M_{\si_{\alpha}(b)} \subset \scrB_{\si_{\alpha}(b)}^{\ast}$ holds for any $\alpha$ and $b \in U_{\alpha}$.

\section{Virtual neighborhood for families}
\label{section Virtual neighborhood for families}

One of the precepts of Atiyah--Singer index theory for families~\cite{MR0279833} is that the natural class of base spaces for a theory on families of Fredholm operators is {\it not} smooth manifolds: the smoothness of base spaces is an excessive assumption.
Therefore, in our theory of characteristic classes, we do not assume that the base space of a given family of ASD/\SW equations is a smooth manifold.
(In fact, we shall eventually extend our theory to families on any topological space.)
The problem here is, of course, how to deal with the transversality to count the moduli spaces parameterized on a non-smooth space. 
To justify this counting argument, we describe a family version of Y.~Ruan's virtual neighborhood technique~\cite{MR1635698}.
This provides a foundation for our construction of characteristic classes in \cref{section Construction of the characteristic classes}.

\subsection{Virtual neighborhood}
\label{subsection unparameterized vn}

In this \lcnamecref{subsection unparameterized vn} we consider the usual (i.e. unparameterized) virtual neighborhood.
The purposes of this \lcnamecref{subsection unparameterized vn} is to rewrite the counting argument in the \vn technique in terms of the relative Euler class:
although Ruan has defined the invariant associated with a given non-linear Fredholm section of a given Hilbert bundle using the finite dimensional Sard's theorem, we define the invariant using the relative Euler class given in M.~Kervaire~\cite{MR0090051} instead of Sard's theorem.
We note that P.~Feehan and T.~Leness~\cite{MR1936209,FL} have used Ruan's technique in a similar form.
This description of the invariant allows us easily to extend Ruan's technique to families on non-smooth spaces.

We work in the following general setting.

\begin{itemize}
\item Let $\scrX$ be a Hilbert manifold.
We assume that $\scrX$ is paracompact Hausdorff and the model of $\scrX$ is a separable Hilbert space.
These topological conditions are used to assure that there exists a smooth partition of unity on $\scrX$.
(See, for example,  Lang~\cite{MR1335233}, Chapter II, Corollary~3.8.)
\item Let $\scrH$ be a Hilbert space and $\scrE \to \scrX$ be a Hilbert bundle with fiber $\scrH$.
\item Let $s : \scrX \to \scrE$ be a Fredholm section.
\end{itemize}

Here the definition of a Fredholm section in this paper is as follows.
For $x\in \scrX$, we have the canonical decomposition $T_{(x,0)}\scrE = T_x\scrX \oplus \scrH$.
We also write $ds_x : T_x\scrX \to \scrH$ for the composition of $ds_x : T_x\scrX \to T_{(x,0)}\scrE$ and the projection $T_{(x,0)}\scrE = T_x\scrX \oplus \scrH \to \scrH$.
We call a smooth section $s : \scrX \to \scrE$ a {\it Fredholm section} if the differential
$ds_x : T_x\scrX \to \scrH$ is a Fredholm map for each $x \in s^{-1}(0)$.
We assume that the Fredholm index of $ds_{x}$ is common to all $x \in s^{-1}(0)$ and write $\ind{s}$ for the index.
For the zero set 
\[
\M := s^{-1}(0),
\] 
we assume the compactness:

\begin{assum}
\label{compactness assumption for unparameterized case}
The zero set $\M$ is compact.
\end{assum}

A typical example of $s$ satisfying this assumption is given as the \SW equations modulo the gauge group.
Under this assumption, we shall define the ``counted number $\#\M$'' $\in \Z \text{ or } \Z/2$, corresponding to the \SW invariant in the case of this example, via a virtual neighborhood.

We first review the construction of a virtual neighborhood.
Let $x$ be a point in $\M$.
Then there exists $N_x \in \N$ and a linear map $f_x : \R^{N_x} \to \scrH$ such that
\[
ds_{x} + f_x : T_{x} \scrX \oplus \R^{N_{x}}\to \scrH
\]
is surjective.
Since surjectivity are open conditions, there exists a small open neighborhood $U_x$ of $x$ in $\scrX$ such that $ds_y + f_x : T_{y}\scrX \oplus \R^{N_x}\to \scrH$ is surjective for any $y \in U_x \cap \M$.
Because $\M$ is compact, there exists finite points $x_{1}, \ldots, x_{m} \in \calM$ such that $\M \subset \bigcup_{i=1}^m U_{x_{i}}$.
Set $N_{i} := N_{x_{i}}$, $f_{i} := f_{x_{i}}$, $U_{i}:=U_{x_{i}}$ and $N := N_1+\cdots+N_m$.
As we mentioned, we can take a smooth partition of unity $\{\rho_i\}_{i=1}^{m}$ subordinate to $\{U_i\}_{i=1}^m$.
We also use the notation $\scrE$ for its pull-back to $\scrX \times \R^N$.
Fix a local trivialization of $\scrE$ on each $U_i$, and define a map
\begin{align*}
\vp : \scrX \times \R^N \to \scrE
\end{align*}
by
\begin{align}
(x,(a_1,\ldots,a_m)) \mapsto \sum_{i=1}^m \rho_{i}(x) f_i(a_i),
\label{eq : definition of finite dim perturbation}
\end{align}
where $x \in \scrX$ and $(a_1,\ldots,a_m) \in \R^{N_1} \times \cdots\times \R^{N_m} = \R^{N}$, 
and the map $(x, a_{i}) \mapsto \rho_{i}(x) f_i(a_i)$ is regarded as a local section of $\scrE$ via the fixed local trivialization.
Note that $\scrX \times \{0\} \subset \vp^{-1}(0)$ holds.
We call $\vp$ a {\it finite dimensional perturbation} of $s$.
For the new section 
\[
\tilde{s} := s+ \vp : \scrX \times \R^N \to \scrE,
\]
the following lemma is straightforward:

\begin{lem}[Ruan~\cite{MR1635698},~Lemma~2.3]
\label{lem : surjective}
For any $x \in \M$, the differential
\[
d\tilde{s}_{(x,0)} : T_x\scrX \oplus \R^N \to\scrH
\]
is surjective.
\end{lem}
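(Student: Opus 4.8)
The plan is to compute the differential $d\tilde{s}_{(x,0)}$ explicitly and then read off surjectivity from the defining property of the cover $\{U_i\}$. First, since $s(x)=0$ (because $x\in\M$) and $\vp(x,0)=0$ (because each $f_i$ is linear, so $f_i(0)=0$), the value $\tilde{s}(x,0)$ lies in the zero section of the pulled-back bundle $\scrE\to\scrX\times\R^N$; writing $T_{(x,0)}\scrE=(T_x\scrX\oplus\R^N)\oplus\scrH$ for the resulting canonical splitting (as in the definition of a Fredholm section above, now over $\scrX\times\R^N$), the map to be shown surjective is $d\tilde{s}_{(x,0)}$ followed by the projection to $\scrH$.

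I would next split $d\tilde{s}_{(x,0)}=ds_{(x,0)}+d\vp_{(x,0)}$. The pull-back of $s$ to $\scrX\times\R^N$ is independent of the $\R^N$-coordinate, so after projecting to $\scrH$ its differential at $(x,0)$ is $(\xi,v)\mapsto ds_x(\xi)$ for $\xi\in T_x\scrX$ and $v\in\R^N$. For $\vp$, the key point is that $\vp$ vanishes identically on $\scrX\times\{0\}$: hence the restriction of $d\vp_{(x,0)}$ to $T_x\scrX\oplus\{0\}$ is tangent to the zero section and projects to $0$ in $\scrH$ (concretely, differentiating $\rho_i(x')f_i(a_i)$ in the $x'$-direction produces a factor $f_i(0)=0$), whereas on $\{0\}\oplus(\R^{N_1}\times\cdots\times\R^{N_m})$ one gets $(v_1,\dots,v_m)\mapsto\sum_{i=1}^m\rho_i(x)f_i(v_i)$ by linearity of the $f_i$. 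Thus
\[
d\tilde{s}_{(x,0)}(\xi,(v_1,\dots,v_m))=ds_x(\xi)+\sum_{i=1}^m\rho_i(x)f_i(v_i).
\]

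Finally, since $\{\rho_i\}$ is a partition of unity subordinate to $\{U_i\}$ and $x\in\M\subset\bigcup_iU_i$, we have $\sum_i\rho_i(x)=1$, so $\rho_{i_0}(x)>0$ for some $i_0$, and in particular $x\in U_{i_0}$. By the way $U_{i_0}=U_{x_{i_0}}$ was chosen, $ds_y+f_{i_0}:T_y\scrX\oplus\R^{N_{i_0}}\to\scrH$ is surjective for every $y\in U_{i_0}\cap\M$, hence at $y=x$. Since $\rho_{i_0}(x)$ is a positive scalar, $v_{i_0}\mapsto\rho_{i_0}(x)f_{i_0}(v_{i_0})$ has the same image as $f_{i_0}$, so already the restriction of $d\tilde{s}_{(x,0)}$ to $T_x\scrX\oplus\R^{N_{i_0}}$ (with the remaining $v_i$ set to $0$) maps onto $\scrH$. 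This proves the lemma.

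I do not expect a genuine obstacle here; the statement is essentially a formal consequence of how $\vp$ was built. The only point demanding care is the differentiation of $\vp$ through the chosen local trivializations of $\scrE$ over the $U_i$: one must verify that differentiating in the $\scrX$-direction contributes nothing to the vertical component at $(x,0)$, and that the identifications of the fibers with $\scrH$ over the (finitely many) sets $U_i$ containing $x$ do not affect the image of $d\tilde{s}_{(x,0)}$ — which is clear, since the relevant transition functions act invertibly on $\scrH$.
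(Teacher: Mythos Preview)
Your proof is correct and matches the paper's intent: the paper states that the lemma ``is straightforward'' and gives no proof, and your explicit computation of $d\tilde{s}_{(x,0)}(\xi,v)=ds_x(\xi)+\sum_i\rho_i(x)f_i(v_i)$ followed by the observation that some $\rho_{i_0}(x)>0$ (whence $ds_x+\rho_{i_0}(x)f_{i_0}$ already surjects) is exactly the intended argument. Your closing remark about the local trivializations is appropriate caution but, as you note, causes no difficulty.
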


Since surjectivity is an open condition, there exists a neighborhood $\mathscr{N}$ of $\M \times \{0\}$ in $\scrX \times \R^N$ such that the differential of $\tilde{s}$ is surjective on any point of $(\tilde{s}|_{\mathscr{N}})^{-1}(0) = \tilde{s}^{-1}(0) \cap \mathscr{N}$.
Set
\[
\calU = \calU(s,\vp) := \tilde{s}^{-1}(0) \cap \mathscr{N}.
\]
Because of the implicit function theorem, $\vnU$ is a smooth manifold.
Since $\dim \Ker (d \tilde{s}_{(x,0)}) = \ind{s} + N$ holds for any $x \in \scrX$,
we have
\[
\dim\vnU = \ind{s} + N.
\]
Henceforth we regard the zero set $\M$ as a subspace of $\calU$ by 
\[
\M \cong \M \times \{0\} = {\tilde s}^{-1}(0) \cap \left(\scrX \times \{0\} \right) \cap \mathscr{N} \subset \vnU.
\]

\begin{defi}[Ruan~\cite{MR1635698}]
The finite dimensional manifold $\vnU$ constructed above is called a {\it virtual neighborhood} for the Fredholm section $s : \scrX \to \scrE$.
\end{defi}

The restriction of the projection $\scrX \times \R^N \to \R^N$ is equipped with for a \vn $\vnU$.
We denote by $h_{\calU} : \vnU \to \R^N$ this map.
The space $\M$ is the level set of $h_{\calU}$ for the height zero.
We are interested only in the ``germ'' of $h_{\calU}$ near $\calM$, and we do not distinguish $\calU$ and another \vn $\calU'$ obtained from the common finite dimensional perturbation $\vp$ (even if another $\mathscr{N}'$ is used instead of $\mathscr{N}$).

Here we remark the orientation of $\calU$.
The determinant line bundle $\det{s} \to s^{-1}(0)$ is associated with the section $s : \scrX \to \scrH$.
Since $\tilde{s} : \scrX \times \R^N \to \scrH$ is also a Fredholm section, we can also define $\det\tilde{s} \to \tilde{s}^{-1}(0)$.

\begin{lem}[Ruan~\cite{MR1635698},~Lemma~2.4]
If $\det{s} \to s^{-1}(0)$ has a nowhere-vanishing section, it induces a a nowhere-vanishing section on $\det\tilde{s} \to \tilde{s}^{-1}(0)$.
(Thus a nowhere-vanishing section on $\det{s} \to s^{-1}(0)$ gives an orientation of $\vnU$.)
\end{lem}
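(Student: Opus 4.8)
The plan is to relate the determinant line bundles $\det s \to s^{-1}(0)$ and $\det\tilde s \to \tilde s^{-1}(0)$ fiberwise, using the fact that $\tilde s = s + \varphi$ is obtained from $s$ by adding a finite-dimensional perturbation that is supported along the $\R^N$-directions. First I would recall the standard construction of $\det s$ as a line bundle on the zero set: at a point $x \in s^{-1}(0)$, one perturbs $ds_x : T_x\scrX \to \scrH$ by a finite-rank operator to make it surjective, takes the (finite-dimensional) kernel and cokernel of the perturbed operator, and forms the determinant of the virtual difference; this is independent of the choice of stabilization. The key observation is that at a point $(x,0) \in \scrX \times \{0\}$ lying in $\tilde s^{-1}(0)$, one has the splitting
\[
d\tilde s_{(x,0)} : T_x\scrX \oplus \R^N \to \scrH, \qquad d\tilde s_{(x,0)}(v, a) = ds_x(v) + \sum_{i=1}^m \rho_i(x) f_i(a_i),
\]
so that $d\tilde s_{(x,0)}$ is exactly $ds_x$ stabilized by the finite-rank map $a \mapsto \sum_i \rho_i(x) f_i(a_i) : \R^N \to \scrH$. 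Therefore, by the very definition of the determinant line as being insensitive to such stabilizations, there is a canonical isomorphism of lines $\det(d\tilde s_{(x,0)}) \cong \det(ds_x)$ for every $x \in \M = s^{-1}(0)$, and this isomorphism is continuous in $x$, hence gives an isomorphism of line bundles $\det\tilde s|_{\M \times \{0\}} \cong \det s$ over $\M$.

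Next I would upgrade this from the closed subspace $\M \times \{0\}$ to all of $\tilde s^{-1}(0)$. Since $\tilde s^{-1}(0)$ deformation retracts onto $\M \times \{0\}$ — indeed, the virtual neighborhood $\calU$ is a finite-dimensional manifold that retracts onto $\M$, as the zero set near $\M \times \{0\}$ of a submersion after using the implicit function theorem, and more simply because $\M$ is compact and we only care about the germ near $\M$ — any line bundle on $\tilde s^{-1}(0)$ is determined up to isomorphism by its restriction to $\M \times \{0\}$. A nowhere-vanishing section of $\det s \to s^{-1}(0)$ pulls back under the isomorphism above to a nowhere-vanishing section of $\det\tilde s$ over $\M \times \{0\}$, and then extends to a nowhere-vanishing section over all of $\tilde s^{-1}(0)$ using the retraction together with the fact that the obstruction to extending a trivialization across a deformation retract vanishes. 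The parenthetical claim that a nowhere-vanishing section of $\det s$ orients $\vnU$ then follows because $\det\tilde s|_{\vnU} = \det(T\vnU)$ canonically: on the smooth manifold $\vnU$ the section $\tilde s$ is transverse to the zero section along $\vnU$, so the kernel of $d\tilde s$ restricted to $\vnU$ is precisely $T\vnU$ and the cokernel is zero, whence $\det\tilde s|_{\vnU} \cong \det(T\vnU) = \Lambda^{\mathrm{top}} T^*\vnU$.

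The main obstacle I expect is making the canonical isomorphism $\det(d\tilde s_{(x,0)}) \cong \det(ds_x)$ genuinely \emph{continuous} (indeed giving an isomorphism of the topologized determinant line bundles) rather than merely a pointwise identification; this is the content of the statement that the determinant line bundle construction is well-defined as a bundle and functorial under stabilization, and one must be slightly careful because the stabilizing map $a \mapsto \sum_i \rho_i(x) f_i(a_i)$ varies with $x$ (its rank can jump where some $\rho_i$ vanishes). The clean way to handle this is to fix a single global stabilization: choose a finite-rank $F : \R^M \to \scrH$ so that $ds_x + F$ is surjective for all $x \in \M$ (possible by compactness, combining the $f_i$), compute both $\det s$ and $\det\tilde s|_{\M}$ using this \emph{same} $F$ as the auxiliary stabilization, and observe that the two resulting finite-dimensional families of kernels and cokernels are literally identical. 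This sidesteps the rank-jumping issue entirely and reduces the lemma to the uniqueness-up-to-canonical-isomorphism of the determinant line bundle, which is classical. Everything else — compactness of $\M$, existence of the retraction, the identification $\det\tilde s|_{\vnU} \cong \det T\vnU$ — is routine given what has already been set up.
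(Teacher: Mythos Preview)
The paper does not prove this lemma; it is quoted from Ruan with attribution and used as a black box. So there is no in-paper proof to compare against, and I assess your argument on its own merits.

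Your first step---identifying $\det\tilde s|_{\M\times\{0\}}\cong\det s$ via stabilization, and recognizing that $\det\tilde s|_{\calU}\cong\Lambda^{\mathrm{top}}T\calU$ because $d\tilde s$ is surjective along $\calU$---is correct and is the heart of the matter. The gap is the extension step. You assert that $\tilde s^{-1}(0)$ (equivalently $\calU$, after shrinking $\mathscr N$) deformation retracts onto $\M\times\{0\}$, but this is unjustified: $\M$ is merely the zero set of the smooth map $h_{\calU}:\calU\to\R^N$, and such a zero set can be an arbitrary compact subset of the manifold $\calU$, in particular not an ANR. Compactness alone does not produce a neighborhood retract, and without it triviality of $\Lambda^{\mathrm{top}}T\calU$ over $\M$ does not force triviality over $\calU$---a priori some component of $\calU$ could be non-orientable while meeting $\M$ only in a contractible piece.

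The standard remedy is to run your stabilization argument over a \emph{neighborhood} of $\M$ in $\scrX$ rather than only over $\M$. Choosing a connection on $\scrE$, the covariant derivative $\nabla s:T\scrX\to\scrE$ is Fredholm on an open set $W\supset s^{-1}(0)$, so $\det s$ extends to a line bundle over $W$; over $W\times\R^N$ the family $\nabla\tilde s$ is a finite-rank perturbation of $(\nabla s)\oplus 0$, yielding $\det\tilde s\cong p^{\ast}\det s$ there by exactly the stabilization isomorphism you describe. In the gauge-theory applications the trivializing section (furnished by the homology orientation $\calO$) is in fact defined over all of $\scrB^{\ast}$, not merely over the moduli space, so one actually has this stronger hypothesis and the argument goes through cleanly. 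Your final ``single global stabilization'' paragraph is the right instinct---it just needs to be carried out over $W$ rather than over $\M$.
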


We now define the ``counted number $\#\M$'' in terms of the relative Euler class.
Assume that $k := \ind s$ is non-negative and fix a cohomology class $\alpha \in H^{k}(\scrX;\Z)$.
(If we work with $\Z/2$-coefficient, we can take a general element of $H^{k}(\scrX;\Z/2)$ as $\alpha$.)
Then we obtain the cohomology class $p^\ast \alpha \in H^{k}(\vnU;\Z)$, where $p : \vnU \to \scrX$ is the restriction of the projection $\scrX \times \R^N \to \scrX$.
The map $h_{\calU} : \vnU \to \R^N$ can be regarded as a section
\begin{align}
h_{\calU} : \vnU \to \vnU \times \R^N
\label{finite dim approximated section}
\end{align}
of the trivial bundle $\vnU \times \R^N \to \vnU$.
(The trivial bundle $\vnU \times \R^N \to \vnU$ and the section $h_{\calU} : \vnU \to \vnU \times \R^N$ can be regarded as a ``finite dimensional approximation" of the Hilbert bundle $\scrE \to \scrX$ and one of the Fredholm section $s : \scrX \to \scrE$ respectively.)
Since $h_{\calU}^{-1}(0) \cong \M$ is compact, one can take a relatively compact neighborhood $K$ of $h_{\calU}^{-1}(0)$ in $\vnU$.
Let 
\[
\tau(\vnU \times \R^N) \in H^{N}(D(\vnU \times \R^N),S(\vnU \times \R^N);\Z)
\]
be the Thom class of the trivial bundle $\vnU \times \R^N \to \vnU$, where $D(\cdot)$ and $S(\cdot)$ mean the disk bundle and sphere bundle respectively.
Then one can consider the relative Euler class with respect to $h_{\calU}$:
\begin{align}
e_{\calU} := h_{\calU}^\ast \tau(\vnU \times \R^N) \in H^N(\vnU, \vnU \setminus K;\Z).
\label{definition of the relative Euler class}
\end{align}
We use the same notation $e_{\calU}$ for the image of $e_{\calU}$ by the map 
\[
H^N( \vnU, \vnU \setminus K; \Z) \to H_{\cpt}^N( \vnU; \Z),
\]
where $H_{\cpt}^\ast( \cdot; \Z)$ means the cohomology with compact supports.
Since $\vnU$ is an (open) manifold, we can consider the fundamental class $[\vnU]_{BM}$ in the sense of Borel-Moore homology.
Here $[\vnU]_{BM}$ is regarded as a homology class with $\Z$-coefficient if a nowhere-vanishing section of $\det{s}$ is given, and otherwise with $\Z/2$-coefficient.

\begin{defi}
We define the number $\frakm(s, \alpha) \in \Z$ or $\Z/2$ by
\begin{align*}
\frakm(s, \alpha) := \left< e_{\calU} \cup p^{\ast}\alpha, [\vnU]_{BM} \right>.
\end{align*}
\end{defi}

The number $\frakm(s, \alpha)$ corresponds to the``counted number $\#\M$''.
(The cohomology class $\alpha$ corresponds to a cutting of the moduli space of higher dimension.)
The proof of the following lemma can be regarded as a non-linearization of the proof of the well-definedness of the index for families.

\begin{rem}
\label{remark on vanishing of the numerical invariant}
Assume that the Poincar\'e dual (in the sense of Borel--Moore) of $p^{\ast}\alpha$ can be represented by a smooth submanifold $V \subset \calU$ of $\calU$, and also that $\M \cap V = \emptyset$.
Then we have $\frakm(s, \alpha) = 0$.
\end{rem}

\begin{lem}[corresponding to Ruan~\cite{MR1635698},~Proposition~2.6]
\label{lem : well-def1}
The number $\frakm(s, \alpha) $ depends only on the section $s : \scrX \to \scrH$ and the cohomology class $\alpha \in H^{k}(\scrX;\Z)$.
\end{lem}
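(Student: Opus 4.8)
The plan is to show that $\frakm(s,\alpha)$ is unchanged when we vary the finite dimensional perturbation $\vp$, since that is the only choice made in its definition (the covering, the partition of unity, and the cutoff neighborhood $\mathscr{N}$ all feed into $\vp$, and we already agreed not to distinguish virtual neighborhoods built from the same $\vp$). So I would fix two finite dimensional perturbations $\vp_0 : \scrX \times \R^{N_0} \to \scrE$ and $\vp_1 : \scrX \times \R^{N_1} \to \scrE$, together with the resulting virtual neighborhoods $\calU_0 \subset \scrX \times \R^{N_0}$ and $\calU_1 \subset \scrX \times \R^{N_1}$, and produce a cobordism between the relevant height functions. First I would reduce to the case $\vp_1 = \vp_0 \oplus \vp'$, i.e.\ where $\vp_1$ is obtained from $\vp_0$ by adjoining more perturbing data on a larger $\R^{N'}$: any two perturbations can be compared to their ``sum'' $\vp_0 \oplus \vp_1$ on $\R^{N_0 + N_1}$, so it suffices to handle a stabilization $\R^N \rightsquigarrow \R^{N} \times \R^{N'}$. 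In the stabilization case one checks directly that a virtual neighborhood for $\vp_0 \oplus \vp'$ can be taken to be (an open neighborhood of $\M$ in) $\calU_0 \times \R^{N'}$ with height function $h_{\calU_0} \times \id_{\R^{N'}}$; the Thom class of $\vnU \times \R^{N}$ for the stabilized bundle is the external product of $\tau(\calU_0 \times \R^N)$ with $\tau(\R^{N'})$, and pairing against $[\calU_0 \times \R^{N'}]_{BM} = [\calU_0]_{BM} \times [\R^{N'}]_{BM}$ gives back $\langle e_{\calU_0} \cup p^\ast\alpha, [\calU_0]_{BM}\rangle$ because the extra $\R^{N'}$ factors cancel. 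This is the ``non-linearization of well-definedness of the index for families'' alluded to in the statement: in the index-for-families argument one stabilizes the family of operators by a trivial bundle and checks the index class is unchanged, and here the role of that stabilization is played by enlarging the finite dimensional perturbation.

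The genuine content is therefore the comparison of $\vp_0 \oplus \vp'$ with $\vp_1 = \vp_0 \oplus \vp'$ built from a \emph{different} presentation — equivalently, showing independence of the partition of unity and the covering for a fixed collection of linear maps $\{f_i\}$, which I handle by a homotopy argument. Given two admissible perturbations $\vp_0,\vp_1$ on a common $\R^N$ (after stabilizing both so they live on the same Euclidean factor), form the straight-line homotopy $\vp_t = (1-t)\vp_0 + t\vp_1$ and the family of Fredholm sections $\tilde{s}_t = s + \vp_t : \scrX \times \R^N \to \scrE$. By Lemma~\ref{lem : surjective} applied to each $\vp_t$ (a convex combination of admissible perturbations is admissible along $\M \times \{0\}$, since surjectivity of $ds_x + (\vp_t)$ at points of $\M$ follows from surjectivity at $t=0$ and $t=1$ together with the fact that the image contains $\im ds_x$ plus the span used at either endpoint — more carefully, one should choose the $\{U_i\}$, $\{\rho_i\}$ and $\{f_i\}$ compatibly so that each $\vp_t$ is of the standard form, which is possible by taking a common refinement), one gets that $\tilde{s}: \scrX \times \R^N \times [0,1] \to \scrE$, $(x,a,t) \mapsto \tilde{s}_t(x,a)$, has surjective differential near $\M \times \{0\} \times [0,1]$. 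Its zero set near there, call it $\calW$, is then a smooth manifold of dimension $\ind s + N + 1$, a compact-supported cobordism (rel the compact $\M$) from $\calU_0$ to $\calU_1$, carrying a height map $h_{\calW} : \calW \to \R^N$ restricting to $h_{\calU_0}$ and $h_{\calU_1}$ at the ends, and carrying $p^\ast\alpha$ pulled back from $\scrX$. Then the relative Euler class $e_{\calW} = h_{\calW}^\ast \tau(\calW \times \R^N)$ lives in compactly supported cohomology, $e_{\calW} \cup p^\ast\alpha$ is compactly supported (its support is proper over $\M$, which is compact), and by the boundary formula for the Borel–Moore fundamental class $\partial[\calW]_{BM} = [\calU_1]_{BM} - [\calU_0]_{BM}$ one gets $\langle e_{\calU_1} \cup p^\ast\alpha, [\calU_1]_{BM}\rangle - \langle e_{\calU_0} \cup p^\ast\alpha, [\calU_0]_{BM}\rangle = \langle \partial(e_{\calW} \cup p^\ast\alpha \cap [\calW]_{BM}) \rangle = 0$. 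The orientation bookkeeping (when a nowhere-vanishing section of $\det s$ is available) is routine given the compatibility of $\det\tilde s_t$ along the homotopy.

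The main obstacle I anticipate is not the topological boundary argument but the \emph{compatibility of presentations}: to run the straight-line homotopy I need $\vp_0$ and $\vp_1$ written over a common Euclidean factor with a common covering $\{U_i\}$, common partition of unity $\{\rho_i\}$, and common (or homotopic) linear maps $\{f_i\}$, and I must check that throughout the homotopy the perturbed section stays surjective along $\M \times \{0\}$ so that a virtual neighborhood exists for every $t$. This requires a careful ``common refinement'' step: given the two original coverings, pass to a covering refining both, re-sum the perturbing maps, and observe that the set of admissible perturbations (those making $ds_x + f$ surjective for all $x \in \M$, in a fixed small neighborhood) is convex and open, so the straight-line path stays inside it. Once this is set up the rest is formal. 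A secondary technical point is ensuring $e_{\calW}\cup p^\ast\alpha$ really has compact support in $\calW$ — this is where compactness of $\M$ (Assumption~\ref{compactness assumption for unparameterized case}) is essential, exactly as it was in defining $\frakm(s,\alpha)$ in the first place — and that $\calW$ can be arranged to be a manifold-with-boundary with the two ends being honest virtual neighborhoods of the type already considered, which follows from the implicit function theorem applied to $\tilde s$ on $\scrX \times \R^N \times [0,1]$ near the compact set $\M \times \{0\} \times [0,1]$.
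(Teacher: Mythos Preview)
Your overall architecture---stabilize, then run a cobordism argument on the height functions---matches the paper's. But the paper sidesteps exactly the ``main obstacle'' you identify, and your handling of that obstacle has a genuine gap.

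First, a confusion in your stabilization paragraph: the virtual neighborhood for $s+\vp_0\oplus\vp'$ is \emph{not} $\calU_0\times\R^{N'}$ unless $\vp'=0$. What you describe (height $h_{\calU_0}\times\id_{\R^{N'}}$, suspension cancellation) is the \emph{trivial} stabilization $\vp'=0$; it does not by itself compare $\vp_0$ to $\vp_0\oplus\vp_1$ with $\vp_1\neq0$. So after your first paragraph there is still real work to do, which you then attempt via the straight-line homotopy.

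Second, and more seriously, your convexity claim is false: the set of linear maps $f:\R^N\to\scrH$ with $ds_x+f$ surjective for all $x\in\M$ is open but \emph{not} convex (take $\Coker ds_x\cong\R$, $N=1$, $f_0(a)=a$, $f_1(a)=-a$; each is surjective, their midpoint is zero). So the straight-line homotopy $(1-t)\vp_0+t\vp_1$ on a common $\R^N$ need not stay admissible, and the ``common refinement'' fix you propose does not address this.

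The paper's trick is to use a different one-parameter family that never turns $\vp_1$ off: on $\scrX\times\R^{N_1}\times\R^{N_2}\times[0,1]$ one takes
\[
(x,a_1,a_2,t)\ \longmapsto\ s(x)+\vp_1(x,a_1)+t\,\vp_2(x,a_2).
\]
At any point of $\M\times\{0\}\times\{0\}\times[0,1]$ the differential in the $(x,a_1)$-directions alone is already surjective (because $\vp_1$ was admissible), so surjectivity along the whole interval is automatic---no refinement, no convexity needed. This yields a virtual-neighborhood cobordism between $\calU_1'$ (the zero set at $t=0$, which near $\M$ agrees with $\calU_1\times\R^{N_2}$) and $\calU_{12}$ (at $t=1$). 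Your suspension argument then handles $\calU_1\leftrightarrow\calU_1\times\R^{N_2}$, and symmetry in $1\leftrightarrow2$ finishes. In short: replace $(1-t)\vp_0+t\vp_1$ by $\vp_1+t\vp_2$ on the product $\R^{N_1}\times\R^{N_2}$, and your obstacle evaporates.
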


\begin{proof}
Let $\vnU_i$ $(i=1,2)$ be two virtual neighborhoods obtained from
$\tilde{s}_{i} := s + \vp_i : \scrX \times \R^{N_i} \to \scrE$.
Let us consider two maps
\begin{align}
&\tilde{s}_{1}' := s + \vp_1 + 0  : \scrX \times \R^{N_1} \times \R^{N_2} \to \scrE, \text{ and} \label{tilde section1p}\\
&\tilde{s}_{12} := s + \vp_1 + \vp_2  : \scrX \times \R^{N_1} \times \R^{N_2} \to \scrE.\label{tilde section12}
\end{align}
The differential of the map
\begin{align}
s + \vp_1 + \bullet \vp_2 : \scrX \times \R^{N_1} \times \R^{N_2} \times [0,1] \to \scrE
\label{eq : cobordism}
\end{align}
defined as
\[
(x, a_{1}, a_{2}, \tau)  \mapsto s(x) + \vp_1(x,a_{1}) + \tau \vp_2(x,a_{2})
\]
is surjective at each point on $\M \times \{0\} \times \{0\} \times [0,1]$, and so on a neighborhood of $\M \times \{0\} \times \{0\} \times [0,1]$ in $\scrX \times \R^{N_1} \times \R^{N_2} \times [0,1]$.
Thus we obtain a manifold ${\vnU} \subset \scrX  \times  \R^{N_1} \times \R^{N_2} \times  [0,1]$ with boundary from the Fredholm section \eqref{eq : cobordism} via the same procedure used to define a \vn as above.
By replacing $\calU$ and virtual neighborhoods to some smaller neighborhoods of $\calM$ if we need, ${\vnU}$ gives a cobordism between a virtual neighborhood $\vnU_1'$ obtained from $\tilde{s}_{1}'$ and a virtual neighborhood $\vnU_{12}$ obtained from $\tilde{s}_{12}$.
(Such a manifold $\calU$ is called a {\it virtual neighborhood cobordism} in Ruan~\cite{MR1635698}.)
Let us consider the section 
\[
h_{\calU} : {\vnU} \times [0,1] \to {\vnU} \times \R^{N_1} \times \R^{N_2} \times [0,1]
\]
and the relative Euler class $e_{\calU} \in H_{\cpt}^{N_{1}+N_{2}}(\calU \times [0,1];\Z)$ obtained in a similar way to define \eqref{finite dim approximated section} and \eqref{definition of the relative Euler class}.
Since the restrictions of $h_{\calU}$ to $\tau = 0,1$ are the sections $h_{\calU_{1}'}$ and $h_{{\vnU}_{12}}$ respectively, the relative Euler classes $e_{\calU_{1}'}$ and $e_{{\vnU}_{12}}$ are the restrictions of $e_{\calU}$ to $\vnU_1'$ and to $\vnU_{12}$ respectively.
Thus we have
\begin{align}
\left<e_{{\vnU}_{1}'} \cup p^\ast \alpha, [\vnU_1']_{BM} \right> - \left<e_{{\vnU}_{12}} \cup p^\ast \alpha, [\vnU_{12}]_{BM} \right> 
=\left<e_{\calU} \cup p^\ast \alpha, \partial[\vnU]_{BM} \right> = 0,
\label{eq : cobordism argument at point}
\end{align}
where $p$ in the left-hand side is the map given by the projection $\scrX \times \R^{N_1} \times \R^{N_2} \to \scrX$ and $p$ in the right-hand side is that given by $\scrX \times \R^{N_1} \times \R^{N_2} \times [0,1] \to \scrX$.
Let $e_{{\vnU}_{1} \times \R^{N_{2}}} \in H_{\cpt}^{N_{1}+N_{2}}(\calU_{1} \times \R^{N_{2}};\Z)$ be the relative Euler class defined by the section
\[
h_{\calU_{1}} \times \id_{\R^{N_{2}}}: \calU_{1} \times \R^{N_{2}} \to \calU_{1} \times \R^{N_{1}} \times \R^{N_{2}}.
\]
Since the \vn $\vnU_1'$ is same to $\vnU_1 \times \R^{N_2}$ if we focus only on neighborhoods of $\calM$, we have
\begin{align}
\left<e_{{\vnU}_{1}'} \cup p^\ast \alpha, [\vnU_1']_{BM} \right>
= \left<e_{{\vnU}_{1} \times \R^{N_{2}}} \cup p^\ast \alpha, [\vnU_1 \times \R^{N_{2}}]_{BM} \right>.
\label{eq : relation to the product vn}
\end{align}
In addition, since $e_{{\vnU}_{1} \times \R^{N_{2}}}$ corresponds to $e_{\calU_{1}}$
via the suspension isomorphism
\[
H_{\cpt}^{\ast}(\vnU_1 \times \R^{N_2};\Z) \cong H_{\cpt}^{\ast-N_2}(\vnU_1;\Z),
\]
we obtain
\begin{align}
\left<e_{{\vnU}_{1} \times \R^{N_{2}}} \cup p^\ast \alpha, [\vnU_1 \times \R^{N_{2}}]_{BM} \right>
= \left<e_{{\vnU}_{1}} \cup p^\ast \alpha, [\vnU_1]_{BM} \right>.
\label{eq : relation to the product vn via suspension iso}
\end{align}
The equalities \eqref{eq : cobordism argument at point}, \eqref{eq : relation to the product vn}, and \eqref{eq : relation to the product vn via suspension iso} imply that 
\begin{align}
\left<e_{{\vnU}_{1}} \cup p^\ast \alpha, [\vnU_1]_{BM} \right>
= \left<e_{{\vnU}_{12}} \cup p^\ast \alpha, [\vnU_{12}]_{BM} \right>.
\label{eq : relation between vn1 and vn12}
\end{align}
Via $\left<e_{{\vnU}_{12}} \cup p^\ast \alpha, [\vnU_{12}]_{BM} \right>$, it follows that 
\[
\left<e_{{\vnU}_{1}} \cup p^\ast \alpha, [\vnU_1]_{BM} \right>
= \left<e_{{\vnU}_{2}} \cup p^\ast \alpha, [\vnU_2]_{BM} \right>
\]
from the equality \eqref{eq : relation between vn1 and vn12} and the similar equality relating $\calU_{2}$ and $\calU_{12}$ shown by the same argument.
\end{proof}

\subsection{Family virtual neighborhood}
\label{subsection Family virtual neighborhood}

We give a family version of Ruan's \vn technique in this \lcnamecref{subsection Family virtual neighborhood}.
We work in the following setting in this subsection.

\begin{itemize}
\item Let $B$ be a normal space. (Then we can take continuous cut-off functions.)
\item Let $\pi : \scrX = \bigsqcup_{b \in B} \scrX_b \to B$ be a parametrized Hilbert manifold, namely, be a continuous fiber bundle and suppose that each fiber $\scrX_b$ be a paracompact Hausdorff Hilbert manifold whose model Hilbert space is separable.
\item Let $\scrE = \bigsqcup_{b \in B} \scrE_b \to \bigsqcup_{b \in B} \scrX_{b} \to B$ be a parametrized Hilbert bundle.
Namely, let $\scrE\to B$ be a continuous fiber bundle and suppose that $\scrE_b \to \scrX_b$ is a smooth Hilbert bundle whose fiber is a Hilbert space $\scrH_b$ for each $b \in B$.
\item Let $s = \bigsqcup_{b \in B} s_{b} : \scrX \to \scrE$ be a parametrized Fredholm section, namely, be a continuous section and suppose that the restriction to each fiber $s_b : \scrX_b \to \scrE_b$ is a smooth Fredholm section.
\item We assume that the index of $d(s_{b})_{x} : T_{x} \scrX_{b} \to \scrH_{b}$ is common to all $b \in B$ and $x \in \scrX_b$ and write $\ind{s}$ for the index.
We also assume that $d(s_{b})$ continuously depends on $b$.
Namely, for the bundles $T_{\fiber}\scrX = \bigsqcup_{b \in B} T\scrX_{b}$ and $T_{\fiber}\scrE = \bigsqcup_{b \in B} T\scrE_{b}$, the section of $\Hom(T_{\fiber}\scrX, s^{\ast}T_{\fiber}\scrE) \to B$ induced from $ds$ is continuous.

\end{itemize}

Set 
\[
\M = \bigsqcup_{b \in B} \M_{b}: = s^{-1}(0) = \bigsqcup_{b \in B} s_b^{-1}(0),
\]
corresponding to the parametrized moduli space. 
We assume that $s$ satisfies the following assumption on compactness:

\begin{assum}
\label{assumption on compactness for parameterized moduli}
The space $\M$ is compact.
\end{assum}

We now construct a ``family virtual neighborhood".
Let $x$ be a point in $\M$.
Then there exists $N_{x} \in \N$ and a linear map $f_{x}$ such that
\[
d(s_{\pi(x)})_x + f_{x} : T_x \scrX_{\pi(x)} \oplus \R^{N_{x}} \to \scrH_{\pi(x)}
\]
is surjective.
Take a small neighborhood $U_x$ of $x$ in $\scrX$ such that $d(s_{\pi(y)})_{y} + f_{x}$ is surjective for any $y \in U_x \cap \M$.
We can choose $U_x$ such as it is the product of open sets $U_x^{\base}$ of $B$ and $U_x^{\fiber}$ of the fiber $\scrX_{b}$ via a fixed local trivialization of $\scrX \to B$, i.e. $U_x \cong U_x^{\base} \times U_x^{\fiber}$.
Let $p_{x} : U_x^{\base} \times U_x^{\fiber} \to U_{x}^{\fiber}$ be the projection.
Take a (continuous) cut-off function $\rho_{x}^{\base} : U_{x}^{\base} \to [0,1]$ supported in $U_{x}^{\base}$ satisfying $\rho_{x}^{\base}(x)>0$, and do a smooth partition of unity $\{\rho_{x}^{\fiber}\}_{x \in \M_{b}}$ subordinate to $\{U_{x}^{\fiber}\}_{x \in \M_{b}}$ in $\scrX_{b}$.
Define 
\[
\rho_{x} : U_{x} \to [0,1]
\]
as 
\[
\rho_{x}^{\base} \cdot p_{x}^{\ast} \rho_{x}^{\fiber} : U_x^{\base} \times U_x^{\fiber} \to [0,1]
\]
via the fixed local trivialization.
Take an open set $U_{x}'$ in $\scrX$ with $x \in U_{x}' \subset U_{x}$ such that $\rho_{x}>0$ on $U_{x}'$.
Then $\{U_{x}'\}_{x \in \M}$ is an open covering of $\M$, and hence there exists finite points $x_{1}, \ldots, x_{m} \in \calM$ such that $\M \subset \bigcup_{i=1}^m U_{x_{i}}$.
Set $N_{i} := N_{x_{i}}$, $f_{i} := f_{x_{i}}$, $U_{i} := U_{x_{i}}$, and $N := N_1+\cdots+N_m$.
Fix a local trivialization of $\scrE$ on each $U_i$, and define a map
\[
\vp : \scrX \times \R^N \to \scrE
\]
by the formula \eqref{eq : definition of finite dim perturbation} via the fixed local trivializations.
Set 
\begin{align*}
&\tilde{s} := s + \vp : \scrX \times \R^{N} \to \scrE, \text{ and} \\
&\tilde{s}_{b} := s_{b} + \vp : \scrX_{b} \times \R^{N} \to \scrE_{b}.
\end{align*}
We now have the following lemma by the completely same argument to prove \cref{lem : surjective} since $\rho_{i}$ is smooth along the fiber direction of $\scrX \to B$ and for any $x \in \M$ there exists $i$ such that $\rho_{i}(x)>0$:

\begin{lem}
\label{lem : surjective for family}
For any $b \in B$ and any $x \in \M_{b}$, the differential
\[
d(\tilde{s}_{b})_{(x,0)} : T_x\scrX_{b} \oplus \R^N \to\scrH_{b}
\]
is surjective.
\end{lem}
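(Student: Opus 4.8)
The plan is to reduce the family statement directly to the already-established unparameterized \cref{lem : surjective} by fiber-wise inspection, exploiting the fact that the finite-dimensional perturbation $\vp$ was built precisely so that it restricts, along each fiber $\scrX_b$, to a perturbation of the type used in \cref{subsection unparameterized vn}. Concretely, fix $b \in B$ and $x \in \M_b$. The differential $d(\tilde{s}_b)_{(x,0)} : T_x\scrX_b \oplus \R^N \to \scrH_b$ decomposes, using the canonical splitting $T_{(x,0)}\scrE_b = T_x\scrX_b \oplus \scrH_b$, as the sum of $d(s_b)_x$ on the $T_x\scrX_b$ factor and, on the $\R^N = \R^{N_1} \times \cdots \times \R^{N_m}$ factor, the linear map $(a_1,\dots,a_m) \mapsto \sum_{i=1}^m \rho_i(x) f_i(a_i)$, exactly as in \eqref{eq : definition of finite dim perturbation}.

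The key point, which I would state explicitly, is that the values $\rho_i(x) \in [0,1]$ depend only on the point $x$ (not on any parameter), and that for the chosen $x \in \M_b$ there is at least one index $i_0$ with $\rho_{i_0}(x) > 0$: this holds because $\{U_{x_j}'\}$ covers $\M$ and $\rho_j > 0$ on $U_{x_j}'$, so $x$ lies in some $U_{x_{i_0}}'$ and hence $\rho_{i_0}(x)>0$. Moreover $x \in U_{i_0} \cap \M$, so by the choice of $U_{x_{i_0}}$ the map $d(s_{\pi(y)})_y + f_{i_0} : T_y\scrX_{\pi(y)} \oplus \R^{N_{i_0}} \to \scrH_{\pi(y)}$ is surjective for all $y \in U_{i_0} \cap \M$, in particular for $y = x$, giving surjectivity of $d(s_b)_x + f_{i_0} : T_x\scrX_b \oplus \R^{N_{i_0}} \to \scrH_b$. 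Since $\rho_{i_0}(x) > 0$, the image of $a_{i_0} \mapsto \rho_{i_0}(x) f_{i_0}(a_{i_0})$ equals the image of $f_{i_0}$, so $d(s_b)_x$ together with this single block already surjects onto $\scrH_b$; adjoining the remaining blocks only enlarges the image. Hence $d(\tilde{s}_b)_{(x,0)}$ is surjective. This is verbatim the argument of \cref{lem : surjective}, and the only thing one must check is that the family construction did not disturb it — i.e. that $\rho_i$ is smooth along the fiber direction (so the differential along $\scrX_b$ makes sense) and that the covering/positivity bookkeeping survives passage to a single fiber.

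I do not expect any genuine obstacle here; the work is organizational rather than mathematical. The one subtlety worth attention is that in the family setting $\rho_i = \rho_{x_i}^{\base}\cdot p_{x_i}^{\ast}\rho_{x_i}^{\fiber}$ is only continuous in the base direction, so one must confirm that the restriction $\rho_i|_{\scrX_b}$ is smooth (it is, being a constant multiple $\rho_{x_i}^{\base}(b)$ of the smooth fiber partition function $\rho_{x_i}^{\fiber}$, pulled back), and that this constant is not forced to be zero at the relevant point — which is exactly what the choice of the refined covering $\{U_x'\}$ and the cut-off conditions $\rho_x^{\base}(x)>0$ guarantee. Thus the proof is: restrict everything to the fiber $\scrX_b$, observe that $\tilde{s}_b = s_b + \vp|_{\scrX_b \times \R^N}$ is of the form covered by \cref{lem : surjective} with the partition $\{\rho_i|_{\scrX_b}\}$ (rescaled, still summing appropriately near $\M_b$ and with some member positive at $x$), and invoke that lemma. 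I would write this in two or three sentences, emphasizing that "the completely same argument" cited in the statement is legitimate precisely because $\vp$ was engineered fiberwise.
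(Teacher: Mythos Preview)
Your proposal is correct and follows essentially the same approach as the paper, which explicitly states (in the sentence immediately preceding the lemma) that the proof is ``the completely same argument to prove \cref{lem : surjective} since $\rho_{i}$ is smooth along the fiber direction of $\scrX \to B$ and for any $x \in \M$ there exists $i$ such that $\rho_{i}(x)>0$.'' You have simply unpacked these two observations in detail---the fiberwise smoothness of $\rho_i$ and the existence of an index $i_0$ with $\rho_{i_0}(x)>0$ via the refined cover $\{U_x'\}$---which is exactly what the paper intends.
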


From this lemma, there exists a neighborhood $\mathscr{N}$ of $\M \times \{0\}$ in $\scrX \times \R^N$ such that such that the differential of $\tilde{s}_{b}$ is surjective on any point of $(\tilde{s}_{b}|_{\mathscr{N}})^{-1}(0) = \tilde{s}_{b}^{-1}(0) \cap \mathscr{N}$ for any $b$.
Set
\begin{align*}
&\calU = \calU(s,\vp) := \tilde{s}^{-1}(0) \cap \mathscr{N}, \text{ and}\\
&\vnU_b := \tilde{s}_{b}^{-1}(0)\cap\mathscr{N}.
\end{align*}
Then each $\vnU_b$ is a smooth manifold with $\dim\vnU_{b} = \ind{s} + N$.

\begin{defi}
The family of manifolds
\[
\vnU = \vnU(s, \vp) = \bigsqcup_{b \in B} \vnU_{b}
\]
constructed above is called a {\it family virtual neighborhood} for the parametrized Fredholm section $s : \scrX \to \scrH$.
\end{defi}

As in the non-parameterized case, the restriction of the projection $\scrX \times \R^N \to \R^N$ is equipped with for a family \vn $\vnU$.
We denote by $h_{\calU} : \vnU \to \R^N$ this map.
The space $\M$ is regarded as a subspace of $\calU$ and is the level set of $h_{\calU}$ for the height zero.

\begin{rem}
We note that a family virtual neighborhood $\vnU$ is not a fiber bundle in general,
and also note that, although $\vnU$ is parameterized on the whole of $B$, it is ``supported'' on the subspace 
\[
B^{s} := \Set{b \in B | s_{b}^{-1}(0) \neq \emptyset},
\]
i.e.  if $s_{b}^{-1}(0) = \emptyset$ holds, then a small neighborhood of $\M_{b}$ in $\calU_{b}$ is also empty.
\end{rem}

We now construct a (compactly supported) cohomology class on $B$ from a family virtual neighborhood.
For $k \geq 0$, fix a cohomology class $\alpha$ in $H^{k}(\scrX;\Z)$ or $H^{k}(\scrX;\Z/2)$.
Let $p : \vnU \to \scrX$ be the restriction of the projection $\scrX \times \R^N \to \scrX$.
The map $h_{\calU} : \vnU \to \R^N$ can be regarded as a section
\begin{align*}
h_{\calU} : \vnU \to \vnU \times \R^N
\end{align*}
of the trivial bundle $\vnU \times \R^N \to \vnU$.
Since $h_{\calU}^{-1}(0) \cong \M$ is compact, one can take a relatively compact neighborhood $K$ of $h_{\calU}^{-1}(0)$ in $\vnU$.
Let us consider the relative Euler class with respect to $h_{\calU}$:
\begin{align*}
e_{\calU} := h_{\calU}^\ast \tau(\vnU \times \R^N) \in H^N(\vnU, \vnU \setminus K;\Z)
\end{align*}
using the Thom class $\tau(\vnU \times \R^N)$ of the trivial bundle $\vnU \times \R^N \to \vnU$.
We use the same notation $e_{\calU}$ for the image of $e_{\calU}$ by the map 
\[
H^N( \vnU, \vnU \setminus K; \Z) \to H_{\cpt}^N( \vnU; \Z).
\]
Since $\M$ is compact and $\calU$ is supported on $B^{s}$, one can find a fiberwise embedding $\calU \to B \times \R^{n}$ for large $n$.
Although $\calU \to B$ is not a fiber bundle in general, one can therefore define ``integration along the fiber''
\begin{align}
\pi_{!} : H_{\cpt}^{\ast}(\calU) \to H_{\cpt}^{\ast-(\ind{s}+N)}(B)
\label{eq: fiber integration}
\end{align}
as follows.
For each $b \in B$, let $\nu_{\calU_b} \to \calU_b$ be the normal bundle for the embedding $\calU_b \inc \{b\} \times \R^{n}$, and let $\nu_{\calU} \to \calU$ be the ``parametrized normal bundle", i.e. $\nu_{\calU} := \bigsqcup_{b \in B} \nu_{\calU_b} \to \calU \to B$.
Then $\nu_{\calU} \to \calU$ is a continuous vector bundle on $\calU$ of rank $n-(\ind{s}+N)$.
We regard $\nu_{\calU}$ as a small neighborhood of $\calU$ in $B \times \R^{n}$.
Using the Thom isomorphism and the excision isomorphism, we get the map
\begin{align*}
H^\ast(\calU) \cong& H^{\ast + n-(\ind{s}+N)}(D(\nu_{\calU}), S(\nu_{\calU})) \\
\cong & H^{\ast + n-(\ind{s}+N)}(D(B \times \R^n), \overline{D(B \times \R^n) \setminus \nu_{\calU}}) \\
\to & H^{\ast + n-(\ind{s}+N)}(D(B \times \R^n), S(B \times \R^n) ) \cong H^{\ast-(\ind{s}+N)}(B).
\end{align*}
This map induces a map between compact supported cohmology grourps, and we define \eqref{eq: fiber integration} as the map.
Here the coefficient of the cohomology groups in \eqref{eq: fiber integration} is $\Z$ if a nowhere-vanishing section of $\det{s} \to s^{-1}(0)$ is given, and otherwise $\Z/2$.
The map \eqref{eq: fiber integration} is independent of the choice of fiberwise embedding as usual.

\begin{defi}
\label{general def of cohomological invariant from family vn}
We define a cohomology class $\frakM(s, \alpha)$ by the formula
\begin{align}
\frakM(s, \alpha) := \pi_{!}(e_{\calU} \cup p^{\ast}\alpha) \in H_{\cpt}^{k-\ind{s}}(B).
\label{eq : family invariant0}
\end{align}
Let $B' \subset B$ be a subset satisfying $B' \cap B^{s} = \emptyset$, i.e. $s$ is nowhere-vanishing on $B'$.
Then the cohomology class \eqref{eq : family invariant0} can be regarded as an element of the relative cohomology:
\begin{align}
\frakM(s, \alpha;B' ):= \pi_{!} \left( e_{\vnU}\cup p^\ast \alpha \right) \in H_{\cpt}^{k-\ind{s}}(B,B').
\label{eq : family invariant1}
\end{align}
Here the coefficient of the cohomology groups in \eqref{eq : family invariant0} and \eqref{eq : family invariant1} is $\Z$ if a nowhere-vanishing section of $\det{s} \to s^{-1}(0)$ is given, and otherwise is $\Z/2$.
We call $\frakM(s, \alpha)$ and $\frakM(s, \alpha;B')$ the {\it cohomological invariants emerging from} $s$, $\alpha$ (and $B'$ for the latter case).
\end{defi}

\begin{rem}
\label{remark on relative family invariant using PD}
Assume that the fiberwise Poincar\'{e} dual of $p^{\ast}\alpha$ can be represented as a family of smooth submanifolds $V \subset \calU$, $V = \bigsqcup_{b \in B} V_{b}$.
For a subspace $B' \subset B$, suppose that $\M_{b} \cap V_{b} = \emptyset$ for any $b \in B'$.
Then, as in \cref{remark on vanishing of the numerical invariant}, the cohomology class $\frakM(s, \alpha)$ can be regarded as an element of the relative cohomology group; we can define $\frakM(s, \alpha;B' ) \in H_{\cpt}^{k-\ind{s}}(B,B')$ as in \eqref{eq : family invariant1}.
\end{rem}

\begin{rem}
For a given family of $4$-manifolds, assume that the family of ASD or \SW equations arising from the family of $4$-manifolds satisfies the condition corresponding to
\cref{assumption on compactness for parameterized moduli}.
Then we can obtain a cohomology class on the base space by combing \cref{general def of cohomological invariant from family vn} with the argument given in \cref{section Structure groups}.
However, for a general family on a non-compact base (for example the universal bundle of the diffeomorphism group on the classifying space of it), there is no standard way to assure the condition corresponding to
\cref{assumption on compactness for parameterized moduli}.
To establish the universal theory, we need an argument obtained by mimicking the obstruction theory, given in \cref{section Construction of the characteristic classes}.

\end{rem}

\begin{lem}
\label{lemma on well definedness of the cohomological inv in family vn context}
The cohomology class $\frakM(s, \alpha)$ depends only on the section $s : \scrX \to \scrE$ and the cohomology class $\alpha \in H^{k}(\scrX;\Z)$.
 Similarly, if $s: \scrX \to\scrE$ is nowhere-vanishing on a subspace $B' \subset B$,
 the cohomology class $\frakM(s, \alpha ; B' )$  depends only on $s : \scrX \to \scrE$, $\alpha \in H^{k}(\scrX;\Z)$, and $B'$.
\label{lem : well-def2}
\end{lem}

\begin{proof}
Let $\vnU_i$ $(i=1,2)$ be two virtual neighborhoods obtained from
$\tilde{s}_{i} := s + \vp_i : \scrX \times \R^{N_i} \to \scrE$ and consider two maps $\tilde{s}_{1}'$ and $\tilde{s}_{12}$ defined by the same formulae \eqref{tilde section1p} and \eqref{tilde section12}.
For each $b \in B$, the differential of 
\begin{align*}
s_b + \vp_1 + \bullet \vp_2 :  \scrX_b \times \R^{N_1} \times \R^{N_2} \times [0,1] & \to \scrE_{b} \times [0,1]
\end{align*}
is surjective at each point on $\M_b \times \{0\} \times \{0\} \times [0,1]$, and thus on a neighborhood of $\M_b \times \{0\} \times \{0\} \times [0,1]$ in $\scrX_b \times \R^{N_1} \times \R^{N_2} \times [0,1]$.
Thus we obtain a family of manifolds with boundaries ${\vnU} \subset \scrX \times \R^{N_1} \times \R^{N_2} \times [0,1]$ and we can assume that ${\vnU}$ gives a fiberwise cobordism between a family \vn $\vnU_1'$ obtained from $\tilde{s}_{1}'$ and a family \vn $\vnU_{12}$ obtained from $\tilde{s}_{12}$.
The relative Euler classes $e_{\calU_{1}'}$ and $e_{\calU_{12}}$ are the restrictions of the relative Euler class obtained from $h_{\calU}$ to $\vnU_1'$ and $\vnU_{12}$ respectively.
Therefore, using the following elementary Lemma~\ref{lem : fiberwise cobordism}, we have $\pi_{!}(e_{{\vnU}_{1}'} \cup p^\ast \alpha)
= \pi_{!}(e_{{\vnU}_{12}} \cup p^\ast \alpha)$.
The rest of the proof is an argument based on the suspension isomorphism, which is same as the proof of Lemma~\ref{lem :  well-def1}.
\end{proof}

\begin{lem}
Let $\calV_0 \to B,\ \calV_1 \to B$ and $\calV \to B$ be continuous maps (not necessary fiber bundles).
Assume that the inverse images by these maps of each point of $B$ are smooth manifolds, and also assume that they admit fiberwise embeddings into a trivial vector bundle.
(Then one can define integration along the fiber for these families.)
Suppose that $\calV$ gives a fiberwise cobordism between $\calV_0$ and $\calV_1$.
Then, for any cohomology class $\beta \in H_{\cpt}^\ast(\calV)$,
\[
\pi_{!}(\beta|_{\calE_0})
= \pi_{!}(\beta|_{\calE_1})
\]
holds.
\label{lem : fiberwise cobordism}
\end{lem}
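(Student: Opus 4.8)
Recall that the fiber integration $\pi_{!}$ appearing in the statement is constructed, as in \cref{subsection Family virtual neighborhood}, by choosing a fiberwise embedding of the family into a trivial vector bundle $B \times \R^{n}$, forming the parametrized normal bundle, and composing the Thom, excision, and suspension isomorphisms; on each fiber it is evaluation against the fiberwise fundamental class. The assertion is then the parametrized, compactly supported version of the elementary identity that, for a compact manifold $W$ with $\partial W = M_{0} \sqcup M_{1}$ and $\beta \in H^{\ast}_{\cpt}(W)$, one has $\langle \beta|_{M_{0}}, [M_{0}] \rangle = \langle \beta|_{M_{1}}, [M_{1}] \rangle$. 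The latter follows from $\langle \delta(\iota^{\ast}\beta), [W, \partial W] \rangle = \langle \iota^{\ast}\beta, \partial[W, \partial W] \rangle = \langle \iota^{\ast}\beta, [\partial W] \rangle$ together with $\delta \circ \iota^{\ast} = 0$ by exactness of the pair $(W, \partial W)$, where $\iota \colon \partial W \hookrightarrow W$ and $\delta$ is the connecting homomorphism in compactly supported cohomology.

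First I would fix \emph{compatible} data. Because $\calV \to B$ is a fiberwise cobordism, fiberwise collar neighborhoods $\calV_{j} \times [0, \varepsilon) \hookrightarrow \calV$ of $\calV_{j}$ ($j = 0, 1$) exist, and they can be arranged to depend continuously on $b \in B$ by patching local choices with a partition of unity, exactly as the cut-off functions were produced in \cref{subsection Family virtual neighborhood} (using that $B$ is normal). Choose fiberwise embeddings $\calV_{j} \hookrightarrow B \times \R^{n-1}$ and extend them to a fiberwise embedding $\calV \hookrightarrow B \times \R^{n-1} \times \R$ which on each collar is $\calV_{j} \times [0, \varepsilon) \hookrightarrow (B \times \R^{n-1}) \times [0, \varepsilon)$ with the collar parameter as the last coordinate. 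Then the parametrized normal bundle of $\calV$ restricted to $\calV_{j}$ is $\nu_{\calV_{j}} \oplus \underline{\R}$, so the Thom classes and push-forward maps attached to $\calV$, $\calV_{0}$, and $\calV_{1}$ are mutually compatible, and one reads off that $\pi_{!}(\beta|_{\calV_{0}}) - \pi_{!}(\beta|_{\calV_{1}})$ (the sum, with $\Z/2$-coefficients) equals $\pm \pi_{!}^{\partial\calV}(\iota^{\ast}\beta)$, where $\iota \colon \partial\calV = \calV_{0} \sqcup \calV_{1} \hookrightarrow \calV$ and $\pi_{!}^{\partial\calV}$ is the fiber integration along $\partial\calV \to B$. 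So it remains to prove $\pi_{!}^{\partial\calV} \circ \iota^{\ast} = 0$.

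For this I would run the parametrized form of the single-fiber argument: using the fiberwise collar, fiber integration along $\partial\calV \to B$ factors, up to sign, as the connecting homomorphism $\delta \colon H^{\ast}_{\cpt}(\partial\calV) \to H^{\ast+1}_{\cpt}(\calV, \partial\calV)$ of the fiberwise pair followed by the relative fiber integration along $\calV$ (the latter defined by identifying $H^{\ast}_{\cpt}(\calV, \partial\calV)$ with the compactly supported cohomology of the fiberwise interior of $\calV$ and integrating there); on each fiber this factorization is merely the duality $\langle \delta x, c \rangle = \langle x, \partial c \rangle$ applied to the fundamental class $c$ of $(\calV_{b}, \partial\calV_{b})$. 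Since $\delta \circ \iota^{\ast} = 0$ by exactness, the composite $\pi_{!}^{\partial\calV} \circ \iota^{\ast}$ vanishes, so $\pi_{!}(\beta|_{\calV_{0}}) = \pi_{!}(\beta|_{\calV_{1}})$. In the $\Z$-coefficient case the sign is pinned down by the chosen nowhere-vanishing section of the determinant line, and with $\Z/2$-coefficients there are no signs at all.

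\textbf{Main obstacle.}
The content is morally Stokes' theorem, so the real work is bookkeeping rather than topology: carrying the classical duality argument through the embedding-into-a-trivial-bundle \emph{definition} of $\pi_{!}$ for families that are merely continuous maps and not fiber bundles. Concretely, one must choose the collars and the compatible embedding continuously over the possibly badly behaved base $B$, and then check that the relative Thom/push-forward constructions and the factorization above are natural enough in $b$ to yield an honest equality of classes in $H^{\ast-(\dim \text{fiber})}_{\cpt}(B)$, not merely a fiberwise one.
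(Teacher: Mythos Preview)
Your argument is correct but takes a different route from the paper. The paper's proof is a two-line reduction: since integration along the fiber commutes with restriction, it suffices to treat the case where $\calV$, $\calV_0$, $\calV_1$ are trivial families over $B$; there the K\"unneth formula separates the $B$-factor from the fiber, and what remains is exactly the unparametrized cobordism identity already recorded as \eqref{eq : cobordism argument at point}. You instead stay in the general setting, build compatible fiberwise collars and embeddings so that $\nu_{\calV}|_{\calV_j}\cong\nu_{\calV_j}\oplus\underline{\R}$, and then use the exact sequence of the pair $(\calV,\partial\calV)$ to see that $\pi_!^{\partial\calV}\circ\iota^\ast$ factors through $\delta\circ\iota^\ast=0$. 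Your route makes the Stokes-type mechanism explicit and is self-contained, at the price of the collar/embedding bookkeeping you honestly flag as the main obstacle; the paper's route is shorter but its reduction step is terse about exactly how one passes to the trivial case when the given families are not assumed locally trivial. Either way the content is the same parametrized Stokes identity, and your version would serve just as well.
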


\begin{proof}
Since integration along the fiber commutes with restriction, it is enough to show our statement in the case that $\calV_{i}$ and $\calV$ are trivial bundles.
In this case, one can easily to see it using the K\"{u}nneth formula and the cobordism argument as \eqref{eq : cobordism argument at point}.
\end{proof}

\begin{defi}
\label{defi of the evaluated invariant}
\begin{enumerate}
\item
\label{case of absolute evaluation of cohomological invariant}
If $B$ is a closed manifold of dimension of $k-\ind{s}$, we define a number $\frakm(s, \alpha)$ by
\[
\frakm(s, \alpha) := \left< \frakM(s, \alpha), [B]\right> \in \Z \text{ or } \Z/2.
\]
Here, if a nowhere-vanishing section of $\det{s} \to s^{-1}(0)$ is given and if $B$ is oriented, $\frakm(s, \alpha) \in \Z$, and otherwise $\in \Z/2$.
\item
If $B$ is a compact manifold with boundary and $s$ is nowhere-vanishing on $\del B$,  we define a number $\frakm(s, \alpha; \del B)$ by
\begin{align*}
\frakm(s, \alpha; \del B) := \left< \frakM(s, \alpha; \del B), [B, \del B]\right> \in \Z \text{ or } \Z/2.
\end{align*}
Here whether $\frakm(s, \alpha; \del B)$ is in $\Z$ or not is similar to the case \eqref{case of absolute evaluation of cohomological invariant}.
\end{enumerate}
\end{defi}

These numbers $\frakm(s, \alpha)$ and $\frakm(s, \alpha; \del B)$ correspond to the``counted number $\#\M$'' for the parameterized moduli space on $B$.

We remark a lemma relating the ``naturality'' at the end of this section:

\begin{lem}
\label{naturality for vn general}
Let $\scrE \to \scrX \to B$ and $s : \scrX \to \scrE$ be as above, $A$ be a normal space, and $f : A \to B$ be a continuous map.
Assume that, for the pull-backed section $f^{\ast}s : f^{\ast}\scrX \to f^{\ast}\scrE$, the parameterized zero set $(f^{\ast}s)^{-1}(0)$ is also compact.
Let $\calU$ be a family \vn for the parameterized Fredholm section $s$.
Then, there exist a family \vn $f^{\ast}\calU$ for the pull-backed section $f^{\ast}s$ and a continuous map $\tilde{f} : f^{\ast}\calU \to \calU$ such that $\tilde{f}$ covers $f$ and
\begin{align}
\tilde{f}^{\ast}e_{\calU} = e_{f^{\ast}\calU}
\label{eq: equality between cohomology classes from vn}
\end{align}
holds.
\end{lem}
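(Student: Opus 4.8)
The plan is to unwind the construction of a family virtual neighborhood from Section~\ref{subsection Family virtual neighborhood} and check that every choice can be made compatibly with the pull-back along $f$. Concretely, recall that $\calU$ was built from data consisting of finitely many points $x_{1},\dots,x_{m} \in \M \subset \scrX$, integers $N_{i}$, linear maps $f_{i} : \R^{N_{i}} \to \scrH_{\pi(x_i)}$, open sets $U_{i} = U_{i}^{\base} \times U_{i}^{\fiber}$, cut-off/partition-of-unity functions $\rho_{i} = \rho_{i}^{\base}\cdot p_{i}^{\ast}\rho_{i}^{\fiber}$, and a neighborhood $\mathscr{N}$ of $\M \times \{0\}$, yielding the finite dimensional perturbation $\vp$ and $\calU = \tilde{s}^{-1}(0) \cap \mathscr{N}$. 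The key observation is that all of this data pulls back under $f : A \to B$: the section $f^{\ast}s : f^{\ast}\scrX \to f^{\ast}\scrE$ is again a parametrized Fredholm section, the cut-off functions pull back to cut-off functions on (the base factors of) open sets in $f^{\ast}\scrX$, and since $(f^{\ast}s)^{-1}(0)$ is assumed compact one may cover it by finitely many pulled-back open sets (possibly shrinking and passing to a finite subcover). This produces a finite dimensional perturbation $f^{\ast}\vp$ of $f^{\ast}s$ whose fiberwise surjectivity on the zero set follows exactly as in Lemma~\ref{lem : surjective for family}; setting $f^{\ast}\calU := (f^{\ast}\tilde{s})^{-1}(0) \cap f^{\ast}\mathscr{N}$ gives a family virtual neighborhood for $f^{\ast}s$.

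Next I would produce the covering map $\tilde{f} : f^{\ast}\calU \to \calU$. Since $f^{\ast}\scrX \to \scrX$ and $f^{\ast}\scrE \to \scrE$ come with canonical bundle maps covering $f$, and since we have arranged $f^{\ast}\vp$ and $f^{\ast}\mathscr{N}$ to be literal pull-backs, the canonical map $f^{\ast}\scrX \times \R^{N} \to \scrX \times \R^{N}$ carries the locus $(f^{\ast}\tilde{s})^{-1}(0)\cap f^{\ast}\mathscr{N}$ into $\tilde{s}^{-1}(0)\cap\mathscr{N}$, and we take $\tilde{f}$ to be its restriction; by construction $\tilde{f}$ covers $f$ and it also intertwines the height maps, i.e. $h_{\calU} \circ \tilde{f} = h_{f^{\ast}\calU}$ under the identification $\R^{N} = \R^{N}$ (both are restrictions of the projection to the common $\R^{N}$ factor).

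Finally, the identity $\tilde{f}^{\ast}e_{\calU} = e_{f^{\ast}\calU}$ is then formal: the relative Euler class $e_{\calU} = h_{\calU}^{\ast}\tau(\vnU\times\R^{N})$ and $e_{f^{\ast}\calU} = h_{f^{\ast}\calU}^{\ast}\tau(f^{\ast}\vnU\times\R^{N})$, the Thom class is natural under bundle maps covering the base map $\tilde{f}$, and $h_{\calU}\circ\tilde{f} = h_{f^{\ast}\calU}$, so
\[
\tilde{f}^{\ast}e_{\calU} = \tilde{f}^{\ast}h_{\calU}^{\ast}\tau = h_{f^{\ast}\calU}^{\ast}\tilde{f}^{\ast}\tau = h_{f^{\ast}\calU}^{\ast}\tau = e_{f^{\ast}\calU},
\]
with due care that the compactly-supported images agree (which they do since $\tilde{f}$ maps the relevant compact neighborhoods of the two zero sets to one another). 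The main obstacle I anticipate is not any single step but the bookkeeping of shrinking: one must verify that the finitely many open sets and the neighborhood $\mathscr{N}$ chosen upstairs can be pulled back and, if necessary, shrunk so that the resulting $f^{\ast}\calU$ is honestly a family virtual neighborhood (fiberwise surjectivity, correct dimension) while $\tilde{f}$ remains well-defined onto $\calU$ — this is routine but requires care because $f^{-1}$ of the covering of $\M$ need not immediately cover $(f^{\ast}s)^{-1}(0)$ without passing to a subcover and re-running the partition-of-unity construction. One also notes the orientation/determinant-line issue is harmless: a nowhere-vanishing section of $\det s$ pulls back to one of $\det f^{\ast}s$, so the coefficient conventions match.
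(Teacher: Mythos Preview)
Your proposal is correct and follows essentially the same route as the paper: pull back the finite dimensional perturbation $\vp$ to obtain $f^{\ast}\vp$, define $f^{\ast}\calU$ from $f^{\ast}\tilde{s} = f^{\ast}s + f^{\ast}\vp$, take $\tilde{f}$ to be the restriction of $\bar{f}\times\id_{\R^{N}}$, and deduce the Euler class identity from naturality of the Thom class together with $h_{\calU}\circ\tilde{f} = h_{f^{\ast}\calU}$. Your anticipated obstacle is a non-issue: since $(f^{\ast}s)^{-1}(0) = \bar{f}^{-1}(\M)$, the preimages $\bar{f}^{-1}(U_{i})$ automatically cover $(f^{\ast}s)^{-1}(0)$ and the pulled-back cut-offs satisfy $\bar{f}^{\ast}\rho_{i}>0$ at every point of this zero set, so no subcover or rerun of the partition-of-unity construction is needed.
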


\begin{proof}
Let $\vp : \scrX \times \R^{N} \to \scrE$ be the finite dimensional perturbation used to define $\calU$.
Then $f^{\ast}\calU$ is constructed as the \vn obtained from $f^{\ast}\vp : f^{\ast}\scrX \times \R^{N} \to f^{\ast}\scrE$.
Let $\bar{f} : f^{\ast}\scrX \to \scrX$ be the natural map covering $f$.
Then the map $\tilde{f}$ is given as the restriction of $\bar{f} \times \id_{\R^{N}} : f^{\ast}\scrX \times \R^{N} \to \scrX \times \R^{N}$.
If we take the open manifold $\mathscr{N}$ used in the definition of $\calU$ and that for $f^{\ast}\calU$ to be sufficiently small, one can check that the equality 
\[
(\tilde{f} \times \id_{\R^{N}})^{\ast} \tau(\calU \times \R^{N})
= \tau(f^{\ast}\calU \times \R^{N})
\]
between the Thom classes holds in
\[
H^{N}(f^{\ast}\calU \times D(\R^{N}), f^{\ast}\calU \times S(\R^{N})) \cong H^{0}(f^{\ast}\calU) \otimes R,
\]
where $R = \Z$ or $\Z/2$ is the coefficient ring.
This implies the equality \eqref{eq: equality between cohomology classes from vn} between the Euler classes.
\end{proof}

\begin{cor}
\label{naturality for vn}
Let $\scrE \to \scrX \to B$, $s : \scrX \to \scrE$ , and $f : A \to B$ be that given in \cref{naturality for vn general} and assume that $(f^{\ast}s)^{-1}(0)$ is also compact.
Then
\[
f^{\ast} \frakM(s, \alpha)
= \frakM(s, \bar{f}^{\ast}\alpha)
\]
holds for any $\alpha \in H^{\ast}(\scrX)$, where $\bar{f} : f^{\ast}\scrX \to \scrX$ is the natural map covering $f$.
Similarly, for subsets $A' \subset A$ and $B' \subset B$ satisfying that $f(A') \subset B'$ and that $s$ is nowhere-vanishing on $A'$, we have
\[
f^{\ast} \frakM(s, \alpha; B')
= \frakM(s, \bar{f}^{\ast}\alpha; A').
\]
\end{cor}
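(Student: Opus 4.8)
The plan is to reduce both identities to \cref{naturality for vn general} together with the naturality of integration along the fiber. By \cref{naturality for vn general}, since $(f^{\ast}s)^{-1}(0)$ is assumed compact, we may compute $\frakM(s,\bar{f}^{\ast}\alpha)$ using the family \vn $f^{\ast}\calU$ produced there, and we are given a continuous map $\tilde{f}:f^{\ast}\calU\to\calU$ covering $f$ with $\tilde{f}^{\ast}e_{\calU}=e_{f^{\ast}\calU}$. Write $p:\calU\to\scrX$ and $p':f^{\ast}\calU\to f^{\ast}\scrX$ for the two projections. Because $\tilde{f}$ was built as the restriction of $\bar{f}\times\id_{\R^{N}}:f^{\ast}\scrX\times\R^{N}\to\scrX\times\R^{N}$, we have $p\circ\tilde{f}=\bar{f}\circ p'$, hence $(p')^{\ast}\bar{f}^{\ast}\alpha=\tilde{f}^{\ast}p^{\ast}\alpha$; together with the equality of relative Euler classes this gives
\begin{align*}
e_{f^{\ast}\calU}\cup(p')^{\ast}\bar{f}^{\ast}\alpha=\tilde{f}^{\ast}\bigl(e_{\calU}\cup p^{\ast}\alpha\bigr).
\end{align*}

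The second step is to verify that the fiber integrations commute with $\tilde{f}^{\ast}$, i.e.\ that $\pi_{!}^{A}\circ\tilde{f}^{\ast}=f^{\ast}\circ\pi_{!}^{B}$, where $\pi_{!}^{B}:H_{\cpt}^{\ast}(\calU)\to H_{\cpt}^{\ast-(\ind{s}+N)}(B)$ and $\pi_{!}^{A}:H_{\cpt}^{\ast}(f^{\ast}\calU)\to H_{\cpt}^{\ast-(\ind{s}+N)}(A)$ are the maps of \eqref{eq: fiber integration}. I would fix the fiberwise embedding $\calU\inc B\times\R^{n}$ used to define $\pi_{!}^{B}$ and use the induced fiberwise embedding $f^{\ast}\calU\inc A\times\R^{n}$ to define $\pi_{!}^{A}$; then the parametrized normal bundle of $f^{\ast}\calU$ is $\tilde{f}^{\ast}\nu_{\calU}$, so its Thom class is $\tilde{f}^{\ast}$ of the Thom class of $\nu_{\calU}$, and each of the maps appearing in the definition of \eqref{eq: fiber integration} --- the Thom isomorphism, the excision isomorphism, the map $H^{\ast}(D(B\times\R^{n}),\overline{D(B\times\R^{n})\setminus\nu_{\calU}})\to H^{\ast}(D(B\times\R^{n}),S(B\times\R^{n}))$, and the suspension isomorphism --- is natural with respect to the commutative square over $f$ (the orientation data, when present, being transported by $\bar{f}$, so the statement holds with $\Z$- or $\Z/2$-coefficients accordingly). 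Applying $\pi_{!}^{B}$ to $e_{\calU}\cup p^{\ast}\alpha$ and using the displayed identity,
\begin{align*}
f^{\ast}\frakM(s,\alpha)=f^{\ast}\pi_{!}^{B}(e_{\calU}\cup p^{\ast}\alpha)=\pi_{!}^{A}\tilde{f}^{\ast}(e_{\calU}\cup p^{\ast}\alpha)=\pi_{!}^{A}\bigl(e_{f^{\ast}\calU}\cup(p')^{\ast}\bar{f}^{\ast}\alpha\bigr)=\frakM(s,\bar{f}^{\ast}\alpha);
\end{align*}
here $f^{\ast}$ on $H_{\cpt}^{\ast}(B)$ is legitimate because $\frakM(s,\alpha)$ comes from a class supported in an arbitrarily small neighborhood of the compact set $\pi(\M)$, and its pull-back is supported near the compact set $\pi_{A}((f^{\ast}s)^{-1}(0))$.

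For the relative statement, if $s$ is nowhere-vanishing on $A'$ then $f^{\ast}s$ is nowhere-vanishing on $A'$, so $A'\cap A^{f^{\ast}s}=\emptyset$ and $\frakM(s,\bar{f}^{\ast}\alpha;A')$ is defined; and since $f(A')\subset B'$, the map $\tilde{f}$ carries the part of $f^{\ast}\calU$ lying over $A'$ into the part of $\calU$ lying over $B'$, over which the relevant classes vanish. Hence the pull-back of the relative Euler class and the naturality of $\pi_{!}$ both take place at the level of the relative groups $H_{\cpt}^{\ast}(B,B')$ and $H_{\cpt}^{\ast}(A,A')$, and the same computation gives $f^{\ast}\frakM(s,\alpha;B')=\frakM(s,\bar{f}^{\ast}\alpha;A')$. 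The one point in the argument that is not purely formal is the naturality of the \emph{ad hoc} fiber integration \eqref{eq: fiber integration}: since $\calU\to B$ need not be a fiber bundle, one must choose the fiberwise embeddings into trivial vector bundles compatibly under $f$ before the chain of Thom and excision isomorphisms can be compared, and that is the step I would write out in full detail.
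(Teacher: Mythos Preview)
Your proof is correct and follows exactly the approach the paper intends: the paper's own proof is a single sentence, ``This is because pull-back commutes with integration along the fiber,'' and what you have written is a careful unpacking of precisely that assertion together with \cref{naturality for vn general}. Your additional care about the meaning of $f^{\ast}$ on compactly supported cohomology and about choosing compatible fiberwise embeddings is welcome detail that the paper leaves implicit.
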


\begin{proof}
This is because pull-back commutes with integration along the fiber.
\end{proof}

\section{Construction of the characteristic classes}
\label{section Construction of the characteristic classes}

The aim of this section is to construct characteristic classes bundles of $4$-manifolds via ASD/\SW equations.
The procedure of the construction is some analogy of that of obstruction theory.
The basic reason why the analogy works is that we have an interpretation of the Donaldson/\SW invariants as the ``Euler classes" of some Hilbert bundles, explained as the introduction.
To consider the Euler class in the rigorous sense, we use some finite dimensional approximations of the Hilbert bundles, given in \cref{section Virtual neighborhood for families}.
We note that the story of this \lcnamecref{section Construction of the characteristic classes} is similar to that of Section~3 in \cite{Konno2}, in which the author has given the construction of the cohomological \SW invariant associated with the adjunction complex of surfaces.

\subsection{Main construction}
\label{subsection Definition of the characteristic classes in the basic case}

Let $X$ be an oriented closed smooth $4$-manifold.
Let us choose one of the ASD setting or the SW setting, and work on it in this \lcnamecref{subsection Definition of the characteristic classes in the basic case}.
We shall eventually define characteristic classes for bundles on a general topological space, but let $B$ be a CW complex until \cref{well-definedness of the invariant on CW str}.
Let $n$ be a non-negative integer and suppose that $b^{+}(X) \geq n+2$.
Assume that the formal dimension of the moduli space is $-n$.
In this \lcnamecref{subsection Definition of the characteristic classes in the basic case} we define our characteristic classes under these assumptions.
Let $B^{(n)}$ denote the $n$-skeleton of $B$.
We first construct a section
\begin{align}
\si = \si^{(n)} : B^{(n)} \to \Pi(E)|_{B^{(n)}}
\label{eq: section from Bn to perturbations}
\end{align}
inductively as follows.
For each $b \in B^{(0)}$, take a generic point in $\Pi(E_{b})$.
Then we have $\si^{(0)} : B^{(0)} \to \Pi(E)|_{B^{(0)}}$.
Assume that we have constructed $\si^{(k-1)} : B^{(k-1)} \to \Pi(E)|_{B^{(k-1)}}$ for $k \leq n$ such that the parameterized moduli space for $\si^{(k-1)}$ is empty:
$\M_{\si^{(k-1)}} = \emptyset$.
Here $\M_{\si^{(k-1)}}$ is the space obtained by substituting $\si^{(k-1)}$ for $\si$ in \cref{defi: globally defined parameterized moduli space}.
Note that this condition on $\si^{(k-1)}$ implies that $\M_{\si^{(k-1)}}$ contains no reducible solution.
Let $e \subset B$ be a $k$-cell and $\vp_{e} : D_{e}^{k} \to \bar{e} \subset B$ be the characteristic map of $e$.
Here $D^{k}_{e}$ is the standard $k$-dimensional disk indexed by $e$.
Since the bundle $\vp_{e}^{\ast} \Pi(E) \to D_{e}^{k}$ is trivial, we can take a trivialization $\psi_{e} : \vp_{e}^{\ast} \Pi(E) \to D_{e}^{k} \times \Pi(X)$.
Let us consider the continuous map obtained as the composition
\[
p_{2} \circ \psi_{e} \circ (\vp_{e}|_{\del D_{e}^{k}})^{\ast}\si^{(k-1)}
: \del D_{e}^{k} \to (\vp_{e}|_{\del D_{e}^{k}})^{\ast} \Pi(E) \to \del D_{e}^{k} \times \Pi(X) \to \Pi(X),
\]
where $p_{2} : \del D_{e}^{k} \times \Pi(X) \to \Pi(X)$ is the projection.
Since $b^{+}(X) \geq n+1$, we can smoothly and generically extend this map to a map from $D_{e}^{k}$ into $\Pi(X)$ avoiding the wall.
(Here the term ``smoothly'' means smoothness in the interior of $D_{e}^{k}$.)
This extended map gives a section $\bar{e} \to \Pi(E)|_{\bar{e}}$.
We therefore obtain $\si^{(k)} : B^{(k)} \to \Pi(E)|_{B^{(k)}}$.
In fact this procedure can be continued until $k=n+1$ since we assume $b^{+}(X) \geq n+2$, but we stop it until $k=n$ in this subsection.
(The case that $k=n+1$ is used in \cref{subsection Well-definedness}.)
We call such $\si$ an {\it inductive section}.
Note that whether $\M_{\si^{(k)}}$ is empty or not is independent of the choice of local trivialization.
This is because another choice of trivialization induces a bijection as in \eqref{isomorphism by pull back between moduli spaces}.
To detect $\M_{\si^{(k)}}$ is empty or not, we can therefore use the smooth structure of the pull-backed moduli space on $D^{n-1}_{e}$ via the trivializations $\psi_{e}$.
Thus we have $\M_{\si^{(k)}} = \emptyset$ for $k<n$ because of formal dimension.

For each $n$-cell $e$, substituting  $\vp_{e}^{\ast}\si$ for $\si_{\si}$ in \cref{defi: family of Fredholm sections in the most general setting}, we can consider the family of Fredholm sections $s_{\vp_{e}^{\ast}\si}$ corresponding to $\vp_{e}^{\ast}\si$ and parameterized on $D^{n}_{e}$.
As we mentioned, the section \eqref{eq: section from Bn to perturbations} satisfies that $\M_{\si^{(n-1)}} = \emptyset$.
Namely, for each $n$-cell $e$, the family of Fredholm sections $s_{\vp_{e}^{\ast}\si}$ is nowhere-vanishing on $\del D_{e}^{n}$.
In addition, the parameterized moduli space $\M_{\vp_{e}^{\ast}s}$ is compact because of the compactness of $D_{e}^{n}$ and that of the usual (i.e. unparameterized) moduli space of the solutions to the $SO(3)$-ASD equation with non-trivial $w_{2}$ or to the \SW equations.
In other words, the family of Fredholm sections $s_{\vp_{e}^{\ast}\si}$ satisfies the compactness assumption corresponding to \cref{assumption on compactness for parameterized moduli}.
We can therefore have the number
\[
\frakm(s_{\vp_{e}^{\ast}\si}, 1; D_{e}^{n}) \in \Z \text{ or } \Z/2
\]
by substituting $s_{\vp_{e}^{\ast}\si}$ and $1 \in H^{0}(\scrB^{\ast}_{\vp_{e}^{\ast}\si})$ for $s$ and $\alpha$ in \cref{defi of the evaluated invariant}, where $\scrB^{\ast}_{\vp_{e}^{\ast}\si}$ is the Hilbert manifold obtained by substituting $\vp_{e}^{\ast}\si$ for $\si$ in \cref{defi family of scrB and scrE in the most general setting}.
Here if we consider the homology oriented case, the number is in $\Z$, and otherwise in $\Z/2$.
We write
\[
C_\ast(B),\quad
C^\ast(B),\quad
\del : C_\ast(B) \to C_{\ast-1}(B),\quad
\codel : C^\ast(B) \to C^{\ast+1}(B)
\]
for the (cellular) chain complex, the cochain complex, the boundary operator, and the coboundary operator respectively, where the coefficient is $\Z$ if we consider the homology oriented case, and otherwise is $\Z/2$.
In what follows, we keep this convention on coefficient.

\begin{defi}
\label{defi of the basic cochain}
For the section $\si$ constructed above, we define a cochain 
\[
\Acoch(E, \si) \in C^{n}(B)
\]
by
\[
e \mapsto \frakm(s_{\vp_{e}^{\ast}\si}, 1; D_{e}^{n}).
\]
\end{defi}

We shall prove the following \lcnamecref{prop: coclosedness} in \cref{subsection Well-definedness}:

\begin{prop}
\label{prop: coclosedness}
The cochain $\Acoch(E, \si)$ constructed above is a cocycle.
\end{prop}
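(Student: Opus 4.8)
The plan is to argue by a version of obstruction theory: I evaluate $\codel\Acoch(E,\si)$ on an arbitrary $(n+1)$-cell $f$ of $B$ and show that $\langle\codel\Acoch(E,\si),f\rangle=\langle\Acoch(E,\si),\del f\rangle$ vanishes, by relating the cellular pairing $\langle\Acoch(E,\si),\del f\rangle$ to a single family invariant over the boundary sphere $\del D^{n+1}_f=S^n$ of the characteristic cell and then killing that invariant using an extension of $\si$ over the disk $D^{n+1}_f$. Concretely, I first promote $\si=\si^{(n)}$ to an inductive section $\si^{(n+1)}:B^{(n+1)}\to\Pi(E)|_{B^{(n+1)}}$; this is possible because $b^{+}(X)\geq n+2$, and the extension over the new $(n+1)$-cells can still be taken to avoid the wall of reducibles, since the wall has codimension $b^{+}(X)\geq n+2$ in $\Pi(X)$. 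Fix an $(n+1)$-cell $f$ with characteristic map $\vp_f:D^{n+1}_f\to B$ and attaching map $\phi_f:=\vp_f|_{\del D^{n+1}_f}:S^n\to B^{(n)}$. Pulling back by $\vp_f$ produces a family of Fredholm sections $s_{\vp_f^{\ast}\si}$ over $D^{n+1}_f$; its parameterized moduli space is compact (by compactness of $D^{n+1}_f$ together with the compactness of the unparameterized moduli space of the $SO(3)$-ASD equation with $w_2\neq 0$, resp.\ of the \SW equations, i.e.\ the hypothesis behind \cref{assumption on compactness for parameterized moduli}) and contains no reducibles. Applying the family virtual neighborhood machinery of \cref{subsection Family virtual neighborhood}, I obtain $\frakM(s_{\vp_f^{\ast}\si},1)\in H^{n}_{\cpt}(D^{n+1}_f;R)$, where $R=\Z$ in the homology oriented case and $R=\Z/2$ otherwise; here I have used that $\ind{s}=-n$ (the formal dimension) and that $1\in H^{0}$, so the degree in question is $0-(-n)=n$.

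Next, let $\iota:\del D^{n+1}_f=S^n\inc D^{n+1}_f$ be the inclusion. Since $(\iota^{\ast}s_{\vp_f^{\ast}\si})^{-1}(0)$ is compact, \cref{naturality for vn} gives $\iota^{\ast}\frakM(s_{\vp_f^{\ast}\si},1)=\frakM(s_{\phi_f^{\ast}\si},1)$ in $H^{n}(S^n;R)$. Since $S^n$ is a closed $n$-manifold, by \cref{defi of the evaluated invariant} the number $\frakm(s_{\phi_f^{\ast}\si},1)=\langle\frakM(s_{\phi_f^{\ast}\si},1),[S^n]\rangle$ is defined, and
\[
\frakm(s_{\phi_f^{\ast}\si},1)=\langle\frakM(s_{\phi_f^{\ast}\si},1),[S^n]\rangle=\langle\iota^{\ast}\frakM(s_{\vp_f^{\ast}\si},1),[S^n]\rangle=\langle\frakM(s_{\vp_f^{\ast}\si},1),\iota_{\ast}[S^n]\rangle=0,
\]
because $\iota_{\ast}[S^n]=0$ in $H_n(D^{n+1}_f;R)$ (the boundary sphere bounds the disk). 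It then remains to identify $\frakm(s_{\phi_f^{\ast}\si},1)$ with the cellular pairing $\langle\Acoch(E,\si),\del f\rangle$. For this I would use that the parameterized moduli over $S^n$ is a finite set of points, all of which lie over $\phi_f^{-1}\bigl(\bigsqcup_e\mathring{e}\bigr)$ (with $e$ ranging over the $n$-cells of $B$), since $\M_{\si^{(n-1)}}=\emptyset$ forbids solutions over the preimage of $B^{(n-1)}$. The family virtual neighborhood and the fiber integration then localize over the open sets $\phi_f^{-1}(\mathring{e})$, the contribution of $\phi_f^{-1}(\mathring{e})$ being $[e:f]\cdot\Acoch(E,\si)(e)$, where $[e:f]$ is the cellular incidence number realized as the signed local degree of $\phi_f$ over $\mathring{e}$ and $\Acoch(E,\si)(e)=\frakm(s_{\vp_e^{\ast}\si},1;D^n_e)$ is the count over the cell $e$ from \cref{defi of the basic cochain}. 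Summing over the $n$-cells gives $\frakm(s_{\phi_f^{\ast}\si},1)=\sum_e[e:f]\,\Acoch(E,\si)(e)=\langle\Acoch(E,\si),\del f\rangle$, hence $\langle\codel\Acoch(E,\si),f\rangle=0$; as $f$ was arbitrary, $\Acoch(E,\si)$ is a cocycle.

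The step I expect to be the main obstacle is this last identification — the localization of the family invariant over the cells and its matching with the cellular coboundary operator, with the correct signs in the homology oriented case. This is the delicate bookkeeping inherent to any obstruction-theoretic argument: one must check that the orientations induced on the parameterized moduli spaces (equivalently, on the family virtual neighborhoods) by the homology orientation $\calO$ are compatible with the boundary orientation on $S^n=\del D^{n+1}_f$ and with the cellular sign conventions, so that the contributions add up precisely to $\sum_e[e:f]\,\Acoch(E,\si)(e)$ and not to that sum up to an overall sign. The remaining ingredients — the existence of $\si^{(n+1)}$, the compactness of the parameterized moduli over $D^{n+1}_f$, and the behaviour of the family virtual neighborhood under restriction along $\iota$ — are either recorded above or are direct consequences of \cref{subsection Family virtual neighborhood} and \cref{naturality for vn}.
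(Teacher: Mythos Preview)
Your overall strategy matches the paper's: extend $\si^{(n)}$ to $\si^{(n+1)}$, pull back to an $(n+1)$-cell, and exploit that the boundary of the disk is null-homologous. The difference lies precisely in the step you flag as the main obstacle --- the identification $\frakm(s_{\phi_f^{\ast}\si},1)=\langle\Acoch(E,\si),\del f\rangle$ --- and the paper handles it differently.

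Rather than your local-degree argument, the paper equips $D^{n+1}_f$ with the standard simplex CW structure $\Delta^{n+1}$ and, by cellular approximation, takes $\vp_f$ to be cellular. Two auxiliary results then do all the work: \cref{lem on naturality at the level of cochain} (cochain-level naturality, giving $\vp_f^{\ast}\Acoch(E,\si)=\Acoch(\vp_f^{\ast}E,\vp_f^{\ast}\si)$ as cochains on $\Delta^{n+1}$) and \cref{lem expression of chchain as global cohomology class} (identifying $\Acoch(\vp_f^{\ast}E,\vp_f^{\ast}\si)$ with $\frakM(s_{\vp_f^{\ast}\si|_{(\Delta^{n+1})^{(n)}}},1;(\Delta^{n+1})^{(n-1)})$ in the relative group $H^{n}((\Delta^{n+1})^{(n)},(\Delta^{n+1})^{(n-1)})$). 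The paper then works with the \emph{relative} class $\frakM(s_{\vp_f^{\ast}\si},1;(\Delta^{n+1})^{(n-1)})\in H^{n}(\Delta^{n+1},(\Delta^{n+1})^{(n-1)})$ rather than your absolute one, and concludes from $i_{\ast}\del\Delta^{n+1}=0$ in $H_{n}(\Delta^{n+1},(\Delta^{n+1})^{(n-1)})$. This packaging absorbs the sign and orientation bookkeeping into the naturality of $\frakM$ under pullback (\cref{naturality for vn}), so no explicit localization or incidence-number computation is needed.

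Your sketch is not wrong in spirit, but making it rigorous amounts to reproving those two lemmas in this special case; note in particular that the pullback $\phi_f^{\ast}\si$ need not be generic (the restriction of $\phi_f$ to a component of $\phi_f^{-1}(\mathring e)$ can be degenerate), so you cannot literally count moduli points and must carry the argument through at the level of the relative Euler class in the family virtual neighborhood --- which is exactly what \cref{lem expression of chchain as global cohomology class} encapsulates.
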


We now may write down the definition of our characteristic classes:

\begin{defi}
\label{def of characteristic classes in the base case}
We define
\[
\mathbb{A}(E) := [\Acoch(E,\si)] \in H^{n}(B),
\]
where $\si : B^{(n)} \to \Pi(E)|_{B^{(n)}}$ is an inductive section.
\end{defi}

This cohomology class is invariants of $E$:

\begin{theo}
\label{theo: well-definedness of the invariant}
The cohomology class $\Acoh(E)$ given in \cref{def of characteristic classes in the base case} is independent of the choice of $\si$.
\end{theo}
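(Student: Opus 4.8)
The plan is to prove independence of $\si$ by a cylinder argument: package the two choices into a single family over $B \times I$, observe that the resulting cochain is a cocycle by \cref{prop: coclosedness}, and conclude from $i_0^{\ast} = i_1^{\ast}$ on $H^{n}(B\times I)$.

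Concretely, let $\si_0, \si_1 : B^{(n)} \to \Pi(E)|_{B^{(n)}}$ be two inductive sections. Equip $B\times I$ with the product CW structure, whose cells are $e\times\{0\}$, $e\times\{1\}$ and $e\times I$ for $e$ a cell of $B$, and let $p : B\times I \to B$ be the projection. Then $E' := p^{\ast}E \to B\times I$ again has structure group $G$, and on the subcomplex $B^{(n)}\times\{0,1\}\subset (B\times I)^{(n)}$ we already have the section $\si_0\sqcup\si_1$ of $\Pi(E')$. I would extend it to an inductive section $\Sigma : (B\times I)^{(n)} \to \Pi(E')|_{(B\times I)^{(n)}}$ by running the inductive procedure of \cref{subsection Definition of the characteristic classes in the basic case}: the only new cells to fill are the $e\times I$ with $\dim e \leq n-1$, each of dimension $\leq n$, on which one extends the already-fixed boundary values generically while avoiding the wall; this is possible because $n \leq b^{+}(X)-1$. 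By the formal-dimension count the parameterized moduli space of $\Sigma$ over $(B\times I)^{(n-1)}$ is empty, so $\Sigma$ is genuinely an inductive section.

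Next, $\Acoch(E', \Sigma) \in C^{n}(B\times I)$ is a cocycle by \cref{prop: coclosedness} — this is the step where the full hypothesis $b^{+}(X)\geq n+2$ is used, since its proof extends $\Sigma$ over the $(n+1)$-cells of $B\times I$. Let $i_j : B \hookrightarrow B\times I$, $b\mapsto(b,j)$, for $j=0,1$; these are cellular, $i_j^{\ast}E' \cong E$, and $i_j$ sends an $n$-cell $e$ of $B$ to the $n$-cell $e\times\{j\}$ of $B\times I$ over which $E'$, the family of Fredholm sections of \cref{defi: family of Fredholm sections in the most general setting}, and $\Sigma$ all restrict to exactly the data attached to $\si_j$ over $e$; hence, using \cref{naturality for vn general} and \cref{naturality for vn} for the family virtual neighborhood and its relative Euler class, the induced cochain map satisfies $i_j^{\#}\Acoch(E',\Sigma) = \Acoch(E,\si_j)$. (In the homology oriented case one also notes that the $\Z$-orientations induced by the nowhere-vanishing section of $\det s$ determined by $\calO$ restrict consistently to the two ends, which is immediate since $\calO$ is $G$-invariant.) Finally $i_0^{\ast} = i_1^{\ast} : H^{n}(B\times I)\to H^{n}(B)$, both being inverse to $p^{\ast}$, so in cohomology
\[
[\Acoch(E,\si_0)] = i_0^{\ast}[\Acoch(E',\Sigma)] = i_1^{\ast}[\Acoch(E',\Sigma)] = [\Acoch(E,\si_1)],
\]
which is the claim. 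Unwinding this, one equivalently obtains a cochain $h\in C^{n-1}(B)$ with $h(e) = \frakm(s_{\cdot}\,;\, e\times I)$ realizing $\Acoch(E,\si_0) - \Acoch(E,\si_1) = \codel h$.

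The main obstacle I expect is bookkeeping rather than conceptual: carefully verifying the cochain-level naturality $i_j^{\#}\Acoch(E',\Sigma) = \Acoch(E,\si_j)$. One must track how the open cover $\{U_\alpha\}$, the local trivializations, the lifts $\tilde g_{\alpha\beta}$, and hence the globally defined parameterized moduli space, the families $\scrB^{\ast}_{\bullet}$, $\scrE_{\bullet}$, and the family of Fredholm sections all behave under pulling $E'$ back along $i_j$, and then feed this into the naturality of the family virtual-neighborhood construction. A secondary technical point is to confirm that the generic extensions building $\Sigma$ can be chosen \emph{rel} the boundary values already fixed on $B^{(n)}\times\{0,1\}$ and on lower cells $e\times I$; but this is the same transversality-away-from-the-wall input already used to produce inductive sections, valid because $n\leq b^{+}(X)-1$, so no new difficulty arises there.
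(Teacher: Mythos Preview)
Your proposal is correct and is essentially the same cylinder argument the paper gives: both construct an inductive section on $B\times I$ restricting to $\si_0,\si_1$ at the ends and then use \cref{prop: coclosedness} on $B\times I$. The only difference is packaging---the paper carries out the explicit cochain computation to exhibit the coboundary $\tilA$ (exactly the cochain $h$ you describe in your last line), whereas you invoke the cochain-level naturality of \cref{lem on naturality at the level of cochain} together with $i_0^{\ast}=i_1^{\ast}$ on cohomology.
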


We shall prove \cref{theo: well-definedness of the invariant} in \cref{subsection Well-definedness}.

\begin{rem}
\label{rem on relation between vanishing and triviality}
If we assume \cref{theo: well-definedness of the invariant}, we can immediately see that, if $E$ is a trivial $G$-bundle and $n>0$, then $\Acoh(E) = 0$.
Indeed, in this case, we can take a ``constant" inductive section $\si$, then we have $\Acoch(E,\si) = 0$ because of formal dimension.
\end{rem}

In addition, $\Acoh(\cdot)$ satisfies functoriality.
Namely, $\Acoh(\cdot)$ is a characteristic class:

\begin{theo}
\label{part of theo of well-definedness of the invariant functoriality}
The correspondence $E \mapsto \Acoh(E)$ is functorial.
Namely, for a CW complex $B'$ and a continuous map $f : B' \to B$, we have
\[
f^{\ast} \Acoh(E) = \Acoh(f^{\ast}E).
\]
\end{theo}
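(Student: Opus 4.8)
The plan is to reduce to the case where $f$ is cellular and then establish the identity at the level of cellular cochains, by combining the construction of \cref{defi of the basic cochain} with the naturality statement \cref{naturality for vn} and the well-definedness \cref{theo: well-definedness of the invariant}. Both sides depend only on the homotopy class of $f$: this is clear for $f^{\ast}\Acoh(E)$, and $\Acoh(f^{\ast}E)$ is homotopy invariant because homotopic maps yield $G$-bundle isomorphic pullbacks and the construction of $\Acoh(\cdot)$ depends only on the $G$-bundle up to isomorphism (an inductive section being transported along any isomorphism). By cellular approximation we may thus assume $f$ is cellular, so $f({B'}^{(k)})\subset B^{(k)}$ for all $k$ and $f^{\ast}$ on cohomology is induced by the cellular cochain map $f^{\#}\colon C^{\ast}(B)\to C^{\ast}(B')$, $(f^{\#}\phi)(e')=\sum_{e}[e':e]\,\phi(e)$, where $[e':e]$ is the incidence degree of the composite $D^{n}_{e'}/\del D^{n}_{e'}\to {B'}^{(n)}/{B'}^{(n-1)}\to B^{(n)}/B^{(n-1)}\to S^{n}_{e}$.

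Next I would pull back an inductive section $\si\colon B^{(n)}\to\Pi(E)|_{B^{(n)}}$ to a section $f^{\ast}\si\colon {B'}^{(n)}\to\Pi(f^{\ast}E)|_{{B'}^{(n)}}$ (over $b'$ it is $\si(f(b'))$ under $\Pi(f^{\ast}E)_{b'}=\Pi(E)_{f(b')}$). Since $f$ is cellular and the pull-back identifications \eqref{isomorphism by pull back between moduli spaces} give $\calM_{(f^{\ast}\si)(b')}\cong\calM_{\si(f(b'))}$ compatibly, the vanishing $\calM_{\si^{(k)}}=\emptyset$ for $k<n$ forces $\calM_{(f^{\ast}\si)^{(k)}}=\emptyset$ for $k<n$; thus $f^{\ast}\si$ has the property defining an inductive section, and the argument proving \cref{theo: well-definedness of the invariant} (which only uses this emptiness) gives $\Acoh(f^{\ast}E)=[\Acoch(f^{\ast}E,f^{\ast}\si)]$. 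It therefore suffices to prove the cochain identity $\Acoch(f^{\ast}E,f^{\ast}\si)=f^{\#}\Acoch(E,\si)$ in $C^{n}(B')$.

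I would check this cell by cell. Given an $n$-cell $e'$ of $B'$ with characteristic map $\vp_{e'}\colon D^{n}_{e'}\to B'$, put $g:=f\circ\vp_{e'}$, so $g(\del D^{n}_{e'})\subset B^{(n-1)}$ and $\vp_{e'}^{\ast}(f^{\ast}\si)=g^{\ast}\si$. The compact set $g(D^{n}_{e'})$ lies in a finite subcomplex $L\subset B$, hence in $L^{(n)}$ with $g(\del D^{n}_{e'})\subset L^{(n-1)}$; over the compact complex $L^{(n)}$ the family of Fredholm sections $s_{\si}$ of \cref{defi: family of Fredholm sections in the most general setting} has compact zero set (so \cref{assumption on compactness for parameterized moduli} holds) and is nowhere-vanishing over $L^{(n-1)}$. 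Applying \cref{naturality for vn} to the map of pairs $g\colon(D^{n}_{e'},\del D^{n}_{e'})\to(L^{(n)},L^{(n-1)})$ yields $\frakM(s_{g^{\ast}\si},1;\del D^{n}_{e'})=g^{\ast}\frakM(s_{\si},1;L^{(n-1)})$; evaluating against the relative fundamental class and using \cref{defi of the evaluated invariant} gives
\[
\Acoch(f^{\ast}E,f^{\ast}\si)(e')=\frakm(s_{g^{\ast}\si},1;\del D^{n}_{e'})=\bigl\langle\frakM(s_{\si},1;L^{(n-1)}),\,g_{\ast}[D^{n}_{e'},\del D^{n}_{e'}]\bigr\rangle .
\]
Here $\frakM(s_{\si},1;L^{(n-1)})\in H^{n}(L^{(n)},L^{(n-1)})\cong\bigoplus_{e\subset L}R$ ($R=\Z$ or $\Z/2$) has $e$-component $\frakm(s_{\vp_{e}^{\ast}\si},1;\del D^{n}_{e})=\Acoch(E,\si)(e)$ (apply \cref{naturality for vn} to $\vp_{e}$ and compare with \cref{defi of the basic cochain}), while $g_{\ast}[D^{n}_{e'},\del D^{n}_{e'}]=\sum_{e}[e':e]\,e$ in $H_{n}(L^{(n)},L^{(n-1)})$ by the definition of the cellular chain map. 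Hence the pairing equals $\sum_{e}[e':e]\,\Acoch(E,\si)(e)=(f^{\#}\Acoch(E,\si))(e')$, which is the required cochain identity; passing to cohomology classes gives $\Acoh(f^{\ast}E)=f^{\ast}\Acoh(E)$.

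The step I expect to be the main obstacle is precisely this cell-by-cell comparison. Because a cellular map does not send cells to cells, one must repackage the virtual-neighborhood count over a single $n$-cell of $B'$ as the finite weighted sum $\sum_{e}[e':e]\,\Acoch(E,\si)(e)$ of counts over cells of $B$, which forces the localisation to a finite subcomplex $L$ (to keep \cref{assumption on compactness for parameterized moduli} in force) and a check that all of the above identifications are compatible with orientations and coefficients; in the homology oriented case the trivialisations of the determinant line bundles fixing the $\Z$-orientations of the family virtual neighborhoods are carried along canonically by pullback, so the incidence degrees $[e':e]$ enter with the correct signs. Everything else is formal once \cref{naturality for vn} and \cref{theo: well-definedness of the invariant} are available.
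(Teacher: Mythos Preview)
Your proof is correct and follows essentially the same route as the paper. The paper organizes the argument by first isolating the cochain-level identity $f^{\ast}\Acoch(E,\si)=\Acoch(f^{\ast}E,f^{\ast}\si)$ as a separate proposition (\cref{lem on naturality at the level of cochain}), preceded by the lemma identifying $\frakM(s_{\si},1;K^{(n-1)})$ with $\Acoch(E,\si)$ on a compact subcomplex (\cref{lem expression of chchain as global cohomology class}); you have simply inlined both into a single cell-by-cell computation, localizing to a finite subcomplex $L$ and invoking \cref{naturality for vn} exactly as the paper does.
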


We shall prove \cref{part of theo of well-definedness of the invariant functoriality}  in \cref{subsection Well-definedness}.
By applying \cref{part of theo of well-definedness of the invariant functoriality} to the identity map, we also have the following independence of $\Acoh(E)$ on CW structure of the base space:

\begin{cor}
\label{well-definedness of the invariant on CW str}
The cohomology class $\Acoh(E)$ given in \cref{def of characteristic classes in the base case} is independent of the choice of CW structure of $B$.
\end{cor}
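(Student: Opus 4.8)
The plan is to deduce the statement directly from \cref{part of theo of well-definedness of the invariant functoriality}, exactly as indicated in the paper: apply the functoriality of $\Acoh(\cdot)$ to the identity map, where the source and the target are the same underlying space equipped with two different CW structures.

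First I would fix notation. Suppose the underlying space $B$ carries two CW structures; write $B_{1}$ and $B_{2}$ for the resulting CW complexes, which have the same underlying topological space. Write $\Acoh_{B_{i}}(E) \in H^{n}(B_{i})$ for the class of \cref{def of characteristic classes in the base case} computed using the CW structure of $B_{i}$ (i.e. using an inductive section defined on the $n$-skeleton of $B_{i}$ and the cellular cochain of \cref{defi of the basic cochain}). The cellular cohomology groups $H^{n}(B_{1})$ and $H^{n}(B_{2})$ are canonically identified with the singular cohomology $H^{n}(B)$ of the common underlying space, and under this identification the goal is to prove $\Acoh_{B_{1}}(E) = \Acoh_{B_{2}}(E)$.

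Next I would observe that the identity of the underlying space is a continuous map $\id : B_{2} \to B_{1}$ of CW complexes, and that its pullback $\id^{\ast}E$ is canonically the bundle $E$ together with its original $G$-structure (for any of the structure groups $G$ in \cref{subsection ASD setting and SW setting}). Applying \cref{part of theo of well-definedness of the invariant functoriality} to $f = \id$ then gives
\[
\id^{\ast}\Acoh_{B_{1}}(E) = \Acoh_{B_{2}}(\id^{\ast}E) = \Acoh_{B_{2}}(E).
\]
Since the induced map $\id^{\ast} : H^{n}(B_{1}) \to H^{n}(B_{2})$ is precisely the canonical identification of the two cellular cohomology groups with $H^{n}(B)$, this yields $\Acoh_{B_{1}}(E) = \Acoh_{B_{2}}(E)$, which is the assertion of the \lcnamecref{well-definedness of the invariant on CW str}.

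I do not anticipate any genuine obstacle here. The only point that requires a word of care is that \cref{part of theo of well-definedness of the invariant functoriality} is stated for an arbitrary continuous map between CW complexes, and the identity between two CW structures on one space is such a map; because functoriality holds for \emph{all} continuous maps (not merely for subdivisions), there is no need to pass to a common refinement of the two CW structures. Everything else is the standard bookkeeping by which cellular cohomology of different cell decompositions is identified with singular cohomology, and this is exactly the usual mechanism by which a characteristic class built cell-by-cell turns out to be independent of the chosen cell decomposition.
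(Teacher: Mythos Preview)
Your proof is correct and follows precisely the approach indicated in the paper: the paper states this \lcnamecref{well-definedness of the invariant on CW str} as an immediate consequence of applying \cref{part of theo of well-definedness of the invariant functoriality} to the identity map, and you have simply spelled out that deduction in detail.
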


In fact, we can functorially extend the definition of $\Acoh(\cdot)$ to any bundle on any topological space
using a purely topological \lcnamecref{lem for a general topological base space}:

\begin{lem}
\label{lem for a general topological base space}
For any topological space $B$, we can associate a cohomology class $\Acoh(E) \in H^{n}(B)$ to any continuous fiber bundle $X \to E \to B$ with structure group $G$, and this correspondence $E \mapsto \Acoh(E)$ satisfies that:
\begin{itemize}
\item if $B$ is a CW complex, this $\Acoh(E)$ coincides with that given in \cref{def of characteristic classes in the base case}, and
\item for a topological space $B'$ and a continuous map $f : B' \to B$, we have $f^{\ast} \Acoh(E) = \Acoh(f^{\ast}E)$.
\end{itemize}
\end{lem}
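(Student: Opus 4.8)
The plan is to extend $\Acoh$ from CW complexes to arbitrary topological spaces by the standard device of CW approximation, using \cref{part of theo of well-definedness of the invariant functoriality} (functoriality on CW complexes) and \cref{well-definedness of the invariant on CW str} (independence of CW structure) as the two inputs that make the extension well-defined. Concretely, for an arbitrary topological space $B$ and a bundle $X \to E \to B$ with structure group $G$, I would choose a CW approximation, i.e. a CW complex $B'$ together with a weak homotopy equivalence $w : B' \to B$, and \emph{define}
\[
\Acoh(E) := (w^{\ast})^{-1} \Acoh(w^{\ast}E) \in H^{n}(B),
\]
where $w^{\ast} : H^{n}(B) \to H^{n}(B')$ is an isomorphism because a weak homotopy equivalence induces isomorphisms on singular cohomology (by the Whitehead theorem for singular (co)homology), and $\Acoh(w^{\ast}E)$ is already defined since $B'$ is a CW complex. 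The degree bound $b^{+}(X) \geq n+2$ and the formal-dimension hypothesis $d = -n$ are unchanged, so the right-hand side makes sense.

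The main work is to verify that this is independent of the chosen CW approximation and that the resulting correspondence is functorial. For independence: given two CW approximations $w_{1} : B_{1} \to B$ and $w_{2} : B_{2} \to B$, one can find a CW complex $B_{3}$ with weak equivalences to both compatible with the maps to $B$ (for instance via the mapping-space/comparison argument for CW approximations, or by approximating the homotopy pullback), so it suffices to check the case where one CW approximation factors through another; then a map $g : B_{1} \to B_{2}$ of CW complexes with $w_{2} \circ g \simeq w_{1}$ gives, by \cref{part of theo of well-definedness of the invariant functoriality}, $g^{\ast}\Acoh(w_{2}^{\ast}E) = \Acoh(g^{\ast}w_{2}^{\ast}E) = \Acoh(w_{1}^{\ast}E)$ (using that $g^{\ast}w_{2}^{\ast}E \cong w_{1}^{\ast}E$ as $G$-bundles, since $w_{2}\circ g \simeq w_{1}$ and bundles are homotopy invariant), and $g^{\ast} \circ w_{2}^{\ast} = w_{1}^{\ast}$ on cohomology; since $g^{\ast}$ is also an isomorphism this yields the equality of the two definitions. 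The fact that $\Acoh$ on CW complexes agrees with \cref{def of characteristic classes in the base case} is then immediate by taking $w = \id$ when $B$ is already a CW complex (here one also invokes \cref{well-definedness of the invariant on CW str} to know the CW-level class does not depend on the chosen CW structure).

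For functoriality of the extended class: given $f : B' \to B$ of arbitrary spaces, pick CW approximations $w : Y \to B$ and $w' : Y' \to B'$, and by the lifting property of CW approximations (weak equivalences, and $Y'$ a CW complex) choose a map $\tilde{f} : Y' \to Y$ with $w \circ \tilde{f} \simeq f \circ w'$. Then $f^{\ast}\Acoh(E)$ is computed by pulling the defining formula back along $w'$: $(w')^{\ast} f^{\ast}\Acoh(E) = \tilde{f}^{\ast} w^{\ast}\Acoh(E) = \tilde{f}^{\ast}\Acoh(w^{\ast}E) = \Acoh(\tilde{f}^{\ast}w^{\ast}E) = \Acoh((w')^{\ast}f^{\ast}E) = (w')^{\ast}\Acoh(f^{\ast}E)$, using \cref{part of theo of well-definedness of the invariant functoriality} at the middle step and homotopy invariance of bundle pullbacks to identify $\tilde{f}^{\ast}w^{\ast}E \cong (w')^{\ast}f^{\ast}E$; since $(w')^{\ast}$ is injective (indeed an isomorphism) this gives $f^{\ast}\Acoh(E) = \Acoh(f^{\ast}E)$.

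I expect the only genuine subtlety to be the bookkeeping around CW approximations — ensuring a common refinement and the compatible lifts exist — which is purely standard algebraic topology (existence and essential uniqueness of CW approximation, and the Whitehead theorem for singular cohomology), so no new gauge-theoretic input is needed. Everything else is formal: the two CW-level theorems already in the excerpt do all the real work, and the extension is the usual passage from CW complexes to spaces via weak equivalence. I would remark that the coefficient ring ($\Z$ in the homology-oriented case, $\Z/2$ otherwise) plays no role in this argument, and that the same extension applies verbatim to the relative versions if desired.
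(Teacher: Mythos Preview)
Your proposal is correct and takes essentially the same approach as the paper: define the class via a CW approximation and invert the induced isomorphism on singular cohomology. The paper's proof is slightly slicker only in that it uses the \emph{functorial} CW approximation $\rho_{B} : |\Delta(B)| \to B$ (geometric realization of the singular simplicial set), which makes your independence-of-choice and lift-comparison bookkeeping automatic; otherwise the two arguments are the same.
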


\begin{proof}
Recall that, for any topological space $B$, there exists a CW complex $|\Delta(B)|$ equipped with a weak homotopy equivalence map $\rho_{B} : |\Delta(B)| \to B$.
The space $|\Delta(B)|$ is obtained by considering the geometric realization of the simplicial set arising from singular simplices, and this construction $B \mapsto |\Delta(B)|$ is functorial with respect to the map $\rho_{B} : |\Delta(B)| \to B$.
(For example, see May's book~\cite{MR1702278}.)
For a bundle $X \to E \to B$, let us define
\[
\Acoh(E) := (\rho_{B}^{\ast})^{-1} \Acoh(\rho_{B}^{\ast}E) \in H^{n}(B),
\]
where $\Acoh(\cdot)$ in the right-hand side is the one defined in \cref{def of characteristic classes in the base case}.
Then it is straightforward to check the required conditions. 
\end{proof}

Write $\Dcoh(\cdot) := \Acoh(\cdot)$ and $\SWcoh(\cdot) := \Acoh(\cdot)$ for the ASD setting and the SW setting respectively.
We summarize the results in this \lcnamecref{subsection Definition of the characteristic classes in the basic case} as follows.

\begin{theo}
\label{Dcoh SWcoh in the base case}
Let $n$ be a non-negative integer, $X$ be an oriented closed smooth $4$-manifold with $b^{+}(X) \geq n+2$, and $B$ be a topological space.
\begin{enumerate}
\item
\label{Dcoh in the base case}
Let $\frakP$ be the isomorphism class of an $SO(3)$-bundle satisfying that $w_{2}(\frakP) \neq 0$ and $d(\frakP) = -n$.
\begin{enumerate}
\item
\label{association of characteristic class ASD wo homology ori}
To a continuous fiber bundle $X \to E \to B$ with structure group $\Diff(X, \frakP)$, we can associate
\[
\Dcoh(E) \in H^{n}(B;\Z/2).
\]
\item
\label{association of characteristic class ASD with homology ori}
To a continuous fiber bundle $X \to E \to B$ with structure group $\Diff(X, \frakP, \calO)$, we can associate
\[
\Dcoh(E) \in H^{n}(B;\Z).
\]
\end{enumerate}
In both case of \eqref{association of characteristic class ASD wo homology ori} and \eqref{association of characteristic class ASD with homology ori}, the correspondence $E \mapsto \Dcoh(E)$ is functorial.
\item
\label{SWcoh in the base case}
Let $\fraks$  be the isomorphism class of a $\spc$ structure on $X$ with $d(\fraks) = -n$.
\begin{enumerate}
\item
\label{association of characteristic class SW wo homology ori}
To a continuous fiber bundle $X \to E \to B$ with structure group $\Diff(X, \fraks)$, we can associate
\[
\SWcoh(E) \in H^{n}(B;\Z/2).
\]
\item
\label{association of characteristic class SW with homology ori}
To a continuous fiber bundle $X \to E \to B$ with structure group $\Diff(X, \fraks, \calO)$, we can associate
\[
\SWcoh(E) \in H^{n}(B;\Z).
\]
\end{enumerate}
In both case of \eqref{association of characteristic class SW wo homology ori} and \eqref{association of characteristic class SW with homology ori}, the correspondence $E \mapsto \SWcoh(E)$ is functorial.
\end{enumerate}
\end{theo}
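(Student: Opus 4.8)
The plan is to assemble what has already been built in \cref{section Construction of the characteristic classes}: \cref{Dcoh SWcoh in the base case} is a bookkeeping consolidation of \cref{def of characteristic classes in the base case,theo: well-definedness of the invariant,part of theo of well-definedness of the invariant functoriality,well-definedness of the invariant on CW str,lem for a general topological base space}, so no genuinely new argument is needed; the task is to check that each of the four cases in the statement is an instance of the general construction, and to keep the coefficient ring straight. There are four cases, indexed by the choice of the ASD setting or the SW setting from \cref{subsection ASD setting and SW setting} and, within each, by whether a homology orientation $\calO$ is fixed. First I would specialize the abstract data $(\gauge,\, G,\, \tilde G,\, \tilde s_{\bullet} : \scrC_{\bullet} \to \scrD_{\bullet})$ exactly as prescribed there: in the ASD cases take $G = \Diff(X,\frakP)$ with $\tilde G = \Aut(X,P)$, or $G = \Diff(X,\frakP,\calO)$ with $\tilde G = \Aut(X,P,\calO)$; in the SW cases take $G = \Diff(X,\fraks)$ with $\tilde G = \Aut(X,s)$, or $G = \Diff(X,\fraks,\calO)$ with $\tilde G = \Aut(X,s,\calO)$. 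The running hypotheses of \cref{subsection Definition of the characteristic classes in the basic case} are then exactly the hypotheses of the statement: $b^{+}(X) \geq n+2$ is assumed, and the formal dimension of the moduli space is $-n$, which is what $d(\frakP) = -n$ and $d(\fraks) = -n$ say.

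The only input that is not purely formal is the compactness hypothesis \cref{assumption on compactness for parameterized moduli}: for every $n$-cell $e$ of $B$ the parameterized zero set $\M_{\vp_{e}^{\ast}s}$ over the closed disk $D^{n}_{e}$ must be compact. I would note that this follows from compactness of $D^{n}_{e}$ together with compactness of the unparameterized moduli space (\cref{compactness assumption for unparameterized case}): for the \SW equations by the standard a priori bound on the spinor component, and for the $SO(3)$-ASD equations from the nontriviality of $w_{2}(\frakP)$, which also rules out reducibles -- this is precisely why the theory is built on $SO(3)$-bundles with $w_{2}(\frakP)\neq 0$ rather than on $SU(2)$-bundles. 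Granting this, \cref{defi of the basic cochain} and \cref{def of characteristic classes in the base case} produce, for $B$ a CW complex, a class $\Acoh(E) \in H^{n}(B;R)$, where $R = \Z$ in the homology oriented case and $R = \Z/2$ otherwise, following the coefficient convention fixed just before \cref{defi of the basic cochain}; in the homology oriented case the orientation $\calO$ supplies the nowhere-vanishing section of $\det s$ used to orient the family virtual neighborhood in \cref{subsection Family virtual neighborhood}, which is what upgrades the coefficients from $\Z/2$ to $\Z$.

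It then remains to invoke the well-definedness and functoriality results in order: \cref{theo: well-definedness of the invariant} gives independence of the inductive section $\si$; \cref{part of theo of well-definedness of the invariant functoriality} gives $f^{\ast}\Acoh(E) = \Acoh(f^{\ast}E)$ for continuous maps of CW complexes, hence \cref{well-definedness of the invariant on CW str}, independence of the CW structure; and \cref{lem for a general topological base space} extends the construction to an arbitrary topological space $B$ while preserving functoriality. Setting $\Dcoh(\cdot) := \Acoh(\cdot)$ in the ASD setting and $\SWcoh(\cdot) := \Acoh(\cdot)$ in the SW setting, the four specializations above, together with functoriality in each, are exactly the content of the statement. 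The main obstacle is not located in this theorem: all the substance lies in \cref{theo: well-definedness of the invariant,part of theo of well-definedness of the invariant functoriality,lem for a general topological base space}, whose proofs are deferred to \cref{subsection Well-definedness}. If anything here requires care it is the bookkeeping of $\Z$ versus $\Z/2$ across the four cases and confirming that the section of $\det s$ orienting the virtual neighborhood is indeed the one determined by $\calO$.
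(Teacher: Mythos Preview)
Your proposal is correct and matches the paper's approach: the paper presents this theorem explicitly as a summary of the preceding subsection, writing ``We summarize the results in this subsection as follows'' and defining $\Dcoh(\cdot) := \Acoh(\cdot)$, $\SWcoh(\cdot) := \Acoh(\cdot)$ in the two settings, with no separate proof given. Your identification of the theorem as a bookkeeping consolidation of \cref{def of characteristic classes in the base case,theo: well-definedness of the invariant,part of theo of well-definedness of the invariant functoriality,well-definedness of the invariant on CW str,lem for a general topological base space}, together with your tracking of the compactness input and the $\Z$ versus $\Z/2$ coefficient convention, is exactly right.
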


\begin{defi}
\label{defi value for universal bundles}
In the setting of \cref{Dcoh SWcoh in the base case}, we define
\begin{align*}
&\Dcoh(X, \frakP) := \Dcoh(E\Diff(X, \frakP)) \in H^{n}(B\Diff(X, \frakP);\Z/2),\\
&\Dcoh(X, \frakP, \calO) := \Dcoh(E\Diff(X, \frakP, \calO)) \in H^{n}(B\Diff(X, \frakP, \calO);\Z)\\
&\SWcoh(X, \fraks) := \SWcoh(E\Diff(X, \fraks)) \in H^{n}(B\Diff(X, \fraks);\Z/2), \text{ and }\\
&\SWcoh(X, \fraks, \calO) := \SWcoh(E\Diff(X, \fraks, \calO)) \in H^{n}(B\Diff(X, \fraks, \calO);\Z).
\end{align*}
\end{defi}

Of course the statement of \cref{Dcoh SWcoh in the base case} has no meaning if we cannot show the non-triviality of them.
In \cref{section Non triviality} we shall explicitly calculate some of these characteristic classes.
Note that, in the case that $n=0$, the cohomology classes given in \cref{defi value for universal bundles} are nothing other than the usual $SO(3)$-Donaldson invariants and \SW invariants (valued in $\Z/2$ and in $\Z$) defined by counting the moduli space of formal dimension zero.
Therefore for $n=0$ theses classes are obviously non-trivial, and so we are interested in the non-triviality in the case that $n>0$, which is the subject of \cref{section Non triviality}.

\begin{rem}
\label{rem stack}
We have assumed that the formal dimension of the moduli space is $-n$ in this \lcnamecref{subsection Definition of the characteristic classes in the basic case}.
We can relax the assumptions using non-trivial cohomology class $\alpha$ appeared in \cref{section Virtual neighborhood for families}, which corresponds to cutting of higher-dimensional moduli spaces.
However, the following technical issue arises to consider such a generalization of characteristic classes.
Let us focus on the SW case for simplicity.
In the unparameterized situation, to obtain a cohomology class of positive degree on the irreducible configuration space divided by the gauge group, denoted by $\scrB^{\ast}$, one needs to consider an $S^{1}$-fibration $S^{1} \to \calL \to \scrB^{\ast}$.
The total space $\calL$ is given as the quotient of the configuration space divided by a subgroup $\gauge_{0}$ of $\gauge$ inducing an exact sequence
\[
1 \to \gauge_{0} \to \gauge \to S^{1} \to 1.
\]
Because of the rest symmetry of $S^{1}$, we cannot apply the argument of \cref{subsection Nakamura's idea and families of Fredholm sections} to $\calL$: for a family with structure group $\Diff(X, \fraks)$ or $\Diff(X, \fraks, \calO)$,
we cannot obtain a line bundle which is globally defined on the whole base space by gluing together $\calL$'s.
However, we hope one can avoid this problem by considering some stacks rather than line bundles, which will be discussed in a subsequent paper.
\end{rem}

\subsection{Well-definedness and naturality}
\label{subsection Well-definedness}

The purpose of this \lcnamecref{subsection Well-definedness} is to prove \cref{prop: coclosedness,theo: well-definedness of the invariant,part of theo of well-definedness of the invariant functoriality}.
Before starting it, we note the following \lcnamecref{lem expression of chchain as global cohomology class} on an expression of the cochain $\Acoch(\cdot)$.
Henceforth we identify $C^{n}(B)$ with $H^{n}(B^{(n)}, B^{(n-1)})$.

\begin{lem}
\label{lem expression of chchain as global cohomology class}
In the setting of \cref{defi of the basic cochain}, assume that $B$ is compact.
Then we have
\begin{align}
\frakM(s_{\si}, 1;B^{(n-1)}) = \Acoch(E,\si)
\label{global cohomology class coincides with Acoch}
\end{align}
in $H^{n}(B^{(n)}, B^{(n-1)}) = C^{n}(B)$.
\end{lem}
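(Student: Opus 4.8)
The plan is to reduce \eqref{global cohomology class coincides with Acoch} to a cell-by-cell comparison and then invoke the naturality of the cohomological invariant under the characteristic maps. First I would check that the left-hand side is a well-defined element of $C^{n}(B)$: since $B$ is compact, $B^{(n)}$ is a finite CW complex, so the parameterized moduli space $\M_{\si}$ over $B^{(n)}$ is compact (using the compactness of the unparameterized $SO(3)$-ASD / \SW moduli spaces), and $s_{\si}$ vanishes nowhere on $B^{(n-1)}$ because $\M_{\si^{(n-1)}}=\emptyset$; hence $\frakM(s_{\si},1;B^{(n-1)})$ is defined and lies in $H^{n}_{\cpt}(B^{(n)},B^{(n-1)})=H^{n}(B^{(n)},B^{(n-1)})=C^{n}(B)$ (the middle equality because $B^{(n)}$ is compact, the last being the identification fixed just before the lemma). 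Next I would recall that for a finite CW complex the characteristic maps $\vp_{e}\colon(D^{n}_{e},\del D^{n}_{e})\to(B^{(n)},B^{(n-1)})$ of the $n$-cells induce an isomorphism $H^{n}(B^{(n)},B^{(n-1)})\cong\bigoplus_{e}H^{n}(D^{n}_{e},\del D^{n}_{e})$, with each summand identified with the coefficient ring by pairing with the relative fundamental class $[D^{n}_{e},\del D^{n}_{e}]$, and that the value of a cochain on a cell $e$ is precisely $\langle\vp_{e}^{\ast}(\,\cdot\,),[D^{n}_{e},\del D^{n}_{e}]\rangle$. Thus it suffices to prove, for every $n$-cell $e$, the numerical identity $\langle\vp_{e}^{\ast}\frakM(s_{\si},1;B^{(n-1)}),[D^{n}_{e},\del D^{n}_{e}]\rangle=\frakm(s_{\vp_{e}^{\ast}\si},1;D^{n}_{e})$, the right-hand side being the value of $\Acoch(E,\si)$ on $e$ from \cref{defi of the basic cochain}.

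To compute the left-hand side I would apply the relative naturality statement of \cref{naturality for vn} to the continuous map $\vp_{e}\colon D^{n}_{e}\to B^{(n)}$ with $A'=\del D^{n}_{e}$ and $B'=B^{(n-1)}$: the hypotheses hold because $\vp_{e}(\del D^{n}_{e})\subset B^{(n-1)}$ by definition of a characteristic map, $s_{\si}$ vanishes nowhere on $B^{(n-1)}$, and $(\vp_{e}^{\ast}s_{\si})^{-1}(0)$ is compact since $D^{n}_{e}$ is. This gives $\vp_{e}^{\ast}\frakM(s_{\si},1;B^{(n-1)})=\frakM(\vp_{e}^{\ast}s_{\si},1;\del D^{n}_{e})$, using that the pull-back of $1\in H^{0}$ is again $1$. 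Here one identifies $\vp_{e}^{\ast}s_{\si}$ with the Fredholm section $s_{\vp_{e}^{\ast}\si}$ of \cref{defi: family of Fredholm sections in the most general setting} built directly over $D^{n}_{e}$ from $\vp_{e}^{\ast}\si$. Then pairing both sides with $[D^{n}_{e},\del D^{n}_{e}]$ and using \cref{defi of the evaluated invariant} (its compact-manifold-with-boundary case, with $\frakm(s_{\vp_{e}^{\ast}\si},1;\del D^{n}_{e})=\langle\frakM(s_{\vp_{e}^{\ast}\si},1;\del D^{n}_{e}),[D^{n}_{e},\del D^{n}_{e}]\rangle$, which is the number written $\frakm(s_{\vp_{e}^{\ast}\si},1;D^{n}_{e})$ in \cref{defi of the basic cochain}) yields the desired cellwise equality; summing over the $n$-cells gives \eqref{global cohomology class coincides with Acoch}.

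The step I expect to be the main obstacle — essentially the only non-formal point — is justifying the identification $\vp_{e}^{\ast}s_{\si}=s_{\vp_{e}^{\ast}\si}$ (together with $\vp_{e}^{\ast}\scrB^{\ast}_{\si}=\scrB^{\ast}_{\vp_{e}^{\ast}\si}$, $\vp_{e}^{\ast}\scrE_{\si}=\scrE_{\vp_{e}^{\ast}\si}$, and, in the homology oriented case, the compatibility of the induced orientations of the determinant line bundles). This requires unwinding the cocycle-modulo-gauge construction of \cref{subsection Nakamura's idea and families of Fredholm sections}: one checks that the local trivializations, the transition functions $g_{\alpha\beta}$, their lifts $\tilde{g}_{\alpha\beta}$ to $\tilde{G}$, the local perturbations $\si_{\alpha}$, and the $\gauge$-quotients defining $\scrB^{\ast}_{\si}$, $\scrE_{\si}$ and $s_{\si}$ all pull back compatibly under $\vp_{e}$, so that restricting the globally glued family to the image of a single cell reproduces the family constructed directly from $\vp_{e}^{\ast}\si$ (this is where one uses that the cells are contractible and that the relevant maps are $\gauge$-independent). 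Once this bookkeeping is in place, the remaining argument is the purely formal combination of the cellular cochain identification, \cref{naturality for vn}, and \cref{defi of the evaluated invariant} described above.
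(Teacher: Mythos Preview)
Your proposal is correct and follows essentially the same approach as the paper: both verify well-definedness of the left-hand side, apply the naturality statement \cref{naturality for vn} to each characteristic map $\vp_{e}$, and conclude via the cellular isomorphism $\prod_{e}\vp_{e}^{\ast}\colon H^{n}(B^{(n)},B^{(n-1)})\to\prod_{e}H^{n}(D^{n}_{e},\del D^{n}_{e})$. The identification $\vp_{e}^{\ast}s_{\si}=s_{\vp_{e}^{\ast}\si}$ that you flag as the ``main obstacle'' is treated as tacit in the paper, so your version is in fact slightly more explicit than the original on this point.
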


\begin{proof}
We first note that the left-hand side of \eqref{global cohomology class coincides with Acoch} can be defined in $H^{n}(B^{(n)}, B^{(n-1)})$ since $B$ is compact and $\M_{\si^{(n-1)}}=\emptyset$ holds.
Let $e$ be an $n$-cell of $B$ and $\vp_{e} : D^{n}_{e} \to B$ be its characteristic map.
Because of the compactness of $B$ and $D^{n}_{e}$, we can apply \cref{naturality for vn} to $\vp_{e} : D^{n}_{e} \to B$, and thus we obtain 
\begin{align*}
\vp_{e}^{\ast}\frakM(s_{\si}, 1 ; B^{(n-1)}) = \frakM(\vp_{e}^{\ast}s_{\si}, 1; \del D^{n}_{e}).
\end{align*}
The equality and the isomorphism
\[
\prod_{e \subset B} \vp_{e}^{\ast} : C^{n}(B) = H^{n}(B^{(n)}, B^{(n-1)}) \to \prod_{e \subset B} H^{n}(D^{n}_{e}, \del D^{n}_{e})
\]
imply \eqref{global cohomology class coincides with Acoch}.
\end{proof}

We now prove the naturality at the level of cochains.

\begin{prop}
\label{lem on naturality at the level of cochain}
Let us follow the setting of \cref{defi of the basic cochain}.
Then, for a CW complex $B'$ and a cellular map $f : B' \to B$, 
\begin{align}
f^{\ast} \Acoch(E, \si)
= \Acoch(f^{\ast} E, f^{\ast}\si)
\label{eq cochain level naturality}
\end{align}
holds in $C^{n}(B')$.
\end{prop}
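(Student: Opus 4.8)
The plan is to reduce the statement to the naturality lemma for family virtual neighborhoods, \cref{naturality for vn}, cell by cell. The key observation is that both sides of \eqref{eq cochain level naturality} are cochains in $C^{n}(B')$, so it suffices to evaluate them on an arbitrary $n$-cell $e'$ of $B'$ and check equality in $\Z$ or $\Z/2$. Let $\vp_{e'} : D^{n}_{e'} \to B'$ be the characteristic map of $e'$. By \cref{defi of the basic cochain}, the value of the right-hand side on $e'$ is $\frakm(s_{\vp_{e'}^{\ast}(f^{\ast}\si)}, 1; D^{n}_{e'})$, while the value of the left-hand side on $e'$ is the value of $\Acoch(E,\si)$ on the cellular chain $f_{\ast}(e')$, which, since $f$ is cellular, is a $\Z$- (or $\Z/2$-) linear combination $\sum_{e} [e' : e]\, e$ of $n$-cells $e$ of $B$, with $[e':e]$ the degree of the induced map $D^{n}_{e'}/\partial \to \bar e/(\bar e\setminus e) \simeq S^{n}$.

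First I would set up the comparison on the level of parameterized moduli data. Pulling back the bundle $E$ and the inductive section $\si$ along $f$ produces the bundle $f^{\ast}E \to B'$ with structure group $G$ and a section $f^{\ast}\si$ of $\Pi(f^{\ast}E)|_{B'^{(n)}}$ which, by construction (and since $f$ is cellular so $f(B'^{(k)})\subseteq B^{(k)}$), is again an inductive section: the parameterized moduli space $\M_{(f^{\ast}\si)^{(k)}}$ is the pullback of $\M_{\si^{(k)}}$ and hence empty for $k<n$. In particular $s_{\vp_{e'}^{\ast}(f^{\ast}\si)}$ is nowhere-vanishing on $\partial D^{n}_{e'}$ and, by compactness of $D^{n}_{e'}$ together with the unparameterized compactness of the ASD/\SW moduli (as in \cref{subsection Definition of the characteristic classes in the basic case}), its zero set is compact, so $\frakm(s_{\vp_{e'}^{\ast}(f^{\ast}\si)}, 1; D^{n}_{e'})$ is defined.

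Next I would apply \cref{naturality for vn} to the composite $D^{n}_{e'} \xrightarrow{\vp_{e'}} B' \xrightarrow{f} B$, or rather to the family of Fredholm sections $s_{\si^{(n)}}$ restricted suitably: the corollary gives, with $\alpha = 1$, an equality $g^{\ast}\frakM(s_{\si}, 1; B^{(n-1)}) = \frakM(g^{\ast}s_{\si}, 1; \cdot)$ for $g = f\circ\vp_{e'}$, once compactness of the relevant zero sets is checked (which follows as above). Combining this with \cref{lem expression of chchain as global cohomology class}, which identifies $\frakM(s_{\si}, 1; B^{(n-1)})$ with the cochain $\Acoch(E,\si)\in H^{n}(B^{(n)},B^{(n-1)})$, and with the fact that $g^{\ast}s_{\si}$ is gauge-equivalent to $s_{\vp_{e'}^{\ast}(f^{\ast}\si)}$ (this is exactly the cocycle-condition-modulo-gauge construction of \cref{defi: family of Fredholm sections in the most general setting} being compatible with pullback of transition functions), I get that the value of $\vp_{e'}^{\ast} f^{\ast}\Acoch(E,\si)$ equals $\frakm(s_{\vp_{e'}^{\ast}(f^{\ast}\si)},1;\partial D^{n}_{e'})$, which is the value of $\Acoch(f^{\ast}E, f^{\ast}\si)$ on $e'$. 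Running this for all $n$-cells $e'$ and using the isomorphism $\prod_{e'}\vp_{e'}^{\ast} : C^{n}(B')\to\prod_{e'}H^{n}(D^{n}_{e'},\partial D^{n}_{e'})$ yields \eqref{eq cochain level naturality}.

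The main obstacle I anticipate is purely bookkeeping rather than conceptual: verifying that the pulled-back family of Fredholm sections $f^{\ast}s_{\si}$, assembled via the pulled-back transition functions $\{f^{\ast}g_{\alpha\beta}\}$ and their lifts, is genuinely identified — not merely isomorphic in some loose sense, but identified via the canonical gauge-quotient maps — with the family $s_{f^{\ast}\si}$ built directly on $B'$. This requires chasing the equivalence relations $\sim$ in \cref{defi: globally defined parameterized moduli space}, \cref{defi family of scrB and scrE in the most general setting}, and \cref{defi: family of Fredholm sections in the most general setting} through the pullback, and checking that one may choose the lifts $\widetilde{f^{\ast}g_{\alpha\beta}}$ to be $f^{\ast}\widetilde{g_{\alpha\beta}}$; the independence of all constructions on the choice of lift (a consequence of the exact sequence \eqref{exact sequence abstract}, already exploited repeatedly above) is what makes this work. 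Once that identification is in place, the argument is a formal consequence of \cref{naturality for vn} and \cref{lem expression of chchain as global cohomology class}. One also needs $B$ (or at least the relevant skeleta/cells) compact to invoke \cref{lem expression of chchain as global cohomology class}, so for a general $B$ I would first restrict to $\bar e$ for each cell, which is compact, and argue locally there.
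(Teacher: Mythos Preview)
Your proposal is correct and follows essentially the same route as the paper: evaluate both sides cell by cell, identify the cochain with the relative cohomological invariant via \cref{lem expression of chchain as global cohomology class}, and then invoke \cref{naturality for vn} for the composite $f\circ\vp_{e'}$. The one place where the paper is more precise than your sketch is the compactness bookkeeping: rather than restricting to $\bar e$ (which need not be a subcomplex), the paper passes to a finite subcomplex $K'\subset B'$ containing the image of $\vp_{e'}$ and then to a finite subcomplex $K\subset B$ containing $f(K')$, applying \cref{lem expression of chchain as global cohomology class} on $K$; this is exactly the fix you anticipated in your last paragraph, made explicit.
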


\begin{proof}
We first note that the notation $\Acoch(f^{\ast} E, f^{\ast}\si)$ makes sense:
$f^{\ast}\si$ is also an inductive section on $B'^{(n)}$, which is a straightforward verification.
Let $e \subset B'$ be an $n$-cell and $\vp_{e} : D^{n}_{e} \to B'$ be the characteristic map of $e$.
Take a finite subcomplex $K' \subset B'$ containing the image of $\vp_{e}$, and also take a finite subcomplex $K \subset B$ containing the image $f(K')$.
By applying \cref{lem expression of chchain as global cohomology class} to $\si|_{K^{(n)}}$, we have
\begin{align}
\frakM(s_{\si|_{K^{(n)}}}, 1;K^{(n-1)}) = i_{K}^{\ast} \Acoch(E,\si)
\label{relating big cohomological inv to small 0}
\end{align}
in $C^{n}(K^{(n)}) = C^{n}(K)$, where $i_{K} : K \inc B$ is the inclusion.
On the other hand, because of the compactness of $K^{(n)}$, $K'^{(n)}$ and $D^{n}_{e}$, we can apply \cref{naturality for vn} to the restriction $f : K'^{(n)} \to K^{(n)}$ of $f$ and $\vp_{e}$, and thus have 
\begin{align}
\vp_{e}^{\ast}f^{\ast}\frakM(s_{\si|_{K^{(n)}}}, 1; K^{(n-1)}) = \frakM(\vp_{e}^{\ast}f^{\ast}s_{\si|_{K^{(n)}}}, 1; \del D^{n}_{e}).
\label{relating big cohomological inv to small 1}
\end{align}
Since $\vp_{e}^{\ast}f^{\ast}s_{\si} = s_{\vp_{e}^{\ast}f^{\ast}\si}$ holds, the equalities \eqref{relating big cohomological inv to small 0} and \eqref{relating big cohomological inv to small 1} imply that
\[
\vp_{e}^{\ast}f^{\ast}\Acoch(E,\si)
= \frakM(s_{\vp_{e}^{\ast}f^{\ast}\si}, 1; \del D^{n}_{e}).
\]
It therefore follows that
\begin{align*}
&f^{\ast} \Acoch(E, \si)(e)
= \left<f^{\ast} \Acoch(E, \si), e\right>
= \left<\vp_{e}^{\ast}f^{\ast} \Acoch(E, \si), [D^{n}_{e}, \del D^{n}_{e}]\right>\\
=& \left<\frakM(s_{\vp_{e}^{\ast}f^{\ast}\si}, 1; \del D^{n}_{e}), [D^{n}_{e}, \del D^{n}_{e}]\right>
= \left<\Acoch(f^{\ast}E, f^{\ast}\si),e \right>.
\end{align*}
Thus we obtain \eqref{eq cochain level naturality}.
\end{proof}

\begin{proof}[Proof of \cref{part of theo of well-definedness of the invariant functoriality}]
This is a direct consequence of \cref{lem on naturality at the level of cochain} and the cellular approximation theorem.
(Here we interpret $\Acoh(f^{\ast}E)$ as $[\Acoch(f^{\ast}E, f^{\ast}\si)]$, which is in fact independent of $f^{\ast}\si$ by \cref{prop: coclosedness}.)
\end{proof}

We now give the proof of \cref{prop: coclosedness,theo: well-definedness of the invariant}.
Both of them are shown using $(n+1)$-parameter families, corresponding to so-called arguments by cobordisms.
However, in fact, any cobordism between manifolds does not explicitly appear in this \lcnamecref{subsection Well-definedness}.
This is because it has been absorbed into the well-definedness of the cohomological invariants (\cref{lemma on well definedness of the cohomological inv in family vn context}) in family \vn context.

\begin{proof}[Proof of \cref{prop: coclosedness}]
As we noted in \cref{subsection Definition of the characteristic classes in the basic case}, an inductive section $\si^{(\bullet)}$ can be constructed also for $\bullet=n+1$ since $b^{+}(X) \geq n+2$.
Let us fix $\si^{(n+1)} : B^{(n+1)} \to \Pi(E)|_{B^{(n+1)}}$ through the inductive procedure in \cref{subsection Definition of the characteristic classes in the basic case}.
By abuse of notation, we write $\si$ for both $\si^{(n)}$ and $\si^{(n+1)}$.
Take an $(n+1)$-cell $e \subset B$ and its characteristic map $\vp_{e} : \Delta^{n+1} \to B$.
Here we equip $D^{n}_{e} \cong \Delta^{n+1}$ with the CW structure as the standard simplex, and by cellular approximation, we can assume that $\vp_{e}$ is cellular with respect to this CW structure on $\Delta^{n+1}$.
From \cref{lem on naturality at the level of cochain}, it follows that
\begin{align}
\label{eq between codelta and evaluation of deltadelta}
&\delta \Acoch(E, \si)(e)
= \delta \Acoch(E, \si)({\vp_{e}}_{\ast}\Delta^{n+1})
= \vp_{e}^{\ast}\delta \Acoch(E, \si)(\Delta^{n+1})\\
=& \delta \vp_{e}^{\ast} \Acoch(E, \si)(\Delta^{n+1})
= \delta \Acoch(\vp_{e}^{\ast}E, \vp_{e}^{\ast}\si)(\Delta^{n+1})
= \Acoch(\vp_{e}^{\ast}E, \vp_{e}^{\ast}\si)(\del\Delta^{n+1}).
\nonumber
\end{align}
On the other hand, by applying \cref{lem expression of chchain as global cohomology class} to $\vp_{e}^{\ast}\si|_{(\Delta^{n+1})^{(n)}}$, we have
\begin{align}
\frakM(s_{\vp_{e}^{\ast}\si|_{(\Delta^{n+1})^{(n)}}}, 1;(\Delta^{n+1})^{(n-1)}) = \Acoch(\vp_{e}^{\ast}E, \vp_{e}^{\ast}\si).
\label{eq between delta n1 and coch}
\end{align}
In addition, since $\M_{\vp_{e}^{\ast}\si^{(n-1)}} = \emptyset$ holds, we can define
\[
\frakM(s_{\vp_{e}^{\ast}\si}, 1;(\Delta^{n+1})^{(n-1)})
\in H^{n}(\Delta^{n+1}, (\Delta^{n+1})^{(n-1)})
\]
and 
\begin{align}
i^{\ast}\frakM(s_{\vp_{e}^{\ast}\si}, 1;(\Delta^{n+1})^{(n-1)})
= \frakM(s_{\vp_{e}^{\ast}\si|_{(\Delta^{n+1})^{(n)}}}, 1;(\Delta^{n+1})^{(n-1)})
\label{eq between pull back coch delta n1}
\end{align}
holds, where
\[
i : ((\Delta^{n+1})^{(n)}, (\Delta^{n+1})^{(n-1)}) \inc (\Delta^{n+1}, (\Delta^{n+1})^{(n-1)})
\]
is the inclusion.
From the equalities \eqref{eq between codelta and evaluation of deltadelta}, \eqref{eq between delta n1 and coch}, \eqref{eq between pull back coch delta n1}, and $i_{\ast}\del\Delta^{n+1}=0$, it follows that
\begin{align*}
\delta \Acoch(E, \si)(e)
= \left<\Acoch(\vp_{e}^{\ast}E, \vp_{e}^{\ast}\si), \del\Delta^{n+1} \right>
&= \left<i^{\ast}\frakM(s_{\vp_{e}^{\ast}\si}, 1;(\Delta^{n+1})^{(n-1)}), \del\Delta^{n+1} \right>\\
&= \left<\frakM(s_{\vp_{e}^{\ast}\si}, 1;(\Delta^{n+1})^{(n-1)}), i_{\ast}\del\Delta^{n+1} \right> = 0.
\end{align*}
Thus we have $\delta \Acoch(E, \si)=0$.
\end{proof}

\begin{proof}[Proof of \cref{theo: well-definedness of the invariant}]
Let us take two inductive sections $\si_{i} : B^{(n)} \to \Pi(E)|_{B^{(n)}}$ for $i = 0,1$, and set
$\Acoch_{i} := \Acoch(E, \si_{i})$.
Let $p : B \times [0,1] \to B$ the the projection.
For each cell $e$ of $B$, let $\vp_{e} : D^{\dim{e}} \to B$ be its characteristic map and fix a trivialization
\[
\tilde{\psi}_{e} : (\vp_{e}\times \id)^{\ast} p^{\ast} \Pi(E) \to D^{\dim{e}} \times [0,1] \times \Pi(X).
\]
We inductively construct a section $\tilde{\si} = \tilde{\si}^{(n)}: B^{(n)} \times [0,1] \to p^{\ast}\Pi(E)|_{B^{(n)}}$ as follows.
First let $e \subset B$ be a $0$-cell.
We take a section $\tilde{\si}^{(0)} : e \times [0,1] \to p^{\ast}\Pi(E)|_{e}$ such that $\tilde{\si}^{(0)}|_{e \times \{i\}} = \si_{i}^{(0)}$ and the composition
\[
p_{2} \circ \tilde{\psi}_{e} \circ (\vp_{e}\times \id)^{\ast} \tilde{\si}^{(0)} : D^{0}_{e} \times (0,1) \to (\vp_{e}\times \id)^{\ast} p^{\ast} \Pi(E)
\to D^{0} \times [0,1] \times \Pi(X)
\to \Pi(X)
\]
is smooth (in the interior of the domain) and generic, and avoiding the wall.
Then we obtain $\tilde{\si}^{(0)} : B^{(0)} \to p^{\ast} \Pi(E)|_{B^{(0)}}$.
We next assume that we have constructed $\tilde{\si}^{(k-1)} : B^{(k-1)} \to p^{\ast}\Pi(E)|_{B^{(k-1)}}$ for $k \leq n$ such that $\tilde{\si}^{(k-1)}|_{e \times \{i\}} = \si_{i}^{(k-1)}$ and $\M_{\tilde{\si}^{(k-1)}}=\emptyset$.
For a $k$-cell $e \subset B$, we take a section $\tilde{\si}^{(k)} : e \times [0,1] \to p^{\ast}\Pi(E)|_{e}$ such that $\tilde{\si}^{(k)}|_{e \times \{i\}} = \si_{i}^{(k)}$, $\tilde{\si}^{(k)}|_{\bar{e}\setminus {e}} = \tilde{\si}^{(k-1)}|_{\bar{e}\setminus {e}}$, and the composition
\[
p_{2} \circ \tilde{\psi}_{e} \circ (\vp_{e}\times \id)^{\ast} \tilde{\si}^{(k)} : D^{k}_{e} \times (0,1) \to \Pi(X)
\]
is smooth and generic, and avoiding the wall.
We can assume that this composition avoids the wall since we assume that $b^{+}(X) \geq n+2$.
We now obtain $\tilde{\si}^{(k)} : B^{(k)} \to p^{\ast} \Pi(E)|_{B^{(k)}}$, and thus $\tilde{\si} : B^{(n)} \times [0,1] \to p^{\ast}\Pi(E)|_{B^{(n)}}$.
Let us define
\[
\tilA \in C^{n-1}(B)
\]
by
\[
\tilA(e) := (-1)^{n-1} \Acoch(p^{\ast}E, \tilde{\si})(e \times I)
\]
for each $(n-1)$-cell $e$ of $B$,
where $I$ is the $1$-cell of $[0,1]$ equipped with the standard cell structure.
We shall show that $\codel \tilA = \Acoch_{1} - \Acoch_{0}$.
Let $R = \Z$ or $\Z/2$ be the our coefficient, and let us write the basis of $C_{\ast}(I)$ as $C_{0}(I) = R \cdot 0 \oplus R \cdot 1$, $C_{1}(I) = R \cdot I$, and write the dual basis as $C^{0}(I) = R \cdot 0^{\ast} \oplus R \cdot 1^{\ast}$, $C^{1}(I) = R \cdot I^{\ast}$.
Let $\Phi : C^{\ast}(B) \to C^{\ast+1}(B \times [0,1])$ be the isomorphism given by $\Phi(c) := c \otimes I^{\ast}$, which commutes with $\codel$.
Let us consider an $n$-cell of $B \times [0,1]$ written as
\begin{align*}
\tilde{e} &= e^{n-1} \otimes I + e^{n}_{0} \otimes 0 + e^{n}_{1} \otimes 1\\
&\in C_{n-1}(B) \otimes C_{1}([0,1]) \oplus C_{n}(B) \otimes C_{0}([0,1])
\cong C_{n}(B \times [0,1]).
\end{align*}
Then, since $\Acoch(p^{\ast}E, \tilde{\si})(e \times i) = \Acoch_{i}(e)$ holds for each $n$-cell $e$ of $B$, we have
\begin{align*}
\Phi \tilA(\tilde{e})
= \tilA(e^{n-1})
&= (-1)^{n-1} \Acoch(p^{\ast}E, \tilde{\si})(e^{n-1} \times I)\\
&= (-1)^{n-1} \{\Acoch(p^{\ast}E, \tilde{\si})(\tilde{e})
- \Acoch_{0}(e^{n}_{0}) - \Acoch_{1}(e^{n}_{1})\}\\
&= (-1)^{n-1} \{\Acoch(p^{\ast}E, \tilde{\si})(\tilde{e})
- \Acoch_{0} \otimes 0^{\ast}(\tilde{e}) - \Acoch_{1} \otimes 1^{\ast}(\tilde{e})\}.
\end{align*}
It therefore follows that
\[
\Phi \tilA
= (-1)^{n-1} (\Acoch(p^{\ast}E, \tilde{\si})
- \Acoch_{0} \otimes 0^{\ast} - \Acoch_{1} \otimes 1^{\ast}).
\]
Using this equality and \cref{prop: coclosedness}, we have
\begin{align*}
\Phi \codel \tilA
= \codel \Phi \tilA
&= (-1)^{n-1} \codel (\Acoch(p^{\ast}E, \tilde{\si})
- \Acoch_{0} \otimes 0^{\ast} - \Acoch_{1} \otimes 1^{\ast})\\
&= (-1)^{n} \{(-1)^{n}\Acoch_{0} \otimes \codel 0^{\ast} + (-1)^{n}\Acoch_{1} \otimes \codel 1^{\ast}\}\\
&= -\Acoch_{0} \otimes I^{\ast} + \Acoch_{1} \otimes I^{\ast}
= \Phi(-\Acoch_{0} + \Acoch_{1}).
\end{align*}
Since $\Phi$ is an isomorpshism, we obtain the required equality $\codel \tilA = \Acoch_{1} - \Acoch_{0}$.
\end{proof}

\begin{rem}
In the argument of this \lcnamecref{subsection Well-definedness}, the cohomological invariants given in \cref{general def of cohomological invariant from family vn} is fruitfully used.
This is one of the clear merits of our family \vn technique, and here we also mention another good point of it.
In the proof of \cref{theo: well-definedness of the invariant}, for $i = 0,1$, let $\psi_{e, i} : \vp_{e}^{\ast}\Pi(E) \to D^{n}_{e} \times \Pi(X)$ be the trivialization of $\vp_{e}^{\ast}\Pi(E) \to D^{n}_{e}$ used in the construction of $\si_{i}$.
We note that we do {\it not} need to take $\tilde{\psi}_{e}$ to be an extension of $\psi_{e, i}$:
although the composition
\[
p_{2} \circ \tilde{\psi}_{e} \circ \vp_{e}^{\ast} \si_{i}^{(n)} : D^{n}_{e} \times \{i\} \to \Pi(X)
\]
is not smooth in general, it does not matter since we here do not consider any cobordism between smooth moduli spaces.
This is also one of the advantages of the family \vn technique:
we do not need to connect by a path between two local trivializations.
If one tries to connect the trivializations $\psi_{e, 0}$ and $\psi_{e, 1}$ via a family of trivializations, we have to study $\pi_{0}(\Homeo(\Pi(X) \setminus \text{wall}))$,
where $\Homeo(\cdot)$ denotes the homeomorphism group.
Especially in the ASD setting, the structure of the wall is complicated, and hence $\Homeo(\Pi(X) \setminus \text{wall})$ is so.
The use of the family \vn technique allows us to avoid such a problem.
\end{rem}

\begin{rem}
We mention other possibilities of ways to construct gauge theoretic characteristic classes on the classifying space of the diffeomorphism group avoiding family \vn technique.
A simple way is to abandon $\Z$ or $\Z/2$-coefficient cohomology and to work over $\Q$.
Since any homology class over $\Q$ on any space, say the classifying space, can be represented as a smooth finite dimensional manifold, one may apply usual family gauge theory (and Nakamura's idea described in \cref{subsection Nakamura's idea and families of Fredholm sections}) to the representative.
For example, D.~McDuff~\cite{MR2396906} describes characteristic classes obtained based on the family Gromov--Witten invariant via this way.
Another possibility is to use a model of the classifying space which is a smooth infinite dimensional manifold.
(For example, see Kriegl--Michor~\cite{MR1471480}.)
However this approach, of course, involves a lot of subtle problems on the counting of the moduli space, and the author does not certain whether one can completely avoid family \vn technique via this way.
\end{rem}

\section{Characteristic classes as obstruction}
\label{section Characteristic classes as obstruction}

In this \lcnamecref{section Characteristic classes as obstruction} we interpret our characteristic classes $\Dcoh$ and $\SWcoh$ as obstructions to some structure on $4$-manifold bundles.
We first note that these characteristic classes are obstructions to fiberwise connected sum under suitable assumption on $b^{+}$.
Let $B$ be a topological space and $X_{i} \to E_{i} \to B$ $(i=1,2)$ be fiber bundles of oriented closed $4$-manifolds $X_{i}$.
Given sections $\calS_{i} : B \to E_{i}$, if the normal bundle of $\calS_{1}(B)$ in $E_{1}$ is isomorphic to that of $\calS_{2}(B)$ in $E_{2}$  fiber-preservingly and fiber-orientation-reversingly,
we can define the fiberwise connected sum $X_{1} \# X_{2} \to E_{1} \#_{f} E_{2} \to B$ by considering the connected sum $(E_{1})_{b} \# (E_{2})_{b}$ along small disklike neighborhoods of $\calS_{i}(b)$ for each $b \in B$.
We call such sections $\calS_{i}$ {\it compatible sections}.
Let $\frakP_{i}$, $\fraks_{i}$, and $\calO_{i}$ be the isomorphism class of an $SO(3)$-bundle with $w_{2}(\frakP_{i}) \neq 0$, the isomorphism class of a $\spc$ structure, and a homology orientation on $X_{i}$ respectively.
They define data $\frakP = \frakP_{1} \# \frakP_{2}$, $\fraks = \fraks_{1} \# \fraks_{2}$, and $\calO = \calO_{1} \# \calO_{2}$ on $X = X_{1} \# X_{2}$, and we call the relations between $\frakP$, $\fraks$, $\calO$ and $\frakP_{i}$, $\fraks_{i}$, $\calO_{i}$ the {\it connected sum relations}.
The characteristic classes $\Dcoh$ and $\SWcoh$ obstruct the fiberwise connected sum as follows.
We note that D.~Ruberman has pointed this kind of phenomena out as Theorem~3.3 in \cite{MR1734421}.

\begin{theo}
\label{theo vanishing fiber wiseconnected sum}
Let $X$ be an oriented closed smooth $4$-manifold, $B$ be a CW complex, $E \to B$ be a continuous $X$-bundle, and $n$ be a non-negative integer.
Assume that $b^{+}(X) \geq n+2$ and $B$ has finitely many $n$-cells, and also suppose that either
\begin{enumerate}
\item $d(\frakP) = -n$ for the isomorphism class $\frakP$ of an $SO(3)$-bundle on $X$ with $w_{2}(\frakP) \neq 0$, the structure group of $E$ reduces to $G = \Diff(X, \frakP)$ or $\Diff(X, \frakP, \calO)$ for some homology orientation $\calO$, and $\Dcoh(E) \neq 0$ or 
\item
\label{vanishing connected sum SW}
$d(\fraks) = -n$ for the isomorphism class $\fraks$ of a spin$^{c}$ structure on $X$, the structure group of $E$ reduces to $G = \Diff(X, \fraks)$ or $\Diff(X, \fraks, \calO)$ for some homology orientation $\calO$, and $\SWcoh(E) \neq 0$.
\end{enumerate}
Then, there are no bundles of oriented closed $4$-manifolds $X_{i} \to E_{i} \to B$ $(i=1,2)$ with structure group $G_{i}$ satisfying that $E = E_{1} \#_{f} E_{2}$ along some compatible sections $\calS_{i} : B \to E_{i}$ and that $b^{+}(X_{i}) > n$.
Here $G_{i}$ is given as $\Diff(X_{i}, \frakP_{i})$, $\Diff(X_{i}, \frakP_{i}, \calO_{i})$, $\Diff(X_{i}, \fraks_{i})$, or $\Diff(X_{i}, \fraks_{i}, \calO_{i})$ corresponding to $G$ for some $\frakP_{i}$, $\fraks_{i}$, or $\calO_{i}$ satisfying the connected sum relations.
\end{theo}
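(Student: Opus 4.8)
The plan is to argue by contradiction: assume $E$ admits a fiberwise connected sum decomposition $E = E_{1} \#_{f} E_{2}$ as in the statement, and deduce $\Dcoh(E) = 0$ (resp.\ $\SWcoh(E) = 0$), contradicting the hypothesis. By \cref{theo: well-definedness of the invariant} and \cref{def of characteristic classes in the base case} the class $\Dcoh(E) = [\Acoch(E,\si)]$ (resp.\ $\SWcoh(E)$) is independent of the inductive section $\si : B^{(n)} \to \Pi(E)|_{B^{(n)}}$, so it suffices to produce one $\si$ for which the cochain $\Acoch(E,\si)$ of \cref{defi of the basic cochain} vanishes identically. Since $\Acoch(E,\si)(e) = \frakm(s_{\vp_{e}^{\ast}\si}, 1; D^{n}_{e})$ is the count, in the sense of \cref{defi of the evaluated invariant}, of the parameterized moduli space sitting over the $n$-cell $e$, and this count is $0$ whenever that moduli space is empty (the corresponding family virtual neighborhood is then empty), I would choose $\si$ so that every fiber carries a Riemannian metric with a very long neck $[-T_{0},T_{0}] \times S^{3}$ in the connected-sum region, $T_{0}$ being a single large constant.

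Concretely: since $E = E_{1} \#_{f} E_{2}$ is built by performing $(E_{1})_{b} \# (E_{2})_{b}$ along disk-like neighborhoods of the compatible sections $\calS_{i}(b)$, there is a fiberwise $3$-sphere in $E$ separating each fiber into $(E_{1})_{b} \setminus D^{4}$ and $(E_{2})_{b} \setminus D^{4}$; a fiber metric may be taken of the form ``prescribed collar metrics on the two pieces joined by a cylinder $[-T,T]\times S^{3}$'', and (in the \SW setting) a perturbation supported away from the neck. First I would run the inductive construction of \cref{subsection Definition of the characteristic classes in the basic case} for the two bundles $E_{i} \to B$ separately, producing sections $\si_{i}$ over $B^{(n)}$ for which the parameterized moduli spaces of $X_{i}$ are cut out transversally and contain no reducible solution; this is exactly where the hypothesis $b^{+}(X_{i}) > n$ is used, since the reducible wall for $X_{i}$ has codimension $b^{+}(X_{i})$ and must be avoided over the $n$-skeleton. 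As $B$ has finitely many cells in each dimension $\le n$, the images of the $\si_{i}$ are compact. Gluing $\si_{1}$ and $\si_{2}$ along a neck of length $\ge T_{0}$ in the metric factor then yields a section $\si$ for $E$; that it is again an inductive section (no reducibles on $X$, lower-skeleton moduli empty because $d(\fraks) = -n$, resp.\ $d(\frakP) = -n$) is a routine verification using that a reducible on $X$ would restrict to one on the $X_{i}$.

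The remaining point is that for $T_{0}$ large this $\si$ has empty parameterized moduli space, whence $\Acoch(E,\si) = 0$. This is the family analogue of the classical vanishing of Donaldson and \SW invariants under connected sum (\cite{MR1066174,MR1306021}; cf.\ Ruberman~\cite{MR1734421}, Theorem~3.3), carried out fiberwise and over the compact parameter $D^{n}_{e}$. Stretching the neck, a sequence of solutions over $E_{b}$ subconverges, after gauge, to a pair of finite-energy solutions on $X_{1}$ and $X_{2}$ over the same $b$ with cylindrical ends asymptotic to a solution of the equations on the round $S^{3}$; since $S^{3}$ has positive scalar curvature that limit is the unique reducible (the trivial flat connection with zero spinor, in the \SW case), which is nondegenerate. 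By the removable-singularity / capping-off argument the limit represents a glued pair in $\M_{\si_{1}} \times_{B^{(n)}} \M_{\si_{2}}$, the gluing parameter along the reducible being a torus $S^{1}$ (in the \SW case) or an $SO(3)$ (in the $SO(3)$-ASD case); as in the classical unparameterized case, this parameter organizes the broken stratum into a bundle whose fiber bounds, so that its contribution to the count cancels, once reducibles on the individual pieces $X_{i}$ --- absent by transversality of the $\si_{i}$ and by $b^{+}(X_{i}) > n$, with trivial connections in the $SO(3)$-case treated exactly as classically --- are ruled out. Thus no solutions survive for large $T_{0}$ over any of the finitely many $n$-cells, a single $T_{0}$ works for all of them, and $\Acoch(E,\si) = 0$. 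For $n = 0$ this is literally the classical connected-sum vanishing of the ordinary Donaldson/\SW invariant of $(X,\frakP)$ or $(X,\fraks)$, valid since $b^{+}(X) \ge 2$.

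The step I expect to be the main obstacle is the neck-stretching analysis in the family setting: proving compactness uniformly in $b$ for the stretched family of solutions --- for which it is essential that, having fixed $\si_{1}$ and $\si_{2}$ first, the away-from-neck perturbation data ranges over the compact images of these sections --- and identifying the stretched limit with glued configurations, together with the careful bookkeeping of the reducible stratum over $S^{3}$. As elsewhere in \cref{subsection Well-definedness}, no explicit cobordism between moduli spaces appears in the final write-up: it is absorbed into the well-definedness of the cohomology class, \cref{theo: well-definedness of the invariant}.
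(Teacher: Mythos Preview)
Your overall strategy matches the paper's: assume a decomposition $E = E_1 \#_f E_2$, build sections $\si_i$ on the pieces (generic and avoiding reducibles, using $b^+(X_i) > n$), glue them with a long neck, and argue that for long enough neck the parameterized moduli over each $n$-cell is empty, whence $\Acoch(E,\si) = 0$. The paper implements your ``single large $T_0$'' claim as a sequence $m \to \infty$ and a contradiction: if the moduli were nonempty for all $m$, extract by the finiteness hypothesis an $n$-cell $e$ carrying solutions for all large $m$, and pass to a limiting pair of solutions on $X_1$ and $X_2$ over a common $b \in D^n_e$.

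Where your write-up goes off track is the \emph{reason} you give for the vanishing in the third paragraph. You invoke the gluing parameter ($S^1$ in the \SW case, $SO(3)$ in the ASD case) and say it ``organizes the broken stratum into a bundle whose fiber bounds, so that its contribution to the count cancels''. That is not the mechanism here, and as phrased it conflates ``the count is zero'' with ``the moduli is empty''. The correct argument --- the one the paper actually uses --- is purely a formal-dimension count. The limiting pair lives in the fiber product $\M_{\si_1} \times_{D^n_e} \M_{\si_2}$, and for generic $\si_i$ this has expected dimension
\[
(d(\fraks_1)+n) + (d(\fraks_2)+n) - n \;=\; d(\fraks_1)+d(\fraks_2)+n \;=\; -1
\]
in the \SW case (using $d(\fraks) = d(\fraks_1)+d(\fraks_2)+1$), and $d(\frakP_1)+d(\frakP_2)+n = -3$ in the ASD case; hence it is \emph{empty}. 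Equivalently, as the paper puts it, ``one of them is empty because of formal dimension'': over any fixed $b$ at least one of $\M_{\si_i}|_b$ must vanish. So no limiting pair exists, contradicting the assumption, and the moduli on $X$ is itself empty for large $m$. No ``fiber bounds'' argument enters. You already wrote down the fiber product $\M_{\si_1} \times_{B^{(n)}} \M_{\si_2}$; replace the sentence about gluing parameters with this dimension count and your proof is the paper's.
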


\begin{proof}
Let us write $\Acoh$ for $\Dcoh$ and $\SWcoh$ as \cref{section Construction of the characteristic classes}.
Assume that there exist $X_{i} \to E_{i} \to B$ $(i=1,2)$ with structure group $G_{i}$ satisfying that $E = E \#_{f} E_{2}$ along some sections $\calS_{i} : B \to E_{i}$ and that $b^{+}(X_{i}) > n$.
As in the usual vanishing theorem of the Donaldson invariant or \SW invariant for connected sum, consider a sequence of fiberwise metrics $g_{i,b,m} \in \Met((E_{i})_{b})$ $(b\in B, m \in \N)$ and $g_{b,m} = g_{1,b,m} \# g_{2,b,m} \in \Met(E_{b})$ such that $g_{b,m}$ has a neck whose radius converges to $0$ as $m \to \infty$ and $g_{i,b,m}$ is constant with respect to $m$ on the complement of a neighborhood of the disk used to define the neck.
If we work on the ASD setting, we can define inductive sections $\si_{i,m} : B^{(n)} \to \Pi(E_{i})|_{B^{(n)}}$ as $\si_{i,m}(b) = g_{i,b,m}$ by taking suitable $g_{i,b,m}$ in advance.
If we work on the SW setting, we take inductive sections $\si_{i,m} : B^{(n)} \to \Pi(E_{i})|_{B^{(n)}}$ so that $\si_{i,m}(b)$ vanishes near the disk used to define the neck, $\si_{i,m}(b)$ is constant with respect to $m$ on the complement of the disk, and $\pi(\si_{i,m}(b)) = g_{i,b,m}$ holds, where $\pi : \Pi((E_{i})_{b}) \to \Met((E_{i})_{b})$ is the projection.
The inductive sections $\si_{i,m}$ define the limiting inductive sections $\si_{i,\infty} : B^{(n)} \to \Pi(E_{i})|_{B^{(n)}}$ by an obvious way.
Let us define $\si_{m}(b) := \si_{1,m}(b) \# \si_{2,m}(b)$.
This new section $\si_{m} : B^{(n)} \to \Pi(E)|_{B^{(n)}}$ is also an inductive section.
Assume that, for any $m$, there exists an $n$-cell of $B$ such that the parameterized moduli space with respect to $\si_{m}$ on it is not empty.
Because of our finiteness assumption on $n$-cells of $B$, without loss of generality, we may assume that there exists an $n$-cell $e$ of $B$ such that, for any $m>>0$,
the parameterized moduli space with respect to $\si_{m}$ on $e$ is not empty.
Let $\vp_{e} : D^{n}_{e} \to B$ be the characteristic map of $e$.
By considering limit and removing the singularity, we get an element of the parameterized moduli pace with respect to $\si_{i, \infty}$ on $e$ for both $i=1,2$.
On the other hand, by the definition of inductive section, there exists trivializations $\psi_{e,i} : \vp_{e}^{\ast} \Pi(E_{i}) \to D^{n}_{e} \times \Pi(X_{i})$ such that $\si_{i, \infty}$ is smooth, generic and avoiding reducibles via $\psi_{e,i}$.
The parameterized moduli spaces with respect to $\si_{i,\infty}$ on $e$ therefore admit smooth structure via $\psi_{e,i}$, and hence one of them is empty because of formal dimension.
This contradicts the existence of the limiting elements discussed above.
This argument implies that there exists $m$ such that the parameterized moduli space with respect to $\si_{m}$ is empty.
Thus we have $\Acoch(E, \si_{m}) = 0$, and hence $\Acoh(E) = 0$.
\end{proof}

We next mention that $\SWcoh$ relates to families of positive scalar curvature metrics, as in D.~Ruberman~\cite{MR1874146} and \cite{Konno3} by the author.
For a given $4$-manifold $X$, we denote by $\PSC(X)$ the space of positive scalar curvature metrics on $X$.
For a fiber bundle $X \to E \to B$ with structure group $\Diff(X)$, the diffeomorphism group of $X$, on a topological space $B$, since $\Diff(X)$ acts on $\PSC(X)$, we get a fiber bundle $\PSC(E) \to B$ whose fiber is $\PSC(X)$.
The following \lcnamecref{theo obstruction to PSC} generalizes Proposition~2.4 in \cite{Konno3}.

\begin{theo}
\label{theo obstruction to PSC}
Let $X$ be an oriented closed smooth $4$-manifold, $\fraks$ be the isomorphism class of a spin$^{c}$ structure on $X$, $B$ be a CW complex, and $n$ be a non-negative number.
Assume that $\PSC(X) \neq \emptyset$, $b^{+}(X) \geq n+2$, $d(\fraks) = -n$, and $B^{(n)}$ is compact.
Let $E \to B$ be a bundle of $X$ with structure group $\Diff(X, \fraks)$ or $\Diff(X, \fraks, \calO)$ for some homology orientation $\calO$.
Then, if $\SWcoh(E) \neq 0$, there is no section of $\PSC(E)|_{B^{(n)}} \to B^{(n)}$.
In particular, $\pi_{i}(\PSC(X)) \neq 0$ holds for at least one $i \in \{0, \ldots, n-1\}$.
\end{theo}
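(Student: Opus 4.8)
The plan is to argue by contradiction. Suppose a section of $\PSC(E)|_{B^{(n)}} \to B^{(n)}$ exists; equivalently, we are given a continuous fiberwise Riemannian metric $\{g_{b}\}_{b \in B^{(n)}}$ on $E|_{B^{(n)}}$ with $g_{b}$ of positive scalar curvature for every $b$. From this I want to manufacture an \emph{inductive section} $\si : B^{(n)} \to \Pi(E)|_{B^{(n)}}$ in the sense of \cref{subsection Definition of the characteristic classes in the basic case} whose globally defined parameterized moduli space $\M_{\si}$ is empty. Once this is done, the cochain of \cref{defi of the basic cochain} vanishes, $\Acoch(E, \si) = 0$ in $C^{n}(B)$, and hence $\SWcoh(E) = [\Acoch(E, \si)] = 0$ by \cref{def of characteristic classes in the base case}, contradicting the hypothesis. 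The same argument applies whether the structure group is $\Diff(X, \fraks)$ or $\Diff(X, \fraks, \calO)$, with $\Z/2$- or $\Z$-coefficients respectively.

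The key analytic input is the positive-scalar-curvature vanishing for the \SW equations, made uniform over the compact base. Since $B^{(n)}$ is compact, the total space $E|_{B^{(n)}}$ is compact, so the (continuous, everywhere positive) fiberwise scalar curvature function is bounded below by some $c > 0$. By the Weitzenb\"ock formula for the Dirac operator, integrating over $X$ shows that any irreducible solution $(A, \Phi)$ of the $\mu$-perturbed \SW equations for one of the metrics $g_{b}$ would satisfy an estimate $0 \geq (\tfrac{c}{4} - C\|\mu\|)\|\Phi\|^{2} + (\text{nonnegative terms})$ with $C$ controlled uniformly over the compact family of metrics; hence there is $\varepsilon = \varepsilon(c) > 0$ so that, for every $b \in B^{(n)}$ and every perturbation with norm $< \varepsilon$, the perturbed \SW equations over $E_{b}$ have no irreducible solution.

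Now I run the inductive construction of \cref{subsection Definition of the characteristic classes in the basic case} inside the sub-bundle of $\Pi(E)|_{B^{(n)}}$ whose fiber over $b$ consists of perturbations $\mu$ with $\Met$-component $g_{b}$ and $\|\mu\| < \varepsilon$. Fiberwise this is a bundle of convex $\varepsilon$-balls, and for each fixed metric the reducible wall is an affine subspace of codimension $b^{+}(X) \geq n+2$; so its intersection with each ball has codimension $\geq n+2$, the complement is fiberwise $n$-connected, and the generic, wall-avoiding extension over each cell of $B^{(n)}$ goes through exactly as in \cref{subsection Definition of the characteristic classes in the basic case}. The resulting $\si$ is an inductive section, and by construction $\M_{\si^{(k)}}$ has no reducible solution (the wall is avoided) and no irreducible solution (positive scalar curvature together with $\|\mu\| < \varepsilon$) for every $k \leq n$. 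In particular $\M_{\si} = \emptyset$, so $\frakm(s_{\vp_{e}^{\ast}\si}, 1; D^{n}_{e}) = 0$ for every $n$-cell $e$ of $B$, i.e.\ $\Acoch(E, \si) = 0$, giving $\SWcoh(E) = 0$ and the contradiction. For the final assertion: if $\pi_{i}(\PSC(X)) = 0$ for all $i \in \{0, \dots, n-1\}$, then since $\PSC(X) \neq \emptyset$ obstruction theory for sections of the fibration $\PSC(E) \to B$ with fiber $\PSC(X)$ (the obstruction to extending a section from $B^{(k)}$ over $B^{(k+1)}$ lying in $H^{k+1}(B; \pi_{k}(\PSC(X)))$ with local coefficients) produces a section of $\PSC(E)|_{B^{(n)}}$, contradicting what was just proved; hence $\pi_{i}(\PSC(X)) \neq 0$ for at least one such $i$.

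The step I expect to be the main obstacle is reconciling the two competing requirements on the perturbation term: smallness --- uniform over the compact family of metrics coming from the section --- to kill irreducible solutions via Weitzenb\"ock, versus genericity at each inductive step to keep off the reducible wall. The content of resolving it is checking that after restricting the fiber $\Pi(X)$ to the $\varepsilon$-ball around the zero perturbation (over the prescribed metric) the reducible wall still has codimension $b^{+}(X) > n$, so that the transversality/extension argument of \cref{subsection Definition of the characteristic classes in the basic case} applies unchanged; I also note that it is the compactness of $B^{(n)}$, not of $B$, that is used, since $\Acoch(E, \si)$ is determined cell by cell on the $n$-cells.
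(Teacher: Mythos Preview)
Your proof is correct and follows essentially the same strategy as the paper: produce an inductive section $\si$ with $\M_{\si} = \emptyset$ from the PSC family, so that $\SWcoch(E,\si)=0$. The only difference is organizational: the paper first isolates the case where $c_{1}(\fraks)^{2}>0$, or $c_{1}(\fraks)^{2}=0$ with $c_{1}(\fraks)$ non-torsion, in which $\PSC(X)\subset\Met(X)\hookrightarrow\Pi(E)$ already misses the wall and one may take zero perturbation throughout; for the remaining cases (notably $c_{1}(\fraks)$ torsion), the paper needs exactly the small-perturbation argument you give, and defers the details to \cite{Konno3}. Your treatment handles all cases uniformly via the $\varepsilon$-ball sub-bundle and the Weitzenb\"ock estimate, which is precisely the content of that deferred argument; the compactness of $B^{(n)}$ is used in the same place and for the same reason.
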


\begin{proof}
We first consider the case that either $c_1(\fraks)^2 > 0$, or $c_1(\fraks)^2 = 0$ and $c_1(\fraks)$ is not torsion.
In these cases, $\PSC(X)$ does not intersects with the wall.
We can therefore construct an inductive section $\si : B^{(n)} \to \Pi(E)|_{B^{(n)}}$
 so that $\si$ factors through the inclusion $\PSC(E) \inc \Met(X) \inc \Pi(E)$.
Since (unperturbed) \SW equations has no solution with respect to a positive scalar curvature metrics, we have $\SWcoch(E, \si) = 0$, and hence $\SWcoh(E)=0$.
Here $\SWcoch(\cdot)$ is the cochain corresponding to $\Acoch(\cdot)$ in \cref{subsection Definition of the characteristic classes in the basic case} for the SW setting.
For other possibilities on $c_{1}(\fraks)$, one can easily modify the above argument.
(In particular, for the case that $c_{1}(\fraks)$ is torsion, we have to consider perturbed \SW equations.
For the proof of the case, we need the compactness assumption on $B^{(n)}$.)
See the proof of Proposition~2.4 in \cite{Konno3} for the modification.
\end{proof}

\section{Calculations}
\label{section Non triviality}

In this \lcnamecref{section Non triviality} we give some calculations of our characteristic classes.
In \cref{subsection Calculation of SWcoh for connected sum with n(S2S^2)} we develop a version of higher-dimensional wall-crossing and use it to calculations of $\SWcoh$.
To give more subtle examples, 
in \cref{subsection Cohomologically trivial bundles,subsection Combination with Ruberman's argument: spin case}, we combine an argument of D.~Ruberman's~\cite{MR1671187} and calculations given in \cite{Konno3} and in \cref{subsection Calculation of SWcoh for connected sum with n(S2S^2)}.
In \cref{subsection Other calculations} we mention a calculation of $\Dcoh$ obtained from D.~Ruberman's result~\cite{MR1671187}.
In \cref{subsection Calculation of SWcoh for connected sum with n(S2S^2),subsection Cohomologically trivial bundles,subsection Combination with Ruberman's argument: spin case}, we use the idea of combining the higher-dimensional mapping torus and \SW equations due to N.~Nakamura~\cite{MR2644908}.

\subsection{Almost localized harmonic forms}
\label{subsection Almost localized harmonic forms}

The purpose of this \lcnamecref{subsection Almost localized harmonic forms} is to describe a tool used in \cref{subsection Calculation of SWcoh for connected sum with n(S2S^2)}:
 harmonic forms on a manifold given as a connected sum of several manifolds which is almost localized on one of connected sum components in $L^{2}$-sense.
This is obtained by mimicking the argument of the additivity of Fredholm index of linear elliptic operators.
We mainly refer to Donaldson~\cite{MR1883043} in this \lcnamecref{subsection Almost localized harmonic forms}.

For $i=1,2$, let $(X_{i} ,g_{i})$ be oriented Riemannian manifolds with $b_{1}(X_{i}) = 0$ and $Y$ be an oriented closed Riemannian $3$-manifold.
Assume that $X_{i}$ has one end which is isometric to the cylinder $Y \times [0, \infty)$ with the product metric, and that the boundary of the complement of the end in $X_{1}$ coincides with $Y$ with the given orientation and that in $X_{2}$ do $-Y$: the opposite orientation.
For $S>0$, let $X_{i}(S)$ be the compact $4$-manifold obtained by cutting the end $Y \times (S, \infty)$ from $X_{i}$.
For $T > 0$, by identifying $Y \times \{t\}$ in $X_{1}$ and $-Y \times \{2T -t\}$ in $X_{2}$, we obtain a closed $4$-manifold $X_{T}$, often denoted by $X$.
The closed $4$-manifold $X_{T}$ contains a cylindrical part which is isometric to $Y \times [0, 2T]$.
Let $D_{i}$ and $D=D_{T}$ be the elliptic operators given by either
\begin{align*}
&D_{i} := d + d^{\ast} : \Omega^{\even}(X_{i})\to \Omega^{\odd}(X_{i}),\\
&D = D_{T} := d + d^{\ast} : \Omega^{\even}(X_{T}) \to \Omega^{\odd}(X_{T})
\end{align*}
or
\begin{align*}
&D_{i} := d + (d^{+})^{\ast} : \Omega^{0}(X_{i}) \oplus \Omega^{+}(X_{i}) \to \Omega^{1}(X_{i}),\\
&D = D_{T} := d + (d^{+})^{\ast} : \Omega^{0}(X_{T}) \oplus \Omega^{+}(X_{T}) \to \Omega^{1}(X_{T}).
\end{align*}
Recall that the $L^{2}$-orthogonal decomposition $\Omega^{+}(X) = \calH^{+} \oplus \im{d^{+}}$, where $\calH^{+}$ is the space of self-dual harmonic $2$-forms.
Because of this decomposition,  if $\mu \in \Omega^{+}(X)$ satisfies $(d^{+})^{\ast}\mu = 0$, then $\mu \in \calH^{+}$ holds.
In fact, for our purpose of \cref{subsection Calculation of SWcoh for connected sum with n(S2S^2)}, it is sufficient to consider only the latter operators, which we call the {\it Atiyah--Hitchin--Singer operators}, rather than the de Rham operators.
However the whole arguments in this \lcnamecref{subsection Almost localized harmonic forms} are parallel to both operators, and so we describe this almost localization phenomena for both operators.
On the cylidrical part, these operators admit the decomposition
\begin{align}
d+d^{\ast} = \frac{\del}{\del t} + L_{Y},
\label{decomposition of d plus dast}
\end{align}
where $t$ is the coordinate of $[0, \infty)$ (or some finite length interval) and $L_{Y}$ is a linear elliptic formally self-adjoint operator on $Y$.
(For example, see Section~9.2 of Melrose~\cite{MR1348401} for the de Rham operators and Subsection~3.1 of Donaldson~\cite{MR1883043} for the Atiyah--Hitchin--Singer operators.)
In fact, $L_{Y}$ has non-trivial kernel, and we therefore use weighted Sobolev spaces.
Let $\delta > 0$ be a sufficiently small positive number such that $[-\delta, \delta] \setminus \{0\}$ contains no spectrum of both $D_{i}$.
Fix a smooth positive function $W_{i}$ on $X_{i}$ whose restriction on the cylindrical end coincides with $e^{\delta t}$.
We take $W_{i}$ so that $W_{i} \geq 1$ on $X_{i}(1)$, and define $\|f\|_{L^{2}_{k, \delta}} := \|W_{i}f\|_{L^{2}_{k}}$.
We simply write $L^{2}_{k, \delta}(X_{i})$ for our functional space if the bundle which we consider is clear.
We now get the extended continuous operator
\[
D_{i} : L^{2}_{1, \delta}(X_{i}) \to L^{2}_{\delta}(X_{i}),
\]
which is Fredholm.
Henceforth, the notations $\Ker{D_{i}}$ and $\Coker{D_{i}}$ denote the kernel and the cokernel of this extended operator respectively.
The operator $D_{i}$ is positively weighted, and hence the adjoint $D_{i}^{\ast}$ is negatively weighted.
Therefore $\Coker{D_{i}}$ is isomorphic to the space of $L^{2}$-solutions to 
\[
D_{i}^{\ast} = d + d^{\ast} : \Omega^{\odd}(X_{i}) \to \Omega^{\even}(X_{i}).
\]
If we consider the de Rham operators, thanks to Proposition~4.9 in Atiyah--Patodi--Singer~\cite{MR0397797}, the space of $L^{2}$-solutions to $D_{i}^{\ast}$ is isomorphic to $\hat{H}^{1}(X_{i};\R) \oplus \hat{H}^{3}(X_{i};\R)$, where $\hat{H}^{\ast}(X_{i};\R)$ is the image of $H^{\ast}(Y;\R) \to H^{\ast}(X_{i}(0);\R)$.
This space vanishes since we assumed that $b_{1}(X_{i}) = 0$.
Similarly, if we consider the Atiyah--Hitchin--Singer operators,
we can use the adjoint version of Proposition~3.15 in Donaldson~\cite{MR1883043}.
It implies that $\Ker{D_{i}^{\ast}} \cong H^{1}(X_{i};\R) = 0$.
Thus we have $\Coker{D_{i}} = 0$ in both cases, and hence there exists a bounded right inverse of $D_{i}$.
We here use a concrete right inverse $Q^{X_{i}} : L^{2}_{\delta}(X_{i}) \to L^{2}_{1, \delta}(X_{i})$: the projection to the $L^{2}_{1, \delta}$-orthogonal complement of $\Ker{D_{i}}$.
Take a positive number $T_{1}>0$ with $T \geq T_{1}$.
Fix cut-off functions $\rho^{i}_{1,T_{1}} : X_{i} \to [0,1]$ satisfying:
\begin{itemize}
\item $\rho_{1,T_{1}}^{1}$ is supported on $X_{1}(3T_{1}/2)$ and $\rho_{1,T_{1}}^{1} = 1$ on $X_{1}(T_{1}/2)$,
\item $|\nabla \rho_{1,T_{1}}^{1}| \leq 2/T_{1}$, and
\item $\rho_{1,T_{1}}^{1} + \rho_{1,T_{1}}^{2} = 1$.
\end{itemize}
Using a way given in Subsection~3.3 in \cite{MR1883043}, we can construct a right inverse $Q^{X}_{1,T_{1}}$ of $D$ as follows.
We first define $P_{1, T_{1}} :  L^{2}(X) \to L^{2}_{1, \delta}(X)$ by
\[
P_{1, T_{1}}(\eta) := \rho_{1,T_{1}}^{1}Q^{X_{1}}(\eta_{1}) + \rho_{1,T_{1}}^{2}Q^{X_{2}}(\eta_{2}),
\]
where $\eta_{1}$ is the restriction of $\eta$ to $X_{1}(2T_{1}) \subset X$ and $\eta_{2}$ is that of $\eta$ to $X_{2}(2T) \subset X$, extended by zero to the rest of $X$.
Then we obtain an estimate $\|DP_{1, T_{1}} - \id\|_{\op} < C_{1}/T_{1}$, where $\|\cdot\|_{\op}$ denotes the operator norm and $C_{1}$ is a universal constant.
 By taking $T_{1}$ as $T_{1} > C_{1}$, we can define $(DP_{1, T_{1}})^{-1}$ using the Neumann series, and so can define $Q^{X}_{1,T_{1}} := P(DP_{1, T_{1}})^{-1}$.
By construction, $\|Q^{X}_{1,T_{1}}\|_{\op}$ continuously depends on $g_{1}$ and $g_{2}$ since $\|Q^{X_{i}}\|_{\op}$ continuously do on $g_{i}$.
Henceforth in this \lcnamecref{subsection Almost localized harmonic forms} we fix $T \geq  T_{1} > C_{1}$.
Let us define a map
\[
\Theta_{1, T_{1}} : \Ker{D_{1}}\oplus\Ker{D_{2}} \to \Ker{D}
\]
by
\[
(\mu_{1}, \mu_{2}) \mapsto \theta_{1, T_{1}}(\mu_{1}, \mu_{2}) - Q^{X}_{1, T_{1}}D\theta_{1, T_{1}}(\mu_{1}, \mu_{2}),
\]
where $\theta_{1, T_{1}}(\mu_{1}, \mu_{2}) := \rho_{1, T_{1}}^{1}\mu_{1} + \rho_{1, T_{1}}^{2}\mu_{2}$.
Similarly, for $T \geq T_{2} > C_{1}$, by taking $\rho_{2, T_{2}}^{1}$ and $\rho_{2, T_{2}}^{2}$, we can define
$Q^{X}_{2,T_{2}}$, $\theta_{2,T_{2}}$, and
\[
\Theta_{2, T_{2}} : \Ker{D_{1}}\oplus\Ker{D_{2}} \to \Ker{D}.
\]

\begin{defi}
Let $T \geq T_{1}, T_{2} > C_{1}$.
For $\mu \in \Ker{D_{1}}$, we call $\Theta_{1, T_{1}}(\mu,0) \in \Ker{D}_{T}$ the {\it almost localized harmonic form on $X_{1}$ arising from $\mu$} with respect to $T$ and $ T_{1}$ (and $g_{1}, g_{2}$).
We briefly write $\check{\mu}$ for $\Theta_{1, T_{1}}(\mu,0)$ if there is no confusion.
Similarly we define almost localized harmonic forms on $X_{2}$ using $\Theta_{2, T_{2}}$.
In particular, when we consider the Atiyah--Hitchin--Singer operators and $\mu \in \Ker{D}_{1} \cap \Omega^{+}(X_{i})$, we also call $\check{\mu}$ the {\it almost localized self-dual harmonic form arising from $\mu$} to emphasize self duality.
\end{defi}

\begin{lem}
\label{estimate for QDvp}
For $(\mu_{1}, \mu_{2}) \in \Ker{D_{1}}\oplus\Ker{D_{2}}$,
\begin{align*}
\|Q^{X}_{i,T_{i}}D\theta_{i,T_{i}}(\mu_{1}, \mu_{2})\|_{L^{2}_{\delta}}
\leq \frac{C_{2}}{T_{i}}(\|\mu_{1}\|_{L^{2}_{\delta}} + \|\mu_{2}\|_{L^{2}_{\delta}})
\end{align*}
holds, where $C_{2} = C_{2}(g_{1}, g_{2})$ is a constant continuously depending on $g_{1}$ and $g_{2}$.
\end{lem}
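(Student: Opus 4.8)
The plan is to reduce the estimate to a pointwise bound on $D\theta_{i,T_{i}}(\mu_{1},\mu_{2})$ and then integrate against the weight. First I would use that, by construction, $\|Q^{X}_{i,T_{i}}\|_{\op}$ is finite and depends continuously on $g_{1},g_{2}$, so that it suffices to prove the analogous inequality with $\|D\theta_{i,T_{i}}(\mu_{1},\mu_{2})\|_{L^{2}_{\delta}}$ in place of the left-hand side, and then absorb $\|Q^{X}_{i,T_{i}}\|_{\op}$ into the constant $C_{2}$.

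Next, since $\mu_{1}\in\Ker{D_{1}}$ and $\mu_{2}\in\Ker{D_{2}}$ are smooth with $D\mu_{j}=0$ pointwise, and since $D=d+d^{\ast}$ or $D=d+(d^{+})^{\ast}$ is first order, the Leibniz rule gives
\[
D\theta_{i,T_{i}}(\mu_{1},\mu_{2}) = D\!\left(\rho_{i,T_{i}}^{1}\mu_{1}+\rho_{i,T_{i}}^{2}\mu_{2}\right) = \mathrm{c}(d\rho_{i,T_{i}}^{1})\,\mu_{1}+\mathrm{c}(d\rho_{i,T_{i}}^{2})\,\mu_{2},
\]
where $\mathrm{c}(\cdot)$ denotes the (purely algebraic) symbol operation of $D$. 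In particular this form is supported where $d\rho_{i,T_{i}}^{j}\neq0$, which lies inside the cylindrical neck $Y\times[0,2T]$ of $X$; there the hypothesis $|\nabla\rho_{i,T_{i}}^{j}|\leq 2/T_{i}$ yields the pointwise bound $|D\theta_{i,T_{i}}(\mu_{1},\mu_{2})|\leq (C'/T_{i})(|\mu_{1}|+|\mu_{2}|)$ with $C'$ an absolute constant.

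Then I would integrate this bound against the weight defining $L^{2}_{\delta}$. The support of $d\rho_{i,T_{i}}^{j}$ sits at cylinder-length of order $T_{i}$, which by the standing convention $T\geq T_{i}>C_{1}$ lies well inside the half of the neck coming from $X_{1}$ (resp.\ $X_{2}$); on that region the weight used on $X=X_{T}$ agrees, up to a factor depending continuously on $g_{1},g_{2}$, with the weight $W_{1}$ (resp.\ $W_{2}$) used on $X_{1}$ (resp.\ $X_{2}$). Hence $\|\mu_{j}\|_{L^{2}_{\delta}(\supp d\rho_{i,T_{i}}^{j})}\leq C''\|\mu_{j}\|_{L^{2}_{\delta}(X_{j})}$, and combining the displayed identity with the two bounds gives the claim, with $C_{2}(g_{1},g_{2})$ the product of the continuously varying constants $\|Q^{X}_{i,T_{i}}\|_{\op}$, $C'$ and $C''$.

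The main obstacle I anticipate is precisely this last comparison of weights: one must check that the weighted $L^{2}$ norms on the glued manifold and on the two pieces are uniformly comparable on the support of $d\rho_{i,T_{i}}^{j}$, so that no spurious factor such as $e^{\delta T_{i}}$ appears. This is where one uses that the cut-offs transition inside the correct cylindrical half and that the weight on $X_{T}$ is assembled from $W_{1}$ and $W_{2}$; the remainder is the routine Leibniz computation carried out as in Subsection~3.3 of Donaldson~\cite{MR1883043}.
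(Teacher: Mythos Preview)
Your proposal is correct and follows precisely the standard argument that the paper has in mind: the paper's own proof consists only of the reference ``See Subsection~3.3 in \cite{MR1883043}'' together with the remark that the continuous dependence of $C_{2}$ comes from that of $\|Q^{X}_{i,T_{i}}\|_{\op}$, and your outline (bound on $\|Q^{X}_{i,T_{i}}\|_{\op}$, Leibniz rule exploiting $D\mu_{j}=0$, the cut-off gradient bound $|\nabla\rho^{j}_{i,T_{i}}|\leq 2/T_{i}$, and comparison of the weights on the neck) is exactly that argument spelled out.
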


\begin{proof}
See Subsection~3.3 in \cite{MR1883043}.
The continuous dependence of $C_{2}(g_{1}, g_{2})$ follows from that of $\|Q^{X}_{i,T_{i}}\|_{\op}$.
\end{proof}

The interaction between almost localized harmonic forms on $X_{1}$ and $X_{2}$ is as follows:

\begin{lem}
\label{lem for interaction between almost localized forms}
Assume that $4T \geq 3(T_{1}+T_{2})$ and that $T_{1}, T_{2} \geq 1$.
Let $\mu_i \in \Ker{D_{i}}$ for $i=1,2$.
Then, for $\check{\mu}_{1} := \Theta_{1, T_{1}}(\mu_{1},0)$ and $\check{\mu}_{2} := \Theta_{2, T_{2}}(0,\mu_{2})$, we have
\begin{align}
|(\check{\mu}_{1}, \check{\mu}_{2})_{L^{2}}| \leq \frac{C_{3}}{\min\{T_{1}, T_{2}\}} (\|\mu_{1}\|_{L^{2}_{\delta}} + \|\mu_{2}\|_{L^{2}_{\delta}})^{2},
\label{estimate for interaction between almost localized forms}
\end{align}
where $C_{3} = C_{3}(g_{1}, g_{2})$ is a constant continuously depending on $g_{1}$ and $g_{2}$.
\end{lem}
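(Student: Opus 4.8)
The plan is to expand $(\check{\mu}_{1}, \check{\mu}_{2})_{L^{2}}$ bilinearly using the definitions of $\Theta_{1, T_{1}}$ and $\Theta_{2, T_{2}}$, to show that the ``leading'' term vanishes by disjointness of supports, and to estimate the three remaining ``error'' terms by combining \cref{estimate for QDvp} with the Cauchy--Schwarz inequality. Write $\eta_{1} := \theta_{1, T_{1}}(\mu_{1}, 0) = \rho^{1}_{1, T_{1}}\mu_{1}$ and $\eta_{2} := \theta_{2, T_{2}}(0, \mu_{2}) = \rho^{2}_{2, T_{2}}\mu_{2}$, so that by definition $\check{\mu}_{1} = \eta_{1} - Q^{X}_{1, T_{1}}D\eta_{1}$ and $\check{\mu}_{2} = \eta_{2} - Q^{X}_{2, T_{2}}D\eta_{2}$. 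Expanding, one gets
\begin{align*}
(\check{\mu}_{1}, \check{\mu}_{2})_{L^{2}}
={}& (\eta_{1}, \eta_{2})_{L^{2}}
- (\eta_{1}, Q^{X}_{2, T_{2}}D\eta_{2})_{L^{2}}\\
& - (Q^{X}_{1, T_{1}}D\eta_{1}, \eta_{2})_{L^{2}}
+ (Q^{X}_{1, T_{1}}D\eta_{1}, Q^{X}_{2, T_{2}}D\eta_{2})_{L^{2}}.
\end{align*}

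First I would show $(\eta_{1}, \eta_{2})_{L^{2}} = 0$. The form $\eta_{1}$ is supported where $\rho^{1}_{1, T_{1}}$ is nonzero, hence inside $X_{1}(3T_{1}/2)$, and $\eta_{2}$ is supported where $\rho^{2}_{2, T_{2}}$ is nonzero, hence (by the symmetric construction) in the corresponding $3T_{2}/2$-neighborhood of $X_{2}$ in the neck. Since the cylindrical part of $X_{T}$ has length $2T$, the hypothesis $4T \geq 3(T_{1} + T_{2})$ forces these two supports to be disjoint up to a set of measure zero, so the pairing vanishes. This is the only place where the numerical relation among $T$, $T_{1}$, $T_{2}$ is used; everything else is soft.

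For the three remaining terms I would apply Cauchy--Schwarz on $X_{T}$ and then \cref{estimate for QDvp}. Since the cut-off functions are bounded by $1$ and the weight used on $X_{T}$ is $\geq 1$, one has $\|\eta_{1}\|_{L^{2}} \leq \|\mu_{1}\|_{L^{2}_{\delta}}$ and $\|\eta_{2}\|_{L^{2}} \leq \|\mu_{2}\|_{L^{2}_{\delta}}$; and \cref{estimate for QDvp}, applied to $(\mu_{1}, 0)$ and to $(0, \mu_{2})$ respectively, gives $\|Q^{X}_{1, T_{1}}D\eta_{1}\|_{L^{2}} \leq \frac{C_{2}}{T_{1}}\|\mu_{1}\|_{L^{2}_{\delta}}$ and $\|Q^{X}_{2, T_{2}}D\eta_{2}\|_{L^{2}} \leq \frac{C_{2}}{T_{2}}\|\mu_{2}\|_{L^{2}_{\delta}}$, with $C_{2} = C_{2}(g_{1}, g_{2})$ the constant from that lemma. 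Hence the three error terms are bounded in absolute value by $\frac{C_{2}}{T_{2}}\|\mu_{1}\|_{L^{2}_{\delta}}\|\mu_{2}\|_{L^{2}_{\delta}}$, $\frac{C_{2}}{T_{1}}\|\mu_{1}\|_{L^{2}_{\delta}}\|\mu_{2}\|_{L^{2}_{\delta}}$, and $\frac{C_{2}^{2}}{T_{1}T_{2}}\|\mu_{1}\|_{L^{2}_{\delta}}\|\mu_{2}\|_{L^{2}_{\delta}}$.

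Finally I would assemble: using $T_{1}, T_{2} \geq 1$, each of $\frac{1}{T_{1}}$, $\frac{1}{T_{2}}$, $\frac{1}{T_{1}T_{2}}$ is at most $\frac{1}{\min\{T_{1}, T_{2}\}}$, and $\|\mu_{1}\|_{L^{2}_{\delta}}\|\mu_{2}\|_{L^{2}_{\delta}} \leq \frac{1}{4}(\|\mu_{1}\|_{L^{2}_{\delta}} + \|\mu_{2}\|_{L^{2}_{\delta}})^{2}$, so that summing the three bounds yields \eqref{estimate for interaction between almost localized forms} with, say, $C_{3} := \frac{1}{4}(2C_{2} + C_{2}^{2})$. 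This $C_{3}$ depends continuously on $g_{1}$ and $g_{2}$ since $C_{2}$ does, by \cref{estimate for QDvp}. The main obstacle, if any, is the support-disjointness step and the bookkeeping needed to keep every constant (in particular the comparison between the $L^{2}$- and $L^{2}_{\delta}$-norms on $X_{T}$) independent of $T$, $T_{1}$, $T_{2}$; the error-term estimates themselves are immediate from \cref{estimate for QDvp} and Cauchy--Schwarz.
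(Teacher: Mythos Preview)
Your proof is correct and follows essentially the same route as the paper: expand the pairing bilinearly, use the assumption $4T \geq 3(T_{1}+T_{2})$ to kill the leading term $(\rho_{1,T_{1}}^{1}\mu_{1}, \rho_{2,T_{2}}^{2}\mu_{2})_{L^{2}}$ by disjoint supports, and bound the three remaining terms via \cref{estimate for QDvp}, Cauchy--Schwarz, and $\|f\|_{L^{2}} \leq \|f\|_{L^{2}_{\delta}}$. The paper records the intermediate bound slightly more loosely (keeping $\|\mu_{1}\|_{L^{2}_{\delta}} + \|\mu_{2}\|_{L^{2}_{\delta}}$ in each factor rather than simplifying as you do), but the argument and the final absorption into a single $C_{3}/\min\{T_{1},T_{2}\}$ are the same.
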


\begin{proof}
Note that $\|f\|_{L^{2}} \leq \|f\|_{L^{2}_{\delta}}$ holds for a general $f$, and also note that 
\[
(\rho_{1,T_{1}}^{1}\mu_{1}, \rho_{2,T_{2}}^{2}\mu_{2})_{L^{2}} = 0
\]
follows from $4T \geq 3(T_{1}+T_{2})$.
We therefore have
\begin{align*}
|(\check{\mu}_{1}, \check{\mu}_{2})_{L^{2}}|
\leq& \frac{C_{2}}{T_{1}}(\|\mu_{1}\|_{L^{2}_{\delta}} + \|\mu_{2}\|_{L^{2}_{\delta}})\|\mu_{2}\|_{L^{2}_{\delta}}\\
&+ \frac{C_{2}}{T_{2}}\|\mu_{1}\|_{L^{2}_{\delta}}(\|\mu_{1}\|_{L^{2}_{\delta}}+ \|\mu_{2}\|_{L^{2}_{\delta}}) + \frac{C_{2}^{2}}{T_{1}T_{2}}(\|\mu_{1}\|_{L^{2}_{\delta}} + \|\mu_{2}\|_{L^{2}_{\delta}})^{2}
\end{align*}
using \cref{estimate for QDvp}.
Thus we obtain the inequality \eqref{estimate for interaction between almost localized forms} for suitable $C_{3}$.
\end{proof}

The interaction between almost localized harmonic forms on a common connected sum component is as follows:

\begin{lem}
\label{lem for interaction between almost localized forms on the same}
Fix an $L^{2}$-orthonormal basis $\{\eta_{j}\}_{j}$ of $\Ker{D_{1}}$.
Then, there exists a family of positive numbers $\epsilon_{1}(T_{1})>0$ with $\epsilon_{1}(T_{1}) \to 0$ as $T_{1} \to \infty$ satisfying the following conditions:
\begin{itemize}
\item  $\epsilon_{1}(T_{1}) = \epsilon_{1}(T_{1};g_{1}, g_{2},\{\eta_{j}\}_{j})$ continuously depends on $T_{1}$, $g_{1}$, $g_{2}$, and $\{\eta_{j}\}_{j}$.
\item Let $\mu, \mu' \in \Ker{D_{1}}$.
Then, for $\check{\mu} := \Theta_{1, T_{1}}(\mu,0)$ and $\check{\mu}' := \Theta_{1, T_{1}}(\mu',0)$, we have
\begin{align*}
|(\check{\mu}, \check{\mu}')_{L^{2}} - (\mu, \mu')_{L^{2}}| \leq \epsilon_{1}(T_{1}) \|\mu\|_{L^{2}_{\delta}} \|\mu'\|_{L^{2}_{\delta}}.
\end{align*}
\end{itemize}
\end{lem}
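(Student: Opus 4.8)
The plan is to compare $\check\mu = \Theta_{1,T_{1}}(\mu,0)$ with its leading part $\rho^{1}_{1,T_{1}}\mu$ and to control the difference by combining \cref{estimate for QDvp} with the exponential decay along the cylindrical end which is built into the weighted norm $\|\cdot\|_{L^{2}_{\delta}}$. By definition $\Theta_{1,T_{1}}(\mu,0) = \rho^{1}_{1,T_{1}}\mu - Q^{X}_{1,T_{1}}D\theta_{1,T_{1}}(\mu,0)$, and likewise for $\mu'$, so the first step is to expand $(\check\mu,\check\mu')_{L^{2}}$ into four terms: the ``main term'' $(\rho^{1}_{1,T_{1}}\mu,\rho^{1}_{1,T_{1}}\mu')_{L^{2}}$ together with three terms each carrying at least one factor $Q^{X}_{1,T_{1}}D\theta_{1,T_{1}}(\nu,0)$ with $\nu\in\{\mu,\mu'\}$.

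For the three error terms I would apply \cref{estimate for QDvp} with $\mu_{2}=0$, which gives $\|Q^{X}_{1,T_{1}}D\theta_{1,T_{1}}(\nu,0)\|_{L^{2}_{\delta}} \leq (C_{2}/T_{1})\|\nu\|_{L^{2}_{\delta}}$, and combine it with $\|\rho^{1}_{1,T_{1}}\nu\|_{L^{2}} \leq \|\nu\|_{L^{2}} \leq \|\nu\|_{L^{2}_{\delta}}$ and Cauchy--Schwarz (using $\|\cdot\|_{L^{2}}\leq\|\cdot\|_{L^{2}_{\delta}}$, as already exploited in the proof of \cref{lem for interaction between almost localized forms}). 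Since $T_{1}>C_{1}$, the quadratic term $(C_{2}/T_{1})^{2}$ is absorbed, so the sum of the three error terms is bounded by $(C/T_{1})\|\mu\|_{L^{2}_{\delta}}\|\mu'\|_{L^{2}_{\delta}}$ with $C$ depending only on $C_{2}=C_{2}(g_{1},g_{2})$.

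For the main term, since $\rho^{1}_{1,T_{1}}$ is supported in $X_{1}(3T_{1}/2)$ and is identically $1$ on $X_{1}(T_{1}/2)$, the difference $(\mu,\mu')_{L^{2}}-(\rho^{1}_{1,T_{1}}\mu,\rho^{1}_{1,T_{1}}\mu')_{L^{2}}$ equals $\int_{Y\times(T_{1}/2,\infty)}(1-(\rho^{1}_{1,T_{1}})^{2})\langle\mu,\mu'\rangle$, whose absolute value is at most $\|\mu\|_{L^{2}(Y\times(T_{1}/2,\infty))}\,\|\mu'\|_{L^{2}(Y\times(T_{1}/2,\infty))}$. The key input is the tail bound $\|\mu\|_{L^{2}(Y\times(T_{1}/2,\infty))}\leq e^{-\delta T_{1}/2}\|\mu\|_{L^{2}_{\delta}}$, which I would deduce directly from $W_{1}=e^{\delta t}$ on the end: for $t>T_{1}/2$ one has $1\leq e^{2\delta t}e^{-\delta T_{1}}$, hence $\int_{T_{1}/2}^{\infty}\|\mu(t)\|_{L^{2}(Y)}^{2}\,dt \leq e^{-\delta T_{1}}\int_{T_{1}/2}^{\infty}e^{2\delta t}\|\mu(t)\|_{L^{2}(Y)}^{2}\,dt \leq e^{-\delta T_{1}}\|\mu\|_{L^{2}_{\delta}}^{2}$. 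Assembling the pieces gives $|(\check\mu,\check\mu')_{L^{2}}-(\mu,\mu')_{L^{2}}|\leq(e^{-\delta T_{1}}+C/T_{1})\|\mu\|_{L^{2}_{\delta}}\|\mu'\|_{L^{2}_{\delta}}$, so I would simply set $\epsilon_{1}(T_{1}):=e^{-\delta T_{1}}+C/T_{1}$, which tends to $0$ as $T_{1}\to\infty$; continuity in $T_{1}$ is clear, continuity in $g_{1},g_{2}$ follows from the continuous dependence of $C_{2}(g_{1},g_{2})$ and $\|Q^{X}_{1,T_{1}}\|_{\op}$, and the dependence on the basis $\{\eta_{j}\}_{j}$ may be taken to be trivial (it is listed in the statement only so that a single $\epsilon_{1}$ can be reused once the basis is fixed in later applications).

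The only step I expect to require care is the bookkeeping of constants: checking that every constant entering $\epsilon_{1}(T_{1})$ traces back to quantities already known to depend continuously on $g_{1},g_{2}$ (namely $C_{2}$, $\|Q^{X}_{1,T_{1}}\|_{\op}$, and the spectral gap $\delta$), together with verifying the exponential-decay tail bound above. Everything else reduces to Cauchy--Schwarz and the support properties of the cut-off functions $\rho^{i}_{1,T_{1}}$, so no genuinely new ideas beyond those already used for \cref{estimate for QDvp} and \cref{lem for interaction between almost localized forms} are needed.
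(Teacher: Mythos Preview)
Your argument is correct and follows the same overall structure as the paper's proof: split $\check\mu$ into its leading part $\rho^{1}_{1,T_{1}}\mu$ and the $Q$-correction, control every term containing a $Q$-factor via \cref{estimate for QDvp}, and bound the remaining tail integral on $Y\times(T_{1}/2,\infty)$. The one genuine difference is in how the tail is handled. You exploit the exponential weight directly to obtain $\|\mu\|_{L^{2}(Y\times(T_{1}/2,\infty))}\leq e^{-\delta T_{1}/2}\|\mu\|_{L^{2}_{\delta}}$, which gives an $\epsilon_{1}(T_{1})$ independent of the basis $\{\eta_{j}\}$. The paper instead expands $\mu$ in the fixed $L^{2}$-orthonormal basis and bounds $\int_{Y\times[T_{1}/2,\infty)}|\mu|^{2}$ by $\bigl(2\sum_{j}\int_{Y\times[T_{1}/2,\infty)}|\eta_{j}|^{2}\bigr)\|\mu\|_{L^{2}}^{2}$, then sets $\epsilon_{1}(T_{1}):=2\max\{2\sum_{j}\int_{Y\times[T_{1}/2,\infty)}|\eta_{j}|^{2},\,(C_{2}/T_{1})^{2}\}$; this is why the basis appears as a parameter in the statement. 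Your route is a bit cleaner and shows that the dependence on $\{\eta_{j}\}$ is not essential; the paper's route has the minor feature that its tail estimate is phrased purely in the unweighted $L^{2}$ norm. Your four-term expansion is also more explicit than the paper's two-term bound, which as written suppresses the cross terms $(\rho^{1}_{1,T_{1}}\mu,\,Q^{X}_{1,T_{1}}D\theta_{1,T_{1}}(\mu',0))_{L^{2}}$ that you handle directly.
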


\begin{proof}
For any $\mu \in \Ker{D_{1}}$, we have
\begin{align}
\|\mu-\theta_{1, T_{1}}(\mu,0)\|_{L^{2}}^{2}
\leq \int_{Y \times [T_{1}/2, \infty)}|\mu|^{2}
\leq \left(2\sum_{j} \int_{Y \times [T_{1}/2, \infty)} |\eta_{j}|^{2}\right) \|\mu\|_{L^{2}}^{2}.
\label{ineq between omega vpomega}
\end{align}
Note that an element of $\Ker{D_{i}}$ is smooth, and so is $\eta_{j}$.
Hence
\[
\epsilon_{1}(T_{1}) := 2\max\left\{2\sum_{j} \int_{Y \times [T_{1}/2, \infty)} |\eta_{j}|^{2},\left(\frac{C_{2}}{T_{1}}\right)^{2}\right\}
\]
continuously depends on $T_{1}$, $g_{1}$, $g_{2}$, and $\{\eta_{j}\}_{j}$.
In addition, for $\mu, \mu' \in \Ker{D_{1}}$, we obtain the estimate
\begin{align*}
&|(\check{\mu}, \check{\mu}')_{L^{2}} - (\mu, \mu')_{L^{2}}|\\
\leq& |(\check{\mu} - \theta_{1, T_{1}}(\mu,0), \check{\mu}'-\theta_{1, T_{1}}(\mu',0))_{L^{2}}|
+ |(\theta_{1, T_{1}}(\mu,0)-\mu, \theta_{1, T_{1}}(\mu',0)-\mu')_{L^{2}}|\\
=& |(Q^{X}_{1, T_{1}}D\theta_{1, T_{1}}(\mu,0), Q^{X}_{1, T_{1}}D\theta_{1, T_{1}}(\mu',0))_{L^{2}}|\\
&+ |(\theta_{1, T_{1}}(\mu,0)-\mu, \theta_{1, T_{1}}(\mu',0)-\mu')_{L^{2}}|\\
\leq& \epsilon_{1}(T_{1}) \|\mu\|_{L^{2}_{\delta}} \|\mu'\|_{L^{2}_{\delta}}
\end{align*}
from \cref{estimate for QDvp} and the inequality \eqref{ineq between omega vpomega}.
\end{proof}

We remark a family version of the whole construction.
Let $B$ be a compact topological space and $\{g_{i}^{b}\}_{b \in B}$ be a continuous family of Riemannian metrics on $X_{i}$ whose restriction on the end of $X_{i}$ is constant with respect to $b \in B$ and is isometric to $Y \times [0, \infty)$ like $g_{i}$ above.
Then, for each $T>0$, we get a family of Riemannian metrics $\{g_{T}^{b}\}$ on $X$ by gluing $(X_{1}, g_{1}^{b})$ and $(X_{2}, g_{2}^{b})$.
Since metrics on $Y$ is constant, $L_{Y}$ is also a constant operator.
Thus we can take a weight $\delta > 0$ uniformly with respect to $b \in B$.
For this $\delta$ and $T_{1}, T_{2}$ with $T \geq T_{1}, T_{2}>C_{1}$, we can define $\Theta^{b}_{i,T_{i}} :  \Ker{D_{1}^{b}}\oplus\Ker{D_{2}^{b}} \to \Ker{D^{b}_{T}}$ as $\Theta_{i,T_{i}}$ above, where $D_{i}^{b} = d+d^{\ast_{g_{i}^{b}}}$ and $D^{b}_{T} = d+d^{\ast_{g_{T}^{b}}}$.
Of course, the $4$-dimensional operators $D_{i}^{b}$ depends on $b$.
However $\bigsqcup_{b \in B}\Ker{D_{i}^{b}} \to B$ forms a vector bundle as follows.
We first consider the de Rham operators.
In this case $\Ker{D_{i}^{b}}$ is isomorphic to the space of extended $L^{2}$-solutions to $D_{i}^{b}$ in the sense of Atiyah--Patodi--Singer~\cite{MR0397797} since $D_{i}^{b}$ is equipped with a positive weight.
This space is isomorphic to $H^{0}(X_{i};\R) \oplus H^{2}(X_{i};\R) \oplus H^{4}(X_{i};\R)$.
Similarly, if we consider the Atiyah--Hitchin--Singer operators, we have $\Ker{D_{i}^{b}} \cong H^{+}(X_{i};\R)$, which is the adjoint version of Proposition~3.15 in Donaldson~\cite{MR1883043}.
Hence $\dim\Ker{D_{i}^{b}}$ is constant with respect to $b \in B$ and $\bigsqcup_{b \in B}\Ker{D_{i}^{b}} \to B$ is a vector bundle for both cases.
Suppose that $\bigsqcup_{b \in B}\Ker{D_{i}^{b}} \to B$ is trivial (for example $B$ is contractible), 
and take a global section $\{\eta_{j}^{b}\}_{j, b}$ of the frame bundle of $\bigsqcup_{b \in B}\Ker{D_{1}^{b}} \to B$ respecting the metric for each fiber.
By replacing each of $C_{2}$, $C_{3}$, and $\epsilon_{1}(T_{1})$ in the above argument with the maximum of it with respect to $b \in B$, we have the following lemmas corresponding to \cref{lem for interaction between almost localized forms,lem for interaction between almost localized forms on the same}:

\begin{lem}
\label{lem family: for interaction between almost localized forms}
Assume that $4T \geq 3(T_{1}+T_{2})$ and that $T_{1}, T_{2} \geq 1$.
Then there exists a constant $C_{3} > 0$ such that the following holds.
Let $\{\mu_i^{b}\}_{b \in B}$ be a section of $\bigsqcup_{b \in B}\Ker{D_{i}^{b}} \to B$ for $i=1,2$.
Then, for $\check{\mu}_{1}^{b} := \Theta_{1,T_{1}}^{b}(\mu_{1}^{b},0)$ and $\check{\mu}_{2}^{b} := \Theta_{2,T_{2}}^{b}(0,\mu_{2}^{b})$, we have
\begin{align*}
|(\check{\mu}_{1}^{b}, \check{\mu}_{2}^{b})_{L^{2}}| \leq \frac{C_{3}}{\min\{T_{1}, T_{2}\}} (\|\mu_{1}^{b}\|_{L^{2}_{\delta}} + \|\mu_{2}^{b}\|_{L^{2}_{\delta}})^{2}.
\end{align*}
\end{lem}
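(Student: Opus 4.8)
The plan is to reduce \cref{lem family: for interaction between almost localized forms} to the unparameterized \cref{lem for interaction between almost localized forms} by making all constants uniform over the compact base $B$. First I would observe that, since the restrictions of $g_i^b$ to the cylindrical ends are independent of $b$, the boundary operator $L_Y$ and hence the admissible weight $\delta>0$ can be chosen once and for all, uniformly in $b$; this is exactly the point stressed in the discussion preceding the lemma. With $\delta$ fixed, for each $b$ we have the right inverses $Q^{X_i^b}$, the cut-off functions $\rho^i_{j,T_j}$ (which depend only on the end coordinate, not on $b$), the patched right inverse $Q^X_{i,T_i,b}$, and the gluing maps $\Theta^b_{i,T_i}$, all constructed exactly as in the unparameterized case fiberwise.

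The key step is uniformity of the constant $C_2$ appearing in \cref{estimate for QDvp}. By that \lcnamecref{estimate for QDvp} we have, for each fixed $b$, a constant $C_2(g_1^b,g_2^b)$ which depends \emph{continuously} on $(g_1^b,g_2^b)$; since $B$ is compact and $b\mapsto(g_1^b,g_2^b)$ is continuous, the supremum
\[
C_2 := \sup_{b\in B} C_2(g_1^b,g_2^b)
\]
is finite. I would then simply rerun the estimate in the proof of \cref{lem for interaction between almost localized forms} with this uniform $C_2$: using that $(\rho^1_{1,T_1}\mu_1^b,\rho^2_{2,T_2}\mu_2^b)_{L^2}=0$ by the support condition $4T\geq 3(T_1+T_2)$, and that $\|\cdot\|_{L^2}\leq\|\cdot\|_{L^2_\delta}$, one gets
\[
|(\check{\mu}_1^b,\check{\mu}_2^b)_{L^2}|
\leq \Bigl(\tfrac{C_2}{T_1}+\tfrac{C_2}{T_2}+\tfrac{C_2^2}{T_1 T_2}\Bigr)(\|\mu_1^b\|_{L^2_\delta}+\|\mu_2^b\|_{L^2_\delta})^2,
\]
and since $T_1,T_2\geq 1$ and $1/T_1,1/T_2\leq 1/\min\{T_1,T_2\}$, this is bounded by $\frac{C_3}{\min\{T_1,T_2\}}(\|\mu_1^b\|_{L^2_\delta}+\|\mu_2^b\|_{L^2_\delta})^2$ for a suitable $C_3$ independent of $b$. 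As remarked in the paragraph before the lemma statement, this is precisely the recipe "replace $C_2$, $C_3$ with the maximum over $b\in B$."

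The only genuine subtlety — the main obstacle — is making sure the fiberwise constructions actually fit together continuously in $b$, i.e. that $b\mapsto C_2(g_1^b,g_2^b)$ is continuous (not merely that each value is finite). This follows from the continuous dependence of $\|Q^{X_i^b}\|_{\op}$ on $g_i^b$, which in turn rests on $\Coker D_i^b=0$ holding uniformly (guaranteed by $b_1(X_i)=0$, via the APS/Donaldson vanishing results cited earlier) so that the projection defining $Q^{X_i^b}$ varies continuously; then $\|Q^X_{i,T_i,b}\|_{\op}$ is continuous in $b$ via the Neumann series, the relevant operator norm bound $\|D P_{1,T_1}-\id\|_{\op}<C_1/T_1$ already being $b$-independent because the universal constant $C_1$ only involves the gradient bound on the cut-offs. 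Granting this, the estimate is term-by-term identical to the proof of \cref{lem for interaction between almost localized forms}, and no new analysis is needed — one simply takes maxima over the compact parameter space.
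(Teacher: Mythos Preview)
Your proof is correct and follows exactly the paper's approach: the paper's own argument, stated in the paragraph immediately preceding the lemma, is simply to replace each of $C_2$, $C_3$ (and $\epsilon_1(T_1)$ for the companion lemma) by its maximum over the compact base $B$, using the continuous dependence on $(g_1^b,g_2^b)$ established in \cref{estimate for QDvp,lem for interaction between almost localized forms}. Your discussion of why $b\mapsto C_2(g_1^b,g_2^b)$ is continuous is more explicit than the paper's, but the argument is the same.
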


\begin{lem}
\label{lem family: for interaction between almost localized forms on the same}
There exists a family of positive numbers $\epsilon_{1}(T_{1})>0$ with $\epsilon_{1}(T_{1}) \to 0$ as $T_{1} \to \infty$  such that the following holds.
Let $\{\mu^{b}\}_{b \in B}, \{\mu'^{b}\}_{b \in B}$ be sections of $\bigsqcup_{b \in B}\Ker{D_{1}^{b}} \to B$.
Then, for $\check{\mu}^{b} := \Theta_{1,T_{1}}^{b}(\mu^{b},0)$ and $\check{\mu}'^{b} := \Theta_{1,T_{1}}^{b}(\mu'^{b},0)$, we have
\begin{align*}
|(\check{\mu}^{b}, \check{\mu}'^{b})_{L^{2}} - (\mu^{b}, \mu'^{b})_{L^{2}}| \leq \epsilon_{1}(T_{1}) \|\mu^{b}\|_{L^{2}_{\delta}} \|\mu'^{b}\|_{L^{2}_{\delta}}.
\end{align*}
\end{lem}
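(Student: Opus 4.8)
The plan is to run the single-manifold argument of \cref{lem for interaction between almost localized forms on the same} fiberwise over $b \in B$ and then take suprema, using the compactness of $B$ to keep the resulting constant finite and to obtain the decay as $T_1 \to \infty$. Throughout I use the uniform weight $\delta > 0$ and the continuous, fiberwise-orthonormal frame $\{\eta_j^b\}_{j,b}$ of $\bigsqcup_{b\in B}\Ker D_1^b \to B$ fixed in the paragraph preceding the statement, together with the fiberwise maps $\Theta^b_{1,T_1}$, $\theta^b_{1,T_1}$, $Q^X_{1,T_1}$ introduced there.

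First I would fix $b$ and copy the estimate chain in the proof of \cref{lem for interaction between almost localized forms on the same} verbatim for $\mu^{b},\mu'^{b} \in \Ker D_1^b$: inequality \eqref{ineq between omega vpomega} applied to $\mu^{b}$ and to $\mu'^{b}$, \cref{estimate for QDvp} with the second argument set to $0$, the Cauchy--Schwarz inequality, and $\|\cdot\|_{L^2}\leq\|\cdot\|_{L^2_\delta}$. This yields
\[
|(\check\mu^{b},\check\mu'^{b})_{L^2} - (\mu^{b},\mu'^{b})_{L^2}| \leq \epsilon_1^b(T_1)\,\|\mu^{b}\|_{L^2_\delta}\,\|\mu'^{b}\|_{L^2_\delta},
\]
where $\epsilon_1^b(T_1) := 2\max\bigl\{2\sum_j \int_{Y\times[T_1/2,\infty)}|\eta_j^b|^2,\ (C_2(g_1^b,g_2^b)/T_1)^2\bigr\}$. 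Then I set $\epsilon_1(T_1) := \sup_{b\in B}\epsilon_1^b(T_1)$. Since $b\mapsto C_2(g_1^b,g_2^b)$ is continuous (this is part of \cref{estimate for QDvp}, inherited from the continuous dependence of $\|Q^X_{i,T_i}\|_{\op}$ on the metrics) and $b \mapsto \sum_j\int_{Y\times[T_1/2,\infty)}|\eta_j^b|^2$ is continuous (the metric is constant on the cylindrical end and the frame is continuous), both are bounded on the compact $B$; hence $\epsilon_1(T_1)<\infty$, its continuous dependence on $T_1$, $g_1$, $g_2$, $\{\eta_j^b\}$ is immediate from the formula, and the asserted estimate for every $b$ follows from the displayed fiberwise bound because $\epsilon_1^b(T_1)\leq\epsilon_1(T_1)$.

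The only step that is not a formality --- and the one I expect to be the main (still routine) obstacle --- is checking $\epsilon_1(T_1)\to 0$ as $T_1\to\infty$, i.e. the \emph{uniformity} in $b$ of the convergence $\int_{Y\times[T_1/2,\infty)}|\eta_j^b|^2 \to 0$. I would deduce this from compactness of $B$: for each fixed $b$ the quantity $\sum_j\int_{Y\times[T_1/2,\infty)}|\eta_j^b|^2$ is non-increasing in $T_1$ and tends to $0$ (the $\eta_j^b$ lie in $\Ker D_1^b$, hence decay on the end), while $(b,T_1)\mapsto\sum_j\int_{Y\times[T_1/2,\infty)}|\eta_j^b|^2$ is jointly continuous, so by Dini's theorem (applied along an increasing sequence of integers $T_1$ and then extended by monotonicity) the convergence is uniform over $b\in B$. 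Combined with $(C_2(g_1^b,g_2^b)/T_1)^2\leq(\sup_{b\in B}C_2(g_1^b,g_2^b))^2/T_1^2\to 0$, this gives $\epsilon_1(T_1)\to 0$, completing the proof.
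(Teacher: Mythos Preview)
Your proposal is correct and follows the paper's approach exactly: the paper's entire proof is the sentence ``By replacing each of $C_{2}$, $C_{3}$, and $\epsilon_{1}(T_{1})$ in the above argument with the maximum of it with respect to $b \in B$,'' and you have unpacked precisely this. Your Dini-type argument for the uniform decay $\sup_b \epsilon_1^b(T_1) \to 0$ is a careful justification of a point the paper leaves implicit.
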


\begin{rem}
In the argument for families, even if metrics on $Y$ varies, in fact we can take uniform $\delta$.
This is because $B$ is compact and $\dim\Ker{L_{Y}}$ is constant.
(See the above argument for $\dim\Ker{D_{i}}$.)
\end{rem}

\begin{rem}
We can work with a negative weight for $D_{i}$.
For example, let us consider the de Rham operators.
In this case $\Coker{D_{i}}$ is isomorphic to the extended $L^{2}$-solutions to this operator, and is also isomorphic to $H^{1}(X_{i};\R) \oplus H^{3}(X_{i};\R)$.
Therefore, if we assume $b_{1}(X_{i}) = 0$, the same argument works.
\end{rem}

\subsection{$\SWcoh$ for connected sum with $n(S^2\times S^2)$}
\label{subsection Calculation of SWcoh for connected sum with n(S2S^2)}

We fix $n>0$ in this \lcnamecref{subsection Calculation of SWcoh for connected sum with n(S2S^2)}.
We shall calculate $\SWcoh$ for a bundle whose fiber is a $4$-manifold given as the connected sum with a given $4$-manifold and $N := nS^{2} \times S^{2}$.
Set $\Si_{1} = S^{2} \times \{\pt\}$ and $\Si_{2} = \{\pt\} \times S^{2}$.
For each $i \in \{1, \ldots, n\}$, let $f_{i} \in \Diff^{+}(S^{2} \times S^{2})$ be an orientation preserving diffeomorphism admitting a fixed $4$-disk, and assume that the induced map $(f_{i})_{\ast}$ on $H_{2}(S^{2} \times S^{2})$ is given by either 
\[
\begin{pmatrix}
 -1 & 0\\
 0 & -1
\end{pmatrix}\quad \text{or}\quad
\begin{pmatrix}
 0 & -1\\
 -1 & 0
\end{pmatrix},
\]
where we take $\{[\Si_{1}], [\Si_{2}]\}$ as the basis of $H_{2}(S^{2} \times S^{2})$.
We write each connected sum component of $N = \#_{i=1}^{n} S^{2} \times S^{2}$ as $N_{i}$, namely $N = \#_{i=n}^{n} N_{i}$.
Regard $f_{i}$ as a diffeomorphism on $N_{i}$, and extend $f_{i}$ along the fixed disk by the identity to the whole of $N$ and to $X = M \# N$ for a given oriented closed $4$-manifold $M$.
These extensions are also denoted by $f_{i}$.
Obviously $f_{1}, \ldots, f_{n} \in \Diff^{+}(X)$ mutually commutes, and thus we get a homomorphism $\pi_{1}(T^{n}) \cong  \Z^{n} \to \Diff^{+}(X)$.
Using this action of $\pi_{1}(T^{n})$ on $X$, we define a family on the torus $E \to T^{n}$ by $E := \R^{n}\times_{\pi_{1}(T^{n})}X$.
In other words, $E$ is the mapping torus of $f_{1}, \ldots, f_{n}$.
We can now write down the following non-vanishing theorem:

\begin{theo}
\label{theo on calculation of SW for connected sum with spin}
Let $M$ be an oriented closed smooth $4$-manifold with $b^+(M) \geq 2$ and $\fraks_0$ be the isomorphism class of a spin$^{c}$ structure on $M$ such that the \SW invariant of $(M, \fraks_0)$ is an odd number.
Let $(X, \fraks)$ be the pair defined by
\[
(X, \fraks) := (M \# n(S^{2} \times S^{2}), \fraks_0\#\frakt),
\]
where $\frakt$ is the isomorphism class of a spin structure on $n(S^{2} \times S^{2})$.
Then, for the bundle $X \to E \to T^{n}$ constructed above, $\SWcoh(E) \in H^{n}(T^{n};\Z/2)$ is the generator of $H^{n}(T^{n};\Z/2)$.
In particular, we have
\[
\SWcoh(X,\fraks) \neq 0 \text{ in } H^{n}(B\Diff(X,\fraks);\Z/2)
\]
by functoriality (\cref{part of theo of well-definedness of the invariant functoriality}).
\end{theo}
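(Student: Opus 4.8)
The plan is to compute the mod $2$ number $\langle \SWcoh(E), [T^{n}]\rangle \in \Z/2$ and show it equals $1$; since $H^{n}(T^{n};\Z/2)\cong\Z/2$ is generated by the Poincar\'e dual of a point, this identifies $\SWcoh(E)$ with the generator, and the last sentence of the theorem then follows from functoriality (\cref{part of theo of well-definedness of the invariant functoriality}) applied to the classifying map $T^{n}\to B\Diff(X,\fraks)$ of $E$ — note $f_{i}\in\Diff(X,\fraks)$ since it is the identity on $M$ and preserves $c_{1}(\frakt)=0$ on the simply connected $n(S^{2}\times S^{2})$, hence preserves $\frakt$ up to isomorphism. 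First I would record the numerology $\chi(X)=\chi(M)+2n$, $\sigma(X)=\sigma(M)$, $c_{1}(\fraks)^{2}=c_{1}(\fraks_{0})^{2}$, so $d(\fraks)=d(\fraks_{0})-n=-n$ (using that the odd, hence nonzero, $\SWinv(M,\fraks_{0})$ forces $d(\fraks_{0})=0$) and $b^{+}(X)=b^{+}(M)+n\ge n+2$, so $\SWcoh(E)\in H^{n}(T^{n};\Z/2)$ is defined by \cref{Dcoh SWcoh in the base case}. By \cref{lem expression of chchain as global cohomology class} and the cobordism invariance in \cref{lemma on well definedness of the cohomological inv in family vn context}, the wanted pairing is the mod $2$ count $\frakm(s_{\si},1)$ of the parameterized \SW moduli space over $T^{n}$ for any generic inductive section $\si$ realizing $E$.

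Next I would choose $\si$ adapted to the decomposition $X=M\#N_{1}\#\cdots\#N_{n}$, $N_{i}=S^{2}\times S^{2}$. Since $f_{i}$ is supported on $N_{i}$ and is the identity elsewhere, $E$ is an iterated fiberwise connected sum of the trivial bundle $M\times T^{n}$ with the mapping tori of the $f_{i}$, each pulled back from the $i$-th circle factor of $T^{n}$; choose fiberwise metrics with long necks of length $T$ separating $M$ and the $N_{i}$ from one another, equal on the $M$-summand to a fixed metric and a perturbation achieving transversality for $(M,\fraks_{0})$, and on the $N_{i}$-summand to a small perturbation varying over the $i$-th circle. The key point is that $f_{i}$ acts by $-1$ on the one-dimensional $H^{+}(N_{i};\R)$ — computed directly from either prescribed matrix on $H_{2}(S^{2}\times S^{2})$ together with the hyperbolic intersection form — so the real line bundle $\bigsqcup_{t}\calH^{+}_{g_{i}(t)}$ over the $i$-th circle is the nontrivial (M\"obius) one, whence the harmonic self-dual projection of the $N_{i}$-perturbation is a section of a M\"obius bundle and vanishes transversally at an \emph{odd} number of points of the circle. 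Because $b^{+}(X)\ge n+2$, the reducible wall in $\Pi(X)$ has codimension $b^{+}(X)>n$, so a generic such $\si$ has no reducibles over all of $T^{n}$, and $\M_{\si^{(k)}}=\emptyset$ for $k<n$ by the dimension count $d(\fraks)+k<0$; for $T$ large the gluing analysis below also gives transversality over the top cell, so $\si$ is a genuine inductive section.

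Then I would run the neck-stretching argument, which is the ``higher-dimensional wall-crossing'' of this subsection. As $T\to\infty$, standard compactness and removable-singularity analysis for the \SW equations forces the parameterized moduli space over $T^{n}$ to converge to broken configurations consisting of an irreducible solution on $(M,\fraks_{0})$ with cylindrical end, together with, on each $N_{i}$, a solution which must be reducible because $S^{2}\times S^{2}$ carries a positive scalar curvature metric and the $N_{i}$-perturbation is small; such reducibles occur exactly over the oddly many zeros of the M\"obius section above. Conversely, for $T$ large a gluing construction produces from each such collection a transversally cut-out irreducible solution on $X_{T}$, the transversality being precisely where the almost localized (self-dual) harmonic forms of \cref{subsection Almost localized harmonic forms} (and \cref{lem family: for interaction between almost localized forms,lem family: for interaction between almost localized forms on the same}) enter: they identify the cokernel of the linearized operator on $X_{T}$ with the near-orthogonal direct sum of the summands' contributions, so the one-dimensional obstruction of each reducible on $N_{i}$ is cancelled transversally by the variation of the perturbation across its wall. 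Counting glued solutions mod $2$ therefore yields $\SWinv(M,\fraks_{0})\cdot\prod_{i=1}^{n}\#\{\text{wall crossings on the }i\text{-th circle}\}\equiv1\cdot1\cdots1=1\pmod 2$, since $\SWinv(M,\fraks_{0})$ is odd and each wall-crossing count is odd. Hence $\langle\SWcoh(E),[T^{n}]\rangle=1$ and $\SWcoh(E)$ generates $H^{n}(T^{n};\Z/2)$, giving the theorem.

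The step I expect to be the main obstacle is the gluing analysis: both producing the irreducible solutions on the neck-stretched $X_{T}$ from the broken configurations \emph{and} showing these exhaust the moduli space (no bubbling, no spreading of a solution across several summands, exponential neck decay), together with the Fredholm bookkeeping of cokernels under gluing needed to see each glued solution is regular and hence contributes $1$ mod $2$. The almost localized harmonic forms of \cref{subsection Almost localized harmonic forms} are introduced exactly to control this bookkeeping, reducing it to the linear additivity of the Fredholm index and of the spaces of self-dual harmonic forms on a connected sum.
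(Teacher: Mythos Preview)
Your proposal is correct and follows essentially the same strategy as the paper: reduce $\langle\SWcoh(E),[T^{n}]\rangle$ to a parameterized moduli count, use Ruberman's wall-crossing/gluing to factor this as $\SWinv(M,\fraks_{0})$ times a wall-crossing number on the $S^{2}\times S^{2}$ summands, and exploit that each $f_{i}$ acts by $-1$ on $H^{+}(N_{i};\R)$ to show this wall-crossing number is odd, with the almost-localized harmonic forms of \cref{subsection Almost localized harmonic forms} controlling the interaction between summands.

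The organizational difference is worth noting. The paper works with the single decomposition $X=M\# N$ (one neck), so that Ruberman's gluing result can be quoted directly; the nontrivial work is then to compute the codimension-$n$ wall-crossing number on $N=\#_{i}N_{i}$ itself. The paper does this by building an explicit section $\sigma$ of $\bigsqcup_{\bt}\calH^{+}_{g_{\bt}}(N)$ over $[0,1]^{n}$ out of almost-localized self-dual harmonic forms $\check{\mu}_{i}$ and computing the mapping degree of $\Psi\circ\sigma:([0,1]^{n},\partial[0,1]^{n})\to(\R^{n},\R^{n}\setminus\{0\})$ through four cases of cup-product estimates; the almost-localized forms are used precisely to show the off-diagonal pairings $\langle\beta_{j}\cup\gamma_{i},[N]\rangle$ for $i\ne j$ are $O(1/T_{j})$, so the degree is $\pm1$. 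Your route instead stretches all $n+1$ necks at once and reads off the factorization $\SWinv(M,\fraks_{0})\cdot\prod_{i}(\text{odd})$ directly from a multi-neck gluing, packaging the same degree computation as ``each $i$-th circle carries a M\"obius line bundle of $\calH^{+}$, hence odd transverse zeros.'' Your version is conceptually cleaner, but it hides the place where the almost-localized harmonic forms actually do work: in the paper they quantify why the wall on $N$ is, for long necks, the transverse intersection of the $n$ individual walls coming from the $N_{i}$, which is exactly the content of your product formula. The paper's two-step decomposition has the advantage that the gluing step is literally Ruberman's one-neck result, so nothing new needs to be proved there, while your single-step approach requires the (routine but not stated) multi-neck extension.
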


Note that the condition $b^{+}(X) \geq n+2$ is obviously satisfied and it is easy to see that $d(\fraks)=-n$ in the situation of \cref{theo on calculation of SW for connected sum with spin}.
We also note that, if $X$ is simply connected and $\fraks$ is the isomorphism class of a $\spc$ structure coming from a spin structure on $X$, then we have $\Diff(X, \fraks) = \Diff^{+}(X)$.
A spin structure on $X$ is unique up to isomorphism in this case.

\begin{ex}
\label{non vanishing for K3}
For $n>0$, let $\fraks$ be the isomorphism class of a spin structure on $K3 \# n(S^{2} \times S^{2})$.
Then we have
\[
\SWcoh(K3 \# n(S^{2} \times S^{2}),\fraks) \neq 0 \text{ in } H^{n}(B\Diff^{+}(K3 \# n(S^{2} \times S^{2}));\Z/2).
\]
\end{ex}

\begin{ex}
\label{ex final is connected sum of SS before}
We here consider the connected sum of some copies of $S^{2}\times S^{2}$ as follows.
(The author thanks David Baraglia for discussing this argument.)
Using techniques given in J.~Park~\cite{MR1900318}, B.~Hanke, D.~Kotschick, and J.~Wehrheim~\cite{MR2021570} have given oriented and closed $4$-manifolds $X(p,q)$ involving two integer parameters $p,q$ (denoted by $k, n$ in \cite{MR2021570} respectively) which are spin, symplectic, signature zero, with $b^{+} \geq 2$ and dissolve after connected summing one copy of $S^{2} \times S^{2}$ for sufficiently large $p,q$.
Fix such $p,q$, and then we have $X(p,q) \# S^{2} \times S^{2} \cong m(S^{2}\times S^{2})$ for some $m>0$.
In \cref{theo on calculation of SW for connected sum with spin}, let us substitute $X(p,q)$ for $M$, and as $\fraks_{0}$ take the (isomorphism class of a) $\spc$ structure coming from the symplectic structure of $X(p,q)$.
Then, for a fixed $n>0$, we get
\[
\SWcoh(k(S^{2}\times S^{2}),\fraks) \neq 0 \text{ in } H^{n}(B\Diff(k(S^{2}\times S^{2}), \fraks);\Z/2),
\]
where $k = m+n-1$.
In addition, thanks to \cite{MR2021570}, one can find infinitely many such $k$ by varying $p,q$.
\end{ex}

\begin{rem}
\label{rem Hanke Kotschick Wehrheim and Wall}
Since Hanke--Kotschick--Wehrheim~\cite{MR2021570} used Wall's theorem~\cite{MR0163324} to prove the above dissolving of $X(p,q) \# S^{2} \times S^{2}$, it seems difficult to determine $k$'s in \cref{ex final is connected sum of SS before}.
\end{rem}

The rest of this \lcnamecref{subsection Calculation of SWcoh for connected sum with n(S2S^2)} is devoted to proving \cref{theo on calculation of SW for connected sum with spin}.
One of the main tools of the proof is a technique due to D.~Ruberman~\cite{MR1671187, MR1734421, MR1874146}:
combination of wall-crossing and gluing.
To use it, we describe wall-crossing on $N$.
This is a spin-analogue of the argument of \cite{Konno1} by the author.
Recall that the definition of the wall for the \SW equations.
Let us fix a spin structure on $N$.
In the space of perturbations 
\[
\Pi(N) = \bigsqcup_{g \in \Met(N)} \Pi_{g}(N) = \bigsqcup_{g \in \Met(N)} L^{2}_{k-1}(\Lambda^{+}_{g}(N)),
\]
we have a codimenion-$n$ subspace
\[
\calW(N) := \bigsqcup_{g \in \Met(N)} \calW_{g}(N),
\]
where
\[
\calW_{g}(N) := F_{A_{0}}^{+_{g}} + \im{d^{+_{g}}}
\]
for a fixed reference connection $A_{0}$ of the determinant line bundle of the spin structure.
We can take $A_{0}$ as a trivial connection, and then $\calW_{g}(N) =  \im{d^{+_{g}}}$.
Set
\[
\circPi(N) := \Pi(N) \setminus \calW(N),\quad \circPi_{g}(N) := \Pi_{g}(N) \setminus \calW_{g}(N).
\]
Then $\circPi(N)$ is homotopy equivalent to $S^{n-1}$.
If a continuous map $\vp : [0,1]^{n} \to \Pi(N)$ satisfying that $\vp(\del[0,1]^{n}) \subset \circPi(N)$ is given, we can therefore define the ``intersection number'' $\vp \cdot \calW(N)$ as the mapping degree of $\vp : ([0,1]^{n}, \del[0,1]^{n})) \to (\Pi(N), \circPi(N))$.
(Since we work on $\Z/2$, we do not mention the convention on sign.)
Denote by $\calH^{+}_{g}(N)$ the space of self-dual harmonic $2$-forms with respect to a metric $g$ on $N$.
Let $g_{\bullet} : [0,1]^{n} \to \Met(N)$ be a family of metrics.
For each element $\bt \in [0,1]^{n}$, we write $g_{\bt}$ for $g_{\bullet}(\bt)$. 
Let $V \subset H^{2}(N;\R)$ be a maximal positive definite subspace with respect to the intersection form and $\gamma_{1}, \ldots, \gamma_{n}$ be a basis of $V$.
Let 
\[
\Rest : \bigsqcup_{\bt \in [0,1]^{n}} (\Pi_{g_{\bt}}(N), \circPi_{g_{\bt}}(N)) \to \bigsqcup_{\bt \in [0,1]^{n}} (\calH^{+}_{g_{\bt}}(N),\calH^{+}_{g_{\bt}}(N) \setminus \{0\})
\]
be the restriction map, and define
\[
\Phi : \bigsqcup_{\bt \in [0,1]^{n}} (\calH^{+}_{g_{\bt}}(N),\calH^{+}_{g_{\bt}}(N) \setminus \{0\}) \to (\R^{n}, \R^{n} \setminus \{0\})
\]
by
\[
\Phi(\eta) := (\left<[\eta] \cup \gamma_{i},[N]\right>)_{i=1}^{n}
\]
for $\eta \in \calH^{+}_{g_{\bt}}(N)$, and set $\Psi := \Phi \circ \Rest$.

\begin{lem}
\label{lem for homotopy equivalence of Phi}
The map $\Psi$ is a homotopy equivalent map between pairs.
\end{lem}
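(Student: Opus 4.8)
The plan is to decompose $\Psi = \Phi \circ \Rest$ and to show that each of $\Rest$ and $\Phi$ is a homotopy equivalence of pairs over the compact contractible base $[0,1]^{n}$; the assertion then follows by composition.

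First I would treat $\Rest$. Since we chose the reference connection $A_{0}$ to be trivial, the wall in each fiber is $\calW_{g_{\bt}}(N) = \im{d^{+_{g_{\bt}}}}$, and Hodge theory gives the $L^{2}$-orthogonal decomposition $L^{2}_{k-1}(\Lambda^{+}_{g_{\bt}}(N)) = \calH^{+}_{g_{\bt}}(N) \oplus \im{d^{+_{g_{\bt}}}}$ (closed range of the elliptic operator $d^{+}$). Hence the orthogonal projection $\pi_{\bt}$ onto $\calH^{+}_{g_{\bt}}(N)$ carries $\Pi_{g_{\bt}}(N)$ onto $\calH^{+}_{g_{\bt}}(N)$ and $\circPi_{g_{\bt}}(N)$ onto $\calH^{+}_{g_{\bt}}(N)\setminus\{0\}$, and the linear homotopy $(x,s) \mapsto (1-s)x + s\pi_{\bt}(x)$ is a deformation retraction which preserves the complement of the wall because $\pi_{\bt}$ is idempotent. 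By standard elliptic theory $\dim\calH^{+}_{g_{\bt}}(N) = b^{+}(N) = n$ is constant in $\bt$, so $\{\pi_{\bt}\}$ depends continuously on $\bt$ and $\bigsqcup_{\bt}\calH^{+}_{g_{\bt}}(N) \to [0,1]^{n}$ is a rank-$n$ subbundle; thus $\Rest$ is a fiberwise deformation retraction, in particular a homotopy equivalence of pairs.

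Next I would analyze $\Phi$ fiberwise. For fixed $\bt$, de Rham's theorem identifies $\calH^{+}_{g_{\bt}}(N)$ with the maximal positive-definite subspace $H^{+}_{g_{\bt}}(N) \subset H^{2}(N;\R)$, under which $\Phi$ becomes $\omega \mapsto (Q(\omega,\gamma_{i}))_{i=1}^{n}$, where $Q$ is the intersection form. This linear map is injective: if $Q(\omega,\gamma_{i}) = 0$ for all $i$, then $\omega$ is $Q$-orthogonal to $V = \spn\{\gamma_{1},\dots,\gamma_{n}\}$; since $V$ is maximal positive and $N = n(S^{2}\times S^{2})$ has signature $0$, the orthogonal complement $V^{\perp_{Q}}$ is negative definite, so $Q(\omega,\omega)\le 0$, whereas $\omega \in H^{+}_{g_{\bt}}(N)$ forces $Q(\omega,\omega)\ge 0$; hence $\omega = 0$. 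As both sides are $n$-dimensional, $\Phi$ restricted to each fiber is a linear isomorphism, restricting further to a homeomorphism of the complements of $0$. Consequently $(\bt,\eta)\mapsto(\bt,\Phi(\eta))$ is a homeomorphism of pairs $\bigsqcup_{\bt}(\calH^{+}_{g_{\bt}}(N),\calH^{+}_{g_{\bt}}(N)\setminus\{0\}) \to ([0,1]^{n}\times\R^{n}, [0,1]^{n}\times(\R^{n}\setminus\{0\}))$, and composing with the projection to $(\R^{n},\R^{n}\setminus\{0\})$ — a homotopy equivalence of pairs because $[0,1]^{n}$ is contractible — shows that $\Phi$ is a homotopy equivalence of pairs.

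Finally, $\Psi = \Phi\circ\Rest$ is a composition of homotopy equivalences of pairs, hence itself one. The only genuinely non-formal ingredient is the fiberwise isomorphism statement for $\Phi$, i.e. the intersection-form computation above; everything else is bookkeeping once one observes that the Hodge projections (and hence all maps in sight) vary continuously with $\bt$ and that $[0,1]^{n}$ is contractible. I expect the continuity of the family of projections — which rests on the constancy of $b^{+}(N)$ — to be the only point where a little care with the elliptic theory is required.
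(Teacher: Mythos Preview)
Your argument is correct and follows essentially the same route as the paper's proof: reduce to a fiberwise statement using contractibility of $[0,1]^{n}$, observe that $\Rest$ is a deformation retraction via the Hodge decomposition, and that $\Phi$ is a linear isomorphism on each fiber. The paper is simply terser---it asserts the fiberwise linear isomorphism property of $\Phi$ without the intersection-form computation you supply---but the strategy is the same.
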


\begin{proof}
Since $[0,1]^{n}$ is contractible, it suffices to see that the restriction of $\Psi$ to a fiber is homotopy equivalent,
and it is clear since $(\Pi_{g_{\bt}}(N), \circPi_{g_{\bt}}(N))$ is homotopy equivalent to $(\calH^{+}_{g_{\bt}}(N),\calH^{+}_{g_{\bt}}(N) \setminus \{0\})$ and $\Phi|_{\calH^{+}_{g_{\bt}}(N)} : \calH^{+}_{g_{\bt}}(N) \to \R^{n}$  is a linear isomorphism.
\end{proof}

\begin{cor}
\label{cor reduction between finite dim mapping degree}
Let $\si$ be a section
\[
\si : ([0,1]^{n}, \del[0,1]^{n}) \to \bigsqcup_{\bt \in [0,1]^{n}} (\Pi_{g_{\bt}}(N), \circPi_{g_{\bt}}(N))
\]
and define $\vp := \iota \circ \si : ([0,1]^{n}, \del[0,1]^{n}) \to (\Pi(N), \circPi(N))$, where
\[
\iota : \bigsqcup_{\bt \in [0,1]^{n}} (\Pi_{g_{\bt}}(N), \circPi_{g_{\bt}}(N)) \to  (\Pi(N), \circPi(N))
\]
is the inclusion.
Then, $\vp \cdot \calW(N)$ is given as the mapping degree of the map
\[
\Psi \circ \si : ([0,1]^{n}, \del[0,1]^{n}) \to (\R^{n}, \R^{n} \setminus \{0\}).
\]
\end{cor}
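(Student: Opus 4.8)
The plan is to realize $\vp\cdot\calW(N)$ and $\deg(\Psi\circ\si)$ as the images of one and the same homology class under two isomorphisms with $H_{n}$ of a sphere pair. Since we work with $\Z/2$-coefficients throughout this subsection, it suffices to show that $\vp_{\ast}[[0,1]^{n},\del[0,1]^{n}]$ and $(\Psi\circ\si)_{\ast}[[0,1]^{n},\del[0,1]^{n}]$ vanish simultaneously, where $[[0,1]^{n},\del[0,1]^{n}]$ is the fundamental class of the pair, a generator of $H_{n}([0,1]^{n},\del[0,1]^{n})\cong\Z/2$.

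First I would use the factorization $\vp=\iota\circ\si$. The pair $(\bigsqcup_{\bt}\Pi_{g_{\bt}}(N),\bigsqcup_{\bt}\circPi_{g_{\bt}}(N))$ is the pull-back along $g_{\bullet}\colon[0,1]^{n}\to\Met(N)$ of the pair of fibrations $(\Pi(N),\circPi(N))\to\Met(N)$, with $\iota$ the induced map over $g_{\bullet}$. As $[0,1]^{n}$ and $\Met(N)$ are both contractible, $g_{\bullet}$ is a homotopy equivalence, so the five lemma applied to the long exact homotopy sequences of these fibrations (the maps on fibers being identities) shows that $\iota$ restricts to weak homotopy equivalences $\bigsqcup_{\bt}\Pi_{g_{\bt}}(N)\to\Pi(N)$ and $\bigsqcup_{\bt}\circPi_{g_{\bt}}(N)\to\circPi(N)$. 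Since weak homotopy equivalences induce isomorphisms on singular homology, the long exact sequences of the two pairs together with the five lemma give that $\iota_{\ast}\colon H_{n}(\bigsqcup_{\bt}\Pi_{g_{\bt}}(N),\bigsqcup_{\bt}\circPi_{g_{\bt}}(N))\to H_{n}(\Pi(N),\circPi(N))$ is an isomorphism; and $H_{n}(\Pi(N),\circPi(N))\cong\Z/2$ because $\Pi(N)$ is contractible and $\circPi(N)\simeq S^{n-1}$.

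On the other hand, \cref{lem for homotopy equivalence of Phi} gives that $\Psi$ is a homotopy equivalence of pairs, so $\Psi_{\ast}\colon H_{n}(\bigsqcup_{\bt}\Pi_{g_{\bt}}(N),\bigsqcup_{\bt}\circPi_{g_{\bt}}(N))\to H_{n}(\R^{n},\R^{n}\setminus\{0\})\cong\Z/2$ is also an isomorphism. Writing $c:=\si_{\ast}[[0,1]^{n},\del[0,1]^{n}]$, one has $\vp_{\ast}[[0,1]^{n},\del[0,1]^{n}]=\iota_{\ast}c$ and $(\Psi\circ\si)_{\ast}[[0,1]^{n},\del[0,1]^{n}]=\Psi_{\ast}c$. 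Since $\iota_{\ast}$ and $\Psi_{\ast}$ are isomorphisms out of a group isomorphic to $\Z/2$, $\iota_{\ast}c=0$ exactly when $\Psi_{\ast}c=0$, which is the claimed identity $\vp\cdot\calW(N)=\deg(\Psi\circ\si)$. Keeping track of orientations, the same computation gives the statement over $\Z$.

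The one step I expect to need care is the assertion that $\iota$ restricts to weak homotopy equivalences; this rests on $\circPi(N)\to\Met(N)$ being a genuine fibration, which in turn follows from the self-dual harmonic $2$-forms forming a rank-$n$ subbundle of the Hilbert bundle $\Pi(N)\to\Met(N)$ (equivalently, $\calW(N)$ being a codimension-$n$ subbundle), together with the contractibility of $\Met(N)$. The remaining steps are routine diagram chasing with the long exact sequences of pairs and fibrations.
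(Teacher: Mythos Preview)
Your proof is correct and follows essentially the same route as the paper, which states the result as an immediate corollary of \cref{lem for homotopy equivalence of Phi} without spelling out the details. You make explicit the one ingredient the paper leaves tacit, namely that $\iota$ also induces an isomorphism on $H_{n}$ of the pairs (for the same reason $\Psi$ does: both source and target fiber over contractible bases with identical fibers), and then compare the two isomorphisms $\iota_{\ast}$ and $\Psi_{\ast}$ applied to $c=\si_{\ast}[[0,1]^{n},\del[0,1]^{n}]$.

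One small remark: your final sentence ``Keeping track of orientations, the same computation gives the statement over $\Z$'' is slightly too quick. Over $\Z$ the two isomorphisms $\iota_{\ast}$ and $\Psi_{\ast}$ from $H_{n}\cong\Z$ could a priori differ by a sign, so the argument only yields equality up to sign. This is harmless here since the paper explicitly works over $\Z/2$ in this subsection (and only uses the corollary to detect nonvanishing), but it is worth being aware of.
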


We shall use \cref{cor reduction between finite dim mapping degree} to prove \cref{theo on calculation of SW for connected sum with spin}.
Before starting the proof of \cref{theo on calculation of SW for connected sum with spin}, we give a map used in the proof.
Let us write $[0,1]^{n} = I_{1} \times \cdots I_{n}$, where $I_{i} = [0,1]$ for each $i$.
Decompose it as $I_{i} = I_{i,1} \cup I_{i,2}$, where $I_{i,1} = [0, 1/2]$ and $I_{i,2} = [1/2,1]$.
Then we obtain the decomposition
\[
[0,1]^{n} = \bigcup_{1 \leq a_{1}, \ldots, a_{n} \leq 2} I_{1,a_{1}} \times\cdots\times I_{n, a_{n}}.
\]
For each $i$,
let $\Gamma_{i,1} : I_{i,1} \to I_{i,1}$ be the identity map and $\Gamma_{i,2} : I_{i,2} \to I_{i,1}$ be the projection onto $\{1/2\}$.
Using these maps, we define a map
\[
\Shrink : [0,1]^{n} \to I_{1,1} \times \cdots \times I_{n,1} = [0,1/2]^{n}
\]
by
\[
\Shrink := \bigcup_{1 \leq a_{1}, \ldots, a_{n} \leq 2} \Gamma_{1, a_{1}} \times \cdots \times \Gamma_{n, a_{n}}.
\]
We next consider the following reparametrization map.
Let us consider the homeomorphism $[0, 1/2] \cong [0,1]$ given by an affine function.
This map induces a homeomorphism $\Repa : [0,1/2]^{n} \cong [0,1]^{n}$.
Set the composition of these maps as
\begin{align}
F_{RS} := \Repa \circ \Shrink : [0,1]^{n} \to [0,1]^{n}.
\label{composition of Repa and Cent}
\end{align}
Intuitively, the pull-back by $F_{RS}$ is the following operation:
for a given object on $[0,1]^{n}$, first give a copy of the object on $[0,1/2]^{n}$, and second extend it on the whole of $[0,1]^{n}$ by a natural pull-back.

\begin{proof}[Proof of \cref{theo on calculation of SW for connected sum with spin}]
We first define $\gamma_{1}, \ldots, \gamma_{n}$ as follows.
Let $\Si_{1,i}$, $\Si_{2,i} \subset N_{i}$ be the copies of $\Si_{1}$ and $\Si_{2}$ respectively.
We define $\gamma_{i} := \PD[\Si_{1,i}] + \PD[\Si_{2,i}]$ for each $i \in \{1, \ldots, n\}$.
We next take $g_{\bullet}$ as follows.
Let $g^{0}_{i}$ be a metric on $N_{i}$ which is a cylindrical near the boundary of an embedded disk in $N_{i}$.
Take a path $h_{i} : [0,1] \to \Met(N_{i})$ from $g^{0}_{i}$ to $f_{i}^{\ast}g^{0}_{i}$ and define $g_{\bullet} : [0,1]^{n} \to \Met(N)$ by 
\[
g_{\bt} = h_{1}(t_{1}) \#\cdots\# h(t_{n})
\]
for $\bt = (t_{1}, \ldots, t_{n})$.
Define faces $F_{i}^{0}$ and $F_{i}^{1}$ of $[0,1]^{n}$ by
\begin{align*}
F_{i}^{0} := \Set{(t_{1}, \ldots, t_{n}) \in [0,1]^{n} | t_{i}  = 0}
\end{align*}
and 
\begin{align*}
F_{i}^{1} := \Set{(t_{1}, \ldots, t_{n}) \in [0,1]^{n} | t_{i} = 1 }.
\end{align*}
We define a map $\reflect{\bullet}{i} : F_{i}^{0} \to F_{i}^{1}$ by
\[
\bt = (t_{1}, \ldots,0, \ldots, t_{n}) \mapsto (t_{1}, \ldots,1, \ldots, t_{n}) = \reflect{\bt}{i},
\]
and similarly define $\reflect{\bullet}{i} : F_{i}^{1} \to F_{i}^{0}$, which satisfies $\reflect{\bullet}{i} \circ \reflect{\bullet}{i} = \id$.
Note that, for $\bt \in F_{i}^{0}$, we have $f_{i}^{\ast}g_{\bt} = g_{\reflect{\bt}{i}}$.
In what follows, we identify $H_{2}(N)$ with $H^{2}(N)$ by the Poincar\'{e} duality.
%Define $V_{i}^{+}\subset H_{2}(N_{i};\R)$ by $V^{+}_{i} := \spn\{[S_{1,i}] + [S_{2,i}]\}$ and set $V^{+} := \bigoplus_{i=1}^{n} V_{i} \subset H_{2}(N;\R)$.
%Let $V^{-}$ be the orthogonal complement of $V^{+}$ with respect to the intersection form of $N$, and $p_{V^{+}} : H_{2}(N) = V^{+} \oplus V^{-} \to V^{+}$ be the projection.
%For a metric $g \in \Met(N)$, let $\psi_{g} : \calH^{+}_{g}(N) \to V^{+}$ be the composition of the injection $\calH^{+}_{g}(N) \inc H^{2}(N)$ given by $\omega \mapsto [\omega]$ and $p_{V^{+}}$.
%Note that $\psi_{g}$ is an isomorpshim.
%We define a Euclidean metric on $H^{2}(N)$ as the intersection from on $V^{+}$ and $-1$ times the intersection from on $V^{-}$.
%Lemma~3.2 in \cite{Konno1} (and its proof) gives estimates
%\begin{align*}
%&|[\Si_{1,i}] - [\psi_{g_{\bf{0}}}^{-1}(p_{V^{+}}([\Si_{1,i}]))]|
%\leq \frac{C_{4}}{R^{1/4}},\text{ and}\\
%&|[\Si_{2,i}] - [\psi_{f_{i}^{\ast}g_{\bf{0}}}^{-1}(p_{V^{+}}([\Si_{2,i}]))]|
%\leq \frac{C_{4}}{R^{1/4}},
%\end{align*}
%where ${\bf0} = (0, \ldots, 0)$ and $C_{4} = \sqrt{2}\max\{ |p_{V^{+}}([\Si_{1,i}])|^{1/2}, |p_{V^{+}}([\Si_{2,i}])|^{1/2} \}$.
%We take $R$ so that $C/R^{1/4} < 1$, and henceforth fix it.
%We next take $\{\xi_{i}^{g_{\bt}}\}_{i=1}^{n}$ by
%\[
%\xi_{i}^{g_{\bt}} = \psi^{-1}_{g_{\bt}}(p_{V^{+}}([\Si_{1,i}])).
%\]
%Note that $p_{V^{+}}([\Si_{1,i}]) = -p_{V^{+}}([\Si_{2,i}])$.

We next take a section $\si : ([0,1]^{n}, \del[0,1]^{n}) \to \bigsqcup_{\bt \in [0,1]^{n}} (\Pi_{g_{\bt}}(N), \circPi_{g_{\bt}}(N))$ as follows.
For each $i$, let $\hat{N}_{i}$ be the manifold obtained by gluing $N_{i} \setminus D^{4}$ with $S^{3} \times [0, \infty)$, where $D^{4} \subset N_{i}$ is the disk used to consider the connected sum and $S^{3}$ is equipped with the standard metric.
Note that a metric $h$ on $N_{i}$ induces $\hat{h} \in \Met(\hat{N}_{i})$ if $h$ is a form of the product metric near $D^{4}$.
For $t_{i} \in [0,1]$, let us consider
\[
D_{i}^{\hat{h}_{i}(t_i)} = d + (d^{+_{\hat{h}_{i}(t_i)}})^{\ast_{\hat{h}_{i}(t_i)}} : \Omega^{0}(\hat{N}_{i}) \oplus \Omega^{+_{\hat{h}_{i}(t_{i})}}(\hat{N}_{i}) \to \Omega^{1}(\hat{N}_{i}),
\]
which decomposes as $D_{i}^{\hat{h}_{i}(t_i)} = \del/\del t + L_{S^{3}}$ as in \eqref{decomposition of d plus dast} on the cylindrical part.
Let $\delta>0$ be a sufficiently small number such that $[-\delta, \delta] \setminus \{0\}$ contains no spectrum of $L_{S^{3}}$.
Extend $D_{i}^{\hat{h}_{i}(t_i)}$ to a bounded Fredholm operator $D_{i}^{\hat{h}_{i}(t_i)} : L^{2}_{1, \delta}(\hat{N}_{i}) \to L^{2}_{\delta}(\hat{N}_{i})$.
As in \cref{subsection Almost localized harmonic forms}, the notation $\Ker{D_{i}^{\hat{h}_{i}(t_i)}}$ denotes the kernel of this extended operator.
As we mentioned in \cref{subsection Almost localized harmonic forms}, $\Ker{D_{i}^{\hat{h}_{i}(t_i)}}$ is isomorpshic to the space of extended $L^{2}$-solutions to $D_{i}^{\hat{h}_{i}(t_i)}$, which is nothing other than constant functions and extended self-dual $L^{2}$-harmonic forms on $\hat{N}_{i}$.
Thus we have an embedding $\calH^{+}_{L^{2},\hat{h}_{i}(t_i)}(\hat{N}_{i}) \inc \Ker{D_{i}^{\hat{h}_{i}(t_i)}}$, where $\calH^{+}_{L^{2},\hat{h}_{i}(t_i)}(\hat{N}_{i})$ is the space of $\hat{h}_{i}(t_i)$-self-dual $L^{2}$-harmonic $2$-forms on $\hat{N}_{i}$, which is of dimension $b^{+}(N_{i})=1$.
Take a section $\{\mu_{i}^{t_{i}}\}_{t_{i} \in [0,1]}$ of the bundle $\bigsqcup_{t_{i} \in [0,1]}\calH^{+}_{L^{2},\hat{h}_{i}(t_i)}(\hat{N}_{i}) \to [0,1]$ with $\|\mu_{i}^{t_i}\|_{L^{2}_{\delta}} = 1$.
There is no difficulty in extending the whole argument of \cref{subsection Almost localized harmonic forms} for connected sums of finitely many manifolds, not only for connected sums of two manifolds, and so we use it.
Let $T \geq T_{1}, \ldots, T_{n}> 0$ be positive numbers satisfying that $4T \geq 3(T_{i} + T_{j})$ for any $i,j \in \{1, \ldots, n\}$ with $i \neq j$ and that $T_{i} > C_{1}$ for any $i$, where $C_{1}$ is the constant given in \cref{subsection Almost localized harmonic forms}.
We take $T \geq T_{1}, \ldots, T_{n}$ to be sufficiently large, which shall be described below.
In the definition for the connected sum $N = \#_{i=1}^{n} N_{i}$, we use the cylinders of length $2T$ as in \cref{subsection Almost localized harmonic forms}.
For $\bt \in [0,1]^{n}$, let us consider
\[
D = D^{g_{\bt}} = d + (d^{+_{g_{\bt}}})^{\ast_{g_{\bt}}} : \Omega^{0}(N) \oplus \Omega^{+_{g_{\bt}}}(N) \to \Omega^{1}(N).
\]
Let $\check{\mu}_{i}^{\bt} \in \Ker{D^{g_{\bt}}}$ be the almost localized self-dual harmonic form on $N_{i}$ arising from $\mu_{i}^{t_i}$ with respect to $T, T_{1}, \ldots, T_{n}$ and $\hat{h}_{1}(t_{1}), \ldots, \hat{h}_{n}(t_{n})$.
Define a section
\[
\alpha_{i} : [0,1]^{n} \to \bigsqcup_{\bt \in [0,1]^{n}}\Pi_{g_{\bt}}(N)
\]
by $\alpha_{i}(\bt) := \check{\mu}_{i}^{\bt}$ for each $i$.
Using the map \eqref{composition of Repa and Cent}, let us consider
\[
\tilg_{\bullet} := F_{RS}^{\ast}g_{\bullet} : [0,1]^{n} \to \Met(N).
\]
We again write $\tilg_{\bt}$ for $\tilg_{\bullet}(\bt)$.
Similarly, let us consider the pull-backed section 
\[
\tilalpha_{i} := F_{RS}^{\ast}\alpha_{i} : [0,1]^{n} \to \bigsqcup_{\bt \in [0,1]^{n}}\Pi_{\tilg_{\bt}}(N).
\]
By definition we have $\tilg_{\bt} = g_{\bs}$ for $\bs = F_{RS}(\bt)$ and $\tilalpha_{i}(\bt) = \check{\mu}_{i}^{\bs}$.
We now define a continuous section
\[
\beta_{i} : [0,1]^{n} \to  \bigsqcup_{\bt \in [0,1]^{n}} \calH^{+}_{\tilg_{\bt}}(N)
\]
as follows.
For $\bt = (t_{1}, \ldots, t_{n})$, if $0 \leq t_{i} \leq 1/2$ holds, we define $\beta_{i}(\bt) := \tilalpha_{i}(\bt)$, and if $1/2 \leq t_{i} \leq 1$ holds, we define
\[
\beta_{i}(\bt) := (1-t_{i})\tilalpha_{i}(\bt) + t_{i} f_{i}^{\ast}\check{\mu}_{i}^{\reflect{\bs}{i}}
\]
for $\bs = F_{RS}(\bt)$.
We here check that, if $\bt$ satisfies that $1/2 \leq t_{i} \leq 1$, then $f_{i}^{\ast}\check{\mu}_{i}^{\reflect{\bs}{i}} \in \calH^{+}_{\tilg_{\bt}}(N)$ holds.
If $1/2 \leq t_{i} \leq 1$ holds, we have $\reflect{\bs}{i} \in F_{i}^{0}$, and therefore it follows that $f_{i}^{\ast}g_{\reflect{\bs}{i}} = g_{\bs} = \tilg_{\bt}$.
This gives the map $f_{i}^{\ast} : \calH^{+}_{g_{\reflect{\bs}{i}}}(N) \to \calH^{+}_{\tilg_{\bt}}(N)$, thus we get $f_{i}^{\ast}\check{\mu}_{i}^{\reflect{\bs}{i}} \in \calH^{+}_{\tilg_{\bt}}(N)$.
We define a section $\si : ([0,1]^{n}, \del[0,1]^{n}) \to \bigsqcup_{\bt \in [0,1]^{n}} (\Pi_{g_{\bt}}(N), \circPi_{g_{\bt}}(N))$ by
\[
\si(\bt) := \beta_{1}(\bt) + \cdots + \beta_{n}(\bt).
\]
Obviously $\si$ factors through the inclusion 
\[
\bigsqcup_{\bt \in [0,1]^{n}} (\calH^{+}_{g_{\bt}}(N),\calH^{+}_{g_{\bt}}(N) \setminus \{0\})
\inc
\bigsqcup_{\bt \in [0,1]^{n}} (\Pi_{g_{\bt}}(N), \circPi_{g_{\bt}}(N)).
\]

For this $\si$, we shall show that the mapping degree of the map
\[
\Psi \circ \si : ([0,1]^{n}, \del[0,1]^{n}) \to (\R^{n}, \R^{n} \setminus \{0\}).
\]
is $1$, up to sign.
Here $\Psi$ is the map given before \cref{lem for homotopy equivalence of Phi}.
If we assume this calculation of the mapping degree, the rest story is similar to Subsection~3.2 of \cite{Konno3}, given as follows.
Let us consider a cell structure of $[0,1]^{n}$ admitting a unique $n$-cell.
We equip $T^{n}$ with the cell structure obtained from the given cell structure of $[0,1]^{n}$ by identifying $F_{i}^{0}$ with $F_{i}^{1}$ for each $i$.
We now use D.~Ruberman's combination of wall-crossing and gluing arguments~\cite{MR1671187, MR1734421, MR1874146}.
(It is summarized as Proposition 4.1 in \cite{Konno2}.)
Take a generic element $\mu_{M} \in \Pi(M)$ and define $\tilde{\si} := \mu_{M} \# \si : [0,1]^{n} \to \bigsqcup_{\bt \in [0,1]^{n}} \Pi_{\pi(\mu_{M}) \# g_{\bt}}(X)$, where $\pi : \Pi(M) \to \Met(M)$ is the projection.
Ruberman's result implies that the counted number of the parameterized moduli space with respect to $\tilde{\si}$ coincides with the product of $\vp \cdot \calW(N)$ and the \SW invariant of $(M, \fraks_0)$ under a small perturbation, and  it is non-zero over $\Z/2$ because of \cref{cor reduction between finite dim mapping degree} and our assumption on $(M, \fraks_0)$.
Since $\si$ satisfies that $f_{i}^{\ast} \si(\bt) = \si(\reflect{\bt}{i})$ for each $i$ and $\bt \in F_{i}^{0}$, $\tilde{\si}$ can be regarded as an inductive section $\tilde{\si} : T^{n} \to \Pi(E)$ for the bundle $E$, and the moduli space parameterized on $[0,1]^{n}$ is isomorphic to the moduli space $\M_{\tilde{\si}}$ parameterized on $T^{n}$.
Here we used the vanishing of the moduli space on $\del[0,1]^{n}$.
Thus we obtain the non-triviality of the counted number of $\M_{\tilde{\si}}$ over $\Z/2$, which is nothing other than
$\frakm(s_{\vp_{e}^{\ast}\tilde{\si}}, 1; D_{e}^{n}) = 1$ in $\Z/2$.
Here $e$ is the unique $n$-cell of $T^{n}$ with respect to the given cell structure and $\vp_{e}$ is the characteristic map of $e$.
Hence $\SWcoh(E)$ is the generator of $H^{n}(T^{n};\Z/2)$.

Our remaining task is to show that the mapping degree of $\Psi \circ \si$ is $\pm1$.
To do it, for $i, j \in \{1,\ldots,n\}$ and $\bt = (t_{1}, \ldots, t_{n})$, we need to investigate $\left< \beta_{j}(\bt) \cup \gamma_{i}, [N]\right>$.
We divide our consideration into the four cases: Case~1-1, Case~1-2, Case~2-1, and Case~2-2.
They correspond to whether $t_{j} \leq 1/2$ or not and whether $i = j$ or not.
For $\bs = F_{RS}(\bt)$,
we write $\bs = (s_{1}, \ldots, s_{n})$, $\reflect{\bs}{i} = (\bar{s}_{1}, \ldots, \bar{s}_{n})$ and set $\vec{\mu}_{j}^{s_j} = (0, \ldots, \mu_{j}^{s_j}, \ldots, 0)$, $\vec{\mu}_{i}^{\bar{s}_j} = (0, \ldots, \mu_{j}^{\bar{s}_j}, \ldots, 0)$, where $\mu_{j}^{s_j}$ and $\mu_{j}^{\bar{s}_j}$ are in the $j$-th components.

Case~1-1:
Assume that $0 \leq t_{j} \leq 1/2$ and $i=j$.
We first have
\begin{align}
\begin{aligned}
\lb \beta_{i}(\bt) \cup \gamma_{i}, [N] \rb
=& \int_{\Si_{1,i}} \check{\mu}_{i}^{\bs} + \int_{\Si_{2,i}} \check{\mu}_{i}^{\bs}\\
=& \int_{\Si_{1,i}} \theta_{i,T_{i}}(\vec{\mu}_{i}^{s_{i}}) + \int_{\Si_{2,i}} \theta_{i,T_{i}}(\vec{\mu}_{i}^{s_{i}})\\
&+ \int_{\Si_{1,i}} Q^{X}_{i,T_{i}}D\theta_{i,T_{i}}(\vec{\mu}_{i}^{s_{i}})
+ \int_{\Si_{2,i}} Q^{X}_{i,T_{i}}D\theta_{i,T_{i}}(\vec{\mu}_{i}^{s_{i}})
\label{case11 eq1}
\end{aligned}
\end{align}
For the first two terms of the right-hand side of \eqref{case11 eq1},
note that
\begin{align}
\int_{\Si_{1,i}} \theta_{i,T_{i}}(\vec{\mu}_{i}^{s_{i}}) + \int_{\Si_{2,i}} \theta_{i,T_{i}}(\vec{\mu}_{i}^{s_{i}})
= \int_{\Si_{1,i}} \mu_{i}^{s_{i}} + \int_{\Si_{2,i}} \mu_{i}^{s_{i}}.
\label{case11 eq2}
\end{align}
We next consider the last two terms of the right-hand side of \eqref{case11 eq1}.
Let us take a metric $g'$ on $N$ such that a neighborhood of $\Si_{k,i}$ is isometric to $D^{2} \times \Si_{k,i}$ equipped with the product metric for $k=1,2$.
Here $\Si_{k,i}$ is regarded as $\{0\} \times \Si_{k,i}$.
Let $u$ be a coordinate of $D^{2}$.
Near a neighborhood of $\Si_{1,i}$, we have $Q^{X}_{i,T_{i}}D\theta_{i,T_{i}}(\vec{\mu}_{i}^{s_{i}}) = \mu_{i}^{s_{i}} - \check{\mu}_{i}^{\bs}$ and the right-hand side is a closed form, and so is the left-hand side.
It therefore follows that
\[
\int_{\{u\} \times \Si_{k,i}} Q^{X}_{i,T_{i}}D\theta_{i,T_{i}}(\vec{\mu}_{i}^{s_{i}})
= \int_{\Si_{k,i}} Q^{X}_{i,T_{i}}D\theta_{i,T_{i}}(\vec{\mu}_{i}^{s_{i}})
\]
for any $u \in D^{2}$.
Thus we have
\begin{align}
\begin{aligned}
\left| \int_{\Si_{k,i}} Q^{X}_{i,T_{i}}D\theta_{i,T_{i}}(\vec{\mu}_{i}^{s_{i}}) \right|^{2}
&= \left| \int_{\{u\} \times \Si_{k,i}} Q^{X}_{i,T_{i}}D\theta_{i,T_{i}}(\vec{\mu}_{i}^{s_{i}}) \right|^{2}\\
&\leq
\Area(\Si_{k,i}) \cdot \int_{\{u\} \times \Si_{k,i}}  \left|Q^{X}_{i,T_{i}}D\theta_{i,T_{i}}(\vec{\mu}_{i}^{s_{i}}) \right|^{2}.
\label{case11 eq3}
\end{aligned}
\end{align}
On the other hand, Fubini's theorem implies that
\begin{align*}
\int_{D^{2}} \int_{\{u\} \times \Si_{k,i}}  \left|Q^{X}_{i,T_{i}}D\theta_{i,T_{i}}(\vec{\mu}_{i}^{s_{i}}) \right|^{2}
=& \int_{D^{2} \times \Si_{k,i}}  \left|Q^{X}_{i,T_{i}}D\theta_{i,T_{i}}(\vec{\mu}_{i}^{s_{i}}) \right|^{2}\\
\leq& \|Q^{X}_{i,T_{i}}D\theta_{i,T_{i}}(\vec{\mu}_{i}^{s_{i}})\|^{2}_{L^{2}(N,g')}\\
\leq& C_{4}\|Q^{X}_{i,T_{i}}D\theta_{i,T_{i}}(\vec{\mu}_{i}^{s_{i}})\|^{2}_{L^{2}(N,g_{\bs})}.
\end{align*}
Here $C_{\bullet}$ denotes a constant which is independent of $T, T_{1}, \ldots, T_{n}$.
From this inequality, the inequality \eqref{case11 eq3}, and \cref{estimate for QDvp}, we have
\begin{align*}
\left| \int_{\Si_{k,i}} Q^{X}_{i,T_{i}}D\theta_{i,T_{i}}(\vec{\mu}_{i}^{s_{i}}) \right|^{2}
\leq& \frac{\Area(\Si_{k,i})}{\Area(D^{2})} \int_{D^{2}} \int_{\{u\} \times \Si_{k,i}}  \left|Q^{X}_{i,T_{i}}D\theta_{i,T_{i}}(\vec{\mu}_{i}^{s_{i}}) \right|^{2}\\
\leq& C_{5} \|Q^{X}_{i,T_{i}}D\theta_{i,T_{i}}(\vec{\mu}_{i}^{s_{i}})\|^{2}_{L^{2}(N,g_{\bt})}
\leq \frac{C_{6}}{T_{i}^{2}}.
\end{align*}
This inequality and the equalities \eqref{case11 eq1}, \eqref{case11 eq2} imply that 
\begin{align}
\left|\lb \beta_{i}(\bt) \cup \gamma_{i}, [N] \rb - \left(\int_{\Si_{1,i}} \mu_{i}^{s_{i}} + \int_{\Si_{2,i}} \mu_{i}^{s_{i}}\right) \right|
\leq \frac{C_{7}}{T_{i}}.
\label{case11 eq4}
\end{align}

Case~1-2:
Assume that $0 \leq t_{j} \leq 1/2$ and $i \neq j$.
Then we have 
\begin{align*}
\int_{\Si_{1,i}} \theta_{j,T_{j}}(\vec{\mu}_{j}^{s_{j}}) + \int_{\Si_{2,i}} \theta_{j,T_{j}}(\vec{\mu}_{j}^{s_{j}})
= 0.
\end{align*}
In addition, near a neighborhood of $\Si_{k,i}$ $(k=1,2)$, we get $Q^{X}_{j,T_{j}}D\theta_{j,T_{j}}(\vec{\mu}_{j}^{s_{j}}) = - \check{\mu}_{j}^{\bs}$, which is a closed form.
An argument which is similar to Case~1-1 therefore works, and thus we obtain
\begin{align}
\left|\lb \beta_{j}(\bt) \cup \gamma_{i}, [N] \rb \right|
\leq \frac{C_{8}}{T_{j}}.
\label{case12 eq1}
\end{align}

Case~2-1:
Assume that $1/2 < t_{j} \leq 1$ and $i = j$.
We first have
\begin{align}
\begin{aligned}
\lb \beta_{i}(\bt) \cup \gamma_{i}, [N] \rb
= (1-t_{i}) \left(\int_{\Si_{1,i}} \check{\mu}_{i}^{\bs} + \int_{\Si_{2,i}} \check{\mu}_{i}^{\bs}\right)
+  t_{i} \left(\int_{\Si_{1,i}} f_{i}^{\ast}\check{\mu}_{i}^{\reflect{\bs}{i}} + \int_{\Si_{2,i}} f_{i}^{\ast}\check{\mu}_{i}^{\reflect{\bs}{i}}\right) 
\label{case21 eq1}
\end{aligned}
\end{align}
For the first term of the right-hand side of this equation, we have shown \eqref{case11 eq4} in Case~1-1, and therefore consider the second term.
We have
\begin{align}
\begin{aligned}
\int_{\Si_{1,i}} f_{i}^{\ast}\check{\mu}_{i}^{\reflect{\bs}{i}} + \int_{\Si_{2,i}} f_{i}^{\ast}\check{\mu}_{i}^{\reflect{\bs}{i}}
=& \int_{\Si_{1,i}} f_{i}^{\ast}\theta_{i,T_{i}}(\vec{\mu}_{i}^{\bar{s}_i}) + \int_{\Si_{2,i}} f_{i}^{\ast}\theta_{i,T_{i}}(\vec{\mu}_{i}^{\bar{s}_i})\\
&+ \int_{\Si_{1,i}} Q^{X}_{i,T_{i}}D\theta_{i,T_{i}}(\vec{\mu}_{i}^{\bar{s}_i})
+ \int_{\Si_{2,i}} Q^{X}_{i,T_{i}}D\theta_{i,T_{i}}(\vec{\mu}_{i}^{\bar{s}_i})
\label{case21 eq2}
\end{aligned}
\end{align}
and
\begin{align}
\int_{\Si_{1,i}} f_{i}^{\ast}\theta_{i,T_{i}}(\vec{\mu}_{i}^{\bar{s}_i}) + \int_{\Si_{2,i}} f_{i}^{\ast}\theta_{i,T_{i}}(\vec{\mu}_{i}^{\bar{s}_i})
= \int_{\Si_{1,i}} f_{i}^{\ast} \mu_{i}^{\bar{s}_i} +  \int_{\Si_{2,i}} f_{i}^{\ast} \mu_{i}^{\bar{s}_i}.
\label{case21 eq3}
\end{align}
Let us consider the last two terms of the right-hand side of \eqref{case21 eq2}.
For $k=1,2$, we have 
$Q^{X}_{i,T_{i}}D\theta_{i,T_{i}}(\vec{\mu}_{i}^{\bar{s}_{i}}) = \mu_{i}^{\bar{s}_{i}} - \check{\mu}_{i}^{\bar{\bs}}$ near $f_{i}(\Si_{k,i})$, and hence $f_{i}^{\ast}Q^{X}_{i,T_{i}}D\theta_{i,T_{i}}(\vec{\mu}_{i}^{\bar{s}_{i}})$ is a closed from near $\Si_{k,i}$.
Therefore, as in Case~1-1, we have
\begin{align}
\begin{aligned}
\left| \int_{\Si_{k,i}} f_{i}^{\ast}Q^{X}_{i,T_{i}}D\theta_{i,T_{i}}(\vec{\mu}_{i}^{\bar{s}_{i}}) \right|^{2}
\leq& C_{9} \int_{D^{2} \times \Si_{k,i}} \left| f_{i}^{\ast}Q^{X}_{i,T_{i}}D\theta_{i,T_{i}}(\vec{\mu}_{i}^{\bar{s}_{i}}) \right|^{2}\\
\leq& C_{9} \int_{N_{i} \setminus D^{4}} \left| f_{i}^{\ast}Q^{X}_{i,T_{i}}D\theta_{i,T_{i}}(\vec{\mu}_{i}^{\bar{s}_{i}}) \right|^{2}\\
\leq& C_{9} \|f_{i}^{\ast}\|_{\op}^{2} \int_{N_{i} \setminus D^{4}} \left| Q^{X}_{i,T_{i}}D\theta_{i,T_{i}}(\vec{\mu}_{i}^{\bar{s}_{i}}) \right|^{2}\\
\leq& C_{10} \|Q^{X}_{i,T_{i}}D\theta_{i,T_{i}}(\vec{\mu}_{i}^{\bar{s}_{i}})\|^{2}_{L^{2}(N,g')}\\
\leq& C_{11} \|Q^{X}_{i,T_{i}}D\theta_{i,T_{i}}(\vec{\mu}_{i}^{\bar{s}_{i}})\|^{2}_{L^{2}(N,\bar{g}_{\bs})}
\leq \frac{C_{12}}{T_{i}^{2}}.
\label{case21 eq4}
\end{aligned}
\end{align}
Here we used the decomposition $N = (N_{i} \setminus D^{4}) \sqcup (\#_{j \neq i} N_{j}) \setminus D^{4}$ taken so that the all cylinders of length $2T$ are contained in $(\#_{j \neq i} N_{j}) \setminus D^{4}$, and $\|f_{i}^{\ast}\|_{\op}$ is the operator norm for the operator acting on $L^{2}(N_{i} \setminus D^{4})$.
Since $N_{i} \setminus D^{4}$ contains no cylindrical part, $\|f_{i}^{\ast}\|_{\op}$ is independent of $T, T_{1}, \ldots, T_{n}$, and so it can be absorbed into $C_{10}$.
From the equalities \eqref{case21 eq1}, \eqref{case21 eq2}, \eqref{case21 eq3} and the inequalities \eqref{case11 eq4}, \eqref{case21 eq4}, it follows that
\begin{align}
\begin{aligned}
&\left| \lb \beta_{i}(\bt) \cup \gamma_{i}, [N] \rb
- (1-t_{i})\left(\int_{\Si_{1,i}} \mu_{i}^{s_{i}} + \int_{\Si_{2,i}} \mu_{i}^{s_{i}}\right)
- t_{i} \left(\int_{\Si_{1,i}} f_{i}^{\ast} \mu_{i}^{\bar{s}_i} +  \int_{\Si_{2,i}} f_{i}^{\ast} \mu_{i}^{\bar{s}_i} \right)
\right|\\
&\leq \frac{C_{7}}{T_{i}} + \frac{\sqrt{C_{12}}}{T_{i}} =  \frac{C_{13}}{T_{i}}.
\label{case21 eq5}
\end{aligned}
\end{align}

Case~2-2:
Assume that $1/2 < t_{j} \leq 1$ and $i \neq j$.
We first have
\begin{align}
\begin{aligned}
\lb \beta_{i}(\bt) \cup \gamma_{i}, [N] \rb
= (1-t_{j}) \left(\int_{\Si_{1,i}} \check{\mu}_{j}^{\bs} + \int_{\Si_{2,i}} \check{\mu}_{j}^{\bs}\right)
+  t_{j} \left(\int_{\Si_{1,i}} f_{j}^{\ast}\check{\mu}_{j}^{\reflect{\bs}{j}} + \int_{\Si_{2,i}} f_{j}^{\ast}\check{\mu}_{j}^{\reflect{\bs}{j}}\right).
\label{case22 eq1}
\end{aligned}
\end{align}
For the first term of the right-hand side of this equation, as Case~1-2, we obtain
\begin{align}
\left|\int_{\Si_{1,i}} \check{\mu}_{j}^{\bs} + \int_{\Si_{2,i}} \check{\mu}_{j}^{\bs}\right|
\leq \frac{C_{14}}{T_{j}}.
\label{case22 eq2}
\end{align}
For the second term, note that
\[
\int_{\Si_{k,i}} f_{j}^{\ast}\check{\mu}_{j}^{\reflect{\bs}{j}}
= \int_{\Si_{k,i}} \check{\mu}_{j}^{\reflect{\bs}{j}}
\]
holds for $k=1,2$.
Therefore, as Case~1-2 again, we obtain
\begin{align}
\left|\int_{\Si_{1,i}} f_{j}^{\ast}\check{\mu}_{j}^{\reflect{\bs}{j}} + \int_{\Si_{2,i}} f_{j}^{\ast}\check{\mu}_{j}^{\reflect{\bs}{j}}\right|
\leq \frac{C_{15}}{T_{j}}.
\label{case22 eq3}
\end{align}
From the equality \eqref{case22 eq1} and the inequalities \eqref{case22 eq2}, \eqref{case22 eq3}, it follows that
\begin{align}
|\lb \beta_{j}(\bt) \cup \gamma_{i}, [N] \rb|
\leq \frac{C_{16}}{T_{j}}.
\label{case22 eq3}
\end{align}

We have now completed investigating all four cases.
For $i \in \{1, \ldots, n\}$ and $\bt \in F_{i}^{0}$, using \eqref{case11 eq4} in Case~1-1, \eqref{case12 eq1} in Case~1-2, and \eqref{case22 eq3} in Case~2-2, we have
\begin{align}
\begin{aligned}
&\left|\lb\si(\bt) \cup \gamma_{i}, [N]\rb
- \left(\int_{\Si_{1,i}} \mu_{i}^{0} + \int_{\Si_{2,i}} \mu_{i}^{0}\right) 
\right|\\
\leq& \left|\lb \beta_{i}(\bt) \cup \gamma_{i}, [N]\rb
- \left(\int_{\Si_{1,i}} \mu_{i}^{0} + \int_{\Si_{2,i}} \mu_{i}^{0}\right) 
\right|
+ \sum_{j \neq i} |\lb \beta_{j}(\bt) \cup \gamma_{i}, [N]\rb|\\
\leq& \frac{C_{7}}{T_{i}} + \sum_{\substack{ j \neq i \\ 0 \leq t_{j} \leq 1/2 }} \frac{C_{8}}{T_{j}} + \sum_{\substack{ j \neq i \\ 1/2 < t_{j} \leq 1 }}\frac{C_{16}}{T_{j}}
\leq \frac{C_{17}}{\min\{T_{j} \mid 1 \leq j \leq n\}}.
\label{main estimate for Fi0}
\end{aligned}
\end{align}
Similarly, for $\bt \in F_{i}^{1}$, using \eqref{case21 eq5} in Case~2-1, and \eqref{case12 eq1} in Case~1-2, \eqref{case22 eq3} in Case~2-2, we have
\begin{align}
\begin{aligned}
&\left|\lb\si(\bt) \cup \gamma_{i}, [N]\rb
- \left(\int_{\Si_{1,i}} f_{i}^{\ast} \mu_{i}^{0} +  \int_{\Si_{2,i}} f_{i}^{\ast} \mu_{i}^{0} \right)
\right|\\
\leq& \frac{C_{13}}{T_{i}} + \sum_{\substack{ j \neq i \\ 0 \leq t_{j} \leq 1/2 }} \frac{C_{8}}{T_{j}} + \sum_{\substack{ j \neq i \\ 1/2 < t_{j} \leq 1 }}\frac{C_{16}}{T_{j}}
\leq \frac{C_{18}}{\min\{T_{j} \mid 1 \leq j \leq n\}}.
\label{main estimate for Fi1}
\end{aligned}
\end{align}
We can regard $\gamma_{i}$ as a cohomology class on $\hat{N}_{i}$.
Since $f_{i}^{\ast}\gamma_{i} = -\gamma_{i}$ holds, we have
\begin{align}
\int_{\Si_{1,i}} f_{i}^{\ast}\mu_{i}^{0} + \int_{\Si_{2,i}} f_{i}^{\ast}\mu_{i}^{0}
= \int_{\hat{N}_{i}} f_{i}^{\ast}[\mu_{i}^{0}] \cup \gamma_{i}
= -\int_{\hat{N}_{i}} [\mu_{i}^{0}] \cup \gamma_{i}
= - \left( \int_{\Si_{1,i}} \mu_{i}^{0} + \int_{\Si_{2,i}} \mu_{i}^{0} \right)
\label{eq for fmu and mu}
\end{align}
Since $\Ker{D_{i}^{\hat{h}_{i}(0)}} \cap \Omega^{2}(N)$ is isomorphic to $H^{+}(\hat{N}_{i})$ via Hodge theory, $[\mu_{i}^{0}]$ is of positive self-intersection, and hence the quantity in \eqref{eq for fmu and mu} is non-zero.
Therefore, if we take $T, T_{1}, \ldots, T_{n}$ to be sufficiently large, the inequalities \eqref{main estimate for Fi0}, \eqref{main estimate for Fi1} and the equality \eqref{eq for fmu and mu} imply that 
$\lb\si(\bt) \cup \gamma_{i}, [N]\rb$ for $\bt \in F_{i}^{0}$ and that for $\bt \in F_{i}^{1}$ have different signs.
This means that $\Psi \circ \si(F_{i}^{0})$ and $\Psi \circ \si(F_{i}^{1})$ are contained in distinct connected componect of $\R^{n} \setminus (\R \times \cdots \times \{0\} \times \cdots \times \R)$, where $\{0\}$ is the $i$-th component.
This implies that the mapping degree of $\Psi \circ \si|_{\del[0,1]^{n}} : \del[0,1]^{n} \to \R^{n} \setminus\{0\}$ is $1$ up to sign, and this coincides with that of $\Psi \circ \si : ([0,1]^{n}, \del[0,1]^{n}) \to (\R^{n}, \R^{n} \setminus \{0\})$.
This completes the proof of \cref{theo on calculation of SW for connected sum with spin}.
\end{proof}

\begin{rem}
Since the diffeomorphisms $f_{i}$ used in \cref{theo on calculation of SW for connected sum with spin} reverse a given homology orientation $\calO$ of $X$, we cannot give a non-trivial element of $H^{\ast}(B\Diff(X, \fraks, \calO);\Z)$ using these diffeomorphisms.
At this stage the author does not know how to give a cohomology class of higher degree over $\Z$ using $\SWcoh$.
On the other hand, thanks to D.~Ruberman's example in \cite{MR1671187}, we can give a non-trivial element of $H^{1}(B\Diff(X, \frakP, \calO);\Z)$ for a suitable $(X, \frakP)$ using $\Dcoh$, described in \cref{subsection Other calculations}.
\end{rem}

\subsection{Behavior under the composition 1}
\label{subsection Cohomologically trivial bundles}

To give subtle examples of calculations for $\SWcoh$, we combine an argument of \cite{Konno3} by the author and one of D.~Ruberman~\cite{MR1671187}.
Ruberman has defined invariants of diffeomorphisms on  an oriented closed $4$-manifold $X$ in \cite{MR1671187} using $1$-parameter families of $SO(3)$-ASD equations and \SW equations.
In \cite{MR1671187} he also has given examples of diffeomorphisms on some $4$-manifold which are topologically isotopic to the identity map, but not smoothly isotopic to it.
To prove the former property, he considered the composition of two diffeomorphisms having mutually converse actions on the homology groups, and to prove the latter property,  he used the invariant based on $SO(3)$-Yang--Mills ASD equations.
His family corresponding to a given diffeomorphism can be regarded as a family of ASD equations parameterized on $S^{1}$ via the mapping torus construction for the diffeomorphism.
In this \lcnamecref{subsection Cohomologically trivial bundles} we consider a \SW version and higher-dimensional parameter version of Ruberman's argument.
We give  a bundle of a $4$-manifold $X$ obtained as the mapping torus of a tuple of commutative diffeomorphisms belonging to $\Diff(X, \fraks)$ for (the isomorphism class of) a $\spc$ structure $\fraks$ on $X$ such that each diffeomorphism is topologically isotopic to the identity, but the bundle is non-trivial as $\Diff(X, \fraks)$-bundle.
In this \lcnamecref{subsection Cohomologically trivial bundles} we consider an example of a bundle with non-spin fiber, and we shall do one with spin fiber in \cref{subsection Combination with Ruberman's argument: spin case} using an argument of \cref{subsection Calculation of SWcoh for connected sum with n(S2S^2)}.

Fix $n>0$ and let $M_{0}$ be an oriented closed smooth $4$-manifold with $b^{+}(M_{0}) \geq 2$ and $\fraks_{0}$ be the isomorpshim class of a $\spc$ structure on $M_{0}$.
Assume that the \SW invariant of $(M, \fraks_{0})$ is an odd number and that $M_{0} \# \CP^{2}$ is diffeomorphic to $M_{1} \# \CP^{2}$, where $M_{1} = b^{+}(M)\CP^{2} \# b^{-}(M)(-\CP^{2})$.
(Typically we can take a simply connected elliptic surface and the complex structure of it as $(M_{0}, \fraks_{0})$.)
Let us denote $H_{i}$ the projective line embedded in the right connected summand of $M_{i} \# \CP^{2}$.
As noted in Ruberman~\cite{MR1671187}, we can find a diffeomorphism $\vp : M_{0} \# \CP^{2} \to M_{1} \# \CP^{2}$ so that $\vp(H_{1})$ is homologous to $H_{2}$.
The induced map $\vp^{\ast} : H^{2}(M_{1}\#\CP^{2}) \to H^{2}(M_{0}\#\CP^{2})$ acts as identity on $H^{2}(\CP^{2})$.
We can therefore define the isomorphism class $\fraks_{1}$ of a $\spc$ structure on $M_{1}$ by  $\fraks_{1} := (\vp^{-1})^{\ast}\fraks_{0}$.
Extending by the identity, we get a diffeomorphism
$\vp : M_{0} \# \CP^{2}\#2(-\CP^{2}) \to M_{1} \# \CP^{2}\#2(-\CP^{2})$.
Let $\frakt_{0}$ be the isomorphism class of a spin$^{c}$ structure on $\CP^{2} \# 2(-\CP^{2}) = \CP^{2} \# (-\CP^{2}_{1}) \# (-\CP^{2}_{2})$ such that the $H^{2}(\CP^{2})$-component of $c_{1}(\frakt)$ gives a generator of $H^{2}(\CP^{2})$ and $j$-th component does of $H^{2}(-\CP^{2}_{j})$ for each $j=1,2$.
Set 
\[
X := M_{0} \# n(\CP^{2}\#2(-\CP^{2}))
= M_{0} \# N,
\]
where $N := \#_{i=1}^{n}N_{i}$ and $N_{i} := \CP^{2}\#2(-\CP^{2})$.
Let $\vp_{i} : M_{0} \# N_{i} \to M_{1} \# N_{i}$ be the copy of $\vp$, and we write $\vp_{i} : X \to M_{1}\# N$ also for the extension by the identity.
Let $\frakt$ be the isomorphism class of a spin$^{c}$ structure on $N$ defined by $\frakt = \#_{i=1}^{n} t_{i}$, where $\frakt_{i}$ is the copy of $\frakt_{0}$.
We define the isomorphism class $\fraks$ of a spin$^{c}$ structure on $X$  by
\begin{align}
\fraks := \fraks_{0} \# \frakt.
\label{eq spinc str defined by connected sum}
\end{align}
Then one can easily see that $d(\fraks)=-n$.
Let $f_{1,0}, \ldots, f_{n,0} \in \Diff(X, \fraks)$ be the diffeomorphisms given in Theorem 3.2 in \cite{Konno3}.
(These diffeomorphisms are written as $f_{1}, \ldots, f_{n}$ there.)
Each $f_{i,0}$ is identity on $M_{0} \#(\#_{i' \neq i} N_{i'})$, and
hence they are mutually commutative.
We also note that $f_{i,0}$ is obtained as the copy of a common diffeomorphism on $M_{0} \# \CP^{2} \# 2(-\CP^{2})$ for any $i$.
Let $f_{1,1}', \ldots, f_{n,1}' \in \Diff^{+}(M_{1}\#N)$ be the diffeomorphisms defined by substituting $M_{1}$ for $M_{0}$ in Theorem 3.2 in \cite{Konno3}, and let $f_{1,1}, \ldots, f_{n,1} \in \Diff^{+}(X)$ be the diffeomorphisms defined by $f_{i,1} := \vp_{i}^{-1} \circ f_{i,1}' \circ \vp_{i}$.
Then $f_{1,1}, \ldots, f_{n,1}$ are mutually commutative and belong to $\Diff(X, \fraks)$.
We also note that, if $i \neq i'$, then $f_{i,0}$ and $f_{i', 1}$ are also commutative.
As Ruberman~\cite{MR1671187}, by the construction of $\vp$ and $f_{i,0}$, $f_{i, 1}$, we have that $f_{i,0}$ is homotopic to $f_{i,1}$ for each $i$.
The diffeomorphism $f_{i} := f_{i,0} \circ f_{i,1}^{-1} \in \Diff(X, \fraks)$ is therefore homotopic to the identity.
In fact the result due to F.~Quinn~\cite{MR868975} implies that, more strongly, $f_{i}$ is topologically isotopic to the identity.

We here recall an invariant of $n$-tuples of commutative diffeomorphisms on an oriented closed $4$-manifold defined in \cite{Konno3} by the author.
This is a generalization of Ruberman's invariant given in~\cite{MR1671187} emerging from $1$-parameter families of \SW equations.
This invariant of commutative diffeomorphisms relates to $\SWcoh$ as follows.
For mutually commutative diffeomorphisms $f_{1}, \ldots, f_{n} \in \Diff(X, \fraks)$, say ones defined above, one can associate a number 
\begin{align}
\SWinv(f_{1}, \ldots, f_{n};\fraks) \in \Z \text{ or } \Z/2.
\label{eq def inv of commutative diffeomorphisms}
\end{align}
This is defined by counting the moduli space of families of \SW equations parameterized on $[0,1]^{n}$, and it can be interpreted as a counted number of a parameterized moduli space on $T^{n}$.
(See Example 2.6 in \cite{Konno3}.)
This number is nothing but the counted number of $\M_{\si}$ for some inductive section $\si : T^{n} \to \Pi(E)$ for the bundle $X \to E \to T^{n}$ defined as the mapping torus of $f_{1}, \ldots, f_{n}$.
Thus we have
\begin{align}
\SWinv(f_{1}, \ldots, f_{n};\fraks) = \pm\left<\SWcoh(E), [T^{n}]\right>.
\label{eq between inv of tuple of diff and SWcoh}
\end{align}
(If we work over $\Z/2$, of course, the plus-minus sign in the right-hand side is omitted.)

We now come back to our specific diffeomorphisms.
Since $f_{i,1}$ and $f_{i,1}$ reverse a homology orientation, we work on $\Z/2$, though $\SWinv(f_1, \ldots, f_n; \fraks)$ is defined over $\Z$.
As Lemma 2.6 Ruberman~\cite{MR1671187}, we immediately see that
\begin{align}
\begin{aligned}
\SWinv(f_1, \ldots, f_n; \fraks)
&= \sum_{0 \leq j_{1}, \ldots, j_{n} \leq 1}\SWinv(f_{1,j_{1}}^{\epsilon_{1}}, \ldots, f_{n,j_{n}}^{\epsilon_{n}}; \fraks)\\
&= \sum_{0 \leq j_{1}, \ldots, j_{n} \leq 1} \SWinv(f_{1,j_{1}}, \ldots, f_{n,j_{n}}; \fraks)
\label{eq for SW for comm diffeos}
\end{aligned}
\end{align}
over $\Z/2$, where $\epsilon_{i} \in \{1, -1\}$ is defined by $j_{i} = 1 \Leftrightarrow \epsilon_{i} = -1$, and $f_{i,0}^{1} := f_{i,0}$.
Let $j_{1}, \ldots, j_{n} \in \{0,1\}$ and $j_{1}', \ldots, j_{n}' \in \{0,1\}$.
Then, if we have $\#\Set{i | j_{i}=0} = \#\Set{i | j_{i}'=0}$, it follows that
\begin{align}
\SWinv(f_{1,j_{1}}, \ldots, f_{n,j_{n}}; \fraks)
= \SWinv(f_{1,j_{1}'}, \ldots, f_{n,j_{n}'}; \fraks)
\label{eq for SW for comm diffeos2}
\end{align}
because of symmetry.
Let us assume that $n$ can be written as $n = 2^{N}$ for some $N \geq 0$.
Then the binomial coefficient $\binom{n}{k}$ is even for any $k \in \{1, \ldots, n-1\}$, and it therefore follows from the equalities \eqref{eq for SW for comm diffeos} and \eqref{eq for SW for comm diffeos2} that
\[
\SWinv(f_1, \ldots, f_n; \fraks)
= \SWinv(f_{1,0}, \ldots, f_{n,0}; \fraks) + \SWinv(f_{1,1}, \ldots, f_{n,1}; \fraks)
\]
over $\Z/2$.
In addition, Theorem 3.2 in \cite{Konno3} implies that 
\begin{align}
\SWinv(f_{1,j}, \ldots, f_{n,j}; \fraks) 
= \SWinv(M_{j}, \fraks_{j}) = j+1
\label{eq calculation from Konno3}
\end{align}
over $\Z/2$ for each $j=0,1$, and thus we have $\SWinv(f_1, \ldots, f_n; \fraks)
= 1$.
We can therefore deduce that $\SWcoh(E) \neq 0$ in $H^{n}(T^{n};\Z/2)$ from \eqref{eq between inv of tuple of diff and SWcoh}.
Thus we have:

\begin{theo}
\label{theo cohomologically trivial bundle}
Let $N \geq 0$ and set $n=2^{N}$.
Let $(X, \fraks)$ and $X \to E \to T^{n}$ be as above.
Then $\SWcoh(E) \neq 0$ holds , in particular $E$ is a non-trivial $\Diff(X, \fraks)$-bundle.
\end{theo}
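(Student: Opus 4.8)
The plan is to reduce the statement to a computation mod $2$ of the invariant $\SWinv$ of the commuting tuple $f_i = f_{i,0}\circ f_{i,1}^{-1}$, and then to an elementary parity argument. First I would record that $\SWcoh(E)$ is defined in this situation: since $X = M_0 \# N$ with $b^+(N_i)=1$, we have $b^+(X) = b^+(M_0)+n \ge n+2$, and $d(\fraks)=-n$ for $\fraks$ as in \eqref{eq spinc str defined by connected sum}. Because $n = 2^N \ge 1 > 0$, it suffices by \cref{rem on relation between vanishing and triviality} to prove $\SWcoh(E) \neq 0$, and for that it is enough to show $\langle \SWcoh(E),[T^n]\rangle \neq 0$ in $\Z/2$. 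By the identification \eqref{eq between inv of tuple of diff and SWcoh} this pairing equals $\SWinv(f_1,\dots,f_n;\fraks)$ mod $2$, so the whole problem becomes the computation of that number.

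Next I would expand $\SWinv(f_1,\dots,f_n;\fraks)$ using multilinearity of the invariant under composition in each slot, working over $\Z/2$ to suppress the sign issues caused by the homology-orientation-reversing diffeomorphisms $f_{i,1}$; this is the higher-parameter form of Ruberman's argument and yields \eqref{eq for SW for comm diffeos}:
\[
\SWinv(f_1,\dots,f_n;\fraks) = \sum_{(j_1,\dots,j_n)\in\{0,1\}^n}\SWinv(f_{1,j_1},\dots,f_{n,j_n};\fraks) \pmod 2 .
\]
Since each $f_{i,j}$ is a copy of one fixed diffeomorphism of $M_j \# \CP^2 \# 2(-\CP^2)$ independent of $i$, the symmetry relation \eqref{eq for SW for comm diffeos2} shows the summand depends only on $k := \#\{i : j_i = 0\}$; writing $c_k$ for this common value, this gives $\SWinv(f_1,\dots,f_n;\fraks) = \sum_{k=0}^{n}\binom{n}{k}c_k$ in $\Z/2$.

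Now I would use that $n = 2^N$: by Kummer's (or Lucas') theorem $\binom{2^N}{k}$ is even for $0 < k < 2^N$, so only $k=0$ and $k=n$ survive and
\[
\SWinv(f_1,\dots,f_n;\fraks) = \SWinv(f_{1,0},\dots,f_{n,0};\fraks) + \SWinv(f_{1,1},\dots,f_{n,1};\fraks) \pmod 2 .
\]
Finally, by \eqref{eq calculation from Konno3} (Theorem 3.2 of \cite{Konno3}) the two terms equal $\SWinv(M_0,\fraks_0)$ and $\SWinv(M_1,\fraks_1)$ mod $2$; the first is odd by hypothesis and the second vanishes because $M_1$ is a connected sum of copies of $\pm\CP^2$ (it carries a positive scalar curvature metric). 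Hence $\SWinv(f_1,\dots,f_n;\fraks) = 1$ in $\Z/2$, so $\langle \SWcoh(E),[T^n]\rangle \neq 0$, therefore $\SWcoh(E) \neq 0$, and $E$ is a non-trivial $\Diff(X,\fraks)$-bundle.

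The substantive inputs here are all imported: the identity \eqref{eq between inv of tuple of diff and SWcoh}, which realizes $\SWinv$ of a commuting tuple as the count of an inductive parameterized moduli space over $T^n$, i.e.\ as a pairing against $\SWcoh(E)$ (available from \cite{Konno3} together with \cref{section Construction of the characteristic classes}), the multilinearity \eqref{eq for SW for comm diffeos}, and the computation \eqref{eq calculation from Konno3}. Given these, the only remaining ingredient is the elementary binomial-parity observation. If one had to re-establish the cited facts, the main obstacle would be the composition/gluing step behind \eqref{eq for SW for comm diffeos}: one must arrange the paths of metrics and perturbations defining the cube-parameterized family so that the parameterized moduli space over $[0,1]^n$ breaks into the $2^n$ pieces indexed by $(j_1,\dots,j_n)$, which is exactly where Ruberman's neck-stretching enters; everything downstream of that is routine.
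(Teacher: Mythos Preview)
Your proposal is correct and follows essentially the same argument as the paper: reduce to computing $\SWinv(f_1,\dots,f_n;\fraks)$ via \eqref{eq between inv of tuple of diff and SWcoh}, expand using the composition formula \eqref{eq for SW for comm diffeos}, exploit the symmetry \eqref{eq for SW for comm diffeos2} and the parity of $\binom{2^N}{k}$ to reduce to the two extreme terms, and evaluate those via \eqref{eq calculation from Konno3}. The only cosmetic differences are that you spell out the verification of $b^+(X)\ge n+2$ and $d(\fraks)=-n$ and explicitly justify $\SWinv(M_1,\fraks_1)=0$ by positive scalar curvature, both of which the paper leaves implicit.
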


\begin{rem}
\label{first rem topologically trivial but smoothly non trivial}
We note that, in the case of $n=1$ above, since $f_{1}$ is topologically isomorphic to the identity, $X \to E \to S^{1}$ is trivial as a $\Homeo_{0}(X)$-bundle, and hence also as a $\Homeo(X, \fraks)$-bundle.
Here $\Homeo_{0}(X)$ is the identity component of the whole group of orientation preserving homeomorphisms on $X$.
\end{rem}

\begin{rem}
Although a result of F.~Quinn~\cite{MR868975} gives topological isotopies between $f_{1}, \ldots, f_{n}$ and the identity, the supports of the topological isotopies are not disjoint.
The topological isotopies are therefore not mutually commutative in general, and we cannot conclude that $E$ is trivial as a $\Homeo(X, \fraks)$-bundle unless $n=1$ from \cite{MR868975}.
The author does not know whether $E$ is trivial as $\Homeo(X, \fraks)$-bundle when $n>1$.
\end{rem}

\begin{ex}
\label{ex comp between fiberwise connected sum and connected sum concrete}
If we substitute $K3$ for $M_{0}$ in the above argument, we have
\begin{align}
X \cong (n+3)\CP^{2} \# (2n+19)(-\CP^{2}).
\label{eq diff betwteen X and CP -CP}
\end{align}
In this case we can take the (isomorphism class of a) $\spc$ structure coming from the complex structure of $K3$ as $\fraks_{0}$.
Let $\fraks$ be the (isomorphism class of a) $\spc$ structure on $X$ defined by \eqref{eq spinc str defined by connected sum}.
From the diffeomorphism \eqref{eq diff betwteen X and CP -CP} and \cref{theo cohomologically trivial bundle}, if $n$ can be written as $n = 2^{N}$ for some $N \geq 0$, we get a bundle
\[
(n+3)\CP^{2} \# (2n+19)(-\CP^{2}) \to E \to T^{n}
\]
with structure group $\Diff(X, \fraks)$ satisfying $\SWcoh(E) \neq 0$.
This is therefore non-trivial as a $\Diff(X, \fraks)$-bundle, and trivial as a $\Homeo(X, \fraks)$-bundle if $n=1$, explained in \cref{first rem topologically trivial but smoothly non trivial}.
\end{ex}

We here note that, to show only the non-triviality of our characteristic classes, one can directly use author's result \cite{Konno3}.
As explained in \eqref{eq calculation from Konno3},
Theorem 3.2 in \cite{Konno3} gives diffeomorphisms whose invariant explained in \eqref{eq def inv of commutative diffeomorphisms} is non-trivial over $\Z/2$, and thus we obtain:

\begin{theo}
\label{non-vanishing theorem non-spin}
Let $M$ be an oriented closed smooth $4$-manifold with $b^+(M) \geq 2$ and $\fraks_0$ be the isomorphism class of a spin$^{c}$ structure on $M$ such that the \SW invariant of $(M, \fraks_0)$ is an odd number.
Let $n$ be a positive integer.
Define $N = n\CP^2 \# m(-\CP^2) = \#_{i=1}^n \CP^2_{i} \# (\#_{j=1}^m (-\CP^2_{j}))$ for some $m \geq 2n$ and $\frakt$ is the isomorphism class of a spin$^{c}$ structure on $N$ such that the $i$-th component of $c_{1}(\frakt)$ gives a generator of $H^{2}(\CP^{2}_{i})$ and $j$-th component does of $H^{2}(-\CP^{2}_{j})$.
Then, for the pair defined by
\[
(X, \fraks) := (M \# N, \fraks_0\#\frakt),
\]
we have
\[
\SWcoh(X,\fraks) \neq 0 \text{ in } H^{n}(B\Diff(X,\fraks);\Z/2).
\]
\end{theo}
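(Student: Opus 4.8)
The plan is to realize $\SWcoh(X,\fraks)$ on a concrete finite-dimensional base, namely a torus $T^n$, by pulling it back along the classifying map of a mapping-torus bundle, and then to evaluate the resulting class using the relation \eqref{eq between inv of tuple of diff and SWcoh} between $\SWcoh$ and the $\SWinv$-invariant of tuples of commuting diffeomorphisms, together with the nonvanishing of the latter established in \cite{Konno3}.

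First I would verify that $\SWcoh(X,\fraks)$ is defined in degree $n$, i.e. that $b^{+}(X)\geq n+2$ and $d(\fraks)=-n$. Since $N=n\CP^2\#m(-\CP^2)$ contributes exactly $n$ to $b^{+}$, we get $b^{+}(X)=b^{+}(M)+n\geq n+2$. For the formal dimension: because $\SWinv(M,\fraks_0)$ is defined (and odd) we have $d(\fraks_0)=0$; adding a $\CP^2$-summand on which $c_1(\fraks)$ restricts to a generator changes the triple $(c_1^2,\Euler,\sign)$ by $(1,1,1)$, hence changes $d$ by $-1$, while adding a $(-\CP^2)$-summand on which $c_1(\fraks)$ restricts to a generator changes $(c_1^2,\Euler,\sign)$ by $(-1,1,-1)$, hence changes $d$ by $0$; therefore $d(\fraks)=d(\fraks_0)-n=-n$. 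So \cref{Dcoh SWcoh in the base case} produces $\SWcoh(X,\fraks)\in H^{n}(B\Diff(X,\fraks);\Z/2)$.

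Next I would invoke Theorem~3.2 of \cite{Konno3} applied to $(M,\fraks_0)$ and $N$ (with $m\geq 2n$): it yields mutually commuting ``blow-up'' diffeomorphisms $f_1,\dots,f_n\in\Diff(X,\fraks)$, each supported on one $\CP^2\#2(-\CP^2)$-block of $N$ and the identity elsewhere, such that $\SWinv(f_1,\dots,f_n;\fraks)\equiv\SWinv(M,\fraks_0)\pmod 2$, which equals $1$ in $\Z/2$ by hypothesis. Form the fiber bundle $X\to E\to T^n$ obtained as the mapping torus of the resulting $\Z^n=\pi_1(T^n)$-action; by construction its structure group reduces to $\Diff(X,\fraks)$. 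By \eqref{eq between inv of tuple of diff and SWcoh} we then have $\langle\SWcoh(E),[T^n]\rangle=\pm\SWinv(f_1,\dots,f_n;\fraks)=1$ in $\Z/2$, so $\SWcoh(E)$ is the generator of $H^{n}(T^n;\Z/2)$; in particular $\SWcoh(E)\neq 0$. Letting $c:T^n\to B\Diff(X,\fraks)$ be a classifying map for $E$, the functoriality of $\SWcoh$ (\cref{part of theo of well-definedness of the invariant functoriality}, extended to arbitrary base spaces via \cref{lem for a general topological base space}) gives $c^{\ast}\SWcoh(X,\fraks)=\SWcoh(E)\neq 0$, whence $\SWcoh(X,\fraks)\neq 0$.

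The argument has essentially no obstacle of its own: the substantive gauge-theoretic input — the wall-crossing-and-gluing computation producing the $f_i$ with nonvanishing $\SWinv$ — is imported wholesale from \cite{Konno3}, and the passage to a statement about $\SWcoh$ uses only the already-recorded identity \eqref{eq between inv of tuple of diff and SWcoh} and functoriality. The one place requiring care is the bookkeeping above (checking $b^{+}(X)\geq n+2$ and $d(\fraks)=-n$, and confirming that the mapping torus genuinely has structure group $\Diff(X,\fraks)$ and not merely $\Diff^{+}(X)$), all of which is immediate once the $\spc$ structure $\fraks=\fraks_0\#\frakt$ and the diffeomorphisms $f_i$ are set up as in \cite{Konno3}.
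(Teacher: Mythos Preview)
Your proposal is correct and follows essentially the same approach as the paper: form the mapping torus $E\to T^n$ of the commuting diffeomorphisms from Theorem~3.2 of \cite{Konno3}, use \eqref{eq between inv of tuple of diff and SWcoh} to identify $\langle\SWcoh(E),[T^n]\rangle$ with $\SWinv(f_1,\dots,f_n;\fraks)\neq 0$ in $\Z/2$, and then conclude by functoriality. Your additional verification that $b^{+}(X)\geq n+2$ and $d(\fraks)=-n$ is a welcome bit of bookkeeping that the paper leaves implicit.
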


\begin{proof}
Let $X \to E \to T^{n}$ be the mapping torus of the diffeomorphisms $f_{1}, \ldots, f_{n}$ given in Theorem 3.2 in \cite{Konno3}.
Then \eqref{eq between inv of tuple of diff and SWcoh} and Theorem 3.2 in \cite{Konno3} imply that $\left<\SWcoh(E), [T^{n}]\right>= \SWinv(f_{1}, \ldots, f_{n};\fraks) \neq 0$ in $\Z/2$.
The assertion in the \lcnamecref{non-vanishing theorem non-spin} therefore follows from functoriality (\cref{part of theo of well-definedness of the invariant functoriality}).
\end{proof}

\subsection{Behavior under the composition 2}
\label{subsection Combination with Ruberman's argument: spin case}

We here discuss a spin analogue of \cref{subsection Cohomologically trivial bundles}:
we consider $S^{2} \times S^{2}$ in this \lcnamecref{subsection Combination with Ruberman's argument: spin case} instead of $\CP^{2}\#2(-\CP^{2})$ in \cref{subsection Cohomologically trivial bundles}.
Fix $n>0$ and let $M_{0}$ be an oriented closed spin smooth $4$-manifold such that $\sign(M_{0})=0$ (hence $M_{0}$ has even $b_{2}$) and $b^{+}(M_{0}) \geq 2$, and let $\fraks_{0}$ be the isomorpshim class of a $\spc$ structure on $M_{0}$.
Assume that the \SW invariant of $(M, \fraks_{0})$ is an odd number and that $M_{0} \# S^{2} \times S^{2}$ is diffeomorphic to $M_{1} \# S^{2} \times S^{2}$, where $M_{1} = \frac{b_{2}(M_{0})}{2}(S^{2} \times S^{2})$.
Set
\begin{align*}
X : = M_{0} \# n(S^{2} \times S^{2}).
\end{align*}
Let $\fraks$ be the isomorphism class of a $\spc$ structure on $X$ defined by $\fraks := \fraks_{0} \# \frakt$, where $\frakt$ is the isomorphism class of a $\spc$ structure coming from a spin structure on $n(S^{2} \times S^{2})$.
Recall that $f_{1,j}, \ldots, f_{n,j}$ in \cref{subsection Cohomologically trivial bundles} are given as the extension of the $n$-tuple of diffeomorphisms on $n(\CP^{2}\#2(-\CP^{2}))$ given in \cite{Konno3} by the identity on $M_{i}$.
If we use the diffeomorphisms $f_{1}, \ldots, f_{n}$ on $n(S^{2}\times S^{2})$ in \cref{subsection Calculation of SWcoh for connected sum with n(S2S^2)} instead of this $n$-tuple of diffeomorphisms on $n(\CP^{2}\#2(-\CP^{2}))$, we get commutative diffeomorphisms  $f_{1,j}, \ldots, f_{n,j} \in \Diff(X, \fraks)$ for $j=0,1$.
In this \lcnamecref{subsection Combination with Ruberman's argument: spin case} we use the notation $f_{1}, \ldots, f_{n}$ for the diffeomorphisms  defined as $f_{i,0} \circ f_{i,1}^{-1}$.
Each $f_{i}$ is topologically isotopic to the identity as in \cref{subsection Cohomologically trivial bundles}.
Let $X \to E \to T^{n}$ be the mapping torus of $f_{1}, \ldots, f_{n}$.
Instead of Theorem 3.2 in \cite{Konno3} used in \cref{subsection Cohomologically trivial bundles}, we can use calculations given in \cref{subsection Calculation of SWcoh for connected sum with n(S2S^2)}.
More precisely, in the proof of  \cref{theo on calculation of SW for connected sum with spin}, we have shown that
\[
\SWinv(f_{1,j}, \ldots, f_{n,j};\fraks) = j+1
\]
over $\Z/2$.
We can therefore deduce the following \lcnamecref{theo non vanishing spin subtle} by  the same argument of \cref{subsection Cohomologically trivial bundles}:

\begin{theo}
\label{theo non vanishing spin subtle}
Let $N \geq 0$ and set $n=2^{N}$.
Let $(X, \fraks)$ and $X \to E \to T^{n}$ be as above.
Then $\SWcoh(E) \neq 0$ holds , in particular $E$ is a non-trivial $\Diff(X, \fraks)$-bundle.
\end{theo}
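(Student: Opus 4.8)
The plan is to mimic exactly the argument already carried out in \cref{subsection Cohomologically trivial bundles} for the non-spin case, substituting the spin building blocks $S^{2}\times S^{2}$ and the diffeomorphisms of \cref{subsection Calculation of SWcoh for connected sum with n(S2S^2)} for the non-spin building blocks $\CP^{2}\#2(-\CP^{2})$ and the diffeomorphisms of \cite{Konno3}. First I would record the two inputs that make the machinery run: the relation \eqref{eq between inv of tuple of diff and SWcoh} identifying $\SWinv(f_{1},\ldots,f_{n};\fraks)$ with $\pm\langle\SWcoh(E),[T^{n}]\rangle$, which holds verbatim here because $E$ is by construction the mapping torus of the commuting tuple $f_{1},\ldots,f_{n}\in\Diff(X,\fraks)$; and the computation, already isolated inside the proof of \cref{theo on calculation of SW for connected sum with spin}, that
\[
\SWinv(f_{1,j},\ldots,f_{n,j};\fraks)=j+1
\]
over $\Z/2$ for $j=0,1$. (This is the spin replacement for the use of Theorem~3.2 of \cite{Konno3} in \eqref{eq calculation from Konno3}.)

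Next I would reproduce the decomposition step. Since each $f_{i}=f_{i,0}\circ f_{i,1}^{-1}$ and the $f_{i,0}$, $f_{i',1}$ are mutually commutative for $i\neq i'$ (they act trivially on the complementary connected summands), the multilinearity argument of Ruberman—used as \eqref{eq for SW for comm diffeos}—gives
\[
\SWinv(f_{1},\ldots,f_{n};\fraks)=\sum_{0\le j_{1},\ldots,j_{n}\le 1}\SWinv(f_{1,j_{1}},\ldots,f_{n,j_{n}};\fraks)
\]
over $\Z/2$, and the symmetry observation \eqref{eq for SW for comm diffeos2} shows that each summand depends only on $\#\{i\mid j_{i}=0\}$. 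Imposing $n=2^{N}$ so that $\binom{n}{k}$ is even for $0<k<n$ collapses this to $\SWinv(f_{1,0},\ldots,f_{n,0};\fraks)+\SWinv(f_{1,1},\ldots,f_{n,1};\fraks)=1+2=1$ over $\Z/2$. Hence $\langle\SWcoh(E),[T^{n}]\rangle\neq 0$, so $\SWcoh(E)\neq 0$ in $H^{n}(T^{n};\Z/2)$ and in particular $E$ is non-trivial as a $\Diff(X,\fraks)$-bundle.

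Before writing any of this I would verify the standing hypotheses of \cref{Dcoh SWcoh in the base case}\eqref{SWcoh in the base case}, namely $b^{+}(X)\geq n+2$ and $d(\fraks)=-n$: the first is immediate from $b^{+}(M_{0})\geq 2$ and $b^{+}(n(S^{2}\times S^{2}))=n$, and the second from additivity of $c_{1}^{2}$, $\Euler$, $\sign$ under connected sum together with $d(\fraks_{0})\geq 0$ being adjusted so that the spin summands each contribute $-1$ — exactly as remarked after \cref{theo on calculation of SW for connected sum with spin}. I would also note that each $f_{i}$ is topologically isotopic to the identity by the same appeal to Quinn~\cite{MR868975} used in \cref{subsection Cohomologically trivial bundles}, since $f_{i,0}$ and $f_{i,1}$ induce the same map on $H_{2}$ by construction of $\vp$.

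The one genuine point of care—and the step I expect to be the main obstacle—is confirming that the intermediate identity $\SWinv(f_{1,j},\ldots,f_{n,j};\fraks)=j+1$ really is available for the spin blocks. In \cref{subsection Cohomologically trivial bundles} this came packaged as Theorem~3.2 of \cite{Konno3}; here it is only implicit in the wall-crossing computation inside the proof of \cref{theo on calculation of SW for connected sum with spin}, where the mapping degree of $\Psi\circ\si$ was shown to be $\pm1$ using the explicit diffeomorphisms $f_{i}$ on $S^{2}\times S^{2}$. So I would spell out that the same reflection-and-gluing section $\si$, combined with Ruberman's combination of wall-crossing and gluing (Proposition~4.1 of \cite{Konno2}) applied to $\mu_{M_{j}}\#\si$ for the two choices $M_{0}$ and $M_{1}$, yields counted numbers equal over $\Z/2$ to $(\vp\cdot\calW(N))\cdot\SWinv(M_{j},\fraks_{j})$, which is $1$ for $j=0$ (odd \SW invariant) and $0$ for $j=1$ (since $M_{1}=\tfrac{b_{2}(M_{0})}{2}(S^{2}\times S^{2})$ admits positive scalar curvature, so $\SWinv(M_{1},\fraks_{1})=0$), giving $j+1$ mod $2$. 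With that verified the rest is the mechanical repetition of \cref{subsection Cohomologically trivial bundles}, so I would keep the exposition brief and refer back to that subsection for the details of the multilinearity and symmetry steps.
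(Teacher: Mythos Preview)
Your proposal is correct and follows essentially the same approach as the paper: reduce to the argument of \cref{subsection Cohomologically trivial bundles}, replacing the input \eqref{eq calculation from Konno3} from \cite{Konno3} by the spin computation $\SWinv(f_{1,j},\ldots,f_{n,j};\fraks)=j+1$ over $\Z/2$ extracted from the proof of \cref{theo on calculation of SW for connected sum with spin}. The paper states exactly this substitution in the paragraph preceding the theorem and otherwise refers back to \cref{subsection Cohomologically trivial bundles}; your write-up simply unpacks the same steps (multilinearity \eqref{eq for SW for comm diffeos}, symmetry \eqref{eq for SW for comm diffeos2}, parity of $\binom{2^{N}}{k}$, and the pairing \eqref{eq between inv of tuple of diff and SWcoh}) in more detail.
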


\begin{rem}
By the same reason described in \cref{first rem topologically trivial but smoothly non trivial},  in the case of $n=1$, the bundle  $X \to E \to S^{1}$ is trivial as a $\Homeo(X, \fraks)$-bundle.
\end{rem}

\begin{ex}
\label{ex final is connected sum of SS}
We here use the $4$-manifold $X(p,q)$ in \cref{ex final is connected sum of SS before}.
We take $p,q$ so that $X(p,q) \# S^{2} \times S^{2} \cong m(S^{2}\times S^{2})$ for some $m>0$.
In the above argument,
let us substitute $X(p,q)$ for $M_{0}$, and as $\fraks_{0}$ take the (isomorphism class of a) $\spc$ structure coming from the symplectic structure of $X(p,q)$.
From the dissolving of $X(p,q) \# S^{2} \times S^{2}$ and \cref{theo non vanishing spin subtle},
if $n$ can be written as $n = 2^{N}$ for some $N \geq 0$, we get a bundle
\[
k(S^{2}\times S^{2}) \to E \to T^{n}
\]
with structure group $\Diff(X, \fraks)$ satisfying $\SWcoh(E) \neq 0$, where $k=m+n-1$.
This is therefore non-trivial as a $\Diff(X, \fraks)$-bundle, and trivial as a $\Homeo(X, \fraks)$-bundle if $n=1$.
\end{ex}

\begin{rem}
As remarked in \cref{rem Hanke Kotschick Wehrheim and Wall}, 
 it seems difficult to determine $k$'s in \cref{ex final is connected sum of SS}.
\end{rem}

\subsection{Ruberman's calculation and $\Dcoh$}
\label{subsection Other calculations}

We mention a calculation of $\Dcoh$ obtained from Ruberman's one given in \cite{MR1671187}.
In Theorem 3.1 in \cite{MR1671187}, he has constructed a diffeomorphism preserving the isomorphism class of an $SO(3)$-bundle on some $4$-manifold $X$ such that his invariant of the diffeomorphism based on a $1$-parameter family of ASD equations does not vanish.
This invariant coincides with the counted number of the parametrized moduli space of ASD equations for the family $X \to E \to S^{1}$ defined as the mapping torus, as in \cref{subsection Cohomologically trivial bundles,subsection Combination with Ruberman's argument: spin case}.
Thus we have:

\begin{theo}
\label{cal of Dcoh via Ruberman}
Let $M$ be an oriented closed smooth $4$-manifold with $b^+(M) \geq 2$ and $\frakP_{0}$ be the isomorphism class of an $SO(3)$-bundle on $M$ such that the formal dimension for $\frakP_{0}$ is zero and $w_{2}(\frakP_{0}) \neq 0$, and that the Donaldson invariant of $(M, \frakP_{0})$ does not vanish (over $\Z$).
Set 
\[
X = M \# \CP^{2} \# 2(-\CP^{2}).
\]
Let $L \to \CP^{2} \# 2(-\CP^{2})$ be the complex line bundle such that the $H^{2}(\CP^{2})$-component of $c_{1}(L)$ is a generator of $H^{2}(\CP^{2})$, and similarly the $H^{2}(-\CP^{2}_{j})$-component of $c_{1}(L)$  is a generator of $H^{2}(-\CP^{2}_{j})$ for $j=1,2$.
We define $\frakP$ as the isomorphism class of the $SO(3)$-bundle obtained by gluing (a representative of) $\frakP_{0}$ with $L \oplus \underline{\R}$, where $\underline{\R}$ is the trivial real line bundle.
Then, for a fixed a homology orientation $\calO$ on $X$, we have
\begin{align*}
\Dcoh(X, \frakP, \calO) \neq 0\ {\text in }\ H^{1}(B\Diff(X, \frakP, \calO);\Z).
\end{align*}
\end{theo}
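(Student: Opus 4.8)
The plan is to reduce the statement, via functoriality, to the non-vanishing of $\Dcoh(E)$ for a single concretely constructed bundle $X \to E \to S^{1}$, and then to identify the evaluation $\langle \Dcoh(E), [S^{1}]\rangle$ with Ruberman's $1$-parameter ASD invariant of a diffeomorphism from \cite{MR1671187}. First I would recall Ruberman's Theorem~3.1 in \cite{MR1671187}: he produces a diffeomorphism $f$ of $X = M \# \CP^{2} \# 2(-\CP^{2})$ which preserves the isomorphism class $\frakP$ of the $SO(3)$-bundle described in the statement together with a homology orientation $\calO$, so that $f \in \Diff(X,\frakP,\calO)$, and whose invariant — the signed count over $\Z$ of the moduli space of $SO(3)$-ASD solutions along a generic path of metrics joining a generic metric $g$ to $f^{\ast}g$ — is non-zero. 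The mapping torus $X \to E \to S^{1}$ of $f$ is then a continuous fiber bundle with structure group $\Diff(X,\frakP,\calO)$. One checks the numerical hypotheses needed to run the construction of \cref{section Construction of the characteristic classes}: connected-sum additivity gives $d(\frakP) = d(\frakP_{0}) + d(L\oplus\underline{\R}) + 3 = -1$, so $n = 1$; $b^{+}(X) = b^{+}(M) + 1 \geq 3 = n+2$; and $w_{2}(\frakP) \neq 0$ forces the required compactness of the parametrized moduli space, so the cochain of \cref{defi of the basic cochain} is defined over $\Z$ in the homology oriented case.

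Next I would equip $S^{1}$ with the CW structure having one $0$-cell and one $1$-cell $e$, with characteristic map $\vp_{e} : D^{1}_{e} \to S^{1}$. An inductive section $\si : (S^{1})^{(1)} \to \Pi(E)|_{(S^{1})^{(1)}}$ is then precisely a generic metric at the $0$-cell together with a generic path of metrics joining it to its pullback by the monodromy $f$ — that is, exactly the data entering Ruberman's definition. Since the cocycle $\Acoch(E,\si)$ lives in top degree, \cref{def of characteristic classes in the base case} gives $\langle \Dcoh(E), [S^{1}]\rangle = \Acoch(E,\si)(e) = \frakm(s_{\vp_{e}^{\ast}\si}, 1; D^{1}_{e})$. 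The crucial point is that this number, defined through the family virtual neighborhood of \cref{subsection Family virtual neighborhood}, agrees up to sign with Ruberman's count: this is the parametrized analogue of the identification (recorded just after \cref{defi value for universal bundles}) of the $n=0$ characteristic class with the ordinary $SO(3)$-Donaldson invariant. Indeed, for a generic inductive section the parametrized moduli space is already a finite set of transverse points, and then the family virtual neighborhood may be taken to be a small neighborhood of this honest $0$-dimensional moduli space, on which the relative Euler class $e_{\calU}$ pushed forward reduces to the signed point count. Combining this with Ruberman's non-vanishing gives $\langle \Dcoh(E), [S^{1}]\rangle \neq 0$ in $\Z$, hence $\Dcoh(E) \neq 0$ in $H^{1}(S^{1};\Z)$; applying \cref{part of theo of well-definedness of the invariant functoriality} to the classifying map $S^{1} \to B\Diff(X,\frakP,\calO)$ of $E$ then yields $\Dcoh(X,\frakP,\calO) \neq 0$ in $H^{1}(B\Diff(X,\frakP,\calO);\Z)$.

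The main obstacle I expect is the careful comparison between Ruberman's count — set up directly via Sard's theorem on a parametrized moduli space that is generically a manifold — and the virtual-neighborhood invariant $\frakm$. One must verify that for a generic inductive section the virtual neighborhood can be chosen to be (a neighborhood of) the transverse $0$-dimensional parametrized moduli space with its given orientation, so that $\langle e_{\calU} \cup p^{\ast}1, [\calU]_{BM}\rangle$ equals the algebraic number of points, and one must match the orientation bookkeeping (homology orientation $\calO$ $\leftrightarrow$ nowhere-vanishing section of $\det s$ $\leftrightarrow$ orientation of $\calU$) with Ruberman's sign conventions. This is routine in spirit but is where all the care is needed; the remaining ingredients are the connected-sum additivity of $p_{1}$, $b_{1}$, $b^{+}$ and an appeal to \cref{part of theo of well-definedness of the invariant functoriality}.
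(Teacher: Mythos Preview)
Your proposal is correct and follows essentially the same route as the paper: cite Ruberman's Theorem~3.1 to obtain $f \in \Diff(X,\frakP,\calO)$ with non-zero $1$-parameter ASD invariant, form the mapping torus $E \to S^{1}$, identify $\langle \Dcoh(E),[S^{1}]\rangle$ with Ruberman's count, and then apply functoriality (\cref{part of theo of well-definedness of the invariant functoriality}). The paper's own proof is simply a terser version of this---it records that Ruberman's invariant equals $-4$ times the Donaldson invariant of $(M,\frakP_{0})$ and then refers back to the parallel argument for \cref{non-vanishing theorem non-spin}; your more explicit verification of $d(\frakP)=-1$, $b^{+}(X)\geq 3$, and of the agreement between the virtual-neighborhood count and the generic signed count is exactly what underlies that reference.
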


\begin{proof}
Theorem 3.1 in Ruberman~\cite{MR1671187} asserts that there is a diffeomorphism on $X$ such that it preserves $\frakP$ and $\calO$ and that his invariant of the diffeomorphism coincides with the $-4$ times the Donaldson invariant of $(M, \frakP_{0})$.
The assertion in the \lcnamecref{cal of Dcoh via Ruberman} therefore follows as in the proof of \cref{non-vanishing theorem non-spin}.
\end{proof}

\begin{rem}
At this stage the author does not have a non-trivial example of $\Dcoh$ for more higher degree $n>1$.
In the argument of \cref{section Non triviality}, the difference on computability for $\SWcoh$ and for $\Dcoh$ arises from the difference between the structure of wall in both theories:
the wall in Donaldson theory is more complicated rather than that of \SW theory.
To generalize \cref{cal of Dcoh via Ruberman} for $n>0$, one has to study the structure of the wall in Donaldson theory for $b^{+}>0$.
\end{rem}

\section{Concluding remarks}
\label{section Concluding remarks}

Finally, we note some related work which we have not mention and also note further potential developments of our characteristic classes $\Dcoh$ and $\SWcoh$.

\begin{rem}
P.~Kronheimer~\cite{Kronheimer1} has defined an invariant for families of \SW equations.
More precisely, he has considered a family parameterized on a singular chain in the space of perturbations such that the parameterized moduli space vanishes on the boundary.
Namely, his invariant is a version of {\it relative} invariants for families.
In symplectic geometry, several people have considered relative Gromov--Witten invariants using Kronheimer's work.
See O.~Bu\c se~\cite{MR2218350} and T.~Nishinou~\cite{MR1967225}.
In our context, we can define a similar relative version of our characteristic classes as follows.
Let $X \to E \to B$ a bundle of an oriented closed smooth $4$-manifold $X$ on a CW complex $B$.
Let us choose either the ASD setting or the SW setting, and suppose that the formal dimension is $-n$ for $n>0$.
Fix an inductive section $\si : B^{(n)} \to \Pi(E)|_{B^{(n)}}$.
Let $B' \subset B$ be a subspace of $B$ such that the parameterized moduli space with respect to $\si$ vanishes on $B'$.
Set $\tau := \si|_{B' \cap B^{(n)}}$.
Then we can define $\Acoh(E, B', \tau) \in H^{n}(B, B')$ as $\Acoh(E)$.
The relative cohomology class $\Acoh(E, B', \tau)$ depends only on $E$, $B'$, and $\tau$, i.e., for another inductive section $\si'$ satisfying $\si'|_{B' \cap B^{(n)}} = \tau$, we can get the same cohomology class.
\end{rem}

\begin{rem}
\label{rem on cohomotopy version}

M.~Szymik~\cite{MR2652709} has considered a family version of the Bauer--Furuta invariant~\cite{MR2025298}.
As described in the introduction, one of the big differences between his setting and ours is the structure group of families.
The author expects that a Bauer--Furuta-type refinement can be also considered in our setting: families with structure group $\Diff(X,\fraks)$ or $\Diff(X, \fraks, \calO)$.
To establish such a refinement in full generality, we need some stacks, which will be discussed in a subsequent paper.
(See also \cref{rem stack}.)
\end{rem}

\begin{rem}
A possibility of a generalization of our theory is to construct a version of Floer theory for $\Dcoh$ and $\SWcoh$.
If we try to do it, we are faced with a serious analytic problem:
if we consider a family of $3$-manifolds, the differential of the Floer chain complexes varies, and in addition, critical points also do in the \SW situation.
One might hope that we shall overcome such a problem by introducing techniques developed in the context of symplectic geometry.
See, for example, K.~Fukaya~\cite{MR1953352} and M.~Abouzaid~\cite{Abouzaid,MR3656481}.
\end{rem}

\begin{bibdiv}
 \begin{biblist}
 
 \bib{Abouzaid}{article}{
   author={Abouzaid, Mohammed},
	title = {Family Floer cohomology and mirror symmetry},
	journal={Proceedings of the International Congress of Mathematicians 2014},
	eprint = {arXiv:1404.2659},
}	
 
\bib{MR3656481}{article}{
   author={Abouzaid, Mohammed},
   title={The family Floer functor is faithful},
   journal={J. Eur. Math. Soc. (JEMS)},
   volume={19},
   date={2017},
   number={7},
   pages={2139--2217},
   issn={1435-9855},
   review={\MR{3656481}},
}

\bib{MR0397797}{article}{
   author={Atiyah, M. F.},
   author={Patodi, V. K.},
   author={Singer, I. M.},
   title={Spectral asymmetry and Riemannian geometry. I},
   journal={Math. Proc. Cambridge Philos. Soc.},
   volume={77},
   date={1975},
   pages={43--69},
   issn={0305-0041},
   review={\MR{0397797}},
}

\bib{MR0279833}{article}{
   author={Atiyah, M. F.},
   author={Singer, I. M.},
   title={The index of elliptic operators. IV},
   journal={Ann. of Math. (2)},
   volume={93},
   date={1971},
   pages={119--138},
   issn={0003-486X},
   review={\MR{0279833}},
}
 
\bib{MR2025298}{article}{
   author={Bauer, Stefan},
   author={Furuta, Mikio},
   title={A stable cohomotopy refinement of Seiberg-Witten invariants. I},
   journal={Invent. Math.},
   volume={155},
   date={2004},
   number={1},
   pages={1--19},
   issn={0020-9910},
   review={\MR{2025298}},
   doi={10.1007/s00222-003-0288-5},
}
 
\bib{MR2218350}{article}{
   author={Bu\c se, Olgu\c ta},
   title={Relative family Gromov-Witten invariants and symplectomorphisms},
   journal={Pacific J. Math.},
   volume={218},
   date={2005},
   number={2},
   pages={315--341},
   issn={0030-8730},
   review={\MR{2218350}},
}

\bib{MR1171888}{article}{
   author={Donaldson, S. K.},
   title={Yang-Mills invariants of four-manifolds},
   conference={
      title={Geometry of low-dimensional manifolds, 1},
      address={Durham},
      date={1989},
   },
   book={
      series={London Math. Soc. Lecture Note Ser.},
      volume={150},
      publisher={Cambridge Univ. Press, Cambridge},
   },
   date={1990},
   pages={5--40},
   review={\MR{1171888}},
}

\bib{MR1066174}{article}{
   author={Donaldson, S. K.},
   title={Polynomial invariants for smooth four-manifolds},
   journal={Topology},
   volume={29},
   date={1990},
   number={3},
   pages={257--315},
   issn={0040-9383},
   review={\MR{1066174}},
} 
 
\bib{MR1339810}{article}{
   author={Donaldson, S. K.},
   title={The Seiberg-Witten equations and $4$-manifold topology},
   journal={Bull. Amer. Math. Soc. (N.S.)},
   volume={33},
   date={1996},
   number={1},
   pages={45--70},
   issn={0273-0979},
   review={\MR{1339810}},
}

\bib{MR1883043}{book}{
   author={Donaldson, S. K.},
   title={Floer homology groups in Yang-Mills theory},
   series={Cambridge Tracts in Mathematics},
   volume={147},
   note={With the assistance of M. Furuta and D. Kotschick},
   publisher={Cambridge University Press, Cambridge},
   date={2002},
   pages={viii+236},
   isbn={0-521-80803-0},
   review={\MR{1883043}},
}

\bib{MR1936209}{article}{
   author={Feehan, Paul M. N.},
   author={Leness, Thomas G.},
   title={$\rm SO(3)$ monopoles, level-one Seiberg-Witten moduli spaces, and
   Witten's conjecture in low degrees},
   booktitle={Proceedings of the 1999 Georgia Topology Conference (Athens,
   GA)},
   journal={Topology Appl.},
   volume={124},
   date={2002},
   number={2},
   pages={221--326},
   issn={0166-8641},
   review={\MR{1936209}},
}

 \bib{FL}{article}{
 	author={Feehan, Paul M. N.},
   	author={Leness, Thomas G.},
	title = {An SO(3)-monopole cobordism formula relating Donaldson and Seiberg-Witten invariants},
	eprint = {arXiv:math/0203047},
}

\bib{MR1953352}{article}{
   author={Fukaya, Kenji},
   title={Floer homology for families---a progress report},
   conference={
      title={Integrable systems, topology, and physics},
      address={Tokyo},
      date={2000},
   },
   book={
      series={Contemp. Math.},
      volume={309},
      publisher={Amer. Math. Soc., Providence, RI},
   },
   date={2002},
   pages={33--68},
   review={\MR{1953352}},
}

\bib{MR991097}{article}{
   author={Furuta, Mikio},
   title={A remark on a fixed point of finite group action on $S^4$},
   journal={Topology},
   volume={28},
   date={1989},
   number={1},
   pages={35--38},
   issn={0040-9383},
   review={\MR{991097}},
}

\bib{MR2021570}{article}{
   author={Hanke, B.},
   author={Kotschick, D.},
   author={Wehrheim, J.},
   title={Dissolving four-manifolds and positive scalar curvature},
   journal={Math. Z.},
   volume={245},
   date={2003},
   number={3},
   pages={545--555},
   issn={0025-5874},
   review={\MR{2021570}},
   doi={10.1007/s00209-003-0553-8},
}

\bib{MR1867354}{book}{
   author={Hatcher, Allen},
   title={Algebraic topology},
   publisher={Cambridge University Press, Cambridge},
   date={2002},
   pages={xii+544},
   isbn={0-521-79160-X},
   isbn={0-521-79540-0},
   review={\MR{1867354}},
}

\bib{MR0090051}{article}{
   author={Kervaire, Michel A.},
   title={Relative characteristic classes},
   journal={Amer. J. Math.},
   volume={79},
   date={1957},
   pages={517--558},
   issn={0002-9327},
   review={\MR{0090051}},
}

\bib{Konno1}{article}{
	author = {H. Konno},
	title = {Bounds on genus and configurations of embedded surfaces in 4-manifolds},
   journal={J. Topol.},
   volume={9},
   date={2016},
   number={4},
   pages={1130-1152},
   doi={10.1112/jtopol/jtw021},
}

\bib{Konno2}{article}{
	author = {H. Konno},
	title = {A cohomological Seiberg-Witten invariant emerging from the adjunction inequality},
	eprint = {arXiv:1704.05859},
}

\bib{Konno3}{article}{
	author = {H. Konno},
	title = {Positive scalar curvature and higher-dimensional families of Seiberg-Witten equations},
	eprint = {arXiv:1707.08974},
}

\bib{MR1471480}{book}{
   author={Kriegl, Andreas},
   author={Michor, Peter W.},
   title={The convenient setting of global analysis},
   series={Mathematical Surveys and Monographs},
   volume={53},
   publisher={American Mathematical Society, Providence, RI},
   date={1997},
   pages={x+618},
   isbn={0-8218-0780-3},
   review={\MR{1471480}},
   doi={10.1090/surv/053},
}

\bib{Kronheimer1}{article}{
	author = {P. Kronheimer},
	title = {Some non-trivial families of symplectic structures},
	status = {preprint},
}

\bib{MR1335233}{book}{
   author={Lang, Serge},
   title={Differential and Riemannian manifolds},
   series={Graduate Texts in Mathematics},
   volume={160},
   edition={3},
   publisher={Springer-Verlag, New York},
   date={1995},
   pages={xiv+364},
   isbn={0-387-94338-2},
   review={\MR{1335233 (96d:53001)}},
   doi={10.1007/978-1-4612-4182-9},
}

\bib{MR2509705}{article}{
   author={L\^e, H\^ong-V\^an},
   author={Ono, Kaoru},
   title={Parameterized Gromov-Witten invariants and topology of
   symplectomorphism groups},
   conference={
      title={Groups of diffeomorphisms},
   },
   book={
      series={Adv. Stud. Pure Math.},
      volume={52},
      publisher={Math. Soc. Japan, Tokyo},
   },
   date={2008},
   pages={51--75},
   review={\MR{2509705}},
}

\bib{MR1868921}{article}{
   author={Li, Tian-Jun},
   author={Liu, Ai-Ko},
   title={Family Seiberg-Witten invariants and wall crossing formulas},
   journal={Comm. Anal. Geom.},
   volume={9},
   date={2001},
   number={4},
   pages={777--823},
   issn={1019-8385},
   review={\MR{1868921 (2002k:57074)}},
}

\bib{MR1702278}{book}{
   author={May, J. P.},
   title={A concise course in algebraic topology},
   series={Chicago Lectures in Mathematics},
   publisher={University of Chicago Press, Chicago, IL},
   date={1999},
   pages={x+243},
   isbn={0-226-51182-0},
   isbn={0-226-51183-9},
   review={\MR{1702278}},
}

\bib{MR2396906}{article}{
   author={McDuff, Dusa},
   title={The symplectomorphism group of a blow up},
   journal={Geom. Dedicata},
   volume={132},
   date={2008},
   pages={1--29},
   issn={0046-5755},
   review={\MR{2396906}},
}

\bib{MR1348401}{book}{
   author={Melrose, Richard B.},
   title={The Atiyah-Patodi-Singer index theorem},
   series={Research Notes in Mathematics},
   volume={4},
   publisher={A K Peters, Ltd., Wellesley, MA},
   date={1993},
   pages={xiv+377},
   isbn={1-56881-002-4},
   review={\MR{1348401}},
}

\bib{MR857372}{article}{
   author={Miller, Edward Y.},
   title={The homology of the mapping class group},
   journal={J. Differential Geom.},
   volume={24},
   date={1986},
   number={1},
   pages={1--14},
   issn={0022-040X},
   review={\MR{857372}},
}

\bib{MR914849}{article}{
   author={Morita, Shigeyuki},
   title={Characteristic classes of surface bundles},
   journal={Invent. Math.},
   volume={90},
   date={1987},
   number={3},
   pages={551--577},
   issn={0020-9910},
   review={\MR{914849}},
}

\bib{MR717614}{article}{
   author={Mumford, David},
   title={Towards an enumerative geometry of the moduli space of curves},
   conference={
      title={Arithmetic and geometry, Vol. II},
   },
   book={
      series={Progr. Math.},
      volume={36},
      publisher={Birkh\"auser Boston, Boston, MA},
   },
   date={1983},
   pages={271--328},
   review={\MR{717614}},
}

\bib{MR2015245}{article}{
   author={Nakamura, Nobuhiro},
   title={The Seiberg-Witten equations for families and diffeomorphisms of
   4-manifolds},
   journal={Asian J. Math.},
   volume={7},
   date={2003},
   number={1},
   pages={133--138},
   issn={1093-6106},
   review={\MR{2015245}},
   doi={10.4310/AJM.2003.v7.n1.a8},
}

\bib{MR2644908}{article}{
   author={Nakamura, Nobuhiro},
   title={Smoothability of $\Bbb Z\times\Bbb Z$-actions on 4-manifolds},
   journal={Proc. Amer. Math. Soc.},
   volume={138},
   date={2010},
   number={8},
   pages={2973--2978},
   issn={0002-9939},
   review={\MR{2644908}},
   doi={10.1090/S0002-9939-10-10413-4},
}

\bib{MR1967225}{article}{
   author={Nishinou, Takeo},
   title={Some nontrivial homology classes on the space of symplectic forms},
   journal={J. Math. Kyoto Univ.},
   volume={42},
   date={2002},
   number={3},
   pages={599--606},
   issn={0023-608X},
   review={\MR{1967225}},
}

\bib{MR1900318}{article}{
   author={Park, Jongil},
   title={The geography of Spin symplectic 4-manifolds},
   journal={Math. Z.},
   volume={240},
   date={2002},
   number={2},
   pages={405--421},
   issn={0025-5874},
   review={\MR{1900318}},
   doi={10.1007/s002090100390},
}

\bib{MR868975}{article}{
   author={Quinn, Frank},
   title={Isotopy of $4$-manifolds},
   journal={J. Differential Geom.},
   volume={24},
   date={1986},
   number={3},
   pages={343--372},
   issn={0022-040X},
   review={\MR{868975}},
}

\bib{MR1635698}{article}{
   author={Ruan, Yongbin},
   title={Virtual neighborhoods and the monopole equations},
   conference={
      title={Topics in symplectic $4$-manifolds},
      address={Irvine, CA},
      date={1996},
   },
   book={
      series={First Int. Press Lect. Ser., I},
      publisher={Int. Press, Cambridge, MA},
   },
   date={1998},
   pages={101--116},
   review={\MR{1635698 (2000e:57054)}},
}

\bib{MR1671187}{article}{
   author={Ruberman, Daniel},
   title={An obstruction to smooth isotopy in dimension $4$},
   journal={Math. Res. Lett.},
   volume={5},
   date={1998},
   number={6},
   pages={743--758},
   issn={1073-2780},
   review={\MR{1671187 (2000c:57061)}},
   doi={10.4310/MRL.1998.v5.n6.a5},
}

\bib{MR1734421}{article}{
   author={Ruberman, Daniel},
   title={A polynomial invariant of diffeomorphisms of 4-manifolds},
   conference={
      title={Proceedings of the Kirbyfest},
      address={Berkeley, CA},
      date={1998},
   },
   book={
      series={Geom. Topol. Monogr.},
      volume={2},
      publisher={Geom. Topol. Publ., Coventry},
   },
   date={1999},
   pages={473--488 (electronic)},
   review={\MR{1734421 (2001b:57073)}},
   doi={10.2140/gtm.1999.2.473},
}

\bib{MR1874146}{article}{
   author={Ruberman, Daniel},
   title={Positive scalar curvature, diffeomorphisms and the Seiberg-Witten
   invariants},
   journal={Geom. Topol.},
   volume={5},
   date={2001},
   pages={895--924 (electronic)},
   issn={1465-3060},
   review={\MR{1874146 (2002k:57076)}},
   doi={10.2140/gt.2001.5.895},
}

\bib{MR2652709}{article}{
   author={Szymik, Markus},
   title={Characteristic cohomotopy classes for families of 4-manifolds},
   journal={Forum Math.},
   volume={22},
   date={2010},
   number={3},
   pages={509--523},
   issn={0933-7741},
   review={\MR{2652709}},
}

\bib{MR0163324}{article}{
   author={Wall, C. T. C.},
   title={On simply-connected $4$-manifolds},
   journal={J. London Math. Soc.},
   volume={39},
   date={1964},
   pages={141--149},
   issn={0024-6107},
   review={\MR{0163324}},
   doi={10.1112/jlms/s1-39.1.141},
}

\bib{MR1306021}{article}{
   author={Witten, Edward},
   title={Monopoles and four-manifolds},
   journal={Math. Res. Lett.},
   volume={1},
   date={1994},
   number={6},
   pages={769--796},
   issn={1073-2780},
   review={\MR{1306021}},
   doi={10.4310/MRL.1994.v1.n6.a13},
}
 \end{biblist}
\end{bibdiv} 

\end{document}